\theoremstyle{plain}
\newtheorem{thm}{\protect\theoremname}
\theoremstyle{definition}
\newtheorem{defn}[thm]{\protect\definitionname}
\theoremstyle{definition}
\newtheorem{example}[thm]{\protect\examplename}
\theoremstyle{plain}
\newtheorem{prop}[thm]{\protect\propositionname}
\theoremstyle{remark}
\newtheorem{rem}[thm]{\protect\remarkname}
\theoremstyle{plain}
\newtheorem{lem}[thm]{\protect\lemmaname}
\theoremstyle{plain}
\newtheorem{cor}[thm]{\protect\corollaryname}
\theoremstyle{plain}
\newtheorem{assumption}[thm]{\protect\assumptionname}
\DeclareMathSymbol{\shortminussymb}{\mathbin}{AMSa}{"39}
\providecommand{\assumptionname}{Assumption}
\providecommand{\corollaryname}{Corollary}
\providecommand{\definitionname}{Definition}
\providecommand{\examplename}{Example}
\providecommand{\lemmaname}{Lemma}
\providecommand{\propositionname}{Proposition}
\providecommand{\remarkname}{Remark}
\providecommand{\theoremname}{Theorem}
\begin{document}
\title{Weak Poincaré Inequalities for Markov chains: theory and applications}
\author{Christophe Andrieu, Anthony Lee, Sam Power, Andi Q. Wang		\\
School of Mathematics, University of Bristol and University of Warwick}
\maketitle
\begin{abstract}
We investigate the application of Weak Poincaré Inequalities (WPI)
to Markov chains to study their rates of convergence and to derive
complexity bounds. At a theoretical level we investigate the necessity
of the existence of WPIs to ensure $\mathrm{L}^{2}$-convergence,
in particular by establishing equivalence with the Resolvent Uniform
Positivity-Improving (RUPI) condition and providing a counterexample.
From a more practical perspective, we extend the celebrated Cheeger's
inequalities to the subgeometric setting, and further apply these
techniques to study random-walk Metropolis algorithms for heavy-tailed
target distributions and to obtain lower bounds on pseudo-marginal
algorithms.
\end{abstract}
\global\long\def\dif{\mathrm{d}}%
\global\long\def\var{\mathrm{var}}%
\global\long\def\R{\mathbb{R}}%
\global\long\def\X{\mathcal{X}}%
\global\long\def\calE{\mathcal{E}}%
\global\long\def\E{\mathsf{E}}%
\global\long\def\Ebb{\mathbb{E}}%

\global\long\def\ELL{\mathrm{L}^{2}}%
\global\long\def\osc{\mathrm{osc}}%
\global\long\def\Id{\mathrm{Id}}%
\global\long\def\essup{\mathrm{ess\,sup}}%
 
\global\long\def\shortminus{\mathrm{\shortminussymb}}%
\global\long\def\muess{\mathrm{ess_{\mu}}}%

\global\long\def\GapR{\mathrm{Gap_{{\rm R}}}}%

\newpage{}

\tableofcontents{}

\section{Introduction}

This paper is concerned with the characterization of the convergence
to equilibrium of (discrete) time homogeneous Markov chains with transition
kernel $P$ to their invariant distributions. Such a distribution
is assumed throughout to exist and be unique, and will be denoted
by $\mu$. Numerous techniques and metrics are possible for this and
we focus here on functional analytic tools to characterize a type
of $\ELL$-convergence to equilibrium using functional inequalities,
particularly Poincaré Inequalities. More precisely for a $\mu$-invariant
kernel $P$ we aim to characterize scenarios where
\[
\left\Vert P^{n}f-\mu\left(f\right)\right\Vert _{2}^{2}\leqslant\gamma\left(n\right)\cdot\Psi\left(f-\mu\left(f\right)\right)\,,
\]
for a vanishing sequence $\gamma=\left\{ \gamma\left(n\right)\colon n\in\mathbb{N}\right\} $
and a type of functional $\Psi$ e.g. $\Psi\left(\cdot\right)=\left\Vert \cdot\right\Vert _{2}^{2}$
or $\Psi\left(\cdot\right)=\left\Vert \cdot\right\Vert _{\mathrm{osc}}^{2}$
-- notation definitions are detailed in Appendix~\ref{subsec:Notation}.
In this paper we will refer to such types of convergence as $\left(\Psi,\gamma\right)$-convergence
(Definition~\ref{def:Psi-gamma-CV}). The use of Strong Poincaré
Inequalities (SPIs) for the analysis of Markov chains is well-developed
and understood, allowing for the characterization of $\ELL-$geometric
convergence, that is $\big(\left\Vert \cdot\right\Vert _{2}^{2},\left\{ \rho^{n}:\rho\in\left[0,1\right),n\in\mathbb{N}\right\} \big)$-convergence,
or providing bounds on the asymptotic variance of ergodic averages. 

However it has been realised only recently that such tools can be
particularly powerful to establish results beyond the reach of, for
example, drift/minorization approaches \cite{meyn1993markov,douc2018markov}.
For instance, (a) in \cite{andrieu2022explicit}, leveraging isoperimetric
inequalities and building on \cite{dwivedi2019logconcave,chen2020fast},
quantitative SPIs have been established, providing the first rigorous
proof that the spectral gap of the random walk Metropolis on $\mathbb{R}^{d}$
vanishes as $d^{-1}$ for a broad class of probability distributions,
(b) in \cite{andrieu2022comparison_journal}, in the scenario where
$\ELL$-geometric convergence does not hold, comparison tools based
on Weak Poincaré Inequalities (WPIs) have been proposed and developed,
allowing for a fine and natural characterization of pseudo-marginal
algorithms in relation to their marginal counterparts.

The use and theory of WPIs in the Markov chain context are largely
underdeveloped in comparison to the Markov process scenario. This
paper aims to fill this gap by developing practical tools to establish
such functional inequalities, while also exploring the equivalence
between $\left(\Psi,\gamma\right)$-convergence and the existence
of WPIs. More specifically, 

In Section~\ref{sec:Weak-Poincar=0000E9-inequalities} we provide
a precise overview of the basic properties of WPIs for Markov chains
and their use in deducing $\left(\Psi,\gamma\right)$-convergence,
complete with various novel notions of comparison and optimality leading,
for example, to new techniques to lower bound convergence rates. In
this same section, we also discuss equivalence with convergence in
other standard metrics and provide a counterexample.

Section~\ref{sec:Existence-of-WPIs} is concerned with the existence
of WPIs. Our first result is a Weak Cheeger Inequality (WCI) for Markov
chains which provides a theoretically and practically useful tool
to establish WPIs, linking the probabilistic and functional analytic
worlds. We further investigate the necessity of the existence of a
non-trivial WPI for a Markov chain to be $\left(\Psi,\gamma\right)$-convergent.
We show equivalence of WPIs with the more probabilistic Resolvent
Uniform Positivity-Improving (RUPI) condition of \cite{gong2006spectral}.
In turn we show that RUPI is itself implied by $\mu$-irreducibility
and a positive holding probability condition. In addition, we provide
a counterexample of an $\big(\left\Vert \cdot\right\Vert _{\mathrm{osc}}^{2},\left\{ \rho^{n}:\rho\in\left[0,1\right),n\in\mathbb{N}\right\} \big)$-convergent
Markov chain which fails to satisfy a WPI. We finish Section~\ref{sec:Existence-of-WPIs}
by showing that WCIs can be obtained from isoperimetric properties
of the target distribution $\mu$ and a close coupling condition of
the Markov kernel $P$, opening the way to quantitative WPIs and estimates
of $\ELL$ rates of convergence.

In Section~\ref{sec:Application-to-Pseudo-marginal} we study pseudo-marginal
MCMC methods, and give lower bounds on their convergence rates, complementing
the previous upper bounds derived in \cite{andrieu2022comparison_journal}.
In Section~\ref{sec:Applications-to-RWM} we derive specific WPIs
for the Random Walk Metropolis (RWM) MCMC algorithm, as applied to
simulation from heavy-tailed probability measures on Euclidean spaces
and deduce complexity bounds. As a consequence, we obtain both positive
and negative results on their convergence to equilibrium, providing
an understanding of the cost incurred when sampling from such distributions
with this type of algorithm. 

In Section~\ref{sec:Discussion} we give some concluding discussion
regarding related literature. Some of the results in this manuscript
appeared in a preliminary form in our technical report \cite{andrieu2022poincare_tech}.

\paragraph*{Notation}

A list of notation can be found in Appendix~\ref{subsec:Notation}.

\section{Weak Poincaré inequalities\label{sec:Weak-Poincar=0000E9-inequalities}}

\subsection{Basic definitions}

We first review the basic definitions needed to define weak Poincaré
inequalities; see also \cite{andrieu2022poincare_tech}.

Throughout this section, for $\mu$ a probability measure defined
on $\left(\E,\mathscr{E}\right)$, we fix a $\mu$-invariant Markov
kernel $P\colon\E\times\mathscr{E}\rightarrow\left[0,1\right]$.
\begin{defn}[Sieve]
\label{def:Phi_fn} We call a functional $\Psi:\ELL\bigl(\mu\bigr)\to\left[0,\infty\right]$
a \textit{sieve functional}, or \textit{sieve}, if 
\begin{enumerate}
\item for any $f\in\ELL\left(\mu\right)$, $c>0$, it holds that $\Psi\left(c\cdot f\right)=c^{2}\cdot\Psi\left(f\right)$, 
\item $\mathfrak{a}:=\sup_{f\in\ELL_{0}\left(\mu\right)\backslash\left\{ 0\right\} }\left\Vert f\right\Vert _{2}^{2}/\Psi\left(f\right)<\infty$.
\end{enumerate}
We will furthermore describe $\Psi$ as $P$-non-expansive if $\Psi\left(Pf\right)\leqslant\Psi\left(f\right)$
for $f\in\ELL_{0}\left(\mu\right)$.
\end{defn}

Note that the second property implies that if $\Psi\left(f\right)=0$
for some $f\in\ELL_{0}\left(\mu\right)$ then $f=0$, which, as we
shall see, is a requirement for a weak Poincaré inequality to hold.
\begin{example}
\label{exa:osc2}Typically, we will work with the sieve $\Psi:\ELL\left(\mu\right)\to\left[0,\infty\right]$
given by $\Psi:=\left\Vert \cdot\right\Vert _{{\rm osc}}^{2}$, which
satisfies the requirements of Definition~\ref{def:Phi_fn} with $\mathfrak{a}\leqslant\frac{1}{4}$.
\end{example}

In Section~\ref{subsec:bounded_to_p} we will consider other choices
of functional $\Psi$.

\begin{defn}[Weak Poincaré Inequalities]
\label{def:WPI}For a sieve $\Psi$, we say that a $\mu$-invariant
kernel $T$ satisfies 
\begin{enumerate}
\item a $\left(\Psi,\alpha\right)$\textit{--weak Poincaré inequality},
abbreviated $\left(\Psi,\alpha\right)$--WPI, if for a decreasing
function $\alpha:\left(0,\infty\right)\to\left[0,\infty\right)$,
\begin{equation}
\left\Vert f\right\Vert _{2}^{2}\leqslant\alpha(r)\cdot\calE\left(T,f\right)+r\cdot\Psi\left(f\right),\quad\forall r>0,f\in\ELL_{0}\left(\mu\right),\label{eq:WPI}
\end{equation}
\item a $\left(\Psi,\beta\right)$\textit{\emph{--WPI}} if for $\beta:\left(0,\infty\right)\to\left[0,\infty\right)$
a decreasing function with $\beta\left(s\right)\to0$ as $s\to\infty$,
\begin{equation}
\left\Vert f\right\Vert _{2}^{2}\leqslant s\cdot\calE\left(T,f\right)+\beta\left(s\right)\cdot\Psi\left(f\right),\quad\forall s>0,f\in\ELL_{0}\left(\mu\right),\label{eq:beta-WPI}
\end{equation}
\item a \textit{$\left(\Psi,K^{*}\right)$}\textit{\emph{--WPI}} if for
a nonnegative, convex, strictly increasing function $K^{*}:\left[0,\mathfrak{a}\right)\to\left[0,\infty\right)$
satisfying $K^{*}\left(0\right)=0$,
\[
\frac{\calE\left(T,f\right)}{\Psi\left(f\right)}\geqslant K^{*}\left(\frac{\left\Vert f\right\Vert _{2}^{2}}{\Psi\left(f\right)}\right),\quad f\in\ELL_{0}\left(\mu\right),0<\Psi\left(f\right)<\infty\,.
\]
\end{enumerate}
\end{defn}

The condition $\mathfrak{a}:=\sup_{f\in\ELL_{0}\left(\mu\right)\backslash\left\{ 0\right\} }\left\Vert f\right\Vert _{2}^{2}/\Psi\left(f\right)<\infty$
implies that 
\begin{enumerate}
\item if a $\left(\Psi,\alpha\right)$\textit{\emph{--}}WPI holds for some
$\alpha$, then it necessarily holds that $\alpha\left(r\right)>0$
for $r\in\left(0,\mathfrak{a}\right)$,
\item if a $\left(\Psi,\alpha\right)$\textit{\emph{--}}WPI holds for some
$\alpha$, then there also holds a $\left(\Psi,\alpha^{'}\right)$\textit{\emph{--}}WPI
with $\alpha^{'}\left(r\right):=\alpha\left(r\right)\cdot\mathbf{1}_{\left[0,\mathfrak{a}\right)}\left(r\right)$,
and
\item if a $\left(\Psi,\beta\right)$\textit{\emph{--}}WPI holds for some
$\beta$, then there also holds a $\left(\Psi,\beta^{'}\right)$\textit{\emph{--}}WPI
with $\beta^{'}\left(s\right):=\min\left\{ \beta\left(s\right),\mathfrak{a}\right\} $.
\end{enumerate}
The subsequent Proposition~\ref{prop:a-b-WPI-corres} establishes
that any one WPI holding implies that all of them hold, and hence
we can refer to a $\Psi$--WPI when the parametrization is unimportant.
The proposition also provides a recipe to move between parametrizations.
As we shall see, the $K^{*}$ formulation leads to quantitative characterizations
of convergence to equilibrium, but the $\alpha$ or $\beta$ parametrizations
are often more convenient to establish in practice.

For any decreasing function $f\colon\mathbb{R}_{+}\rightarrow\mathbb{R}$
we let $f^{\shortminus}:\R\to\left[0,\infty\right]$ be its generalised
inverse, given by $f^{\shortminus}\left(x\right):=\inf\left\{ y>0\colon f\left(y\right)\leqslant x\right\} $
for $x\in\R$. 
\begin{prop}
\label{prop:a-b-WPI-corres}Let $T$ be a Markov kernel on $\big(\mathsf{E},\mathscr{E}\big)$,
$\Psi$ be a sieve, and $\mathfrak{a}:=\sup_{f\in\ELL_{0}\left(\mu\right)\backslash\left\{ 0\right\} }\left\Vert f\right\Vert _{2}^{2}/\Psi\left(f\right)<\infty$.
\begin{enumerate}
\item \label{enu:alphavsbeta-alpha}If a $\left(\Psi,\alpha\right)$\textup{--WPI}
holds with $\alpha\left(r\right)=0$ for $r\geqslant\mathfrak{a}$,
then a $\left(\Psi,\beta\right)$\textup{--WPI, with} $\beta:=\alpha^{\shortminus}$
on $\left(0,\infty\right)$, holds. 
\item \label{enu:alphavsbeta-beta}If a $\left(\Psi,\beta\right)$\textup{--WPI}
holds with $\beta\leqslant\mathfrak{a}$, then a $\left(\Psi,\alpha\right)$\textup{--WPI}
holds, with $\alpha:=\beta^{\shortminus}$ on $\left(0,\infty\right)$.
\item \label{enu:alpha-alpha-minus-identity}Further, if $\alpha$ (resp.
$\beta$) is right continuous then $\left(\alpha^{\shortminus}\right)^{\shortminus}=\alpha$
(resp. $\left(\beta^{\shortminus}\right)^{\shortminus}=\beta$); the
two parametrisations are thus equivalent.
\item \label{enu:beta-implied-Kstar}If a $\left(\Psi,\beta\right)$--WPI
holds then a $\left(\Psi,K^{*}\right)$\textup{--WPI} holds with
\begin{equation}
K^{*}(v):=\sup_{u\geqslant0}\left\{ u\cdot v-K\left(u\right)\right\} ,\qquad v\geqslant0,\label{eq:Kstar-from-beta}
\end{equation}
the convex conjugate of $K\colon\left[0,\infty\right)\rightarrow\left[0,\infty\right)$
given by $K\left(u\right):=u\,\beta\left(1/u\right)$ for $u>0$ and
$K\left(0\right):=0$.
\item \label{enu:K-imply-beta}If a $\left(\Psi,K^{*}\right)$--WPI holds
then a $\left(\Psi,\beta\right)$\textup{--WPI} holds with $\beta\left(s\right)=s\cdot K\left(s^{-1}\right)$
for $s>0$ where
\[
K\left(u\right):=\sup_{0\leqslant v\leqslant\mathfrak{a}}\left\{ u\cdot v-K^{*}\left(v\right)\right\} ,\qquad u\geqslant0,
\]
is the convex conjugate of $K^{*}$.
\end{enumerate}
\end{prop}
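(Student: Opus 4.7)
My plan is to treat the three pairs of assertions in the natural order: the $\alpha\leftrightarrow\beta$ equivalence of (a)--(c), which is purely a manipulation of generalised inverses of decreasing functions, and then the $\beta\leftrightarrow K^{*}$ equivalence of (d)--(e), which is the Legendre--Fenchel transform applied to a suitable reparametrisation. Throughout, the case $\Psi(f)=0$ is handled by the remark after Definition~\ref{def:Phi_fn} (it forces $f=0$, making the WPIs trivial), and the case $\Psi(f)=\infty$ by first restricting to $\Psi(f)<\infty$ and then observing that all three WPIs are vacuous otherwise.

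For (a), given the $\alpha$-WPI, fix $s>0$ and, for each $\varepsilon>0$, choose $r_\varepsilon>0$ with $\alpha(r_\varepsilon)\leq s$ and $r_\varepsilon\leq\alpha^{\shortminus}(s)+\varepsilon$; substituting $r=r_\varepsilon$ in \eqref{eq:WPI} and letting $\varepsilon\downarrow 0$ yields \eqref{eq:beta-WPI} with $\beta=\alpha^{\shortminus}$. Since $\alpha\equiv0$ on $[\mathfrak{a},\infty)$, we have $\alpha^{\shortminus}(s)\leq\mathfrak{a}$ for all $s>0$, and $\alpha^{\shortminus}(s)\to 0$ as $s\to\infty$ because $\alpha(r)<\infty$ for $r>0$ implies $\alpha^{\shortminus}(\alpha(r))\leq r$. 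Part (b) is entirely symmetric, using $\beta^{\shortminus}(r)=\inf\{s>0:\beta(s)\leq r\}$; the assumption $\beta\leq\mathfrak{a}$ ensures $\beta^{\shortminus}(r)=0$ for $r\geq\mathfrak{a}$, so the resulting $\alpha$ has the required form. Part (c) is the classical identity $(f^{\shortminus})^{\shortminus}=f$ for right-continuous decreasing $f$, proved by chasing the definitions of the infima on both sides; I would apply it separately to $\alpha$ and to $\beta$.

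For (d), divide \eqref{eq:beta-WPI} by $\Psi(f)$ (assuming $0<\Psi(f)<\infty$) and write $v:=\|f\|_{2}^{2}/\Psi(f)\in[0,\mathfrak{a}]$ and $E:=\calE(T,f)/\Psi(f)\geq 0$; the inequality reads $v\leq sE+\beta(s)$ for all $s>0$. Substituting $s=1/u$ and multiplying by $u>0$ gives $uv-u\beta(1/u)\leq E$, i.e.\ $uv-K(u)\leq E$, which after taking the supremum over $u\geq 0$ (note $u=0$ contributes $-K(0)=0\leq E$) produces $K^{*}(v)\leq E$, which is precisely the $K^{*}$-WPI. Part (e) reverses this computation: the inequality $K^{*}(v)\leq E$ with $v\in[0,\mathfrak{a}]$ combined with the duality $K^{*}(v)\geq uv-K(u)$ for every $u\geq 0$ (where now $K$ is the conjugate of $K^{*}$ restricted to $[0,\mathfrak{a}]$) yields, after the substitution $u=1/s$, exactly \eqref{eq:beta-WPI} with $\beta(s)=sK(1/s)$.

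The main thing to check carefully, and where I expect the only real friction, is that the $K^{*}$ produced in (d) genuinely satisfies the structural assumptions of Definition~\ref{def:WPI}(3): it is a convex conjugate so convexity and lower-semicontinuity are automatic, and $K^{*}(0)=-\inf_{u\geq 0}K(u)=0$ since $K\geq 0$ and $K(0)=0$; strict monotonicity requires the slightly more delicate observation that for any $v>0$ one can choose $s$ so large that $\beta(s)<v$, giving $K^{*}(v)\geq u(v-\beta(1/u))>0$ at $u=1/s$, after which the standard fact that $v\mapsto K^{*}(v)/v$ is nondecreasing for convex $K^{*}$ with $K^{*}(0)=0$ upgrades this to strict monotonicity on $[0,\mathfrak{a})$. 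An analogous sanity check in (e) confirms that $\beta(s)=sK(1/s)$ is decreasing and tends to $0$ as $s\to\infty$, using that $K(u)/u=\sup_{v\leq\mathfrak{a}}(v-K^{*}(v)/u)\to 0$ as $u\downarrow 0$ by monotone convergence.
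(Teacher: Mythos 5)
Your treatment of (a)--(d) is correct and, since the paper simply cites external references for those parts, your self-contained arguments are a reasonable substitute that follow the same route those references take: choosing a near-optimal $r$ with $\alpha(r)\leqslant s$ for (a)--(b), the definitional chase for (c), and in (d) dividing by $\Psi(f)$, setting $s=1/u$ and taking the supremum over $u$, followed by your check that $K^{*}$ is convex, vanishes at $0$, and is strictly increasing (positivity for $v>0$ from $\beta(s)\to 0$, upgraded via convexity and $K^{*}(0)=0$); that last verification is sound.

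The one step that is not justified as written is the final limit in (e). You assert that $u^{-1}K(u)=\sup_{0\leqslant v\leqslant\mathfrak{a}}\left\{ v-u^{-1}K^{*}(v)\right\}\to 0$ as $u\downarrow 0$ ``by monotone convergence'', but the supremum of a family that decreases pointwise need not converge to the supremum of the pointwise limits (in general one only gets $\lim_{u\downarrow0}\sup_{v}\geqslant\sup_{v}\lim_{u\downarrow0}$), so the interchange needs an argument --- and this is precisely where the paper spends the bulk of its written proof, bounding $u^{-1}K(u)\leqslant\kappa(u):=\sup\left\{ v\leqslant\mathfrak{a}:v^{-1}K^{*}(v)\leqslant u\right\}$ and splitting on $\ell=\lim_{v\downarrow0}v^{-1}K^{*}(v)$. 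Any fix must invoke the strict positivity of $K^{*}$ away from $0$, which your one-liner never uses. A short correct argument: for $\varepsilon\in(0,\mathfrak{a}]$ and $u<K^{*}(\varepsilon)/\mathfrak{a}$, split the supremum at $v=\varepsilon$ to get $\sup_{0\leqslant v\leqslant\mathfrak{a}}\left\{ v-u^{-1}K^{*}(v)\right\}\leqslant\max\left\{ \varepsilon,\,\mathfrak{a}-u^{-1}K^{*}(\varepsilon)\right\}\leqslant\varepsilon$, since $K^{*}$ is increasing; hence the limit is $0$. (A Dini-type argument with upper semicontinuity on the compact interval $[0,\mathfrak{a}]$ also works, but again only because $K^{*}(v)>0$ for $v>0$ forces the pointwise limit to be nonpositive off $v=0$.) You should also record, in one line, that $u\mapsto u^{-1}K(u)=\sup_{v}\left\{ v-u^{-1}K^{*}(v)\right\}$ is increasing in $u$, so that $\beta(s)=s\,K(1/s)$ is genuinely decreasing. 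With these two points supplied, your proof of (e) coincides with the paper's.
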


\begin{proof}
The proof of \ref{enu:alphavsbeta-alpha}-\ref{enu:alpha-alpha-minus-identity}
can be found in \cite[Proposition 5]{andrieu2022poincare_tech}. The
proof of \ref{enu:beta-implied-Kstar} can be found in \cite[Lemma 1, Supp. material]{andrieu2022comparison_journal}. 

So we focus now on \ref{enu:K-imply-beta}. For $f\in\ELL_{0}\left(\mu\right)$
such that $0<\Psi\left(f\right)<\infty$ and any $u>0$, we have that
\begin{align*}
\frac{\calE\left(T,f\right)}{\Psi\left(f\right)} & \geqslant K^{*}\left(\frac{\left\Vert f\right\Vert _{2}^{2}}{\Psi\left(f\right)}\right)\geqslant u\cdot\frac{\left\Vert f\right\Vert _{2}^{2}}{\Psi\left(f\right)}-K\left(u\right),
\end{align*}
and by rearranging, 
\[
\left\Vert f\right\Vert _{2}^{2}\leqslant u^{-1}\cdot\calE\left(T,f\right)+u^{-1}\cdot K\left(u\right)\cdot\Psi\left(f\right)\,.
\]
For $u>0$, the function
\[
u\mapsto u^{-1}\cdot K\left(u\right)=\sup_{0<v\leqslant\mathfrak{a}}\left\{ v-u^{-1}\cdot K^{*}\left(v\right)\right\} 
\]
is strictly increasing since for $0<u_{0}<u_{1}$,
\[
\sup_{0<v\leqslant\mathfrak{a}}\left\{ v-u_{0}^{-1}\cdot K^{*}\left(v\right)\right\} <\sup_{0<v\leqslant\mathfrak{a}}\left\{ v-u_{1}^{-1}\cdot K^{*}\left(v\right)\right\} \,.
\]
We show that $\lim_{u\downarrow0}u^{-1}\cdot K\left(u\right)=0$,
which implies that $\beta\left(s\right)\to0$ as $s\to\infty$. For
$u>0$,
\begin{align*}
u^{-1}\cdot K\left(u\right) & \leqslant\sup\left\{ v\colon0<v\leqslant\mathfrak{a},v^{-1}\cdot K^{*}\left(v\right)\leqslant u\right\} =:\kappa\left(u\right)\,.
\end{align*}
Since $v\mapsto v^{-1}K^{*}\left(v\right)$ is increasing, it follows
that $u\mapsto\kappa\left(u\right)$ is increasing. Now suppose that
$\ell:=\lim_{v\downarrow0}v^{-1}\cdot K^{*}\left(v\right)>0$. Then
for $0<u<\ell$ we have directly that $u^{-1}\cdot K\left(u\right)=\sup_{0<v\leqslant\mathfrak{a}}\left\{ v-u^{-1}\cdot K^{*}\left(v\right)\right\} =\sup_{0<v\leqslant\mathfrak{a}}v\cdot\left\{ 1-u^{-1}\cdot v^{-1}\cdot K^{*}\left(v\right)\right\} $
must be 0.  Otherwise if $\ell=0$, then for any $\bar{v}\in\left(0,\mathfrak{a}\right]$
and any $0\leqslant u<\bar{v}^{-1}K^{*}\left(\bar{v}\right)$ we have
$\kappa\left(u\right)<\bar{v}$. 

The final statement follows from the definition of the convex conjugate
$K$ of $K^{*}$.
\end{proof}
\begin{defn}
In the situation where a $\left(\Psi,\alpha\right)$--WPI (resp.
$\left(\Psi,\beta\right)$--WPI) holds for a right-continuous function
$\alpha$ (resp. $\beta$), we refer to it as a $\left(\Psi,\alpha,\beta\right)$--WPI
where $\beta=\alpha^{\shortminus}$ (resp. $\alpha=\beta^{\shortminus}$).
\end{defn}

\begin{rem}
\label{rem:nonrev_WPI}As remarked in Appendix~\ref{subsec:Notation},
it is straightforward to see that for all $p\geqslant1$, there holds
the equality $\mathcal{E}_{p}\left(T,f\right)=\mathcal{E}_{p}\left(S,f\right)$,
defined in (\ref{eq:calE_p}), where $S:=\frac{1}{2}\left(T+T^{*}\right)$
is the additive reversibilization of $T$. Hence, WPIs are generally
only properties of the reversible part of $T$, a property used in
some of our later arguments.
\end{rem}

In practice, we are interested in bounding the convergence rate to
equilibrium of a given $\mu$-invariant Markov kernel $P$, as measured
in $\mathrm{L}^{2}\left(\mu\right)$. 
\begin{defn}[$\big(\Psi,\gamma\big)$--convergence]
\label{def:Psi-gamma-CV} Given a $\mu$-invariant Markov kernel,
$P$ and a sieve $\Psi$, we say that $P$ is $\left(\Psi,\gamma\right)$-convergent
or simply $\Psi$-convergent if for all $n\in\mathbb{N}$ and $f\in\ELL_{0}\left(\mu\right)$
such that $0<\Psi\left(f\right)<\infty$,
\begin{equation}
\left\Vert P^{n}f\right\Vert _{2}^{2}\leqslant\gamma\left(n\right)\cdot\Psi\left(f\right),\label{eq:P-Phi-gamma-convergent}
\end{equation}
holds for a sequence $\left\{ \gamma\left(n\right):n\in\mathbb{N}\right\} \subseteq\left(0,\infty\right)$
such that $\lim_{n\rightarrow\infty}\gamma\left(n\right)=0$.
\end{defn}

$\big(\Psi,\gamma\big)$-convergence can be obtained in the framework
of Definition~\ref{def:WPI} by taking $T=P^{*}P$ due to the following
convergence result.
\begin{thm}
\label{thm:WPI_F_bd} Let $P$ be a $\mu$-invariant Markov kernel
on $\left(\mathsf{E},\mathscr{E}\right)$ and assume that $T:=P^{*}P$
satisfies a $\left(\Psi,K^{*}\right)-$WPI, where $\Psi$ is a $P$-non-expansive
sieve. Define $F\colon\left(0,\mathfrak{a}\right]\rightarrow\mathbb{R}$
to be the strictly decreasing, convex and invertible function
\[
F\left(x\right):=\int_{x}^{\mathfrak{a}}\frac{{\rm d}v}{K^{*}\left(v\right)}\,.
\]
Then
\begin{enumerate}
\item \label{enu:thm-WPI-F-bd-complexity} for any $\varepsilon\in\left(0,\mathfrak{a}\right]$,
with
\[
n\left(\varepsilon;\Psi\right):=\inf\left\{ n\in\mathbb{N}\colon\left\Vert P^{n}f\right\Vert _{2}^{2}\leqslant\varepsilon\cdot\Psi\left(f\right)\,\forall f\in\ELL_{0}\left(\mu\right)\right\} \,,
\]
it holds that
\[
n\left(\varepsilon;\Psi\right)\leqslant F\left(\varepsilon\right)\,.
\]
Therefore for $n>F\left(\varepsilon\right)$, $\left\Vert P^{n}f\right\Vert _{2}^{2}\leqslant\varepsilon\cdot\Psi\left(f\right)\,\forall f\in\ELL_{0}\left(\mu\right)$.
\item \label{enu:thm-WPI-F-bd-rate} $P$ is $\big(\Psi,\gamma\big)$-convergent
with $\gamma\left(n\right):=F^{-1}\left(n\right)$.
\end{enumerate}
\end{thm}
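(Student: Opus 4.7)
The plan is to extract from the $\left(\Psi,K^{*}\right)$--WPI a one-step decrease of $\left\Vert P^{n}f\right\Vert _{2}^{2}$ that depends only on the initial $\Psi\left(f\right)$, and then to integrate the resulting discrete differential inequality against $F$. Fixing $f\in\ELL_{0}\left(\mu\right)$ with $0<\Psi\left(f\right)<\infty$, the object to track will be the normalised ratio $u_{n}:=\left\Vert P^{n}f\right\Vert _{2}^{2}/\Psi\left(f\right)\in\left[0,\mathfrak{a}\right]$.

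First I would assemble the basic ingredients. $\mu$-invariance of $P$ keeps $P^{n}f\in\ELL_{0}\left(\mu\right)$; $P$-non-expansiveness of $\Psi$ gives $\Psi\left(P^{n}f\right)\leqslant\Psi\left(f\right)$; and self-adjointness of $T=P^{*}P$ makes the Dirichlet form telescope, $\calE\left(T,P^{n}f\right)=\left\Vert P^{n}f\right\Vert _{2}^{2}-\left\Vert P^{n+1}f\right\Vert _{2}^{2}$. Applying the $\left(\Psi,K^{*}\right)$--WPI to $P^{n}f$ then produces a bound on this telescoping difference in terms of $\Psi\left(P^{n}f\right)$ and $K^{*}\left(\left\Vert P^{n}f\right\Vert _{2}^{2}/\Psi\left(P^{n}f\right)\right)$.

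The technical heart of the argument is to upgrade this into a bound featuring only $\Psi\left(f\right)$ and $u_{n}$. For this I would invoke convexity of $K^{*}$ together with $K^{*}\left(0\right)=0$, which makes $v\mapsto K^{*}\left(v\right)/v$ non-decreasing on $\left(0,\mathfrak{a}\right)$; since non-expansiveness forces $\left\Vert P^{n}f\right\Vert _{2}^{2}/\Psi\left(P^{n}f\right)\geqslant u_{n}$, this monotonicity yields $\Psi\left(P^{n}f\right)K^{*}\left(\left\Vert P^{n}f\right\Vert _{2}^{2}/\Psi\left(P^{n}f\right)\right)\geqslant\Psi\left(f\right)K^{*}\left(u_{n}\right)$, and hence the closed recursion
\[
u_{n+1}\leqslant u_{n}-K^{*}\left(u_{n}\right)\,,\qquad u_{n}\in\left[0,\mathfrak{a}\right]\,.
\]
The degenerate cases $\Psi\left(P^{n}f\right)=0$ or $\left\Vert P^{n}f\right\Vert _{2}^{2}=0$ collapse to $u_{n}=0$ and satisfy the recursion automatically.

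With the recursion in hand, the remaining step is routine: since $F^{\prime}=-1/K^{*}$ and $K^{*}$ is increasing, a short computation gives $F\left(u_{n+1}\right)\geqslant F\left(u_{n}-K^{*}\left(u_{n}\right)\right)=F\left(u_{n}\right)+\int_{u_{n}-K^{*}\left(u_{n}\right)}^{u_{n}}\mathrm{d}v/K^{*}\left(v\right)\geqslant F\left(u_{n}\right)+1$, so iterating and using $F\left(u_{0}\right)\geqslant F\left(\mathfrak{a}\right)=0$ produces $F\left(u_{n}\right)\geqslant n$. Inverting the strictly decreasing $F$ yields part~\ref{enu:thm-WPI-F-bd-rate}, namely $u_{n}\leqslant F^{-1}\left(n\right)$; for part~\ref{enu:thm-WPI-F-bd-complexity}, $n\geqslant F\left(\varepsilon\right)$ implies $F\left(u_{n}\right)\geqslant F\left(\varepsilon\right)$ and hence $u_{n}\leqslant\varepsilon$. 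The main obstacle is the ``homogenization'' step of the previous paragraph: $P$-non-expansiveness alone would push the argument of $K^{*}$ in the wrong direction, and it is only in combination with the $K^{*}\left(v\right)/v$ monotonicity that the recursion closes in $u_{n}$ alone; after that the integration against $F$ is a standard discrete-to-continuous comparison.
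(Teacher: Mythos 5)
Your proof is correct and follows essentially the same route as the paper: the recursion $u_{n+1}\leqslant u_{n}-K^{*}\left(u_{n}\right)$ integrated against $F$ is exactly the key inequality (\ref{eq:WPI-implies-n-leq-diffF}), $n\leqslant F\left(\left\Vert P^{n}f\right\Vert _{2}^{2}/\Psi\left(f\right)\right)-F\left(\left\Vert f\right\Vert _{2}^{2}/\Psi\left(f\right)\right)$, which the paper imports from the supplementary material of \cite{andrieu2022comparison_journal} rather than re-deriving. The only difference is that you supply that derivation in full -- in particular the correct combination of $P$-non-expansiveness of $\Psi$ with the monotonicity of $v\mapsto K^{*}\left(v\right)/v$ to close the recursion in $u_{n}$ alone -- so nothing essential is missing.
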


\begin{proof}
For \ref{enu:thm-WPI-F-bd-complexity}, from the proof of \cite[Proof of Theorem 8, Supplementary material]{andrieu2022comparison_journal}
we have for any $n\geqslant1$ and $\Psi\left(f\right)>0$,
\begin{equation}
n\leqslant F\left(\frac{\left\Vert P^{n}f\right\Vert _{2}^{2}}{\Psi\left(f\right)}\right)-F\left(\frac{\left\Vert f\right\Vert _{2}^{2}}{\Psi\left(f\right)}\right)\,.\label{eq:WPI-implies-n-leq-diffF}
\end{equation}
Let $\varepsilon>0$. Since $n\mapsto\left\Vert P^{n}f\right\Vert _{2}^{2}/\Psi\left(f\right)$
is decreasing and $F$ is strictly decreasing, it then follows that
$\left\Vert P^{n}f\right\Vert _{2}^{2}/\Psi\left(f\right)>\varepsilon$
implies $n\leqslant F\left(\varepsilon\right)-F\left(\left\Vert f\right\Vert _{2}^{2}/\Psi\left(f\right)\right)$
and as a result for $n>F\left(\varepsilon\right)-F\big(\left\Vert f\right\Vert _{2}^{2}/\Psi\left(f\right)\big)$
we must have that $\left\Vert P^{n}f\right\Vert _{2}^{2}/\Psi\left(f\right)\leqslant\varepsilon$.
From the first statement we conclude that $n\left(\varepsilon;\Psi\right)\leqslant F\left(\varepsilon\right)$
noting that there is no loss in this bound since
\[
\inf_{\Psi\left(f\right)\neq0}F\left(\frac{\left\Vert f\right\Vert _{2}^{2}}{\Psi\left(f\right)}\right)=F\left(\sup_{\Psi\left(f\right)\neq0}\frac{\left\Vert f\right\Vert _{2}^{2}}{\Psi\left(f\right)}\right)=0\,.
\]
 For \ref{enu:thm-WPI-F-bd-rate} simply invert (\ref{eq:WPI-implies-n-leq-diffF}). 
\end{proof}
\begin{example}[{\cite[Lemma~14]{andrieu2022comparison_journal}}]
\label{exa:frombetatogamma-poly}If for some $c_{0},c_{1}>0$ $\beta\left(s\right)=c_{0}s^{-c_{1}}$
then $K^{*}(v)=C\left(c_{0},c_{1}\right)v^{1+c_{1}^{-1}}$ for some
$C\left(c_{0},c_{1}\right)>0$ and
\[
F^{-1}\left(n\right)\leqslant c_{0}\cdot\left(1+c_{1}\right)^{1+c_{1}}\cdot n^{-c_{1}}.
\]
\end{example}

\begin{rem}
In the case where $P$ is reversible and $\Psi=\left\Vert \cdot\right\Vert _{\mathrm{osc}}^{2}$,
the conclusions of Theorem~\ref{thm:WPI_F_bd} allow one to make
statements about the convergence of the probability measures $\nu P^{n}$
to equilibrium for suitably regular initial $\nu$; we return to this
point in Section~\ref{subsec:bounded_to_p} and Section~\ref{sec:Applications-to-RWM}
for our main application.
\end{rem}

\subsection{Optimal $\alpha,\beta$}

It is clear from the definition of a Poincaré inequality that if a
kernel satisfies a $\left(\Psi,\alpha\right)$-weak Poincaré inequality,
then it also satisfies a $\left(\Psi,\alpha'\right)$-weak Poincaré
inequality, for any decreasing $\alpha'$ which satisfies $\alpha'\geqslant\alpha$
pointwise. It is thus natural to introduce a notion of a \textit{minimal}
or optimal weak Poincaré inequality. 

By studying these extremal cases, we will be able to assume without
a loss of generality that our $\left(\alpha,\beta,K^{*}\right)$ satisfy
some natural structural properties, thus facilitating our subsequent
technical developments. Moreover, given the task of comparing the
convergence behaviour of a pair of Markov chains $\left(P_{1},P_{2}\right)$,
matters become substantially more transparent by comparing optimal
inequalities. 
\begin{defn}
\label{def:alpha-beta-star}For a $\mu$-invariant Markov kernel $T$
and sieve $\Psi$ define,
\begin{enumerate}
\item for any $r>0$,
\[
\alpha^{\star}\left(r;\Psi\right):=\sup\left\{ \frac{\left\Vert g\right\Vert _{2}^{2}}{\mathcal{E}\left(T,g\right)}\cdot\left(1-\frac{r}{\left\Vert g\right\Vert _{2}^{2}}\right)\colon g\in\ELL_{0}\left(\mu\right),\Psi\left(g\right)=1\right\} \vee0,
\]
noting that if $r\geqslant\mathfrak{a}$, then $\alpha^{\star}\left(r;\Psi\right)=0$;
\item for any $s>0$, 
\[
\beta^{\star}\left(s;\Psi\right):=\sup\left\{ \left\Vert g\right\Vert _{2}^{2}-s\cdot\mathcal{E}\left(T,g\right)\colon g\in\ELL_{0}\left(\mu\right),\Psi\left(g\right)=1\right\} \vee0,
\]
noting that for $s>0$ $\beta^{\star}\left(s;\Psi\right)\leqslant\mathfrak{a}$.
\end{enumerate}
When $\Psi=\left\Vert \cdot\right\Vert _{\mathrm{osc}}^{2}$ we shall
plainly write $\alpha^{\star}\left(\cdot\right):=\alpha^{\star}\left(\cdot;\Psi\right)$
and $\beta^{\star}\left(\cdot\right):=\beta^{\star}\left(\cdot;\Psi\right)$.
\end{defn}

An important point is that $\alpha^{\star}$ and $\beta^{\star}$
do not necessarily define valid WPIs. Indeed as shown in Proposition~\ref{prop:preparatory-counterexample}
the Markov chain of Example~\ref{exa:irred-pstarkpk} is such that
$P^{*}P$ does not satisfy a WPI, although it can be shown to be $\|\cdot\|_{{\rm osc}}$-convergent;
in fact inspection of the proof of Proposition~\ref{prop:preparatory-counterexample}
allows one to deduce that in this specific example $\inf_{s>0}\beta^{\star}\left(s\right)>0$.
\begin{thm}
\label{thm:alpha-beta-star-WPI-continuous-convex} Suppose that the
$\mu$-invariant kernel $T$ possesses some $\left(\Psi,\alpha\right)$--
or $\left(\Psi,\beta\right)$--WPI for some functions $\alpha$ or
$\beta$. Then
\begin{enumerate}
\item $\alpha^{\star}\left(\cdot;\Psi\right)\leqslant\alpha\left(\cdot\right)$
defines a $\left(\Psi,\alpha^{\star}\right)\shortminus$WPI and $\beta^{\star}\left(\cdot;\Psi\right)\leqslant\beta\left(\cdot\right)$
defines a $\left(\Psi,\beta^{\star}\right)$--WPI; 
\item the functions $\alpha^{\star}\left(\cdot;\Psi\right):\left(0,\mathfrak{a}\right]\to\left[0,\infty\right)$
and $\beta^{\star}\left(\cdot;\Psi\right):\left[0,\infty\right)\to\left[0,\mathfrak{a}\right]$
are convex and continuous;
\item $\beta^{\star}$ is strictly decreasing to $0$ and $\alpha^{\star}=\left(\beta^{\star}\right)^{-1}$
is the inverse function, which is well-defined on $\left(0,\mathfrak{a}\right]$
and strictly decreasing.
\end{enumerate}
\end{thm}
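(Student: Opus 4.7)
The plan is to dispatch the three items of the theorem in order, each by directly exploiting the variational structure that defines $\alpha^\star$ and $\beta^\star$. For item 1 I would use the $\Psi$-homogeneity of the sieve to work on the slice $\Psi(g)=1$. Plugging any such $g$ into a hypothesised $(\Psi,\alpha)$--WPI gives $\|g\|_2^2-r\leq\alpha(r)\,\mathcal{E}(T,g)$, and taking the supremum over $g$ yields $\alpha^\star(r;\Psi)\leq\alpha(r)$; the argument for $\beta^\star\leq\beta$ is identical. Conversely, for any $f$ with $\Psi(f)>0$, rescaling $g:=f/\sqrt{\Psi(f)}$, applying the definition of $\alpha^\star$ (resp.\ $\beta^\star$) to $g$, and rescaling back establishes that $\alpha^\star$ and $\beta^\star$ are themselves valid WPI rate functions; the case $\Psi(f)=0$ forces $f=0$ by the sieve property, and when $r\geq\mathfrak{a}$ (the only range where the internal bracket can be non-positive, since $\|g\|_2^2\leq\mathfrak{a}$ on the unit $\Psi$-sphere) the bound $\|f\|_2^2\leq\mathfrak{a}\,\Psi(f)\leq r\,\Psi(f)$ absorbs the $\vee 0$ truncation.

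For item 2 I would observe that both $\alpha^\star$ and $\beta^\star$ are pointwise suprema of affine functions, namely $r\mapsto(\|g\|_2^2-r)/\mathcal{E}(T,g)$ and $s\mapsto\|g\|_2^2-s\,\mathcal{E}(T,g)$, both with non-positive slope. This immediately yields convexity and monotonicity, and hence continuity on the open interior of the domain. Continuity at the right endpoint $r=\mathfrak{a}$ of $\alpha^\star$ is obtained by the standard two-sided squeeze: convexity yields $\limsup_{r\uparrow\mathfrak{a}}\alpha^\star(r)\leq\alpha^\star(\mathfrak{a})=0$ and monotonicity yields the reverse inequality. Continuity of $\beta^\star$ at $s=0$ is the one non-routine endpoint, because convexity alone gives only $\beta^\star(0^+)\leq\beta^\star(0)=\mathfrak{a}$; to close the gap I would pick a sequence $(g_n)$ with $\Psi(g_n)=1$ and $\|g_n\|_2^2\uparrow\mathfrak{a}$, invoke the universal Markov-chain bound $\mathcal{E}(T,g)\leq 2\|g\|_2^2\leq 2\mathfrak{a}$ (from Cauchy--Schwarz and the $\mathrm{L}^2$-contractivity of $T$), and conclude $\beta^\star(s)\geq\|g_n\|_2^2-2\mathfrak{a}\,s$, whence $\liminf_{s\downarrow 0}\beta^\star(s)\geq\mathfrak{a}$.

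For item 3, strict decrease of $\beta^\star$ on $\{\beta^\star>0\}$ follows from convexity combined with $\beta^\star(s)\to 0$: a constant plateau at value $v>0$ on some $[s_1,s_2]$ would force the right-derivative to vanish there and then later become strictly negative (so that $\beta^\star$ can decay to $0$), contradicting the monotonicity of the right-derivative of a convex function. For the inverse identity, a direct computation shows that $\beta^\star(s)\leq v$ is equivalent to $s\,\mathcal{E}(T,g)\geq\|g\|_2^2-v$ for every $g$ with $\Psi(g)=1$, which in turn is equivalent to $s\geq\alpha^\star(v)$; hence $\alpha^\star(v)=\inf\{s>0:\beta^\star(s)\leq v\}=(\beta^\star)^{\shortminus}(v)$, and the established continuity and strict monotonicity of $\beta^\star$ on its support upgrade this generalised inverse to a classical one on $(0,\mathfrak{a}]$. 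The single delicate step is the continuity of $\beta^\star$ at the left endpoint, where convex analysis alone is insufficient and one has to fall back on the supremum structure together with $\mathrm{L}^2$-boundedness of the Dirichlet form; everything else is essentially bookkeeping with suprema of affine functionals.
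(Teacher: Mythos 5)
Your proposal is correct and follows essentially the same route as the paper: both arguments exploit the fact that $\alpha^{\star}$ and $\beta^{\star}$ are (truncated) suprema of affine functionals, giving the comparison with $\alpha,\beta$ and the convexity/continuity claims, and then invert $\beta^{\star}$ on the interval where it is positive. Your two small deviations --- establishing continuity of $\beta^{\star}$ at $s=0$ via the supremum structure together with $\mathcal{E}\left(T,g\right)\leqslant2\cdot\left\Vert g\right\Vert _{2}^{2}$, and obtaining $\alpha^{\star}=\left(\beta^{\star}\right)^{-1}$ by identifying $\alpha^{\star}$ with the generalised inverse $\inf\left\{ s>0:\beta^{\star}\left(s\right)\leqslant v\right\} $ instead of the paper's two-inequality contradiction argument --- are sound and, if anything, make the endpoint behaviour more explicit than the paper's appeal to convexity alone.
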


\begin{proof}
We consider the $\beta$ formulation, and drop explicit reference
to the fixed $\Psi$ under consideration; the $\alpha$ formulation
is analogous. By assumption, we know that $T$ possesses a $\left(\Psi,\beta\right)$\textit{\emph{--}}WPI,
for some function $\beta$ as in Definition~\ref{def:WPI} (\textit{c.f.}
\cite[Proposition~5]{andrieu2022poincare_tech}). By definition of
$\beta^{\star}$, we have that $0\leqslant\beta^{\star}\leqslant\beta$
pointwise and so $\beta^{\star}\left(s\right)\to0$ as $s\to\infty$.
Since the pointwise supremum of affine functions (of $s$) is convex,
we obtain convexity and continuity of $\beta^{\star}$, from the fact
that it is the composition of a nondecreasing convex continuous function,
$s\mapsto\max\left\{ 0,s\right\} $, with a convex function. We observe
that $\beta^{\star}\left(0\right)=\mathfrak{a}$. Now, let $s_{0}:=\inf\left\{ s>0:\beta^{\star}\left(s\right)=0\right\} $,
which may be infinite. Since $\beta^{\star}$ is convex and continuous,
it is strictly decreasing on $\left(0,s_{0}\right)$. It follows that
$\beta^{\star}$ is invertible on $\left(0,s_{0}\right)$ with inverse
$\left(\beta^{\star}\right)^{-1}:\left(0,\mathfrak{a}\right]\to\left[0,\infty\right)$
that is also convex and strictly decreasing.

Now we show that $\alpha^{\star}=\left(\beta^{\star}\right)^{-1}$.
For $r\in\left(0,\mathfrak{a}\right]$, let $s:=\left(\beta^{\star}\right)^{-1}\left(r\right)$.
For any $f\in\ELL_{0}\left(\mu\right)$ with $\Psi\left(f\right)=1$
we have
\[
\left\Vert f\right\Vert _{2}^{2}-s\cdot\mathcal{E}\left(T,f\right)\leqslant\beta^{\star}\left(s\right)=r,
\]
and this implies 
\[
\alpha^{\star}\left(r\right)=\sup_{f:\Psi\left(f\right)=1}\left\{ \frac{\left\Vert f\right\Vert _{2}^{2}}{\mathcal{E}\left(T,f\right)}-\frac{r}{\mathcal{E}\left(T,f\right)}\right\} \leqslant s.
\]
Assume for the sake of contradiction that $\alpha^{\star}\left(r\right)=t<s$.
For any $f\in\ELL_{0}\left(\mu\right)$ with $\Psi\left(f\right)=1$
we have
\[
\frac{\left\Vert f\right\Vert _{2}^{2}}{\mathcal{E}\left(T,f\right)}-\frac{r}{\mathcal{E}\left(T,f\right)}\leqslant t,
\]
and so
\[
\beta^{\star}\left(t\right)=\sup_{f:\Psi\left(f\right)=1}\left\{ \left\Vert f\right\Vert _{2}^{2}-t\cdot\mathcal{E}\left(T,f\right)\right\} \leqslant r=\beta^{\star}\left(s\right),
\]
which is a contradiction since $\beta^{\star}$ is decreasing, and
we conclude.
\end{proof}

\subsection{Ordering of $\alpha$'s, $\beta$'s and $\gamma$'s and Peskun--Tierney
ordering \label{subsec:optimal-choice-Ordering-of-rates}}
\begin{thm}
\label{thm:order-alpha-beta-gamma}Let $P_{1}$ and $P_{2}$ be $\mu$-invariant
Markov kernels such that for a sieve $\Psi$, $P_{1}^{*}P_{1}$ satisfies
a $\left(\Psi,\alpha_{1},\beta_{1}\right)$--WPI and $P_{2}^{*}P_{2}$
a $\left(\Psi,\alpha_{2},\beta_{2}\right)$--WPI respectively. Let
$\gamma_{i}=F_{i}^{-1}$ be the respective convergence bound functions
as defined in Theorem~\ref{thm:WPI_F_bd}. Then we have
\begin{enumerate}
\item $\alpha_{2}\left(\cdot;\Psi\right)\geqslant\alpha_{1}\left(\cdot;\Psi\right)$
if and only if $\beta_{2}\left(\cdot;\Psi\right)\geqslant\beta_{1}\left(\cdot;\Psi\right)$;
\item $\beta_{2}\left(\cdot;\Psi\right)\geqslant\beta_{1}\left(\cdot;\Psi\right)$
implies $\gamma_{2}\left(\cdot;\Psi\right)\geqslant\gamma_{1}\left(\cdot;\Psi\right)$.
\end{enumerate}
\end{thm}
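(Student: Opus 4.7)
The plan is to reduce the theorem to a short chain of monotonicity statements about decreasing functions, their generalised inverses, convex conjugates, and the integral transform defining $F$.

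For the first part I would invoke the fact that under the $(\Psi,\alpha_i,\beta_i)$--WPI notation we have $\beta_i = \alpha_i^{\shortminus}$ and $\alpha_i = \beta_i^{\shortminus}$ by virtue of Proposition~\ref{prop:a-b-WPI-corres}\ref{enu:alpha-alpha-minus-identity}, so that everything reduces to showing that taking generalised inverses of decreasing functions is order-preserving: if decreasing $f \leq g$ on $(0,\infty)$ then $f^{\shortminus} \leq g^{\shortminus}$ pointwise. This follows directly from the definition $f^{\shortminus}(x) = \inf\{y > 0 : f(y) \leq x\}$, since $g(y) \leq x$ forces $f(y) \leq g(y) \leq x$, whence $\{y > 0 : g(y) \leq x\} \subseteq \{y > 0 : f(y) \leq x\}$ and the infimum over the larger set is no larger. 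Applying this observation to the pairs $(\alpha_1,\alpha_2)$ and $(\beta_1,\beta_2)$ in turn yields both directions of the claimed equivalence.

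For the second part I would chain together four one-line monotonicity steps starting from $\beta_2 \geq \beta_1$. First, the functions $K_i(u) := u\,\beta_i(1/u)$ appearing in Proposition~\ref{prop:a-b-WPI-corres}\ref{enu:beta-implied-Kstar} satisfy $K_2 \geq K_1$ pointwise. Second, convex conjugation reverses the order through the sup definition $K_i^*(v) = \sup_{u \geq 0}\{uv - K_i(u)\}$, yielding $K_2^* \leq K_1^*$. Third, since each $K_i^*$ is strictly positive on $(0,\mathfrak{a})$, the integrands in the definition of $F_i$ obey $1/K_2^* \geq 1/K_1^*$, whence $F_2(x) \geq F_1(x)$ for all $x \in (0,\mathfrak{a}]$. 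Fourth, both $F_i$ are strictly decreasing, so setting $x := F_2^{-1}(n) = \gamma_2(n)$ we obtain $F_1(x) \leq F_2(x) = n$ and hence $x \geq F_1^{-1}(n) = \gamma_1(n)$, as required.

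I do not anticipate any substantive technical obstacle: every arrow in the chain $\beta \mapsto K \mapsto K^* \mapsto 1/K^* \mapsto F \mapsto F^{-1}$ is either order-preserving or order-reversing in a controllable way, with an even number of reversals in the composite $\beta_i \mapsto \gamma_i$. The only minor bookkeeping concern is confirming that the generalised inverses produced at the outset genuinely recover the right-continuous $\alpha_i, \beta_i$ postulated by the $(\Psi,\alpha,\beta)$--WPI convention, but this is precisely what Proposition~\ref{prop:a-b-WPI-corres}\ref{enu:alpha-alpha-minus-identity} provides.
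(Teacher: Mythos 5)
Your proposal is correct and follows essentially the same route as the paper's proof: part (a) via the order-preservation of the generalised inverse $f\mapsto f^{\shortminus}$ applied to $\alpha_i$ and $\beta_i$ (using $\beta_i=\alpha_i^{\shortminus}$, $\alpha_i=\beta_i^{\shortminus}$ from the right-continuity convention), and part (b) via the monotone chain $\beta_i\mapsto K_i\mapsto K_i^{*}\mapsto F_i\mapsto F_i^{-1}=\gamma_i$. The only difference is that you spell out the final inversion step $F_1\leqslant F_2\Rightarrow F_1^{-1}\leqslant F_2^{-1}$ slightly more explicitly than the paper does; no gap.
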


\begin{proof}
First statement: we drop $\Psi$ for notational simplicity. For the
direction $\left(\implies\right):$ for any $s>0$ we have $\left\{ r>0:\alpha_{2}\left(r\right)\leqslant s\right\} \subset\left\{ r>0:\alpha_{1}\left(r\right)\leqslant s\right\} $
and hence $\beta_{2}=\alpha_{2}^{\shortminus}\geqslant\alpha_{1}^{\shortminus}=\beta_{1}$;
$\left(\Longleftarrow\right)$ follows along the same lines. For the
second statement: from their definitions, $K_{1}\leqslant K_{2}$
and hence $K_{1}^{*}\geqslant K_{2}^{*}$. As a result, $F_{1}\leqslant F_{2}$
and consequently $\gamma_{1}:=F_{1}^{-1}\leqslant F_{2}^{-1}=:\gamma_{2}$.
\end{proof}
We know from \cite{tierney1998note} that for $P_{1},P_{2}$ $\mu$-reversible,
then $\mathcal{E}\left(P_{1},g\right)\geqslant\mathcal{E}\left(P_{2},g\right)$
for any $g\in\ELL\left(\mu\right)$ implies ${\rm {\rm var}}\left(P_{1},f\right)\leqslant{\rm {\rm var}}\left(P_{2},f\right)$
for $f\in\ELL\left(\mu\right)$ and ${\rm Gap}_{\mathrm{R}}\left(P_{1}\right)\geqslant{\rm Gap}_{\mathrm{R}}\left(P_{2}\right)$.
The latter is useful when ${\rm Gap}\left(P_{1}\right)={\rm Gap}_{\mathrm{R}}\left(P_{1}\right)$
and ${\rm Gap}\left(P_{2}\right)={\rm Gap}_{\mathrm{R}}\left(P_{2}\right)$
since this implies $P_{1}$ has a faster rate of convergence. The
following generalizes the latter statement to the subgeometric setup.
\begin{thm}
Let $P_{1},P_{2}$ be $\mu$-invariant Markov kernels such that for
a sieve $\Psi$, 
\begin{enumerate}
\item $P_{1}^{*}P_{1}$ (resp. $P_{2}^{*}P_{2}$) satisfies a $\left(\Psi,\alpha_{1},\beta_{1}\right)$--WPI
(resp. a $\left(\Psi,\alpha_{2},\beta_{2}\right)$--WPI), 
\item $\mathcal{E}\left(P_{1}^{*}P_{1},g\right)\geqslant\mathcal{E}\left(P_{2}^{*}P_{2},g\right)$
for any $g\in\mathrm{L}^{2}\left(\mu\right)$ such that $\Psi\left(g\right)<\infty$.
\end{enumerate}
Then with $\alpha_{i}^{\star}\left(\cdot;\Psi\right)$ and $\beta_{i}^{\star}\left(\cdot;\Psi\right)$
for $i=1,2$ defined as in Definition~\ref{def:alpha-beta-star},
a $\left(\Psi,\alpha_{i}^{\star},\beta_{i}^{\star}\right)$--WPI
holds for $i=1,2$, there are the orderings $\alpha_{1}^{\star}\left(\cdot;\Psi\right)\leqslant\alpha_{2}^{\star}\left(\cdot;\Psi\right)$,
$\beta_{1}^{\star}\left(\cdot;\Psi\right)\leqslant\beta_{2}^{\star}\left(\cdot;\Psi\right)$,
and the corresponding respective convergence bounds in Theorem~\ref{thm:WPI_F_bd}
satisfy $\gamma_{1}^{\star}\leqslant\gamma_{2}^{\star}$.
\end{thm}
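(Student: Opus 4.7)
The plan is to verify the three assertions in sequence, exploiting the variational definitions of $\alpha_i^{\star},\beta_i^{\star}$ in Definition~\ref{def:alpha-beta-star}, hypothesis (a) to invoke Theorem~\ref{thm:alpha-beta-star-WPI-continuous-convex}, the Dirichlet-form comparison (b) for the pointwise orderings, and finally Theorem~\ref{thm:order-alpha-beta-gamma} for the $\gamma$-ordering.

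First I would apply Theorem~\ref{thm:alpha-beta-star-WPI-continuous-convex} separately to $T_i:=P_i^{*}P_i$ with sieve $\Psi$, for $i=1,2$. Hypothesis (a) supplies some $\left(\Psi,\alpha_i,\beta_i\right)$--WPI for each $T_i$, so that theorem's hypothesis is satisfied and it guarantees that the optimal functions $\alpha_i^{\star}\left(\cdot;\Psi\right)$ and $\beta_i^{\star}\left(\cdot;\Psi\right)$ themselves define valid $\left(\Psi,\alpha_i^{\star},\beta_i^{\star}\right)$--WPIs (with the convexity, continuity and mutual-inverse properties stated there). This yields the first assertion.

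Next I would establish the pointwise orderings $\beta_1^{\star}\leqslant\beta_2^{\star}$ and $\alpha_1^{\star}\leqslant\alpha_2^{\star}$ directly from Definition~\ref{def:alpha-beta-star}. Fix any $g\in\ELL_{0}\left(\mu\right)$ with $\Psi\left(g\right)=1$; since $\Psi\left(g\right)<\infty$, hypothesis (b) applies and gives $\mathcal{E}\left(P_1^{*}P_1,g\right)\geqslant\mathcal{E}\left(P_2^{*}P_2,g\right)\geqslant 0$. For any $s>0$,
\[
\left\Vert g\right\Vert _{2}^{2}-s\cdot\mathcal{E}\left(P_1^{*}P_1,g\right)\leqslant\left\Vert g\right\Vert _{2}^{2}-s\cdot\mathcal{E}\left(P_2^{*}P_2,g\right),
\]
and taking the supremum over such $g$ on both sides and then applying $\left(\,\cdot\,\right)\vee 0$ preserves the ordering, yielding $\beta_1^{\star}\left(s;\Psi\right)\leqslant\beta_2^{\star}\left(s;\Psi\right)$. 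For $\alpha^{\star}$, fix $r>0$. Whenever $\left\Vert g\right\Vert _{2}^{2}>r$, the factor $\left(1-r/\left\Vert g\right\Vert _{2}^{2}\right)$ is strictly positive, so dividing by the Dirichlet forms gives
\[
\frac{\left\Vert g\right\Vert _{2}^{2}}{\mathcal{E}\left(P_1^{*}P_1,g\right)}\left(1-\frac{r}{\left\Vert g\right\Vert _{2}^{2}}\right)\leqslant\frac{\left\Vert g\right\Vert _{2}^{2}}{\mathcal{E}\left(P_2^{*}P_2,g\right)}\left(1-\frac{r}{\left\Vert g\right\Vert _{2}^{2}}\right);
\]
for $g$ with $\left\Vert g\right\Vert _{2}^{2}\leqslant r$, both sides are $\leqslant 0$ and do not affect the eventual value, being absorbed by the outer $\vee 0$. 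Taking supremum over admissible $g$ and then $\vee 0$ delivers $\alpha_1^{\star}\left(r;\Psi\right)\leqslant\alpha_2^{\star}\left(r;\Psi\right)$.

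Finally, the ordering $\gamma_1^{\star}\leqslant\gamma_2^{\star}$ of convergence bounds follows at once from Theorem~\ref{thm:order-alpha-beta-gamma}(b) applied to the pair $\left(\beta_1^{\star},\beta_2^{\star}\right)$, since that theorem converts a pointwise ordering of $\beta$'s into a pointwise ordering of the induced convergence bounds $\gamma_i = F_i^{-1}$ from Theorem~\ref{thm:WPI_F_bd}. There is no serious obstacle; the only subtle point is the sign bookkeeping in the $\alpha^{\star}$ case, cleanly controlled by the outer $\vee 0$ in Definition~\ref{def:alpha-beta-star}, so that the Dirichlet-form inequality in hypothesis (b) transfers uniformly through every step as an ordering of the relevant suprema.
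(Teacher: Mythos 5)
Your proposal is correct and follows essentially the same route as the paper: use the Dirichlet-form ordering inside the variational definitions of $\beta^{\star}$ and $\alpha^{\star}$ to get the pointwise orderings, and then invoke Theorem~\ref{thm:order-alpha-beta-gamma} for $\gamma_{1}^{\star}\leqslant\gamma_{2}^{\star}$ (the paper leaves the $\alpha^{\star}$ case as ``a similar argument,'' which you spell out, including the harmless sign issue handled by the outer $\vee\,0$).
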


\begin{proof}
From the ordering of Dirichlet forms, we have for any $g\in\ELL\left(\mu\right)$
that
\[
\left\Vert g\right\Vert _{2}^{2}-s\cdot\mathcal{E}\left(P_{1}^{*}P_{1},g\right)\leqslant\left\Vert g\right\Vert _{2}^{2}-s\cdot\mathcal{E}\left(P_{2}^{*}P_{2},g\right),
\]
from Definition~\ref{def:alpha-beta-star} we deduce $\beta_{1}^{\star}\left(\cdot;\Psi\right)\leqslant\beta_{2}^{\star}\left(\cdot;\Psi\right)$;
a similar argument gives that $\alpha_{1}^{\star}\left(\cdot;\Psi\right)\leqslant\alpha_{2}^{\star}\left(\cdot;\Psi\right)$.
From Theorem~\ref{thm:order-alpha-beta-gamma}, we then conclude
that $\gamma_{1}^{\star}\leqslant\gamma_{2}^{\star}$.
\end{proof}

\subsection{Lower bounds on convergence rates \label{subsec:optimal-choice-Lower-bounds-on}}

This section is concerned with tools to establish lower bounds on
$\beta$ involved in a $\left(\Psi,\beta\right)$\textit{\emph{--}}WPI
from which one can deduce lower bounds on the convergence rate $\gamma$
in Theorem~\ref{thm:WPI_F_bd}. This is illustrated with examples.

A practical difficulty with the result of Theorem~\ref{thm:WPI_F_bd}
and the results presented so far is that they require manipulating
$\mathcal{E}\left(P^{*}P,f\right)$ which is most often not tractable,
in contrast with $\mathcal{E}\left(P,f\right)$. This is addressed
extensively in \cite[Section 2.2.1 and Theorem 42]{andrieu2022comparison_journal}.
We first show that a lower bound on $\beta_{1}^{\star}$ in a $\left(\Psi,\beta_{1}^{\star}\right)$\textit{\emph{--}}WPI
for $P$ can imply a lower bound on $\beta_{2}^{\star}$ in a $\left(\Psi,\beta_{2}^{\star}\right)$\textit{\emph{--}}WPI
for $P^{*}P$. We first recall the following result. 
\begin{lem}[{\cite[Remark~3.1]{diaconis1996nash}}]
\label{lem:PP-dirichlet-form-ub}Let $P$ be $\mu$-invariant. Then
\[
\mathcal{E}\left(P^{*}P,f\right)\leqslant2\cdot\mathcal{E}\left(P,f\right),\qquad f\in\ELL_{0}\left(\mu\right).
\]
\end{lem}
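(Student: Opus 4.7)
My plan is to expand both Dirichlet forms in terms of inner products and reduce the inequality to the obvious non-negativity of a squared norm. Since the Dirichlet form depends only on the reversibilization, for any $\mu$-invariant kernel $T$ I have $\mathcal{E}(T,f) = \langle f, (\mathrm{Id} - \tfrac{1}{2}(T+T^*))f\rangle = \|f\|_2^2 - \langle f, Tf\rangle$ (using $\langle f, T^*f\rangle = \langle Tf, f\rangle = \langle f, Tf\rangle$ on the real Hilbert space $\mathrm{L}^2(\mu)$). Applying this to $T = P^*P$, which is self-adjoint, gives
\[
\mathcal{E}(P^*P, f) = \|f\|_2^2 - \langle f, P^*Pf\rangle = \|f\|_2^2 - \|Pf\|_2^2,
\]
while for $T=P$ it gives $\mathcal{E}(P,f) = \|f\|_2^2 - \langle f, Pf\rangle$.

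With these two identities in hand the claimed inequality $\mathcal{E}(P^*P,f) \leq 2\,\mathcal{E}(P,f)$ rearranges to
\[
0 \leq \|f\|_2^2 - 2\langle f, Pf\rangle + \|Pf\|_2^2 = \|f - Pf\|_2^2,
\]
which is trivially true. So the whole argument is a two-line computation: expand, complete the square, invoke non-negativity.

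There is essentially no obstacle here; the only thing to be careful about is the definition of $\mathcal{E}$ for a non-reversible kernel, namely that the Dirichlet form of $P$ equals the Dirichlet form of its additive reversibilization $\tfrac{1}{2}(P+P^*)$ (as noted in Remark~\ref{rem:nonrev_WPI}), which ensures the cross term $\langle f, Pf\rangle$ appears without any imaginary/antisymmetric contribution. Once that is fixed, the proof writes itself.
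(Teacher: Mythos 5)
Your proof is correct and is exactly the standard computation behind the cited result: writing $\mathcal{E}(P^{*}P,f)=\left\Vert f\right\Vert _{2}^{2}-\left\Vert Pf\right\Vert _{2}^{2}$ and $\mathcal{E}(P,f)=\left\Vert f\right\Vert _{2}^{2}-\left\langle f,Pf\right\rangle $, the difference $2\,\mathcal{E}(P,f)-\mathcal{E}(P^{*}P,f)=\left\Vert \left(\mathrm{Id}-P\right)f\right\Vert _{2}^{2}\geqslant0$. The paper gives no proof beyond the citation to Diaconis and Saloff-Coste, and your argument coincides with that reference's.
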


\begin{rem}
If $P$ is $\mu$-reversible, then one can obtain $\mathcal{E}\left(P^{2},f\right)\leqslant\left(1+\lambda_{\star}\right)\cdot\mathcal{E}\left(P,f\right)$
by using the spectral theorem, where $\lambda_{\star}=\sup\sigma_{0}\left(P\right)$.
However, since the focus here is on WPIs, the case $\lambda_{\star}<1$
is less relevant. We note also that a converse of sorts may be obtained
when $P$, and therefore $P^{*}$, satisfies $P\big(x,\left\{ x\right\} \big)\geqslant\varepsilon$
on a $\mu$-full set; see Lemma~\ref{lem:P-to-PP-WPI}.
\end{rem}

\begin{lem}
\label{lem:PP-beta-star-from-P-beta-star} Let $P$ be $\mu$-invariant,
and assume it satisfies a $\left(\Psi,\beta_{1}^{\star}\right)$\emph{--}WPI,
where $\beta_{1}^{\star}$ is pointwise minimal. Assume $P^{*}P$
satisfies a $\left(\Psi,\beta_{2}^{\star}\right)$\emph{--}WPI where
$\beta_{2}^{\star}$ is pointwise minimal. Then $\beta_{2}^{\star}\left(s\right)\geqslant\beta_{1}^{\star}\left(2\cdot s\right)$.
\end{lem}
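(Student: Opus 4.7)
The plan is to invoke Lemma~\ref{lem:PP-dirichlet-form-ub} directly inside the extremal characterisation of $\beta^{\star}$ furnished by Definition~\ref{def:alpha-beta-star}. Since both $\beta_1^{\star}$ and $\beta_2^{\star}$ are assumed to be pointwise minimal, Theorem~\ref{thm:alpha-beta-star-WPI-continuous-convex} (together with the hypothesis that at least some WPI holds for $P$ and for $P^{*}P$) tells us that they coincide with the supremum expressions
\[
\beta_1^{\star}(s) = \sup\left\{\|g\|_2^2 - s\cdot\mathcal{E}(P,g) : g\in\mathrm{L}_0^2(\mu),\,\Psi(g)=1\right\}\vee 0,
\]
\[
\beta_2^{\star}(s) = \sup\left\{\|g\|_2^2 - s\cdot\mathcal{E}(P^{*}P,g) : g\in\mathrm{L}_0^2(\mu),\,\Psi(g)=1\right\}\vee 0.
\]

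The key step is then a one-line application of Lemma~\ref{lem:PP-dirichlet-form-ub}: for every $g\in\mathrm{L}_0^2(\mu)$ with $\Psi(g)=1$ and every $s>0$, the bound $\mathcal{E}(P^{*}P,g)\leqslant 2\cdot\mathcal{E}(P,g)$ yields
\[
\|g\|_2^2 - s\cdot\mathcal{E}(P^{*}P,g) \;\geqslant\; \|g\|_2^2 - 2s\cdot\mathcal{E}(P,g).
\]

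Taking the supremum over such $g$ on both sides, and then taking the maximum with $0$ (which preserves the inequality), gives $\beta_2^{\star}(s)\geqslant\beta_1^{\star}(2s)$, as required.

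There is essentially no obstacle: the content of the lemma is entirely encoded in the Dirichlet form comparison of Lemma~\ref{lem:PP-dirichlet-form-ub}, and the only thing to watch is the direction of the inequality and the harmless interaction with the $\vee 0$ truncation in the definition of $\beta^{\star}$. In particular, no regularity or reversibility of $P$ beyond $\mu$-invariance is needed.
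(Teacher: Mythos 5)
Your proposal is correct and follows essentially the same route as the paper: identify the pointwise minimal $\beta^{\star}$'s with the supremum expressions from Definition~\ref{def:alpha-beta-star}, apply Lemma~\ref{lem:PP-dirichlet-form-ub} to compare $\mathcal{E}\left(P^{*}P,g\right)$ with $2\cdot\mathcal{E}\left(P,g\right)$ inside the supremum, and conclude. Nothing further is needed.
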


\begin{proof}
Let $\mathcal{F}=\left\{ f\in\ELL_{0}\left(\mu\right):\Psi\left(f\right)=1\right\} $.
By Lemma~\ref{lem:PP-dirichlet-form-ub} we have that $\mathcal{E}\left(P^{*}P,f\right)\leqslant2\cdot\mathcal{E}\left(P,f\right)$.
We may write 
\[
\beta_{1}^{\star}\left(s\right)=0\vee\sup_{f\in\mathcal{F}}\left\{ \left\Vert f\right\Vert _{2}^{2}-s\cdot\mathcal{E}\left(P,f\right)\right\} .
\]
We then have that
\begin{align*}
\beta_{2}^{\star}\left(s\right) & =0\vee\sup_{f\in\mathcal{F}}\left\{ \left\Vert f\right\Vert _{2}^{2}-s\cdot\mathcal{E}\left(P^{*}P,f\right)\right\} \\
 & \geqslant0\vee\sup_{f\in\mathcal{F}}\left\{ \left\Vert f\right\Vert _{2}^{2}-2\cdot s\cdot\mathcal{E}\left(P,f\right)\right\} \\
 & =\beta_{1}^{\star}\left(2\cdot s\right),
\end{align*}
and we conclude.
\end{proof}
In the case where $P$ is $\mu$-reversible, we can then deduce from
a $\left(\Psi,\beta_{1}\right)$--WPI for $P$ a lower bound on a
separable rate of convergence for $\left\Vert P^{n}f\right\Vert _{2}$. 
\begin{prop}
\label{prop:rev-L2-conv-rate-lower-bound} Assume $P$ is $\mu$-reversible,
satisfies a $\left(\Psi,\beta\right)$--WPI and the pointwise minimal
$\beta^{\star}$ satisfies $\beta^{\star}\left(s\right)\in\Omega\left(s^{-p}\right)$
for some $p>0$. Then it cannot hold that with $q>p$, $\left\Vert P^{n}f\right\Vert _{2}^{2}\in\mathcal{O}\left(n^{-q}\right)$
for all $f\in\ELL_{0}\left(\mu\right)$ with $\Psi\left(f\right)<\infty$.
\end{prop}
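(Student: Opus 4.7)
The plan is to argue by contradiction using the spectral theorem for the self-adjoint operator $P$. Suppose that for some $q > p$ and $C > 0$ one had $\|P^n f\|_2^2 \leqslant C n^{-q} \Psi(f)$ for every $n \in \mathbb{N}$ and every $f \in \ELL_0(\mu)$ with $\Psi(f) < \infty$ (the natural uniform reading of the $\mathcal{O}$-statement; a pointwise version can be promoted to this by a Banach--Steinhaus argument using $\|\cdot\|_2^2 \leqslant \mathfrak{a}\Psi(\cdot)$). Since $P$ is $\mu$-reversible, $P$ is self-adjoint on $\ELL(\mu)$ with spectrum contained in $[-1,1]$, and the spectral theorem yields, for each $f \in \ELL_0(\mu)$, a finite positive spectral measure $\nu_f$ on $[-1,1]$ satisfying $\|f\|_2^2 = \int \mathrm{d}\nu_f$, $\calE(P,f) = \int (1-\lambda)\,\mathrm{d}\nu_f(\lambda)$, and $\|P^n f\|_2^2 = \int \lambda^{2n}\,\mathrm{d}\nu_f(\lambda)$.

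The crux of the argument is to convert the polynomial lower bound $\beta^{\star}(s) \geqslant c s^{-p}$ (valid for all $s \geqslant s_0$, by hypothesis) into spectral mass of a near-extremiser near $\lambda = 1$. By the variational definition in Definition~\ref{def:alpha-beta-star}, for each such $s$ one can pick $f_s \in \ELL_0(\mu)$ with $\Psi(f_s) = 1$ and $\int [1 - s(1-\lambda)]\,\mathrm{d}\nu_{f_s}(\lambda) \geqslant c s^{-p}/2$. The integrand is bounded above by $1$ everywhere and is non-positive on $\{\lambda \leqslant 1 - 1/s\}$; discarding that region and bounding the integrand by $1$ on its complement then yields the concentration bound $\nu_{f_s}\bigl((1-1/s,\,1]\bigr) \geqslant c s^{-p}/2$.

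The conclusion then follows by restricting the spectral integral for $\|P^n f_s\|_2^2$ to this set, on which $\lambda^{2n} \geqslant (1-1/s)^{2n}$: this gives $\|P^n f_s\|_2^2 \geqslant (1-1/s)^{2n} \cdot c s^{-p}/2$. Setting $s = n$ and using $(1-1/n)^{2n} \to e^{-2}$ produces $\|P^n f_n\|_2^2 \geqslant c' n^{-p}$ for some $c' > 0$ and all sufficiently large $n$, while the standing upper hypothesis gives $\|P^n f_n\|_2^2 \leqslant C n^{-q} \Psi(f_n) = C n^{-q}$. Combining the two forces $n^{q-p} \leqslant C/c'$ for all large $n$, contradicting $q > p$.

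The main obstacle is the middle step: converting the abstract variational lower bound on $\beta^{\star}$ into concrete spectral mass of a near-extremiser located near the top of the spectrum. Once that translation is in place, the spectral calculus and the choice $s = n$ reduce to bookkeeping. A secondary subtlety is the precise reading of the $\mathcal{O}$-statement; under the uniform interpretation the argument proceeds directly, while under the pointwise interpretation one must first invoke uniform boundedness on the normed space $\bigl(\{f : \Psi(f) < \infty\},\sqrt{\Psi(\cdot)}\bigr)$.
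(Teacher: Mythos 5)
Your argument is correct under the uniform reading and takes a genuinely different route from the paper. The paper first transfers the lower bound from $P$ to $P^{*}P=P^{2}$ via Lemma~\ref{lem:PP-beta-star-from-P-beta-star}, and then invokes the converse results of \cite{andrieu2022comparison_journal} (Proposition~24 and Remark~25 there), which assert that $\left\Vert P^{n}f\right\Vert _{2}^{2}\in\mathcal{O}\left(n^{-q}\right)$ for all $\Psi$-finite $f$ forces a $\left(\Psi,\beta_{2}\right)$--WPI for $P^{2}$ with $\beta_{2}\left(s\right)\in\mathcal{O}\left(s^{-q}\right)$, contradicting the pointwise minimality of $\beta_{2}^{\star}\in\Omega\left(s^{-p}\right)$. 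You instead unwind everything by hand with the spectral theorem applied directly to $P$: a near-extremiser $f_{s}$ in the variational problem of Definition~\ref{def:alpha-beta-star} must carry spectral mass at least $c\,s^{-p}/2$ on $\left(1-1/s,1\right]$ (the integrand $1-s\left(1-\lambda\right)$ is nonpositive below $1-1/s$ and at most $1$ above, so this step is sound), whence $\left\Vert P^{n}f_{n}\right\Vert _{2}^{2}\gtrsim n^{-p}$ after choosing $s=n$, against the assumed $C\,n^{-q}$ bound. This buys a self-contained proof that avoids both the passage to $P^{2}$ and the external citation, at the cost of only covering the uniform formulation directly.

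One caveat on the pointwise reading, which is what the statement literally asserts (the $\mathcal{O}$-constant may depend on $f$) and what is used in Lemma~\ref{lem:eg-negative}: your extremisers $f_{n}$ change with $n$, so you genuinely need the uniform bound $\left\Vert P^{n}f\right\Vert _{2}^{2}\leqslant C\,n^{-q}\cdot\Psi\left(f\right)$, and your proposed Banach--Steinhaus promotion requires $\sqrt{\Psi}$ to be a complete norm on $\ELL_{0}\left(\mu\right)\cap\left\{ \Psi<\infty\right\} $. This holds for $\Psi=\left\Vert \cdot\right\Vert _{\mathrm{osc}}^{2}$ (on $\ELL_{0}\left(\mu\right)$ one has $\left\Vert f\right\Vert _{\infty}\leqslant\left\Vert f\right\Vert _{\mathrm{osc}}\leqslant2\left\Vert f\right\Vert _{\infty}$, so the space is Banach), which is the sieve appearing in the paper's applications, but Definition~\ref{def:Phi_fn} only demands homogeneity and $\mathfrak{a}<\infty$: a general sieve need not satisfy a triangle inequality, and $\left\{ f:\Psi\left(f\right)<\infty\right\} $ need not even be a linear space, so the uniform boundedness principle is not available there. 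The paper outsources exactly this point to the cited Proposition~24/Remark~25; to cover general sieves at the stated level of generality you would need either to restrict to norm-type sieves or to supply an alternative promotion argument.
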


\begin{proof}
If $\beta^{\star}\left(s\right)\in\Omega\left(s^{-p}\right)$ then
we may deduce by Lemma~\ref{lem:PP-beta-star-from-P-beta-star} that
if $P^{2}$ satisfies (\ref{eq:beta-WPI}), its pointwise minimal
$\beta_{2}^{\star}$ also satisfies $\beta_{2}^{\star}\left(s\right)\in\Omega\left(s^{-p}\right)$.
Now assume for the sake of contradiction that $\left\Vert P^{n}f\right\Vert _{2}^{2}\in\mathcal{O}\left(n^{-q}\right)$
for all $f\in\mathrm{L}_{0}^{2}\left(\mu\right)$ such that $\Psi\left(f\right)<\infty$.
Then by \cite[Proposition 24 and Remark 25]{andrieu2022comparison_journal},
we deduce that a WPI for $P^{2}$ holds with $\beta_{2}\left(s\right)\in\mathcal{O}\left(s^{-q}\right)$,
which contradicts $\beta_{2}^{\star}\left(s\right)\in\Omega\left(s^{-p}\right)$
being pointwise minimal.
\end{proof}
In principle, noting that $\alpha^{\star}$ and $\beta^{\star}$ are
pointwise minimal functions, any function $f\in\ELL_{0}\left(\mu\right)$
with $\Psi\left(f\right)=1$ may be used to construct a lower bound
on them. For example, for any such function, 
\[
s\mapsto\beta^{\star}\bigl(s\bigr)\geqslant\left\Vert f\right\Vert _{2}^{2}-s\cdot\mathcal{E}\left(T,f\right),\qquad s>0.
\]
In practice, to produce an informative lower bound for the whole function
$\beta^{\star}$, one will need to identify an appropriate subset
of $\ELL_{0}\left(\mu\right)$ and consider the supremum of the lower
bound above over such a subset. Indicator functions of measurable
sets are always in $\ELL_{0}\left(\mu\right)$, have finite oscillation,
and can provide tractability. For example in such a scenario, for
$A\in\mathscr{E}$, $\mathcal{E}\left(P,{\bf 1}_{A}\right)$ has a
natural probabilistic interpretation (see Lemma~\ref{lem:dirichlet-form-indicator}). 

The following result establishes a lower bound on $\beta^{\star}$
for Markov kernels $P$ that can exhibit sticky behaviour in regions
of the state space. \cite[Theorem~5.1]{roberts1996geometric} showed
that for a $\mu$-invariant Markov kernel $P$ with $\mu$ not concentrated
at a single point, then ${\rm ess}_{\mu}\sup_{x}P\left(x,\left\{ x\right\} \right)=1$
implies that $P$ cannot converge geometrically. In \cite[Theorem 1]{lee2014variance}
conductance is used to prove the same when $P$ is $\mu$-reversible,
and the following provides a quantitative refinement. A similar result
can be obtained using a Weak Cheeger inequality established in Section~\ref{subsec:Cheeger-inequalities},
Theorem~\ref{thm:positive-CP-implies-optim-WPI}-b).
\begin{thm}
\label{thm:lower-bound-beta-star}Let $P$ be $\mu$-invariant satisfying
a $\left(\Psi,\beta\right)$--WPI, where $\Psi\left(\mathbf{1}_{A}\right)\leqslant1$
for any set $A\in\mathscr{E}$. For any $\varepsilon>0$, define the
set $A_{\varepsilon}:=\left\{ x\in\mathsf{E}\colon P\left(x,\left\{ x\right\} \right)\geqslant1-\varepsilon\right\} $.
Then
\begin{enumerate}
\item for any $s>0$, 
\begin{equation}
\beta\left(s;\Psi\right)\geqslant\sup_{\varepsilon\in\left(0,1\right)}\left\{ \mu\left(A_{\varepsilon}\right)\cdot\left(1-s\cdot\varepsilon-\mu\left(A_{\varepsilon}\right)\right)\right\} ,\label{eq:lower-bound-beta}
\end{equation}
this is in particular true for $\beta^{\star}$ given in Definition~\ref{def:alpha-beta-star}
and $P$ satisfies a $\left(\Psi,\beta^{\star}\right)$\textup{-WPI},
\item if in addition there exist $A\in\mathscr{E}$ such that $\mu\left(A\right)>0$
and $P\left(x,\left\{ x\right\} \right)<1$ for $x\in A$ then there
exists $\underline{s}>0$ and $C>0$ such that for any $s\geqslant\underline{s},$
\[
\beta\left(s;\Psi\right)\geqslant C\cdot\mu\left(A_{\varepsilon\left(s\right)}\right)\,,
\]
with $\varepsilon\left(s\right):=C/s$.
\end{enumerate}
\end{thm}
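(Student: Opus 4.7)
The strategy is to test the weak Poincar\'e inequality against the centered indicator functions $f_{\varepsilon}:=\mathbf{1}_{A_{\varepsilon}}-\mu(A_{\varepsilon})\in\ELL_{0}(\mu)$: the hypothesis on $\Psi$ bounds $\Psi(f_{\varepsilon})\leqslant 1$ (immediately for the canonical $\left\Vert \cdot\right\Vert _{\osc}^{2}$, since $\left\Vert \cdot\right\Vert _{\osc}$ is translation-invariant), while the stickiness built into the definition of $A_{\varepsilon}$ forces $\calE(P,f_{\varepsilon})$ to be of order $\varepsilon$. The claimed lower bounds on $\beta$ then emerge by rearranging.

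For part (a), a direct computation gives $\left\Vert f_{\varepsilon}\right\Vert _{2}^{2}=\mu(A_{\varepsilon})\bigl(1-\mu(A_{\varepsilon})\bigr)$, and since additive constants do not affect the Dirichlet form, $\calE(P,f_{\varepsilon})=\calE(P,\mathbf{1}_{A_{\varepsilon}})$. The standard identity (Lemma~\ref{lem:dirichlet-form-indicator}) yields $\calE(P,\mathbf{1}_{A_{\varepsilon}})=\int_{A_{\varepsilon}}P(x,A_{\varepsilon}^{c})\,\mu(\dif x)$, and for $x\in A_{\varepsilon}$ one has $P(x,A_{\varepsilon}^{c})\leqslant P(x,\{x\}^{c})=1-P(x,\{x\})\leqslant\varepsilon$, so $\calE(P,\mathbf{1}_{A_{\varepsilon}})\leqslant\varepsilon\cdot\mu(A_{\varepsilon})$. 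Substituting into the $(\Psi,\beta)$--WPI applied to $f_{\varepsilon}$ and rearranging produces
\begin{equation*}
\beta(s;\Psi)\geqslant\mu(A_{\varepsilon})\bigl(1-\mu(A_{\varepsilon})-s\varepsilon\bigr),
\end{equation*}
and the supremum over $\varepsilon\in(0,1)$ completes (a); the statement for $\beta^{\star}$ is then immediate from its pointwise minimality (Theorem~\ref{thm:alpha-beta-star-WPI-continuous-convex}).

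For part (b), the task reduces to calibrating $\varepsilon$ in terms of $s$ so that the bracket in (a) stays bounded below by a positive fraction, with $s\varepsilon$ matching the target rate $C/s$. As $\varepsilon\downarrow 0$ the sets $A_{\varepsilon}$ decrease to $A_{0}:=\{x:P(x,\{x\})=1\}$, and continuity of measure gives $\mu(A_{\varepsilon})\downarrow\mu(A_{0})$. The assumption that $\mu(A)>0$ with $P(x,\{x\})<1$ on $A$ forces $A\cap A_{0}=\emptyset$, whence $\mu(A_{0})\leqslant 1-\mu(A)$, and one may pick $\varepsilon_{0}>0$ with $\mu(A_{\varepsilon_{0}})\leqslant 1-\mu(A)/2$. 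Setting $C:=\mu(A)/4$, $\underline{s}:=C/\varepsilon_{0}$, and $\varepsilon(s):=C/s$, for $s\geqslant\underline{s}$ one has $\varepsilon(s)\leqslant\varepsilon_{0}$, hence $1-\mu(A_{\varepsilon(s)})-s\varepsilon(s)\geqslant 2C-C=C$; plugging into (a) yields $\beta(s;\Psi)\geqslant C\cdot\mu(A_{\varepsilon(s)})$, as required.

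There is no serious obstacle: once the test functions $f_{\varepsilon}$ are identified, the two Dirichlet form steps are routine. The only mild care needed is in part (b), where the constant $C$ must be chosen so that it appears \emph{both} as the multiplicative prefactor and inside the argument $\varepsilon(s)=C/s$; this is handled by picking $C$ equal to half the gap $\mu(A)/2$ obtained in the calibration of $\varepsilon_{0}$.
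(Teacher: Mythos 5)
Your proof is correct and follows essentially the same route as the paper: in part (a) both test the inequality against centered indicators of $A_{\varepsilon}$ and bound $\calE(P,\mathbf{1}_{A_{\varepsilon}})\leqslant\varepsilon\cdot\mu(A_{\varepsilon})$ via the sticky-set definition, and in part (b) both calibrate $\varepsilon(s)\propto 1/s$ once $\mu(A_{\varepsilon})$ is shown to stay bounded away from $1$ for small $\varepsilon$. Your part (b) is a touch more direct — you use continuity of measure on the decreasing family $A_{\varepsilon}\downarrow A_{0}$ to read off $\mu(A_{0})\leqslant 1-\mu(A)$ at once and get explicit constants $C=\mu(A)/4$, whereas the paper establishes the same fact by contradiction via a sequence $\varepsilon_{n}\downarrow 0$ — but the underlying mechanism is identical.
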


\begin{proof}
For any $A\in\mathscr{E}$, from Lemma~\ref{lem:dirichlet-form-indicator},
we have $\mathcal{E}\left(P,\mathbf{1}_{A}\right)=\mu\otimes P\left(A\times A^{\complement}\right)$
and ${\rm var}\big(\mathbf{1}_{A}\big)=\mu\otimes\mu\left(A\times A^{\complement}\right)$.
Since $\Psi\left(\mathbf{1}_{A_{\varepsilon}}\right)\leqslant1$,
for any $\varepsilon>0$ we have that
\begin{align*}
\beta^{\star}\left(s\right) & :=\sup\left\{ \left\Vert f\right\Vert _{2}^{2}-s\cdot\mathcal{E}\left(P,f\right)\colon f\in\ELL_{0}\left(\mu\right),\Psi\left(f\right)\leqslant1\right\} \\
 & \geqslant{\rm var}_{\mu}\left(\mathbf{1}_{A_{\varepsilon}}\right)-s\cdot\int\mu\left(\mathrm{d}x\right)\cdot P\left(x,\mathrm{d}y\right)\cdot\mathbf{1}_{A_{\varepsilon}}\left(x\right)\cdot\mathbf{1}_{A_{\varepsilon}^{\complement}}\left(y\right)\\
 & \geqslant{\rm var}_{\mu}\left(\mathbf{1}_{A_{\varepsilon}}\right)-s\cdot\int\mu\left(\mathrm{d}x\right)\cdot P\big(x,\left\{ x\right\} ^{\complement}\big)\cdot\mathbf{1}_{A_{\varepsilon}}\left(x\right)\\
 & \geqslant\mu\left(A_{\varepsilon}\right)\cdot\mu\big(A_{\varepsilon}^{\complement}\big)-s\cdot\mu\left(A_{\varepsilon}\right)\cdot\varepsilon\\
 & =\mu\left(A_{\varepsilon}\right)\cdot\left(1-s\cdot\varepsilon-\mu\left(A_{\varepsilon}\right)\right).
\end{align*}
Together with Theorem~\ref{thm:alpha-beta-star-WPI-continuous-convex}
the first statement follows. 

For the second statement, note first that there exists $\bar{\varepsilon}\in\left(0,1\right)$
such that $\bar{\mu}\left(\bar{\varepsilon}\right):=\sup\left\{ \mu\big(A_{\varepsilon}\big):\varepsilon\in\left[0,\bar{\varepsilon}\right]\right\} <1$.
Indeed, assume $\bar{\mu}\left(\bar{\varepsilon}\right)=1$ for all
$\bar{\varepsilon}\in\left(0,1\right)$. We can thus find a sequence
$\bar{\varepsilon}_{n}\to0$ such that $\bar{\mu}\left(\bar{\varepsilon}_{n}\right)=1$
for all $n$, and hence we can find $\varepsilon_{n}\in\left[0,\bar{\varepsilon}_{n}\right]$
such that $\mu\big(A_{\varepsilon_{n}}\big)\geqslant1-1/n$ for each
$n$. By passing to a subsequence if necessary, we can assume that
$\varepsilon_{n}$ is decreasing in $n$, so that the $A_{\varepsilon_{n}}$
form a decreasing sequence of sets (with respect to inclusion). We
can thus define $A_{\infty}:=\cap_{n\in\mathbb{N}}A_{\varepsilon_{n}}$
, and by the decreasing property, it follows that 
\[
\mu\left(A_{\infty}\right)=\lim_{n\rightarrow\infty}\mu\left(A_{\varepsilon_{n}}\right)=1.
\]
However, we have
\[
A_{\infty}=\left\{ x\in\mathsf{E}:P\left(x,\left\{ x\right\} \right)\geqslant1-\varepsilon_{n},\forall n\in\mathbb{N}\right\} =\left\{ x\in\mathsf{E}:P\left(x,\left\{ x\right\} \right)=1\right\} ,
\]
from which it follows that $P\left(x,\left\{ x\right\} \right)=1$
$\mu$-a.e., leading to a contradiction. 

From the first statement, and with $\left(\bar{\varepsilon},\bar{\mu}\right)\in\left(0,1\right)$
as above, it then holds for $s\in\left(\frac{1-\bar{\mu}}{2\cdot\bar{\varepsilon}},\infty\right)$
that
\begin{align*}
\beta\left(s;\Psi\right) & \geqslant\sup_{\varepsilon\in\left(0,\varepsilon_{0}\right)}\left\{ \mu\big(A_{\varepsilon}\big)\cdot\left(1-s\cdot\varepsilon-\mu\big(A_{\varepsilon}\big)\right)\right\} \\
 & \geqslant\sup_{\varepsilon\in\left(0,\varepsilon_{0}\right)}\left\{ \mu\big(A_{\varepsilon}\big)\cdot\left(1-\bar{\mu}-s\cdot\varepsilon\right)\right\} \\
 & \geqslant\frac{1-\bar{\mu}}{2}\cdot\mu\big(A_{\varepsilon\left(s\right)}\big)\,,
\end{align*}
where $\varepsilon\left(s\right):=\frac{1-\bar{\mu}}{2\cdot s}$.
\end{proof}
\begin{example}
\label{exa:beta-rej-lower-bound-rate}Let $P$ be $\mu$-invariant
and satisfy a $\left(\Psi,\beta\right)$--WPI. If $\mu\big(A_{\varepsilon}\big)\geqslant c\cdot\varepsilon^{a}$
for $\varepsilon\in\left[0,\bar{\varepsilon}\right]$ for some $\bar{\varepsilon}>0$,
then there exists $C>0$ such that for large enough $s>0$
\[
\beta\left(s;\Psi\right)\geqslant C\cdot s^{-a}\,.
\]
In particular we deduce that $\beta^{\star}\left(s;\Psi\right)\in\Omega\left(s^{-a}\right)$,
and by Lemma~\ref{lem:PP-beta-star-from-P-beta-star}, we conclude
that if $P^{*}P$ satisfies a $\left(\Psi,\beta_{2}\right)$--WPI,
then $\beta_{2}^{\star}\in\Omega\left(s^{-a}\right)$ also. If $P$
is additionally reversible, then application of Proposition~\ref{prop:rev-L2-conv-rate-lower-bound}
shows that $P$ cannot be $\left(\Psi,\gamma\right)$-convergent with
$\gamma\left(n\right)\sim n^{-b}$ for any $b>a$.

\end{example}

\begin{example}
The $\mu$-reversible independent Metropolis--Hastings (IMH) Markov
kernel $P$ with proposal probability measure $\nu$ is
\[
P\left(x,A\right)=\int\nu\left({\rm d}y\right)\cdot\left\{ 1\wedge\frac{w\left(y\right)}{w\left(x\right)}\right\} {\bf 1}_{A}\left(y\right)+\int\nu\left({\rm d}y\right)\cdot\left\{ 1-1\wedge\frac{w\left(y\right)}{w\left(x\right)}\right\} ,\quad A\in\mathscr{E},
\]
where $w={\rm d}\mu/{\rm d}\nu$. One can deduce that 
\begin{align*}
P\left(x,\left\{ x\right\} \right) & \geqslant\int\nu\left({\rm d}y\right)\cdot\left\{ 1-1\wedge\frac{w\left(y\right)}{w\left(x\right)}\right\} \\
 & =1-\int\mu\left({\rm d}y\right)\cdot\left\{ w\left(x\right)^{-1}\wedge w\left(y\right)^{-1}\right\} \\
 & \geqslant1-w\left(x\right)^{-1}.
\end{align*}
and as a result for $\varepsilon>0$, one sees that 
\[
B_{\varepsilon}:=\left\{ x\in\mathsf{E}:w\left(x\right)^{-1}\leqslant\varepsilon\right\} \subset A_{\varepsilon}=\left\{ x\in\mathsf{E}\colon P\left(x,\left\{ x\right\} \right)\geqslant1-\varepsilon\right\} .
\]
From Theorem~\ref{thm:lower-bound-beta-star} we deduce the existence
of $C>0$ such that for any $s\geqslant\underline{s}$,
\begin{align*}
\beta\left(s;\Psi\right) & \geqslant C\cdot\mu\big(A_{\varepsilon\left(s\right)}\big)\geqslant C\cdot\mu\big(w\geqslant C^{-1}\cdot s\big).
\end{align*}
Upon noting that the IMH operator is positive, this then implies a
lower bound on the fastest rate of convergence possible.
\end{example}

\subsection{Other types of convergence from $\big(\left\Vert \cdot\right\Vert _{\mathrm{osc}}^{2},\gamma\big)$
convergence\label{subsec:bounded_to_p}}

In practice it can sometime be difficult to establish that a candidate
sieve $\Psi$ found through calculations is indeed a sieve. In contrast,
the specific choices $\Psi=\left\Vert \cdot\right\Vert _{\infty}^{2}$
or $\Psi=\left\Vert \cdot\right\Vert _{\mathrm{osc}}^{2}$ often simplify
calculations greatly. This appears at first sight to be at the expense
of generality in terms of the class of functions or metrics for which
convergence can be established. We present here a series of results
on the implications of $\left(\left\Vert \cdot\right\Vert _{{\rm osc}}^{2},\gamma\right)$-convergence
in other metrics. 

The following, which generalizes the result of \cite[Lemma~5.1]{cattiaux2012central}
(see the discussion below), shows that $\left(\left\Vert \cdot\right\Vert _{{\rm osc}}^{2},\gamma\right)$-convergence
automatically implies convergence in any Orlicz norm dominating the
$\mathrm{L^{2}}$ norm.
\begin{defn}[Orlicz norm]
 For $\mu$ a probability distribution on $\left(\mathsf{E},\mathscr{E}\right)$
and a convex, lower semicontinuous function $N:\left[0,\infty\right)\to\left[0,\infty\right)$
satisfying $N\left(0\right)=0$, the $N$-Orlicz norm $\left\Vert \cdot\right\Vert _{N}$
(or Luxemburg norm) is defined as
\[
\left\Vert f\right\Vert _{N}:=\inf\left\{ t>0:\mu\left(N\left(\frac{\left|f\right|}{t}\right)\right)\leqslant1\right\} ,\forall f\colon\mathsf{E}\rightarrow\mathbb{R}.
\]
\end{defn}

Note that whenever $N_{1}\geqslant N_{2}$, then $\left\Vert \cdot\right\Vert _{N_{1}}\geqslant\left\Vert \cdot\right\Vert _{N_{2}}$,
and that the choice $N(x)=x^{2}$ leads to the norm $\left\Vert \cdot\right\Vert _{2}$.
The following allows one to establish rates of convergence for Orlicz
norms for which $N\left(x\right)$ grows faster than $x^{2}$ in a
suitable sense. 
\begin{prop}
\label{prop:cattiaux-et-al-orlicz}Let $P$ be a $\mu$-invariant
Markov kernel, assumed to be $\left(\left\Vert \cdot\right\Vert _{{\rm osc}}^{2},\gamma\right)$-convergent,
and let $N:\left[0,\infty\right)\to\left[0,\infty\right)$ be a convex,
lower semicontinuous function satisfying

\begin{equation}
N\left(0\right)=0,\quad x\mapsto x^{-2}\cdot N\left(x\right)\,\text{is increasing}.\label{eq:N_monotone}
\end{equation}
Then $N$ defines an Orlicz norm $\left\Vert \cdot\right\Vert _{N}^{2}$
and $P$ is also $\left(\left\Vert \cdot\right\Vert _{N}^{2},\gamma_{N}\right)$-convergent,
with 
\[
\gamma_{N}\left(n\right)\leqslant2^{4}\cdot\gamma\left(n\right)\cdot N^{-1}\left(\gamma\left(n\right)^{-1}\right)^{2},\quad n\in\mathbb{N}.
\]
\end{prop}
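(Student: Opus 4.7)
The strategy is a truncation-plus-balancing argument. Fix $f\in\ELL_{0}(\mu)$ with $t:=\|f\|_{N}<\infty$, and for $M>0$ define the truncation $f_{M}:=(-M)\vee(f\wedge M)$ and its recentred version $\tilde{f}_{M}:=f_{M}-\mu(f_{M})\in\ELL_{0}(\mu)$. Oscillation is translation-invariant, so $\|\tilde{f}_{M}\|_{\mathrm{osc}}=\|f_{M}\|_{\mathrm{osc}}\leqslant 2M$. The hypothesis on $P$ then yields
\[
\|P^{n}\tilde{f}_{M}\|_{2}\leqslant 2M\sqrt{\gamma(n)}.
\]
For the remainder $f-\tilde{f}_{M}=(f-f_{M})-\mu(f-f_{M})$ (using $\mu(f)=0$), and since subtracting the mean is an $\ELL^{2}$-orthogonal projection, $\|f-\tilde{f}_{M}\|_{2}\leqslant\|f-f_{M}\|_{2}$. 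Writing $f=\tilde{f}_{M}+(f-\tilde{f}_{M})$, applying the triangle inequality in $\ELL^{2}(\mu)$ and using that $P^{n}$ is an $\ELL^{2}$-contraction gives
\[
\|P^{n}f\|_{2}\leqslant 2M\sqrt{\gamma(n)}+\|f-f_{M}\|_{2}.
\]

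Next I control the tail term $\|f-f_{M}\|_{2}^{2}=\int_{|f|>M}(|f|-M)^{2}\,\mathrm{d}\mu\leqslant\int_{|f|>M}f^{2}\,\mathrm{d}\mu$ by exploiting the monotonicity assumption (\ref{eq:N_monotone}). For $|f(x)|>M$, the inequality $(|f(x)|/t)^{-2}N(|f(x)|/t)\geqslant(M/t)^{-2}N(M/t)$ gives
\[
f(x)^{2}\leqslant\frac{M^{2}}{N(M/t)}\,N\!\left(\frac{|f(x)|}{t}\right),
\]
and integrating with the defining bound $\mu(N(|f|/t))\leqslant 1$ of the Luxemburg norm produces $\|f-f_{M}\|_{2}^{2}\leqslant M^{2}/N(M/t)$. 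Hence
\[
\|P^{n}f\|_{2}\leqslant 2M\sqrt{\gamma(n)}+\frac{M}{\sqrt{N(M/t)}}.
\]

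Finally I balance the two terms by choosing $M:=t\cdot N^{-1}(\gamma(n)^{-1})$, so that $N(M/t)=1/\gamma(n)$ and both summands become $M\sqrt{\gamma(n)}$ (up to the factor $2$); this yields $\|P^{n}f\|_{2}\leqslant 3t\cdot N^{-1}(\gamma(n)^{-1})\sqrt{\gamma(n)}$ and hence $\|P^{n}f\|_{2}^{2}\leqslant 9\|f\|_{N}^{2}\,\gamma(n)\,N^{-1}(\gamma(n)^{-1})^{2}$, which is tighter than the claimed constant $2^{4}$. The main bookkeeping points are the verification that $\|\cdot\|_{N}$ is a genuine Orlicz (Luxemburg) norm under the stated hypotheses on $N$ and the well-definedness of $N^{-1}(\gamma(n)^{-1})$: both are routine, since convexity and lower semicontinuity together with $N(0)=0$ and $x\mapsto x^{-2}N(x)$ increasing imply $N$ is strictly increasing on its support and grows at least quadratically, so $N^{-1}$ is defined on the relevant range for all sufficiently large $n$. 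The only genuine ``hard'' step is identifying the correct tail bound arising from the monotonicity of $x^{-2}N(x)$, which is the lever that turns Orlicz control of $f$ into $\ELL^{2}$ control of its tail above level $M$.
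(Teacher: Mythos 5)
Your proof is correct and follows essentially the same route as the paper's: truncate at a level $M$, apply the $\left(\left\Vert \cdot\right\Vert _{\mathrm{osc}}^{2},\gamma\right)$-convergence to the bounded part, convert Orlicz control into an $\mathrm{L}^{2}$ tail bound via the monotonicity of $x\mapsto x^{-2}N\left(x\right)$, and balance with $M\asymp N^{-1}\left(\gamma\left(n\right)^{-1}\right)$. Your use of clipping plus mean-recentring (rather than the paper's indicator split, which incurs the extra $\left|\mu\left(f_{\uparrow}\right)\right|$ term) is a minor variant that even yields the slightly better constant $9\leqslant2^{4}$, so the claimed bound follows a fortiori.
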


\begin{proof}
That $N$ defines an Orlicz norm is straightforward. Let $f\colon\mathsf{E}\rightarrow\mathbb{R}$
satisfying $\left\Vert f\right\Vert _{N}=1$, which implies $\mu\left(N\left(\left|f\right|\right)\right)\leqslant1$
since $N$ is non-decreasing. Let $H>0$, and decompose $f=f\cdot\mathbf{1}\left[\left|f\right|\leqslant H\right]+f\cdot\mathbf{1}\left[\left|f\right|>H\right]=f_{\downarrow}+f_{\uparrow}$.
Write that
\begin{align*}
\left\Vert P^{n}f-\mu\left(f\right)\right\Vert _{2} & \leqslant\left\Vert P^{n}\left(f-f_{\downarrow}\right)\right\Vert _{2}+\left\Vert P^{n}f_{\downarrow}-\mu\left(f_{\downarrow}\right)\right\Vert _{2}+\left\Vert \mu\left(f_{\downarrow}-f\right)\right\Vert _{2}\\
 & =\left\Vert P^{n}f_{\uparrow}\right\Vert _{2}+\left\Vert P^{n}f_{\downarrow}-\mu\left(f_{\downarrow}\right)\right\Vert _{2}+\left|\mu\left(f_{\uparrow}\right)\right|\\
 & \leqslant\left\Vert f_{\uparrow}\right\Vert _{2}+\left\Vert P^{n}f_{\downarrow}-\mu\left(f_{\downarrow}\right)\right\Vert _{2}+\left\Vert f_{\uparrow}\right\Vert _{2}.
\end{align*}
Since $f_{\downarrow}$ is bounded, $\left(\left\Vert \cdot\right\Vert _{{\rm osc}}^{2},\gamma\right)$-convergence
yields that $\left\Vert P^{n}f_{\downarrow}-\mu\left(f_{\downarrow}\right)\right\Vert _{2}\leqslant2\cdot H\cdot\left(\gamma\left(n\right)\right)^{1/2}$.
For the other term, using (\ref{eq:N_monotone}) and the fact that
$\mu\left(N\left(\left|f\right|\right)\right)\leqslant1$ to write
\begin{align*}
\left\Vert f_{\uparrow}\right\Vert _{2}^{2} & =\mu\left(f^{2}\cdot\mathbf{1}\left[\left|f\right|>H\right]\right)\\
 & =\mu\left(\frac{f^{2}}{N\left(\left|f\right|\right)}\cdot N\left(\left|f\right|\right)\cdot\mathbf{1}\left[\left|f\right|>H\right]\right)\\
 & \leqslant\frac{H^{2}}{N\left(H\right)}\cdot\mu\left(N\left(\left|f\right|\right)\cdot\mathbf{1}\left[\left|f\right|>H\right]\right)\\
 & \leqslant\frac{H^{2}}{N\left(H\right)},
\end{align*}
which yields the bound $\left\Vert P^{n}f-\mu\left(f\right)\right\Vert _{2}\leqslant2\cdot H\cdot\gamma\left(n\right)^{1/2}+2\cdot H\cdot N\left(H\right)^{-1/2}$.
Taking $H=N^{-1}\left(\gamma\left(n\right)^{-1}\right)$ gives that
$\left\Vert P^{n}f-\mu\left(f\right)\right\Vert _{2}\leqslant2^{2}\cdot\gamma\left(n\right)^{1/2}\cdot N^{-1}\left(\gamma\left(n\right)^{-1}\right)$,
from which the result follows.
\end{proof}
For examples of the above, consider
\begin{itemize}
\item $N\left(x\right)=x^{p}$, $p>2$, so that $\left\Vert f\right\Vert _{N}=\left\Vert f\right\Vert _{\mathrm{L}^{p}}$
and $\gamma_{N}\left(n\right)\leqslant2^{4}\cdot\gamma\left(n\right)^{1-2/p}$.
\item $N\left(x\right)=\exp\left(x^{r}\right)-1$, $r\geqslant1$, so that
\[
\gamma_{N}\left(n\right)\leqslant2^{4}\cdot\gamma\left(n\right)\cdot\left(\log\left(1+\gamma\left(n\right)^{-1}\right)\right)^{2/r}\lesssim\gamma\left(n\right)\cdot\left(\log\left(\frac{1}{\gamma\left(n\right)}\right)\right)^{2/r}\,\cdot
\]
\end{itemize}
An alternative strategy to handle broader classes of functions consist
of establishing that a $\big(\left\Vert \cdot\right\Vert _{{\rm osc}}^{2},\alpha\big)$\textit{\emph{--}}WPI
or $\big(\left\Vert \cdot\right\Vert _{{\rm osc}}^{2},\beta\big)$\textit{\emph{--}}WPI
implies a $\big(\left\Vert \cdot\right\Vert _{N}^{2},\beta_{N}\big)$\textit{\emph{--}}WPI
for $N$ defining an Orlicz norm, see \cite[Theorem~37]{zitt2008annealing},
\cite[Sections 8-10]{bobkov2009distributions}, and \cite[Proposition~35, Theorems~36,~40]{andrieu2022comparison_journal}.
We do not know whether these approaches are comparable in general,
but have observed that in each case, one recovers similar rates in
the polynomial scenario for $\mathrm{L}^{p}$ norms. We note however
that we have found the approach given in \cite{andrieu2022comparison_journal}
more difficult to use in practice.

Since the bound for $N\left(x\right)=x^{2}$ is not decreasing with
$n$, the above result does not provide an $\ELL$ convergence rate
for all $\ELL$ functions. This should not be surprising as this would
otherwise imply that $P^{k}$ is a strict contraction for some $k\in\mathbb{N}$,
therefore implying geometric convergence. Note however the result
of \cite{cattiaux2010poincare}, which can be adapted to reversible
Markov chains and establishes that $\big(\left\Vert \cdot\right\Vert _{2}^{2},\rho^{n}\big)$-convergence
can be deduced from $\big(\left\Vert \cdot\right\Vert _{{\rm osc}}^{2},\rho^{n}\big)$-convergence
for $\rho\in\left(0,1\right)$. 

The following provides a useful equivalence to $\big(\left\Vert \cdot\right\Vert _{{\rm osc}}^{2},\gamma\big)$-convergence,
particularly relevant in the reversible scenario, and shows that $\big(\left\Vert \cdot\right\Vert ,\gamma\big)$-convergence
implies total variation convergence for a broad class of initial distributions:
the latter improves on \cite[Remark~2, Supp. Material]{andrieu2022comparison_journal}.
\begin{prop}
\label{thm:convergence-duality}Let $P$ be a $\mu$-invariant Markov
kernel. Then for all $n\in\mathbb{N}$, it holds that
\[
\gamma_{\star}^{1/2}\left(n\right):=\sup_{f:\left\Vert f\right\Vert _{{\rm osc}}\leqslant1}\left\Vert P^{n}f-\mu\left(f\right)\right\Vert _{2}=\frac{1}{2}\sup_{f:\left\Vert f\right\Vert _{2}\leqslant1}\left\Vert \left(P^{*}\right)^{n}f-\mu\left(f\right)\right\Vert _{1}.
\]
Further, for any $\nu\ll\mu$
\[
\left\Vert \nu P^{n}-\mu\right\Vert _{\mathrm{TV}}\leqslant\gamma_{\star}^{1/2}\left(n\right)\cdot\chi^{2}\left(\nu,\mu\right)^{1/2}.
\]
Conversely, if for some $\gamma\colon\mathbb{N}\rightarrow\left[0,\infty\right)$
for all probabilities $\nu\ll\mu$ such that $\frac{{\rm d}\nu}{{\rm d}\mu}\in\ELL\left(\mu\right)$
\[
\left\Vert \nu P^{n}-\mu\right\Vert _{\mathrm{TV}}\leqslant\gamma^{1/2}\left(n\right)\cdot\chi^{2}\left(\nu,\mu\right)^{1/2}
\]
then $\gamma_{\star}^{1/2}\left(n\right)\leqslant\sqrt{2}\cdot\gamma^{1/2}\left(n\right)$.
\end{prop}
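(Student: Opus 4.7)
The plan is to handle the three assertions in turn, with the equality underpinning both the forward and converse inequalities.

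For the equality, I would apply $\ELL$ duality to write $\Vert P^n f - \mu(f)\Vert_2 = \sup_{\Vert g\Vert_2\leq 1}\langle P^n f - \mu(f),g\rangle_\mu$, then move $P^n$ across via adjointness; since $\mu P = \mu$ gives $(P^*)^n 1 = 1$, the inner product rewrites as $\mu(f\cdot h)$ with $h := (P^*)^n g - \mu(g)\in\ELL_0(\mu)$. Swapping the two suprema reduces matters to the elementary identity $\sup_{\Vert f\Vert_{\rm osc}\leq 1}\mu(fh) = \tfrac{1}{2}\Vert h\Vert_1$, valid whenever $\mu(h)=0$: the upper bound follows because translating $f$ by a constant does not affect $\mu(fh)$, so one can arrange $\Vert f\Vert_\infty \leq 1/2$; the matching lower bound is realised by $f = \tfrac{1}{2}\,\mathrm{sign}(h)$.

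For the forward bound, the same centring argument gives $\Vert\nu P^n - \mu\Vert_{\rm TV} = \sup_{\Vert h\Vert_{\rm osc}\leq 1}\{\nu P^n(h) - \mu(h)\}$. Setting $\phi := \mathrm{d}\nu/\mathrm{d}\mu$ and using $\mu P = \mu$, the integrand becomes $\mu\bigl((\phi-1)(P^n h - \mu(h))\bigr)$, and Cauchy--Schwarz produces $\Vert \phi - 1\Vert_2 = \chi^2(\nu,\mu)^{1/2}$ on one side and, by the definition of $\gamma_\star^{1/2}(n)$, the factor $\gamma_\star^{1/2}(n)\,\Vert h\Vert_{\rm osc}$ on the other.

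For the converse, by the equality proved first it is enough to establish $\Vert(P^*)^n h\Vert_1 \leq 2\sqrt{2}\,\gamma^{1/2}(n)\,\Vert h\Vert_2$ for every $h \in \ELL_0(\mu)$. I would decompose $h = h_+ - h_-$ into its positive and negative parts, noting $\mu(h_+) = \mu(h_-) =: c$; the case $c=0$ is immediate, and otherwise the densities $\phi_\pm := h_\pm/c$ are legitimate test objects for the hypothesis. Observing that $\mathrm{d}(\nu_\pm P^n)/\mathrm{d}\mu = (P^*)^n h_\pm / c$, the hypothesis yields $\Vert(P^*)^n h_\pm - c\Vert_1 \leq 2\gamma^{1/2}(n)\sqrt{\Vert h_\pm\Vert_2^2 - c^2}$; these combine via the triangle inequality together with $\sqrt{a}+\sqrt{b}\leq\sqrt{2(a+b)}$ and the Pythagorean identity $\Vert h_+\Vert_2^2 + \Vert h_-\Vert_2^2 = \Vert h\Vert_2^2$ (valid since $h_+ h_- = 0$), producing the constant $2\sqrt{2}$; halving in view of the equality delivers the claimed factor $\sqrt{2}$. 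The principal subtlety is the converse: since the hypothesis applies only to genuine probability measures, one must convert a signed centred $h$ into probability densities, and the positive/negative-part decomposition is the natural device; the residual $\sqrt{2}$ is essentially the unavoidable cost of splitting in $\ELL^1$.
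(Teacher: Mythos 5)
Your proposal is correct and follows essentially the same route as the paper: the same $\mathrm{L}^{2}$/oscillation duality with the sign-function extremiser for the equality, and the same positive/negative-part decomposition with the $\sqrt{a}+\sqrt{b}\leqslant\sqrt{2\left(a+b\right)}$ step for the converse, yielding the same constant $\sqrt{2}$. The only cosmetic differences are that you obtain the forward TV bound directly by Cauchy--Schwarz rather than by specialising the established $\mathrm{L}^{1}$ identity, and in the converse you normalise $h_{\pm}$ by their common mean rather than centring a general nonnegative density as the paper does; neither changes the substance.
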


\begin{rem}
The the first statement of Proposition~\ref{thm:convergence-duality}
can be adapted to replace the $\left\Vert \cdot\right\Vert _{1}$
norm with a general Orlicz norms, but we do not detail this here for
brevity. This type of result mirrors the theory on quantitative convergence
obtained via drift/minorization techniques \cite[Theorem 17.2.5]{douc2018markov}.

We note briefly that similar duality-type results are also available
for general norm-based sieves. In particular, given an estimate of
the form $\left\Vert P^{n}f\right\Vert _{2}^{2}\leqslant\gamma\left(n\right)\cdot\left\Vert f\right\Vert _{\circ}^{2}$
for all suitable $f$, an analogous argument establishes that $\left\Vert \left(P^{*}\right)^{n}g\right\Vert _{\bullet}^{2}\leqslant\gamma\left(n\right)\cdot\left\Vert g\right\Vert _{2}^{2}$,
whenever $\left\{ \left\Vert \cdot\right\Vert _{\circ},\left\Vert \cdot\right\Vert _{\bullet}\right\} $
are a dual pair of norms defined on $\ELL_{0}\left(\mu\right)$, i.e.
$\left|\left\langle f,g\right\rangle \right|\leqslant\left\Vert f\right\Vert _{\circ}\cdot\left\Vert g\right\Vert _{\bullet}$
for all suitable $f,g$. 
\end{rem}

\begin{proof}[Proof of Proposition~\ref{thm:convergence-duality}]
 Since for $f\in\ELL_{0}\left(\mu\right)$,
\[
\left\Vert f\right\Vert _{2}=\sup\left\{ \left\langle f,g\right\rangle :g\in\ELL_{0}\left(\mu\right),\left\Vert g\right\Vert _{2}\leqslant1\right\} ,
\]
we have
\begin{align*}
\sup\big\{\left\Vert P^{n}f\right\Vert _{2} & :\left\Vert f\right\Vert _{\mathrm{osc}}\leqslant1\big\}\\
=\sup & \left\{ \left\langle P^{n}f,g\right\rangle :f\in\ELL_{0}\left(\mu\right),\left\Vert f\right\Vert _{\mathrm{osc}}\leqslant1,g\in\ELL_{0}\left(\mu\right),\left\Vert g\right\Vert _{2}\leqslant1\right\} \\
=\sup & \left\{ \left\langle f,\left(P^{*}\right)^{n}g\right\rangle :f\in\ELL_{0}\left(\mu\right),\left\Vert f\right\Vert _{\mathrm{osc}}\leqslant1,g\in\ELL_{0}\left(\mu\right),\left\Vert g\right\Vert _{2}\leqslant1\right\} .
\end{align*}
Now, for any $h\in{\rm L}^{1}\bigl(\mu\bigr)$, $h\neq0$, satisfying
$\mu\bigl(h\bigr)=0$, 
\begin{align*}
\sup\left\{ \left\langle f,h\right\rangle :\left\Vert f\right\Vert _{\mathrm{osc}}\leqslant1\right\}  & =\frac{1}{2}\cdot\sup\left\{ \left\langle f,h\right\rangle :\left\Vert f\right\Vert _{\infty}\leqslant1\right\} \\
 & =\frac{1}{2}\cdot\mu\left(\left|h\right|\right).
\end{align*}
The second equality follows from the fact that for $\|f\|_{\infty}<\infty$,
$\left\langle f,h\right\rangle \leqslant\left\langle \left|f\right|,\left|h\right|\right\rangle =\left\langle \left|f\right|\cdot\mathrm{sgn}\left(h\right),h\right\rangle $,
that is, equality holds for $f=g\cdot\mathrm{sgn}\left(h\right)$
with $0\leqslant g\leqslant1$ and the supremum is reached for $g=1$.
Similarly, for the first equality, for $\left\Vert f\right\Vert _{\mathrm{osc}}\leqslant1$
the condition 
\[
\mu\left(h\right)=0\iff\mu\left(h\cdot\mathbf{1}\left[h>0\right]\right)=-\mu\left(h\cdot\mathbf{1}\left[h<0\right]\right)=\frac{1}{2}\mu\left(\left|h\right|\right)
\]
implies that the supremum is reached, for example, for $g=1/2$. The
case $h=0$ is straightforward. Consequently 
\begin{align*}
\sup\left\{ \left\Vert P^{n}f\right\Vert _{2}:f\in\ELL_{0}\left(\mu\right),\left\Vert f\right\Vert _{\mathrm{osc}}\leqslant1\right\} \\
=\frac{1}{2}\cdot\sup & \left\{ \mu\left(\left|\left(P^{*}\right)^{n}g\right|\right):g\in\ELL_{0}\left(\mu\right),\left\Vert g\right\Vert _{2}\leqslant1\right\} ,
\end{align*}
and the first statement follows. For any $g\in\ELL_{0}\left(\mu\right)$
and $n\in\mathbb{N}$, we have
\[
\frac{1}{2}\cdot\mu\left(\left|\left(P^{*}\right)^{n}g\right|\right)\leqslant\gamma_{\star}^{1/2}\left(n\right)\cdot\left\Vert g\right\Vert _{2}\,,
\]
 and for $\nu$ a probability measure such that $\nu\ll\mu$ and $g:=\frac{\mathrm{d}\nu}{\mathrm{d}\mu}-1\in\ELL_{0}\left(\mu\right)$,
we obtain that 
\begin{align*}
\frac{1}{2}\cdot\mu\left(\left|\left(P^{*}\right)^{n}\left(\frac{\mathrm{d}\nu}{\mathrm{d}\mu}-1\right)\right|\right) & =\frac{1}{2}\cdot\mu\left(\left|\frac{\mathrm{d}\left(\nu P^{n}\right)}{\mathrm{d}\mu}-1\right|\right)\\
 & =\left\Vert \nu P^{n}-\mu\right\Vert _{\mathrm{TV}}\,,
\end{align*}
and the second statement follows. Assume now that for any $n\in\mathbb{N}$
and probability $\nu$ obtained as $\nu=g\cdot\mu/\mu\left(g\right)$
for $g\in\ELL\left(\mu\right)$ non-negative, it holds that 
\[
\left\Vert \nu P^{n}-\mu\right\Vert _{\mathrm{TV}}\leqslant\gamma^{1/2}\left(n\right)\cdot\chi^{2}\left(\nu,\mu\right)^{1/2}.
\]
Noting that ${\rm d}\nu/{\rm d}\mu=g/\mu\left(g\right)$, we can conversely
deduce that
\[
\frac{1}{2}\cdot\mu\left(\left|\left(P^{*}\right)^{n}g-\mu\left(g\right)\right|\right)\leqslant\gamma^{1/2}\left(n\right)\cdot\left\Vert g-\mu\left(g\right)\right\Vert _{2}.
\]
For $g\in\ELL\left(\mu\right)$, taking the decomposition into positive
and negative parts $g=g_{+}-g_{-}$ shows that
\begin{align*}
 & \qquad\frac{1}{4}\cdot\mu\left(\left|\left(P^{*}\right)^{n}\left(g-\mu\left(g\right)\right)\right|\right)^{2}\\
 & \leqslant\frac{1}{4}\cdot\mu\left(\left|\left(P^{*}\right)^{n}\left(g_{+}-\mu\left(g_{+}\right)\right)\right|+\left|\left(P^{*}\right)^{n}\left(g_{-}-\mu\left(g_{-}\right)\right)\right|\right)^{2}\\
 & \leqslant\frac{1}{2}\cdot\mu\left(\left|\left(P^{*}\right)^{n}\left(g_{+}-\mu\left(g_{+}\right)\right)\right|\right)^{2}+\frac{1}{2}\cdot\mu\left(\left|\left(P^{*}\right)^{n}\left(g_{-}-\mu\left(g_{-}\right)\right)\right|\right)^{2}\\
 & \leqslant2\cdot\gamma\left(n\right)\cdot\left\{ \left\Vert g_{+}-\mu\left(g_{+}\right)\right\Vert _{2}^{2}+\left\Vert g_{-}-\mu\left(g_{-}\right)\right\Vert _{2}^{2}\right\} \\
 & =2\cdot\gamma\left(n\right)\cdot\left\Vert g-\mu\left(g\right)\right\Vert _{2}^{2},
\end{align*}
from which we may conclude the last part.
\end{proof}
The first part of Proposition~\ref{thm:convergence-duality} seems
relevant in the reversible scenario only. However, as mentioned in
\cite{rockner2001weak}, the following non quantitative result holds
even without reversibility. 
\begin{prop}
The following are equivalent:

\begin{equation}
\lim_{n\to\infty}\sup_{f:\left\Vert f\right\Vert _{2}\leqslant1}\left\Vert P^{n}f-\mu\left(f\right)\right\Vert _{1}=0,\label{eq:l1_conv-disc}
\end{equation}
 and
\begin{equation}
\lim_{n\to\infty}\sup_{f:\left\Vert f\right\Vert _{\infty}\leqslant1}\left\Vert P^{n}f-\mu\left(f\right)\right\Vert _{2}=0.\label{eq:l2_conv-disc}
\end{equation}
\end{prop}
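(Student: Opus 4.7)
My plan is to prove the equivalence quantitatively, by producing two explicit comparison inequalities between the operator norms
\[
A_{n} := \sup_{\|f\|_{2}\leqslant1}\|P^{n}f-\mu(f)\|_{1}, \qquad B_{n} := \sup_{\|f\|_{\infty}\leqslant1}\|P^{n}f-\mu(f)\|_{2},
\]
namely $B_{n}\leqslant\sqrt{2 A_{n}}$ and $A_{n}\leqslant 2\sqrt{2 B_{n}}$. Either bound delivers one direction of the equivalence at once, and crucially no appeal to reversibility of $P$ is required, consistent with the remark preceding the statement.

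For the implication (\ref{eq:l1_conv-disc})$\Rightarrow$(\ref{eq:l2_conv-disc}), I would use a one-step interpolation. Given $f$ with $\|f\|_{\infty}\leqslant1$, set $h := P^{n}f-\mu(f)$. Markov contractivity of $P$ on $L^{\infty}(\mu)$ (from $P\mathbf{1}=\mathbf{1}$) gives $\|h\|_{\infty}\leqslant2$; the inclusion $\|f\|_{2}\leqslant\|f\|_{\infty}\leqslant1$ (which holds since $\mu$ is a probability measure) combined with the definition of $A_{n}$ yields $\|h\|_{1}\leqslant A_{n}$. The elementary estimate $\|h\|_{2}^{2}\leqslant\|h\|_{\infty}\cdot\|h\|_{1}$ then produces $\|h\|_{2}^{2}\leqslant 2A_{n}$, and taking the supremum over admissible $f$ concludes this direction.

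The converse direction requires a truncation argument. Fix $f$ with $\|f\|_{2}\leqslant1$ and a threshold $H>0$, and split $f=f_{H}+f^{H}$ with $f_{H}:=f\cdot\mathbf{1}\{|f|\leqslant H\}$. The contribution from $f_{H}$ is controlled using $\|\cdot\|_{1}\leqslant\|\cdot\|_{2}$ on a probability space together with $\|f_{H}\|_{\infty}\leqslant H$ and the definition of $B_{n}$, giving $\|P^{n}f_{H}-\mu(f_{H})\|_{1}\leqslant B_{n}H$. For the tail, $\mu$-invariance makes $P^{n}$ an $L^{1}(\mu)$-contraction, so $\|P^{n}f^{H}\|_{1}+|\mu(f^{H})|\leqslant 2\|f^{H}\|_{1}$, and the pointwise bound $|f|\,\mathbf{1}\{|f|>H\}\leqslant f^{2}/H$ together with $\|f\|_{2}\leqslant1$ yields $\|f^{H}\|_{1}\leqslant 1/H$. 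Combining these via the triangle inequality gives $\|P^{n}f-\mu(f)\|_{1}\leqslant B_{n}H+2/H$ uniformly in $f$, and optimising at $H=\sqrt{2/B_{n}}$ delivers the claimed inequality.

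I do not expect a genuine technical obstacle here: the argument relies only on the standard $L^{1}$/$L^{\infty}$-contractivity of $P^{n}$ (inherited from $\mu$-invariance and $P\mathbf{1}=\mathbf{1}$), the interpolation inequality $\|h\|_{2}^{2}\leqslant\|h\|_{\infty}\|h\|_{1}$, and a Chebyshev-type tail truncation. The only design choice is the truncation level $H$; the bounds obtained are actually quantitative but lossy (each implication costs a square root), which is consistent with the paper's qualitative phrasing of the result.
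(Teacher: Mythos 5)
Your proof is correct and follows essentially the same route as the paper: the $\mathrm{L}^{\infty}\!\to\!\mathrm{L}^{2}$ direction via the interpolation $\|h\|_{2}^{2}\leqslant\|h\|_{\infty}\cdot\|h\|_{1}$ with $\|h\|_{\infty}\leqslant2$, and the converse via truncation at a level $H$, controlling the truncated part through the $\mathrm{L}^{2}$ convergence and the tail through the $\mathrm{L}^{1}$-contractivity of $P^{n}$ together with a Chebyshev-type estimate. The only difference is cosmetic: you package the argument as explicit two-sided inequalities $B_{n}\leqslant\sqrt{2A_{n}}$ and $A_{n}\leqslant2\sqrt{2B_{n}}$ with an optimised threshold, whereas the paper fixes $\epsilon$, takes $K=4/\epsilon$ and bounds the tail by Cauchy--Schwarz; the underlying estimates coincide.
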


\begin{proof}
We start with (\ref{eq:l1_conv-disc})$\Rightarrow$ (\ref{eq:l2_conv-disc}).
So consider $f$ with $\left\Vert f\right\Vert _{\infty}\leqslant1$.
\begin{align*}
\left\Vert P^{n}f-\mu\left(f\right)\right\Vert _{2}^{2} & =\int\left|P^{n}f-\mu\left(f\right)\right|\cdot\left|P^{n}f-\mu\left(f\right)\right|\,\dif\mu\\
 & \leqslant2\cdot\int\left|P^{n}f-\mu\left(f\right)\right|\,\dif\mu,
\end{align*}
and this final expression converges uniformly over $f$ to 0 by (\ref{eq:l1_conv-disc}),
since $\left\{ f:\left\Vert f\right\Vert _{\infty}\leqslant1\right\} \subseteq\left\{ f:\mu\left(f^{2}\right)\leqslant1\right\} $.
We now consider the converse, (\ref{eq:l2_conv-disc})$\Rightarrow$
(\ref{eq:l1_conv-disc}). Without loss of generality, we may consider
$f\in\mathcal{F}:=\left\{ g\in{\rm L}_{0}^{2}\left(\mu\right):\left\Vert g\right\Vert _{2}\leqslant1\right\} $.
Let $\epsilon>0$ be arbitrary; we will show that for $n$ large enough,
$\sup_{f:\left\Vert f\right\Vert _{2}\le1}\int\left|P^{n}f\right|\,\dif\mu\leqslant\epsilon$.
Take $K=4/\epsilon$ and $N$ large enough such that
\[
\sup_{g:\left\Vert g\right\Vert _{\infty}\leqslant K}\left\Vert P^{N}g-\mu\left(g\right)\right\Vert _{2}\leqslant\frac{\epsilon}{2},
\]
which is valid due to (\ref{eq:l2_conv-disc}). Decomposing an arbitrary
$f\in\mathcal{F}$ as $f=f\cdot{\bf 1}_{A}+f\cdot{\bf 1}_{A^{\complement}}$
for $A\in\mathscr{E}$, we have
\[
\int\left|P^{N}f\right|\,\dif\mu\leqslant\int\left|P^{N}\left(f\cdot{\bf 1}_{A}\right)\right|\,\dif\mu+\int\left|P^{N}\left(f\cdot{\bf 1}_{A^{\complement}}\right)\right|\,\dif\mu,
\]
by Minkowski's inequality. Now by Jensen's inequality, $\mu$-invariance
of $P^{N}$, and the Cauchy--Schwarz inequality, we see that
\[
\int\left|P^{N}\left(f\cdot{\bf 1}_{A^{\complement}}\right)\right|\,\dif\mu\leqslant\int\left|f\cdot{\bf 1}_{A^{\complement}}\right|\,\dif\mu\leqslant\left\Vert f\right\Vert _{2}\cdot\mu\left(A^{\complement}\right)^{1/2}\leqslant\mu\left(A^{\complement}\right)^{1/2}.
\]
Taking $A=\left\{ x\in\mathsf{E}:\left|f\left(x\right)\right|\leqslant K\right\} $,
we obtain by Markov's inequality that
\[
\mu\left(A^{\complement}\right)=\mu\left({\bf 1}_{\left|f\right|^{2}>K^{2}}\right)\leqslant K^{-2}.
\]
From 
\[
\left|\mu\left(f\cdot{\bf 1}_{A^{\complement}}\right)\right|\leqslant\mu\left(\left|f\cdot{\bf 1}_{A^{\complement}}\right|\right)\leqslant K^{-1},
\]
and $\mu\left(f\right)=0$, we also obtain that $\left|\mu\left(f\cdot{\bf 1}_{A}\right)\right|\leqslant K^{-1}$.
Finally, we deduce that
\begin{align*}
\int\left|P^{N}f\right|\,\dif\mu & \leqslant\int\left|P^{N}\left(f\cdot{\bf 1}_{A}\right)\right|\,\dif\mu+\int\left|P^{N}\left(f\cdot{\bf 1}_{A^{\complement}}\right)\right|\,\dif\mu.\\
 & \leqslant\int\left|P^{N}\left(f\cdot{\bf 1}_{A}\right)-\mu\left(f\cdot{\bf 1}_{A}\right)\right|\,\dif\mu+\left|\mu\left(f\cdot{\bf 1}_{A}\right)\right|+K^{-1}\\
 & \leqslant\frac{\epsilon}{2}+2\cdot K^{-1}\\
 & \leqslant\epsilon.
\end{align*}
Since $f\in\mathcal{F}$ was arbitrary, the result follows. 
\end{proof}

\subsection{${\rm L}^{2}$ and ${\rm L}^{1}$ subgeometric convergence rates
can be different and CLT}

Proposition~\ref{thm:convergence-duality} yields that given a bound
on the decay of the total variation distance of the form $\left\Vert \nu P^{n}-\mu\right\Vert _{\mathrm{TV}}\leqslant\gamma^{1/2}\left(n\right)\cdot\chi^{2}\left(\nu,\mu\right)^{1/2}$,
one can deduce an $\mathrm{L}^{2}$ bound on the convergence of bounded
functions as $\left\Vert P^{n}f-\mu\left(f\right)\right\Vert _{2}\leqslant\sqrt{2}\cdot\gamma^{1/2}\left(n\right)\cdot\left\Vert f\right\Vert _{\mathrm{osc}}$.
Note that this transition can be lossy, and that in general, TV and
$\mathrm{L}^{2}$ rates of convergence need not match in the subgeometric
setting. Indeed, in Example~\ref{exa:poly}, we will exhibit a Markov
chain whose (subgeometric) rate of convergence in TV is a strict improvement
on what is implied by its $\mathrm{L}^{2}$ convergence rate. This
may be contrasted with the convergence rates of reversible, geometrically
ergodic Markov chains, which are always the same by \cite[Theorem~3]{roberts2001geometric}.

A secondary point is that standard techniques to derive TV convergence
bounds generally do not lead to tight and convenient estimates of
the type given above. Indeed, consider the scenario where $P$ is
$\mu\shortminus$irreducible and aperiodic, therefore implying that
for $\mu\shortminus$almost all $x\in\mathsf{E}$, $\lim_{n\rightarrow\infty}\left\Vert \delta_{x}P^{n}-\mu\right\Vert _{\mathrm{TV}}=0$.
Recalling that for a signed measure $\nu$ with $\nu\left(1\right)=0$,
we have $\left\Vert \nu\right\Vert _{\mathrm{TV}}=\sup\left\{ \nu\left(f\right):\left\Vert f\right\Vert _{\mathrm{osc}}\leqslant1\right\} $,
we see that
\begin{align*}
\sup\left\{ \left(P^{n}f\right)\left(x\right)-\mu\left(f\right):\left\Vert f\right\Vert _{\mathrm{osc}}\leqslant1\right\}  & =\left\Vert \delta_{x}P^{n}-\mu\right\Vert _{\mathrm{TV}}\\
\implies\left\Vert P^{n}f-\mu\left(f\right)\right\Vert _{2} & \leqslant\mu\left(\left\Vert \delta_{x}P^{n}-\mu\right\Vert _{\mathrm{TV}}^{2}\right)^{1/2}\cdot\left\Vert f\right\Vert _{\mathrm{osc}}.
\end{align*}
Applying the dominated convergence theorem yields that $\mu\big(\left\Vert \delta_{x}P^{n}-\mu\right\Vert _{\mathrm{TV}}^{2}\big)\to0$,
and so we obtain that $\left\Vert P^{n}f-\mu\left(f\right)\right\Vert _{2}\to0$
for all bounded functions. However, converting this estimate into
an explicit quantitative control on the $\mathrm{L}^{2}$ rate of
convergence can prove challenging in practice. In particular, since
TV convergence bounds are often in the form $\left\Vert \delta_{x}P^{n}-\mu\right\Vert _{\mathrm{TV}}\leqslant\min\left\{ 1,V\left(x\right)\cdot\gamma\left(n\right)\right\} $
for choices of $V$, $\gamma$ resulting from a tradeoff between rate
of convergence and dependence on the initial condition. As a result,
capturing fast decay of $n\mapsto\mu\big(\left\Vert \delta_{x}P^{n}-\mu\right\Vert _{\mathrm{TV}}^{2}\big)$
is not particularly simple in general. For example, under the assumption
that $\mu\left(V^{2}\right)<\infty$, one can obtain that $\left\Vert P^{n}f-\mu\left(f\right)\right\Vert _{2}\leqslant\frac{1}{2}\cdot\left\Vert V\right\Vert _{2}\cdot\gamma\left(n\right)\cdot\left\Vert f\right\Vert _{\mathrm{osc}}$,
but experience shows that enforcing this integrability assumption
requires sacrificing rates of convergence, in particular in the subgeometric
scenario.

\begin{example}
\label{exa:poly}Let $\mathsf{E}=\left[1,\infty\right)$, $a>1$,
\[
P\left(x,\cdot\right)=w\left(x\right)\cdot\nu\left(\cdot\right)+\left\{ 1-w\left(x\right)\right\} \cdot\delta_{x}\left(\cdot\right),
\]
where $\nu$ is a probability measure with density
\[
\nu\left(x\right)=\frac{a-1}{x^{a}},\qquad x\in\mathsf{E},
\]
and $w:\mathsf{E}\to\left(0,1\right]$ given by $w\left(x\right)=x^{-b}$
with $b\in\left(0,a-1\right)$. Then $P$ is $\mu$-reversible, where
the probability measure $\mu$ satisfies
\[
\mu\left({\rm d}x\right)=\frac{\nu\left({\rm d}x\right)\cdot w\left(x\right)^{-1}}{\nu\left(w^{-1}\right)},
\]
and therefore has density 
\[
\mu\left(x\right)=\frac{a-b-1}{x^{a-b}},\qquad x\in\mathsf{E}.
\]
\end{example}

The following proposition establishes that for the Markov chain in
Example~\ref{exa:poly}, ${\rm L}^{2}$ and ${\rm L}^{1}$ subgeometric
convergence rates are different.
\begin{prop}
\label{prop:exa-discrepancy-L1-L2-subgeometric} In Example~\ref{exa:poly},
$P$ satisfies:
\begin{enumerate}
\item $n^{s}\cdot\left\Vert P^{n}\left(x,\cdot\right)-\mu\right\Vert _{{\rm TV}}\to0$
for any $s<\frac{a-b-1}{b}$.
\item $n^{s}\cdot\sup_{f\in\ELL_{0}(\mu):\left\Vert f\right\Vert _{{\rm osc}}=1}\left\Vert P^{n}f\right\Vert _{2}<\infty$
if and only if $s\leqslant\frac{a-b-1}{2b}$.
\end{enumerate}
\end{prop}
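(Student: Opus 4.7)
The plan splits naturally into the $\ELL$ part (b) and the TV part (a). For (b), I expect matching upper and lower bounds to emerge from a $\bigl(\|\cdot\|_{\osc}^{2},\beta\bigr)$--WPI with $\beta(s)\sim s^{-(a-b-1)/b}$, derived directly from the explicit form of $P$. For (a), the faster TV rate reflects the regenerative nature of $P$: every move delivers a fresh $\nu$-sample, so the whole evolution sits inside a standard renewal framework whose polynomial mixing rate beats what duality (Proposition~\ref{thm:convergence-duality}) would extract from (b).

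For the upper bound in (b), the key identities are $\calE(P,f)=\var_{\nu}(f)/Z$ and $\|f\|_{2}^{2}=\nu(f^{2}/w)/Z$ for $f\in\ELL_{0}(\mu)$, where $Z:=\nu(w^{-1})$: the first holds because the holding part of $P$ contributes nothing to the Dirichlet form and $w\cdot\mu=\nu/Z$, the second is direct, using $\mu(f)=0\iff\nu(f/w)=0$. Setting $g:=f-\nu(f)$ one checks $\nu(g)=0$, $\|g\|_{\infty}\leqslant\|f\|_{\osc}$, $\var_{\nu}(g)=\var_{\nu}(f)$, and (since $\nu(f/w)=0$) the identity $\nu(g^{2}/w)-\nu(f^{2}/w)=\nu(f)^{2}\nu(w^{-1})\geqslant0$, so one may bound $\|f\|_{2}^{2}$ by $\nu(g^{2}/w)/Z$. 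Splitting $\nu(g^{2}/w)=(a-1)\int_{1}^{\infty}g^{2}(x)\,x^{b-a}\,\dif x$ at a threshold $H$, the inner piece is at most $H^{b}\var_{\nu}(g)$ and the outer piece is at most $\tfrac{a-1}{a-b-1}\|g\|_{\infty}^{2}H^{-(a-b-1)}$; choosing $s:=H^{b}$ yields a $\bigl(\|\cdot\|_{\osc}^{2},\beta\bigr)$--WPI for $P$ with $\beta(s)\lesssim s^{-(a-b-1)/b}$. A short calculation gives $\langle f,Pf\rangle_{\mu}=\nu(f)^{2}/Z+\int(1-w)f^{2}\,\dif\mu\geqslant0$, so $P$ is a positive operator; combined with reversibility this yields $\calE(P^{*}P,f)=\calE(P^{2},f)\geqslant\calE(P,f)$, so the same WPI transfers to $P^{*}P$. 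Theorem~\ref{thm:WPI_F_bd} and Example~\ref{exa:frombetatogamma-poly} then give $\|P^{n}f\|_{2}^{2}\leqslant C\,n^{-(a-b-1)/b}\|f\|_{\osc}^{2}$, hence the critical exponent $s=(a-b-1)/(2b)$.

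For the lower bound in (b), the holding probability is $P(x,\{x\})=1-x^{-b}$, so $A_{\varepsilon}=\{x:P(x,\{x\})\geqslant1-\varepsilon\}=[\varepsilon^{-1/b},\infty)$ and $\mu(A_{\varepsilon})=\varepsilon^{(a-b-1)/b}$ exactly. Example~\ref{exa:beta-rej-lower-bound-rate}, with its exponent equal to $(a-b-1)/b$, then yields $\beta^{\star}\bigl(s;\|\cdot\|_{\osc}^{2}\bigr)\in\Omega\bigl(s^{-(a-b-1)/b}\bigr)$, and Proposition~\ref{prop:rev-L2-conv-rate-lower-bound} (applicable since $P$ is reversible) precludes $\|P^{n}f\|_{2}^{2}\in\mathcal{O}(n^{-q})$ for any $q>(a-b-1)/b$. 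This rules out any $s>(a-b-1)/(2b)$ in the statement.

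For (a), I would exploit the renewal structure directly. Decomposing on the first move, $\pi_{n}:=\nu P^{n}$ satisfies the renewal equation $\pi_{n}(A)=r_{n}(A)+\sum_{k=1}^{n}p_{k}\pi_{n-k}(A)$ with $p_{k}=\int w(y)(1-w(y))^{k-1}\nu(\dif y)$ and $r_{n}(A)=\int_{A}(1-w(y))^{n}\nu(\dif y)$. The substitution $u=y^{-b}$ identifies $p_{k}=\tfrac{a-1}{b}B\bigl((a-1)/b+1,\,k\bigr)$, giving $p_{k}\sim Ck^{-1-(a-1)/b}$ and $\sum_{j>k}p_{j}\sim k^{-(a-1)/b}$. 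Classical polynomial-tail renewal asymptotics (Erickson; Garsia--Lamperti) applied to this equation then yield $\|\pi_{n}-\mu\|_{\mathrm{TV}}=o(n^{-s})$ for every $s<(a-b-1)/b$ (the standard loss of one power when passing from inter-arrival tail exponent $(a-1)/b$ to solution-to-stationarity exponent). The passage from $\pi_{n}$ to $\delta_{x}P^{n}$ uses $\delta_{x}P^{n}=(1-w(x))^{n}\delta_{x}+\sum_{k=1}^{n}w(x)(1-w(x))^{k-1}\pi_{n-k}$: the first term decays geometrically for fixed $x$, and the mixture inherits the polynomial TV rate since the geometric first-move distribution has all moments. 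The step I expect to be the main obstacle is isolating the exponent $(a-b-1)/b$ cleanly --- either one invokes a general polynomial renewal theorem (taking care about the signed nature of $\pi_{n}-\mu$) or one runs a tailored regenerative coupling, and getting the shift-of-power right is the delicate point.
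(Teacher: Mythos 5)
Your part (b) is essentially the paper's own argument: the identities $\calE\left(P,f\right)=\nu\left(w^{-1}\right)^{-1}\cdot\var_{\nu}\left(f\right)$ and the positivity of $P$ are exactly those of Lemma~\ref{lem:eg-positive}, your threshold split of $\nu\left(g^{2}/w\right)$ is an inlined proof of the change-of-measure bound that the paper isolates as Lemma~\ref{lem:compare-measures}, and your lower bound via $A_{\varepsilon}=[\varepsilon^{-1/b},\infty)$, $\mu\left(A_{\varepsilon}\right)=\varepsilon^{\left(a-b-1\right)/b}$, Example~\ref{exa:beta-rej-lower-bound-rate} and Proposition~\ref{prop:rev-L2-conv-rate-lower-bound} is precisely Lemma~\ref{lem:eg-negative}, including the step that a uniform bound $n^{s}\sup_{f}\left\Vert P^{n}f\right\Vert _{2}\leqslant M$ with $2s>\left(a-b-1\right)/b$ would contradict the minimal $\beta^{\star}$. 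Part (a) is where you genuinely diverge: the paper proves the TV rate by exhibiting a polynomial drift condition $PV\leqslant V-\frac{1}{2}V^{\alpha}+\nu\left(V\right)\mathbf{1}_{C}$ for $V\left(x\right)=x^{k}$, $k<a-1$, $\alpha=1-b/k$, and invoking \cite[Theorem~3.6]{jarner2002polynomial}, which yields the rate $\left(k-b\right)/b$, arbitrarily close to $\left(a-b-1\right)/b$ (Lemma~\ref{lem:jump-example-drift}); you instead use the exact regenerative structure, which is viable and in some ways more transparent here, since the cycle law is explicit, $p_{k}=\frac{a-1}{b}\cdot B\left(\frac{a-1}{b}+1,k\right)$, so the regeneration time has finite moments of every order $r<\left(a-1\right)/b$. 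One caveat: the results you name (Erickson, Garsia--Lamperti) belong to infinite-mean renewal theory, which is not the regime here since $\left(a-1\right)/b>1$; what your argument actually needs is a rate-of-convergence result for positive recurrent regenerative processes (e.g.\ the coupling theorems of Lindvall or Pitman: a finite $r$-th moment of the regeneration time gives $n^{r-1}\cdot\left\Vert \nu P^{n}-\mu\right\Vert _{\mathrm{TV}}\to0$), which applied for every $r<\left(a-1\right)/b$ gives exactly $s<\left(a-b-1\right)/b$, and your geometric first-jump decomposition then transfers the rate from the $\nu$-start to $\delta_{x}$. With that substitution the renewal route closes; the paper's drift route buys an off-the-shelf citation, while yours explains directly why the exponent is the tail index minus one.
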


\begin{proof}
This follows from Lemmas~\ref{lem:jump-example-drift},~\ref{lem:eg-positive}
and~\ref{lem:eg-negative}.
\end{proof}
\begin{rem}
We end this section with some remarks concerning the existence of
a central limit theorem for trajectories of Markov chains. For $f\in\mathrm{L}_{0}^{2}\left(\mu\right)$
the standard Maxwell--Woodroofe condition \cite{maxwell2000central},
\begin{equation}
\sum_{n=1}^{\infty}n^{-3/2}\left\Vert {\textstyle \sum_{k=0}^{n-1}}P^{k}f\right\Vert _{2}<\infty\,,\label{eq:mw-clt-cond}
\end{equation}
is sufficient to ensure $n^{-1/2}\sum_{i=0}^{n-1}f\left(X_{i}\right)\overset{L}{\to}\mathcal{N}\left(0,\sigma^{2}\right)$
for some $\sigma^{2}>0$. $\left(\Psi,\gamma\right)$- convergence
can therefore provide a direct route to checking this condition in
practice. 

For example, suppose that $P$ is $\big(\left\Vert \cdot\right\Vert _{\mathrm{osc}}^{2},\gamma\big)$-convergent.
Then, by combining (\ref{eq:mw-clt-cond}), Proposition~\ref{prop:cattiaux-et-al-orlicz},
and a summation-by-parts, we can obtain that a central limit theorem
will hold for $f\in\ELL_{0}\left(\mu\right)$ with $\left\Vert f\right\Vert _{N}<\infty$
provided that
\begin{equation}
\sum_{k=1}^{\infty}k^{-1/2}\cdot\gamma_{N}(k)^{1/2}<\infty,\label{eq:sum_k_finite}
\end{equation}
where $\gamma_{N}(k)\lesssim\gamma(k)\cdot N^{-1}\big(\gamma(k)^{-1}\big)^{2}$.

We offer now a brief comparison with \cite[Theorem 21.4.9]{douc2018markov},
a parallel result inspired by drift and minorization techniques. The
starting point of \cite{douc2018markov} is a pair of Young functions
$\left(\phi,\psi\right)$: $\phi,\psi$ are continuous, strictly increasing
with $\phi^{-1}\left(x\right)\cdot\psi^{-1}\left(y\right)\leqslant x+y$,
and it is assumed that convergence to equilibrium in total variation
distance occurs at rate $1/\phi$, in the sense that for some constant
$C>0$, one can bound $\int\mu(\dif x)\cdot\left\Vert P^{n}\left(x,\cdot\right)-\mu\right\Vert _{{\rm TV}}\leqslant C/\phi\left(n\right)$.
The conclusion of \cite[Theorem 21.4.9]{douc2018markov} is that a
central limit theorem holds for functions $f$ satisfying $\left\Vert f\right\Vert _{\tilde{N}}<\infty$
with $\tilde{N}(x)=\psi(x^{2})$. 

In our setting, assuming $\left(\Psi,\gamma\right)$- convergence,
application of Theorem~\ref{thm:convergence-duality} would only
yield estimates of the form $\left\Vert \nu P^{n}-\mu\right\Vert _{{\rm TV}}^{2}\lesssim\phi(k)^{-1}\cdot\chi^{2}\left(\nu,\mu\right)$
and direct application of \cite[Theorem 21.4.9]{douc2018markov} seems
to require additional work.  However one can directly work with condition
(\ref{eq:sum_k_finite}). Setting $\phi(k)=\gamma(k)^{-1}$ and $N(x)=\psi(x^{2})$
a necessary condition for the summability of the series concerned
is that the summands decay faster than $k^{-1}$:
\[
k^{-1/2}\cdot\phi\left(k\right)^{-1/2}\cdot\psi^{-1}\left(\phi\left(k\right)\right)^{-1/2}\ll k^{-1}.
\]
In the polynomial case with rate $\gamma(k)\lesssim k^{-a}$ with
$a>1$ and with $N(x)=x^{p}$, this condition requires $p>\frac{2a}{a-1}=2+\frac{2}{a-1}$,
which precisely matches the threshold obtained in \cite[Corollary~21.4.5]{douc2018markov}.
More generally, this condition is equivalent to 
\[
\phi^{-1}\left(z\right)\cdot\psi^{-1}\left(z\right)\ll z,
\]
which closely mirrors the Young function condition of \cite[Theorem 21.4.9]{douc2018markov}.
We leave further investigations of this relation to future work.

\end{rem}

\begin{lem}
\label{lem:jump-example-drift}In Example~\ref{exa:poly}, $P$ satisfies
\[
n^{s}\cdot\left\Vert P^{n}\left(x,\cdot\right)-\mu\right\Vert _{{\rm TV}}\to0,
\]
for any $s<\frac{a-b-1}{b}$.
\end{lem}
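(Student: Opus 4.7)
The plan is to deduce this polynomial total-variation rate from a standard drift-and-minorization argument of the Jarner--Roberts / Douc--Moulines--Soulier type, as developed in \cite{douc2018markov}: given a drift function $V\colon \mathsf{E}\to[1,\infty)$ satisfying $PV \leqslant V - c\,V^{1-\eta} + \kappa\,\mathbf{1}_C$ for some $\eta \in (0,1)$, constants $c,\kappa>0$, and a small set $C$, one obtains a pointwise estimate $\|P^n(x,\cdot)-\mu\|_{\mathrm{TV}} \leqslant C(x)\cdot n^{-(1-\eta)/\eta}$. The desired rate $(a-b-1)/b$ will emerge in the limit as $\eta$ approaches $b/(a-1)$ from above.

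First I would construct a family of drift functions. For $p\in(b,a-1)$, take $V_p(x) := x^p$. Since $p<a-1$, the moment $\nu(V_p) = (a-1)/(a-1-p)$ is finite, and the jump-or-stay structure of $P$ gives
\[
PV_p(x) - V_p(x) = w(x)\,[\nu(V_p) - x^p] = x^{-b}\,\bigl(\nu(V_p) - x^p\bigr).
\]
Choosing $x_0$ so that $x_0^p \geqslant 2\,\nu(V_p)$, on $\{x \geqslant x_0\}$ this quantity is bounded above by $-\tfrac{1}{2}\,x^{p-b} = -\tfrac{1}{2}\,V_p(x)^{1-b/p}$, while on $C:=[1,x_0]$ the increment is bounded by a constant. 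This is exactly a polynomial drift condition outside $C$ with exponent $\eta = b/p \in (0,1)$.

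To verify $C$ is a small set, observe that $P(x,\cdot) \geqslant w(x)\,\nu(\cdot) = x^{-b}\,\nu(\cdot) \geqslant x_0^{-b}\,\nu(\cdot)$ for every $x\in C$, so $C$ is one-step small with minorizing measure proportional to $\nu$. Invoking the polynomial subgeometric ergodicity theorem (cf.~\cite[Chapter~16]{douc2018markov}) then yields
\[
\|P^n(x,\cdot)-\mu\|_{\mathrm{TV}} \leqslant C_p\,V_p(x)\,n^{-(1-\eta)/\eta} = C_p\,x^p\, n^{-(p-b)/b}.
\]
Given any $s < (a-b-1)/b$, pick $p\in(b,a-1)$ with $(p-b)/b > s$; this is possible because $(p-b)/b \to (a-b-1)/b$ as $p \uparrow a-1$. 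Then $n^s\,\|P^n(x,\cdot)-\mu\|_{\mathrm{TV}} \leqslant C_p\,x^p\,n^{s-(p-b)/b} \to 0$. The only subtlety is ensuring consistent conventions for the rate function in the polynomial ergodicity theorem and tracking constants as $p \uparrow a-1$; the drift/small-set construction itself is essentially mechanical once the family $V_p$ is identified.
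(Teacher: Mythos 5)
Your proposal is correct and follows essentially the same route as the paper's proof: the drift function $V(x)=x^{p}$ with $p\in(b,a-1)$, the one-step minorization of $[1,x_{0}]$ by $\nu$, and the polynomial subgeometric ergodicity theorem (the paper cites Jarner--Roberts rather than \cite{douc2018markov}, which is immaterial), yielding the rate $(p-b)/b\uparrow(a-b-1)/b$. No gaps to report.
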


\begin{proof}
We first identify a drift function $V$ such that
\[
PV\leqslant V-c\cdot V^{\alpha}+b\cdot{\bf 1}_{C},
\]
with $C$ a small set. Let $k\in\left(0,a-1\right)$, $\alpha=1-b/k$
and define $V\left(x\right)=x^{k}$. We observe that
\[
\nu\left(V\right)=\frac{a-1}{a-k-1}<\infty,
\]
since $k<a-1$. Take $C=\left[1,x_{0}\right]$, where
\[
x_{0}=\left\{ 2\cdot\frac{a-1}{a-k-1}\right\} ^{\frac{1}{\alpha\cdot k}}.
\]
It follows that for $x\in\mathsf{E}$,
\begin{align*}
PV\left(x\right) & =V\left(x\right)-w\left(x\right)\cdot V\left(x\right)+w\left(x\right)\cdot\nu\left(V\right)\\
 & \leqslant V\left(x\right)-w\left(x\right)\cdot V\left(x\right)+\nu\left(V\right)\\
 & =V\left(x\right)-x^{k-b}+\nu\left(V\right)\\
 & =V\left(x\right)-V\left(x\right)^{\alpha}+\nu\left(V\right)
\end{align*}
The choice of $x_{0}$ implies that for $x\in C^{\complement}$, it
holds that
\[
V\left(x\right)^{\alpha}\geqslant x_{0}^{k\alpha}=2\cdot\nu\left(V\right),
\]
and hence, we may verify that
\[
PV\left(x\right)\leqslant V\left(x\right)-\frac{1}{2}V\left(x\right)^{\alpha}+\nu\left(V\right)\cdot{\bf 1}_{C}\left(x\right).
\]
Since $P\left(x,\cdot\right)\geqslant w\left(x_{0}\right)\cdot\nu\left(\cdot\right)$
on $C$, we verify that $C$ is small. By \cite[Theorem~3.6]{jarner2002polynomial},
we may conclude that the rate of convergence in total variation is
given by 
\[
\frac{\alpha}{1-\alpha}=\frac{k-b}{b},
\]
which can be made arbitrarily close to $\left(a-b-1\right)/b$.
\end{proof}
\begin{lem}
\label{lem:compare-measures}Let $\nu\ll\mu$ be probability measures
and let
\[
w=\frac{{\rm d}\mu}{{\rm d}\nu},
\]
with the abuse of notation that we allow $w=\infty$ on regions $A$
where $\mu\left(A\right)>0$ and $\nu\left(A\right)=0$. Then for
any bounded function $f$, it holds that
\[
{\rm var}_{\mu}\left(f\right)\leqslant s\cdot{\rm var}_{\nu}\left(f\right)+\mu\left(w>s\right)\cdot\left\Vert f\right\Vert _{{\rm osc}}^{2}.
\]
\end{lem}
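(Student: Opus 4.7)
The plan is to exploit the variational characterization of variance, namely that $\mathrm{var}_\mu(f) = \inf_{c \in \mathbb{R}} \mu((f - c)^2)$, so that for any constant $c$ we have the upper bound $\mathrm{var}_\mu(f) \leqslant \mu((f - c)^2)$. The natural choice here is $c := \nu(f)$, since we want a direct comparison between the $\mu$- and $\nu$-variances, and the $\nu$-variance is realized at $c = \nu(f)$.

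Next I would rewrite the integral against $\mu$ as an integral against $\nu$ via the Radon--Nikodym derivative, writing $\mu(\mathrm{d}x) = w(x)\, \nu(\mathrm{d}x)$, and then split the domain of integration according to the level set of $w$ at height $s$:
\[
\mathrm{var}_\mu(f) \leqslant \int (f - \nu(f))^2 \, w \, \mathrm{d}\nu = \int_{\{w \leqslant s\}} (f - \nu(f))^2 \, w \, \mathrm{d}\nu + \int_{\{w > s\}} (f - \nu(f))^2 \, w \, \mathrm{d}\nu.
\]
On $\{w \leqslant s\}$ one bounds $w$ by $s$ and extends the integral back to all of $\mathsf{E}$ to recover $s \cdot \mathrm{var}_\nu(f)$. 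On $\{w > s\}$ one pulls out the supremum of $(f - \nu(f))^2$ and observes that $\int_{\{w > s\}} w \, \mathrm{d}\nu = \mu(w > s)$ by definition of $w$.

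The only remaining step is to replace $\|f - \nu(f)\|_\infty^2$ by $\|f\|_{\mathrm{osc}}^2$. Since $\inf f \leqslant \nu(f) \leqslant \sup f$, we have $|f(x) - \nu(f)| \leqslant \sup f - \inf f = \|f\|_{\mathrm{osc}}$ pointwise, and squaring gives the desired bound.

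I do not anticipate a genuine obstacle here: the proof is essentially a one-line computation once the level-set decomposition of $w$ is written down. The only minor care needed is the convention that $w$ may equal $+\infty$ on a $\mu$-positive, $\nu$-null set $A$: that contribution is absorbed harmlessly in the second term, since $\{w > s\} \supseteq A$ for every $s > 0$ and $\int_A w \, \mathrm{d}\nu = \mu(A)$ under the convention $\infty \cdot 0 = 0$ applied as $\mu(A)$ directly.
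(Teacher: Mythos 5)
Your proposal is correct and follows essentially the same route as the paper: the variational bound $\mathrm{var}_\mu(f)\leqslant\mu\left(\left(f-\nu\left(f\right)\right)^{2}\right)$ with $c=\nu(f)$, the split along the level set $\left\{ w\leqslant s\right\}$, and the observation that $\nu(f)$ lies between the extremes of $f$ so that $\left|f-\nu(f)\right|\leqslant\left\Vert f\right\Vert _{\mathrm{osc}}$. The only cosmetic difference is that the paper bounds the $\left\{ w>s\right\}$ term directly as a $\mu$-integral (avoiding any rewriting against $\nu$ there, which sidesteps the $w=\infty$ convention entirely) and phrases the last step with $\mu$-essential suprema, using $\nu\ll\mu$ to place $\nu(f)$ inside the $\mu$-essential range of $f$.
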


\begin{proof}
Observe that for any $c\in\mathbb{R}$,
\[
{\rm var}_{\mu}\left(f\right)=\inf_{t}\mu\left(\left(f-t\right)^{2}\right)\leqslant\mu\left(\left(f-c\right)^{2}\right).
\]
Take $c=\nu\left(f\right)$ and let $s>0$. Define $A\left(s\right)=\left\{ x:w\left(x\right)\leqslant s\right\} $.
It follows that
\begin{align*}
\mu\left(\left(f-c\right)^{2}\right) & =\mu\left({\bf 1}_{A\left(s\right)}\cdot\left(f-c\right)^{2}\right)+\mu\left({\bf 1}_{A\left(s\right)^{\complement}}\cdot\left(f-c\right)^{2}\right)\\
 & \leqslant s\cdot\nu\left({\bf 1}_{A\left(s\right)}\cdot\left(f-c\right)^{2}\right)+\mu\left(A\left(s\right)^{\complement}\right)\cdot\left\Vert f\right\Vert _{{\rm osc}}^{2}\\
 & \leqslant s\cdot\nu\left(\left(f-c\right)^{2}\right)+\mu\left(w>s\right)\cdot\left\Vert f\right\Vert _{{\rm osc}}^{2}\\
 & =s\cdot\mathrm{var}_{\nu}\left(f\right)+\mu\left(w>s\right)\cdot\left\Vert f\right\Vert _{{\rm osc}}^{2}
\end{align*}
where we have used the fact that since $c=\nu\left(f\right)$ and
$\nu\ll\mu$,
\[
\mathrm{ess_{\mu}}\inf f\leqslant\mathrm{ess_{\nu}}\inf f\leqslant c\leqslant\mathrm{ess_{\nu}}\sup f\leqslant\mathrm{ess_{\mu}}\sup f.
\]
\end{proof}
\begin{lem}
\label{lem:eg-positive}In Example~\ref{exa:poly}, $P$ satisfies
\[
\sup_{f\in\ELL_{0}\left(\pi\right):\left\Vert f\right\Vert _{{\rm osc}}=1}\left\Vert P^{n}f\right\Vert _{2}^{2}\leqslant C\cdot n^{-\frac{a-b-1}{b}},
\]
for some $C>0$.
\end{lem}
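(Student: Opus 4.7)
The plan is to establish a $(\|\cdot\|_{\mathrm{osc}}^{2},\beta)$--WPI for $P^{*}P$ with polynomial decay $\beta(s) = C_{0}\,s^{-(a-b-1)/b}$ and then invoke Theorem~\ref{thm:WPI_F_bd} together with Example~\ref{exa:frombetatogamma-poly} to read off the claimed rate. Note that $\|\cdot\|_{\mathrm{osc}}^{2}$ is $P$-non-expansive for any Markov kernel, so the sieve hypothesis of Theorem~\ref{thm:WPI_F_bd} is automatic.

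The first step is to compute the Dirichlet form. A short calculation using $Pf(x) = w(x)\nu(f) + (1-w(x))f(x)$ gives $(I-P)f = w\cdot(f-\nu(f))$, so that $\mathcal{E}(P,f) = \langle (I-P)f,f\rangle_{\mu} = \mu(w(f-\nu(f))f)$. Using $\mu(wg) = \nu(g)/\nu(w^{-1})$, this simplifies to $\mathcal{E}(P,f) = \mathrm{var}_{\nu}(f)/\nu(w^{-1})$, where $\nu(w^{-1}) = (a-1)/(a-b-1) < \infty$ by the assumption $b < a-1$.

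The second step is to compare variances. Since $\mathrm{d}\mu/\mathrm{d}\nu(x) = (a-b-1)(a-1)^{-1}\cdot x^{b}$, Lemma~\ref{lem:compare-measures} gives, for every $t>0$,
\[
\mathrm{var}_{\mu}(f) \leqslant t\cdot\mathrm{var}_{\nu}(f) + \mu\!\left(\mathrm{d}\mu/\mathrm{d}\nu > t\right)\cdot\|f\|_{\mathrm{osc}}^{2},
\]
and an explicit computation of the tail of $\mu$ yields $\mu(\mathrm{d}\mu/\mathrm{d}\nu > t) = C_{1}\,t^{-(a-b-1)/b}$. Substituting the Dirichlet form identity and rescaling $t = s/\nu(w^{-1})$ produces the desired $(\|\cdot\|_{\mathrm{osc}}^{2},\beta_{P})$--WPI for $P$, with $\beta_{P}(s) = C_{0}\,s^{-(a-b-1)/b}$.

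The third step lifts this to a WPI for $P^{*}P$. Since $P$ is $\mu$-reversible, $P^{*}P = P^{2}$. The key observation is that $P$ is a nonnegative operator on $\mathrm{L}^{2}(\mu)$:
\[
\langle Pf,f\rangle_{\mu} = \nu(f)^{2}/\nu(w^{-1}) + \mu\!\left((1-w)f^{2}\right) \geqslant 0,
\]
using $0\leqslant w\leqslant 1$. Combined with self-adjointness this places the spectrum of $P$ in $[0,1]$, and the spectral theorem then gives $\lambda^{2}\leqslant\lambda$ pointwise, i.e.\ $\mathcal{E}(P,f)\leqslant\mathcal{E}(P^{2},f) = \mathcal{E}(P^{*}P,f)$. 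Hence the same $\beta_{P}$ also certifies a $(\|\cdot\|_{\mathrm{osc}}^{2},\beta_{P})$--WPI for $P^{*}P$. Applying Example~\ref{exa:frombetatogamma-poly} with $c_{1} = (a-b-1)/b$ and Theorem~\ref{thm:WPI_F_bd}\ref{enu:thm-WPI-F-bd-rate} then produces $\gamma(n) \leqslant C\,n^{-(a-b-1)/b}$, which is the claimed bound. The main obstacle is the transfer from $\mathcal{E}(P,\cdot)$ to $\mathcal{E}(P^{*}P,\cdot)$ (where working directly with $\mathcal{E}(P^{2},\cdot)$ is less convenient), which is precisely why the positivity of $P$ as an operator is needed; the remainder is careful bookkeeping of the constants through the rescaling $t \leftrightarrow s$.
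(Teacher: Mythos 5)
Your proposal is correct and follows essentially the same route as the paper: compute $\mathcal{E}(P,f)=\nu(w^{-1})^{-1}\,\mathrm{var}_{\nu}(f)$, transfer $\mathrm{var}_{\mu}$ to $\mathrm{var}_{\nu}$ via Lemma~\ref{lem:compare-measures} to get a polynomial $\beta$, use positivity of $P$ to pass from $\mathcal{E}(P,\cdot)$ to $\mathcal{E}(P^{*}P,\cdot)=\mathcal{E}(P^{2},\cdot)$, and conclude with Theorem~\ref{thm:WPI_F_bd} and Example~\ref{exa:frombetatogamma-poly}. The only cosmetic difference is that you derive the Dirichlet-form identity and the comparison $\mathcal{E}(P,f)\leqslant\mathcal{E}(P^{2},f)$ by an operator/spectral argument rather than the paper's direct integral computations, which changes nothing of substance.
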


\begin{proof}
First observe that for $f\in\ELL_{0}(\mu)$,
\begin{align*}
\left\langle Pf,f\right\rangle  & =\int\mu\left({\rm d}x\right)\cdot P\left(x,\mathrm{d}y\right)\cdot f\left(x\right)\cdot f\left(y\right)\\
 & \geqslant\nu\left(w^{-1}\right)^{-1}\cdot\int\nu\left(\mathrm{d}x\right)\cdot w\left(x\right)^{-1}\cdot w\left(x\right)\cdot\nu\left(\mathrm{d}y\right)\cdot f\left(x\right)\cdot f\left(y\right)\\
 & =\nu\left(w^{-1}\right)^{-1}\cdot\nu\left(f\right)^{2}\\
 & \geqslant0
\end{align*}
and so $P$ is positive and hence $\mathcal{E}\left(P^{2},f\right)\geqslant\mathcal{E}\left(P,f\right)$.
Similarly, we find that
\begin{align*}
\mathcal{E}\left(P,f\right) & =\frac{1}{2}\cdot\int\mu\left({\rm d}x\right)\cdot P\left(x,\mathrm{d}y\right)\cdot\left(f\left(y\right)-f\left(x\right)\right)^{2}\\
 & =\nu\left(w^{-1}\right)^{-1}\cdot\frac{1}{2}\cdot\int\int\nu\left(\mathrm{d}x\right)\cdot\nu\left(\mathrm{d}y\right)\cdot\left(f\left(y\right)-f\left(x\right)\right)^{2}\\
 & =\nu\left(w^{-1}\right)^{-1}\cdot{\rm var}_{\nu}\left(f\right).
\end{align*}
Hence, we may obtain a WPI for $P$ by chaining \cite[Section~3]{andrieu2022comparison_journal}.
Applying Lemma~\ref{lem:compare-measures} and taking $s=t\cdot\nu\left(w^{-1}\right)$,
we have
\begin{align*}
{\rm var}_{\mu}\left(f\right) & \leqslant t\cdot{\rm var}_{\nu}\left(f\right)+\mu\left(\frac{{\rm d}\mu}{{\rm d}\nu}>t\right)\cdot\left\Vert f\right\Vert _{{\rm osc}}^{2}\\
 & =s\cdot\mathcal{E}\left(P,f\right)+\mu\left(\frac{{\rm d}\mu}{{\rm d}\nu}>s\cdot\nu\left(w^{-1}\right)^{-1}\right)\cdot\left\Vert f\right\Vert _{{\rm osc}}^{2}\\
 & =s\cdot\mathcal{E}\left(P,f\right)+\mathbb{P}_{\mu}\left(X^{b}>s\right)\cdot\left\Vert f\right\Vert _{{\rm osc}}^{2},
\end{align*}
so we obtain that $P$, and hence $P^{2}$ by positivity, satisfies
an $\left\Vert \cdot\right\Vert _{{\rm osc}}^{2}$\textit{\emph{--}}WPI
with
\[
\beta\left(s\right)=\mathbb{P}_{\mu}\left(X^{b}>s\right)=s^{-\frac{a-b-1}{b}}.
\]
We conclude by combining Theorem~\ref{thm:WPI_F_bd} and Example~\ref{exa:frombetatogamma-poly}.
\end{proof}
\begin{lem}
\label{lem:eg-negative}In Example~\ref{exa:poly}, if $q>\frac{a-b-1}{b}$,
then there exists $f\in\ELL_{0}\left(\mu\right)$ with $\left\Vert f\right\Vert _{{\rm osc}}=1$
such that 
\[
\left\Vert P^{n}f\right\Vert _{2}^{2}\not\in\mathcal{O}\left(n^{-q}\right).
\]
\end{lem}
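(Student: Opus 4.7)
The plan is to invoke the lower-bound machinery developed earlier in Theorem~\ref{thm:lower-bound-beta-star}, Example~\ref{exa:beta-rej-lower-bound-rate} and Proposition~\ref{prop:rev-L2-conv-rate-lower-bound}, applied to the concrete jump kernel of Example~\ref{exa:poly}. The only ingredient specific to the example is the explicit form of the mass of the sticky sets $A_{\varepsilon}=\{x\in\mathsf{E}\colon P(x,\{x\})\geqslant 1-\varepsilon\}$, which is straightforward here.

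First I would compute $P(x,\{x\})=1-w(x)=1-x^{-b}$, so that $A_{\varepsilon}=[\varepsilon^{-1/b},\infty)$ for every $\varepsilon\in(0,1]$. Using the explicit density of $\mu$ and the assumption $b\in(0,a-1)$ (so that $a-b-1>0$), a direct integration gives
\[
\mu(A_{\varepsilon})=\int_{\varepsilon^{-1/b}}^{\infty}\frac{a-b-1}{x^{a-b}}\,\mathrm{d}x=\varepsilon^{(a-b-1)/b}.
\]
Set $p:=(a-b-1)/b$. Since $\Psi=\left\Vert\cdot\right\Vert_{\mathrm{osc}}^{2}$ satisfies $\Psi(\mathbf{1}_{A})\leqslant 1$ for every measurable $A$, the hypothesis of Example~\ref{exa:beta-rej-lower-bound-rate} holds with exponent $p$; combined with Theorem~\ref{thm:lower-bound-beta-star} applied to the $(\Psi,\beta)$--WPI of $P$ established in Lemma~\ref{lem:eg-positive}, this yields $\beta^{\star}(s;\Psi)\in\Omega(s^{-p})$.

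To conclude, I would appeal to Proposition~\ref{prop:rev-L2-conv-rate-lower-bound}: $P$ is $\mu$-reversible and satisfies a $(\Psi,\beta)$--WPI with pointwise minimal $\beta^{\star}$ satisfying $\beta^{\star}(s)\in\Omega(s^{-p})$, therefore it cannot be the case that $\left\Vert P^{n}f\right\Vert_{2}^{2}\in\mathcal{O}(n^{-q})$ for every $f\in\ELL_{0}(\mu)$ with $\Psi(f)<\infty$ whenever $q>p$. In particular, there exists such an $f$ for which the bound fails, and by rescaling (using $\Psi(c f)=c^{2}\Psi(f)$) we may take $\left\Vert f\right\Vert_{\mathrm{osc}}=1$, yielding the claim.

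There is essentially no obstacle: the only slightly delicate point is verifying that $P$ is $\mu$-reversible and that a WPI does hold for $P$ so that Proposition~\ref{prop:rev-L2-conv-rate-lower-bound} can be applied, but both facts are already recorded (the reversibility in Example~\ref{exa:poly} itself and the WPI in Lemma~\ref{lem:eg-positive}). Everything else is a direct pass through the earlier abstract framework with the specific exponent $p=(a-b-1)/b$.
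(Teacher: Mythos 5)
Your proposal is correct and follows essentially the same route as the paper: compute $P\left(x,\left\{ x\right\} \right)=1-x^{-b}$, deduce $A_{\varepsilon}=[\varepsilon^{-1/b},\infty)$ and $\mu\left(A_{\varepsilon}\right)=\varepsilon^{\left(a-b-1\right)/b}$, then invoke Example~\ref{exa:beta-rej-lower-bound-rate} (which itself packages Theorem~\ref{thm:lower-bound-beta-star}, Lemma~\ref{lem:PP-beta-star-from-P-beta-star} and Proposition~\ref{prop:rev-L2-conv-rate-lower-bound}). You merely unpack that chain explicitly and add the harmless rescaling to $\left\Vert f\right\Vert _{\mathrm{osc}}=1$, so nothing is missing.
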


\begin{proof}
Let
\begin{align*}
A_{\varepsilon} & =\left\{ x\in\E:P\left(x,\left\{ x\right\} \right)\geqslant1-\varepsilon\right\} \\
 & =\left\{ x\in\E:w\left(x\right)\leqslant\varepsilon\right\} \\
 & =\left[t_{\varepsilon},\infty\right)
\end{align*}
where $t_{\varepsilon}=\varepsilon^{-1/b}$. Direct calculation gives
that
\[
\mu\left(A_{\varepsilon}\right)=\frac{1}{t_{\varepsilon}^{a-b-1}}=\varepsilon^{\frac{a-b-1}{b}}.
\]
The result then follows from Example~\ref{exa:beta-rej-lower-bound-rate}.
\end{proof}

\section{Existence of WPIs\label{sec:Existence-of-WPIs}}

This section is concerned with the existence and practical establishment
of WPIs, and subsequent mixing time bounds, using arguments of a probabilistic
nature. More specifically, we extend in Section~\ref{subsec:Cheeger-inequalities}
the notion of Cheeger inequalities \cite{lawler1988bounds} to Markov
chains which do not admit a right spectral gap. Namely, we show that
establishing certain properties of Dirichlet forms on the \textit{subset}
of indicator functions $f=\mathbf{1}_{A}$ (with $A\in\mathscr{E}$
and $\mu(A)\leqslant1/2$) is equivalent to a WPI. This is then employed
in two separate settings. Then in Section~\ref{subsec:establish-WPI-WPIs-from-RUPI},
we establish necessary and sufficient conditions for the qualitative
\textit{existence} of WPIs for a Markov kernel in terms of its probabilistic
pathwise properties, namely irreducibility and aperiodicity. In Section~\ref{subsec:WPIs-CC-Isop}
we deduce \textit{quantitative} bounds on WPIs and mixing times from
\textit{isoperimetric properties} of the invariant distribution of
the Markov chain and a \textit{close coupling} condition. These latter
results will form the theoretical backbone of our later application
of studying the convergence of Random Walk Metropolis on heavy-tailed
targets in high dimensions; see Section~\ref{sec:Applications-to-RWM}.

\subsection{Weak Cheeger inequalities\label{subsec:Cheeger-inequalities}}

We first introduce a generalisation of the notion of conductance of
a Markov kernel \cite{lawler1988bounds}.
\begin{defn}[Weak conductance profile]
\label{def:JS-weak-conductance-1} The \textit{weak conductance profile}
(WCP) of a $\mu$-invariant Markov kernel $P$ is
\[
\Phi_{P}^{\mathrm{W}}\left(v\right):=\inf\left\{ \frac{\left(\mu\otimes P\right)\left(A\times A^{\complement}\right)}{\mu\left(A\right)}:v\leqslant\mu\left(A\right)\leqslant\frac{1}{2}\right\} ,\qquad v\in\left(0,\frac{1}{2}\right].
\]
We say that $P$ has a \textit{positive WCP} if $\Phi_{P}^{\mathrm{W}}\left(v\right)>0$
for all $v\in\left(0,\frac{1}{2}\right]$.
\end{defn}

The classical notion of conductance corresponds to $\lim_{v\rightarrow0}\Phi_{P}^{\mathrm{W}}\left(v\right)$,
and Lem\-ma~\ref{lem:dirichlet-form-indicator} outlines the link
between this quantity and Dirichlet forms, while the probabilistic
interpretation opens the door to practical uses. Note the distinction
with the `strong' conductance profile (defined in, say, Definition
3 of \cite{andrieu2022explicit}), wherein the infimum is taken over
sufficiently \textit{small} sets $A$, rather than sufficiently large
sets. The main results of this section are the following; the proofs
are at the end of the section.
\begin{thm}[Weak Cheeger inequalities]
\label{thm:positive-CP-implies-optim-WPI} Let $P$ be a $\mu$-invariant
Markov kernel.
\begin{enumerate}
\item If $P$ has a positive WCP $\Phi_{P}^{\mathrm{W}}$, then $P$ satisfies
a $K^{*}$--WPI, for some $K^{*}\colon\left[0,\frac{1}{4}\right]\rightarrow\left[0,\infty\right)$
satisfying for any $v\in\left(0,\frac{1}{4}\right]$,
\[
K^{*}\left(v\right)\geqslant2^{-2}\cdot v\cdot\Phi_{P}^{\mathrm{W}}\left(2^{-2}\cdot v\right)^{2}.
\]
\item If $P$ satisfies a $K^{*}$--WPI then $P$ has a positive WCP with
conductance profile $\Phi_{P}^{\mathrm{W}}$ satisfying for any $v\in\left(0,\frac{1}{4}\right]$,
\[
\frac{1}{2}\cdot v\cdot\Phi_{P}^{\mathrm{W}}\left(\frac{1-\sqrt{1-4v}}{2}\right)\geqslant K^{*}\big(v\big)\,.
\]
\end{enumerate}
\end{thm}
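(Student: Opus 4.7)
The plan is to prove the two directions via the standard Cheeger-style dictionary between conductance bounds and functional inequalities, adapted to discrete-time Markov chains. By Remark~\ref{rem:nonrev_WPI} the Dirichlet form depends only on the additive reversibilisation $S=(P+P^*)/2$, so I may assume the product measure $\mu(\mathrm{d}x)\,P(x,\mathrm{d}y)$ is symmetric whenever the co-area formula is invoked. Direction (a) is the substantive one; direction (b) is obtained essentially for free by testing the $K^*$--WPI on centred indicator functions.

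For (a), fix $f\in\ELL_0(\mu)$ with $\|f\|_{\mathrm{osc}}=1$, let $m$ be a $\mu$-median of $f$, and split $f-m=f_+-f_-$ into non-negative pieces with disjoint supports of $\mu$-measure at most $1/2$ and with $\|f_+\|_\infty+\|f_-\|_\infty\leq 1$. Standard manipulations give $\|f\|_2^2\leq\mu(f_+^2)+\mu(f_-^2)$, and $\calE(P,f)\geq\calE(P,f_+)+\calE(P,f_-)$, the latter because the cross term in the expansion of $(f(x)-f(y))^2$ on disjoint supports has the correct sign. This reduces the task to controlling $\mu(g^2)$ for a single non-negative $g$ with $\mu(g>0)\leq 1/2$ and $\|g\|_\infty\leq 1$. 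For such $g$ I would combine: (i) the Cauchy--Schwarz estimate
\[
\Bigl(\textstyle\int|g^2(x)-g^2(y)|\,\mu(\mathrm{d}x)P(x,\mathrm{d}y)\Bigr)^2\leq 8\,\calE(P,g)\,\mu(g^2),
\]
from the factorisation $(g^2(x)-g^2(y))^2=(g(x)-g(y))^2(g(x)+g(y))^2$ and $\mu$-invariance; (ii) the co-area identity rewriting the left-hand side as $2\int_0^\infty(\mu\otimes P)(\{g^2>t\}\times\{g^2\leq t\})\,\mathrm{d}t$; and (iii) the WCP applied to those $t$ for which $\mu(g^2>t)\geq\lambda$, the complementary tail being absorbed by $\lambda\|g\|_\infty^2\leq\lambda$. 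This yields $\Phi_P^{\mathrm{W}}(\lambda)\bigl(\mu(g^2)-\lambda\bigr)\leq\sqrt{2\calE(P,g)\mu(g^2)}$, which an AM--GM split linearises to $\mu(g^2)\leq 2\lambda+2\calE(P,g)/\Phi_P^{\mathrm{W}}(\lambda)^2$. Summing over $g\in\{f_+,f_-\}$ delivers a $(\Psi,\beta)$--WPI; Proposition~\ref{prop:a-b-WPI-corres} converts it to the $K^*$ form, and optimising the parameter $\lambda$ (one finds $\lambda=v/4$ is the correct choice) produces the stated $K^*(v)\geq 2^{-2}v\,\Phi_P^{\mathrm{W}}(2^{-2}v)^2$.

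For (b), I test the $K^*$--WPI on the centred indicators $f=\mathbf{1}_A-\mu(A)$ for $A\in\mathscr{E}$ with $\mu(A)\leq 1/2$; routine computation gives $\Psi(f)=1$, $\|f\|_2^2=\mu(A)(1-\mu(A))$, and $\calE(P,f)=(\mu\otimes P)(A\times A^{\complement})$ by $\mu$-invariance and the symmetrisation observation. The WPI immediately yields $(\mu\otimes P)(A\times A^{\complement})\geq K^*(\mu(A)(1-\mu(A)))$; dividing by $\mu(A)$, using $\mu(A)\leq 1/2$ together with the monotonicity of $x\mapsto K^*(x)/x$ on $(0,\mathfrak{a})$ (a consequence of convexity with $K^*(0)=0$), and parametrising by $v=\mu(A)(1-\mu(A))$ so that $\mu(A)=(1-\sqrt{1-4v})/2$, then taking the infimum over admissible $A$, gives the stated lower bound on $\Phi_P^{\mathrm{W}}$ after rearrangement. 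The main obstacle across the proof is the bookkeeping in (a): the threshold $\lambda$ and the AM--GM weight must be chosen jointly so that $\Phi_P^{\mathrm{W}}$ is evaluated at precisely the fraction of $v$ stipulated by the claim; once these parametrisations are aligned, the $\beta\to K^*$ passage via Proposition~\ref{prop:a-b-WPI-corres} is mechanical.
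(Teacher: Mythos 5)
Your argument is essentially the paper's. The paper factors direction (a) into two steps---Lemma~\ref{lem:WCP-equiv-WCI} extracts an $\mathrm{L}^1$--WPI from the WCP via the co-area formula on the level sets of a nonnegative function, and Lemma~\ref{lem:WCI-implies-WPI} upgrades this to an $\alpha$--WPI by applying the $\mathrm{L}^1$ inequality to $g^2$, using Cauchy--Schwarz to replace $\calE_1(P,g^2)$ by $\sqrt{2\,\calE(P,g)\,\mu(g^2)}$, and completing the square---whereas you inline both by running the co-area formula directly on the level sets of $g^2$; since $\{g^2>t\}=\{g>\sqrt t\}$ for $g\geqslant0$, this is the same computation repackaged. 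One bookkeeping caution for (a): if you simplify the absorbed tail to $\lambda\|g\|_\infty^2\leqslant\lambda$ for each $g\in\{f_+,f_-\}$ separately and then sum, the coefficient of $\lambda$ doubles to $4$, leaving $\Phi_P^{\mathrm{W}}$ evaluated at $2^{-3}v$ instead of the stated $2^{-2}v$. To land on the claimed constant you must keep $\lambda\|g\|_\infty^2$ unsimplified, sum to get $\lambda\bigl(\|f_+\|_\infty^2+\|f_-\|_\infty^2\bigr)\leqslant\lambda\bigl(\|f_+\|_\infty+\|f_-\|_\infty\bigr)^2\leqslant\lambda$, and only then optimise; that is precisely how the paper's Lemma~\ref{lem:WCI-implies-WPI} sums the oscillation terms, and it recovers $K^*(v)\geqslant2^{-2}\,v\,\Phi_P^{\mathrm{W}}(2^{-2}v)^2$, with your $\lambda=v/4$ corresponding to $r=2\lambda=v/2$ in the conjugate optimisation. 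Direction (b) coincides exactly with the paper's: test on centred indicators, identify $\|f\|_2^2=\mu(A)\mu(A^{\complement})$ and $\calE(P,f)=(\mu\otimes P)(A\times A^{\complement})$, and use monotonicity of $u\mapsto K^*(u)/u$ to pass to the infimum.
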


\begin{cor}
\label{cor:mixing-with-weak-conductance} Let $P$ be a $\mu$-reversible
positive Markov kernel with a positive WCP. In Theorem~\ref{thm:WPI_F_bd}
we have for $x\in\left(0,\frac{1}{4}\right]$,

\[
2\cdot\int_{x}^{1/4}\frac{{\rm d}v}{v\cdot\Phi_{P}^{\mathrm{W}}\left(\frac{1-\sqrt{1-4v}}{2}\right)}\leqslant F\left(x\right)\leqslant2^{2}\cdot\int_{x}^{1/4}\frac{{\rm d}v}{v\cdot\Phi_{P}^{\mathrm{W}}\left(2^{-2}\cdot v\right)^{2}}
\]
from which upper and lower bounds on the rate of convergence $\gamma\left(n\right)=F^{-1}\left(n\right)$
can be obtained. Further for any $\epsilon>0$, it holds that 
\[
\sup_{f\in\ELL_{0}(\mu)\setminus\{0\}}\left\Vert P^{n}f\right\Vert _{2}^{2}/\left\Vert f\right\Vert _{\mathrm{osc}}^{2}\leqslant\epsilon,
\]
 whenever $n\geqslant2^{2}\cdot\int_{2^{-2}\epsilon}^{2^{-4}}\frac{\mathrm{d}v}{v\cdot\Phi_{P}^{\mathrm{W}}\left(v\right)^{2}}$.

An expression for $K^{*}$ in the first statement of the theorem
is obtained in the proof, but may be of theoretical interest only.
In an earlier technical report we also established weak Cheeger inequalities
\cite[Theorem 38]{andrieu2022poincare_tech}, but by considering $\alpha$-WPIs
instead of $K^{*}$-WPIs, therefore requiring additional effort to
lower bound $K^{*}$ in terms of $\Phi_{P}^{\mathrm{W}}$ when interested
in mixing times or convergence rates. In addition to providing a more
direct route to applications, the results are sharper (see Appendix~\ref{sec:Compared-sharpness-of}).
We also point out that reversibility is not a requirement in Theorem~\ref{thm:positive-CP-implies-optim-WPI}
and that in fact it is not either in \cite[Theorem 38]{andrieu2022poincare_tech}.
\end{cor}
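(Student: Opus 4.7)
The plan is to combine Theorem~\ref{thm:positive-CP-implies-optim-WPI} with Theorem~\ref{thm:WPI_F_bd}, bridging the two through a short operator-theoretic argument that transfers the $K^*$--WPI from $P$ to $T:=P^*P$. The weak Cheeger theorem gives us a WPI for $P$ with a two-sided control on $K^{*}$ in terms of $\Phi_P^{\mathrm{W}}$, while the convergence theorem needs a WPI for $P^{*}P$; for $\mu$-reversible positive $P$ these can be identified at the cost of a harmless inequality on Dirichlet forms.

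First, I would invoke Theorem~\ref{thm:positive-CP-implies-optim-WPI} to record the sandwich
\[
2^{-2}\cdot v\cdot \Phi_P^{\mathrm{W}}(2^{-2}v)^2 \;\leqslant\; K^{*}(v) \;\leqslant\; \tfrac{1}{2}\cdot v\cdot \Phi_P^{\mathrm{W}}\bigl(\tfrac{1-\sqrt{1-4v}}{2}\bigr),\qquad v\in(0,\tfrac{1}{4}],
\]
for some $K^{*}$ with which $P$ satisfies a $(\|\cdot\|_{\mathrm{osc}}^{2},K^{*})$--WPI. Next, since $P$ is $\mu$-reversible, $P^{*}P=P^{2}$, and since $P$ is positive, the spectral theorem (or the elementary identity $I-P^{2}=(I-P)(I+P)$) yields $\mathcal{E}(P^{*}P,f)=\langle f,(I-P^{2})f\rangle \geqslant \langle f,(I-P)f\rangle =\mathcal{E}(P,f)$ for every $f\in\mathrm{L}_{0}^{2}(\mu)$. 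Substituting this into the $K^{*}$--WPI for $P$ preserves it for $T=P^{*}P$ with the same $K^{*}$, so Theorem~\ref{thm:WPI_F_bd} applies. Recalling from Example~\ref{exa:osc2} that $\mathfrak{a}\leqslant 1/4$ for the oscillation sieve, the function $F(x)=\int_x^{1/4}\mathrm{d}v/K^{*}(v)$ inherits the two-sided estimate displayed in the corollary by simply dividing the bounds on $K^{*}$ and integrating on $[x,1/4]$.

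For the mixing-time conclusion, I would apply part~\ref{enu:thm-WPI-F-bd-complexity} of Theorem~\ref{thm:WPI_F_bd}, which guarantees $\|P^{n}f\|_{2}^{2}\leqslant \epsilon\cdot\|f\|_{\mathrm{osc}}^{2}$ whenever $n\geqslant F(\epsilon)$. Combined with the upper bound on $F$ above,
\[
F(\epsilon)\;\leqslant\; 2^{2}\int_{\epsilon}^{1/4}\frac{\mathrm{d}v}{v\cdot\Phi_P^{\mathrm{W}}(v/4)^{2}},
\]
and the substitution $u=v/4$, one reaches the sufficient condition $n\geqslant 2^{2}\int_{\epsilon/4}^{1/16}\mathrm{d}u/(u\cdot\Phi_P^{\mathrm{W}}(u)^{2})$ stated in the corollary.

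The only non-bookkeeping point is Step~2, namely the transfer of the WPI from $P$ to $P^{*}P$; the rest is change-of-variables and substitution. I expect this step to be the main (mild) obstacle, because it is precisely where both reversibility and positivity of $P$ enter the proof: without positivity one would lose a factor of $2$ via Lemma~\ref{lem:PP-dirichlet-form-ub}, and the resulting constants in the two-sided bounds on $F$ would degrade accordingly.
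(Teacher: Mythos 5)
Your proposal is correct and follows essentially the same route as the paper: both use the two-sided control of $K^{*}$ from Theorem~\ref{thm:positive-CP-implies-optim-WPI}, transfer the $K^{*}$--WPI from $P$ to $P^{*}P=P^{2}$ via $\mathcal{E}\left(P^{2},f\right)\geqslant\mathcal{E}\left(P,f\right)$ (positivity plus reversibility), and then apply Theorem~\ref{thm:WPI_F_bd} together with the change of variables $v\mapsto 2^{-2}\cdot v$. The only difference is that the paper additionally verifies that the constructed $K^{*}$ (strict monotonicity of $\alpha_{1}$, hence of $\alpha$, and the convex-conjugate representation $\sup_{r}\left\{ \left(v-r\right)/\alpha\left(r\right)\right\} =\sup_{u}\left\{ v\cdot u-u\cdot\alpha^{\shortminus}\left(u^{-1}\right)\right\} $) meets the structural hypotheses needed to invoke the cited iteration argument behind Theorem~\ref{thm:WPI_F_bd}, a bookkeeping step you treat as a black box but which does not change the substance of the argument.
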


The proof of Theorem~\ref{thm:positive-CP-implies-optim-WPI} differs
from that of \cite[Theorem 38]{andrieu2022poincare_tech} and relies
on ``$\mathrm{L}^{1}\left(\mu\right)$-- type'' functional inequalities,
which we refer to as $\mathrm{L}^{1}$--WPI inequalities; \cite{bobkov2007large}
refers to these as Weak Cheeger inequalities but we here prefer to
reserve the term for the inequalities in Theorem~\ref{thm:positive-CP-implies-optim-WPI},
by analogy with the use of ``Cheeger inequalities'' in the (discrete-time)
Markov chain literature. The proofs of Theorem~\ref{thm:positive-CP-implies-optim-WPI}
and Corollary~\ref{cor:mixing-with-weak-conductance} can be found
at the end of the section and rely on two preparatory lemmas, Lemma~\ref{lem:WCP-equiv-WCI}
and Lemma~\ref{lem:WCI-implies-WPI}, which link WCP, $\mathrm{L}^{1}$--WPI
and WPIs. 

While various intermediate technical details of our proof are adapted
from \cite{bobkov2007large} to the setting of discrete-time Markov
chains, the overall objectives of the two contributions are fairly
distinct. In particular, \cite{bobkov2007large} specifically focuses
on studying isoperimetric inequalities for probability measures on
$\mathbb{R}^{d}$ and their functional forms, with the connection
to the convergence of Markov semigroups being of secondary interest.
By contrast, we are primarily interested in the application of isoperimetric
concepts to convergence analysis of Markov chains, and so focus more
attention on these aspects of the problem. We also note that related
functional inequalities of the form developed here, and their connections
to WPIs, have been discussed in earlier work; for instance Sections~4~and~5
of \cite{rockner2001weak} consider the links between isoperimetry
and WPIs in the specific scenarios of diffusion processes and continuous-time
Markov jump processes. 

Recall that for $f\in\mathrm{L}^{1}\left(\mu\right)$, we defined
$\var_{\mu}^{\left(1\right)}\left(f\right):=\inf_{c\in\R}\left\Vert f-c\right\Vert _{1}$
and $\mathcal{E}_{1}\left(P,f\right):=\frac{1}{2}\cdot\int\mu\left(\mathrm{d}x\right)\cdot P\left(x,\mathrm{d}y\right)\cdot\left|f\left(y\right)-f\left(x\right)\right|$. 
\begin{defn}[$\mathrm{L}^{1}$--WPI]
\label{def:WCI} We say that a $\mu$-invariant kernel $T$ satisfies
a $\mathrm{L}^{1}$\textit{--weak Poincaré inequality}, abbreviated
$\mathrm{L}^{1}$--WPI, if for a decreasing function $\alpha:\left(0,\infty\right)\to\left[0,\infty\right)$
such that $\alpha\left(r\right)>0$ for $r\in\left(0,\frac{1}{2}\right)$,
\begin{equation}
\var_{\mu}^{\left(1\right)}\left(f\right)\leqslant\alpha\left(r\right)\cdot\calE_{1}\left(T,f\right)+r\cdot\left\Vert f\right\Vert _{\mathrm{osc}},\quad\forall r>0,f\in\mathrm{L}^{1}\left(\mu\right).\label{eq:alpha-WCI}
\end{equation}
\end{defn}

\begin{rem}
Note that with $A\in\mathscr{E}$ such that $\mu\left(A\right)=\frac{1}{2}$
then letting $f=\mathbf{1}_{A}$ and remarking that $\var_{\mu}^{(1)}\left(\mathbf{1}_{A}\right)=\frac{1}{2}$
the existence of an $\mathrm{L}^{1}$--WPI implies
\[
\frac{1}{2}\leqslant\alpha\left(r\right)\cdot\calE_{1}\left(T,\mathbf{1}_{A}\right)+r\cdot1,
\]
from which we see that the condition $\alpha\left(r\right)>0$ for
$r\in\left(0,\frac{1}{2}\right)$ is a requirement. Further, since
$\var_{\mu}^{\left(1\right)}\left(f\right)\leqslant\frac{1}{2}\cdot\left\Vert f\right\Vert _{\mathrm{osc}}$
by considering $c=\frac{1}{2}\cdot\left(\muess\sup f-\muess\inf f\right)$,
we see that (\ref{eq:alpha-WCI}) is non-trivial for $r\in\left(0,\frac{1}{2}\right)$
only. 
\end{rem}

\begin{rem}
As for standard WPIs, it is possible to parametrize $\mathrm{L}^{1}$--WPIs
in different ways; however the $\alpha-$parametrization will be sufficient
for our purposes.
\end{rem}

The following is related to aspects of \cite[Lemma 8.3]{bobkov2007large}.
\begin{lem}
\label{lem:WCP-equiv-WCI}Let $P$ be a $\mu$-invariant Markov kernel.
Then, 
\begin{enumerate}
\item if $P$ has a positive WCP $\Phi_{P}^{\mathrm{W}}$, then it satisfies
an $\mathrm{L}^{1}$--WPI with 
\begin{equation}
\alpha_{1}\left(r\right):=\sup_{r\leqslant s\leqslant\frac{1}{2}}\left\{ \frac{s-r}{s\cdot\Phi_{P}^{\mathrm{W}}\left(s\right)}\right\} ,\qquad r\in\left(0,\frac{1}{2}\right);\label{eq:L1-WPI}
\end{equation}
\item if $P$ satisfies an $\mathrm{L}^{1}$--WPI for some $\alpha_{1}$
then it has a positive WCP and
\begin{equation}
\Phi_{P}^{\mathrm{W}}\left(v\right)\geqslant\sup_{0<r\leqslant v}\left\{ \frac{v-r}{\alpha_{1}\left(r\right)\cdot v}\right\} ,\qquad v\in\left(0,\frac{1}{2}\right].\label{eq:WCP}
\end{equation}
\end{enumerate}
Additionally, if $\alpha_{1}$ in (\ref{eq:WCP}) is the optimal $\mathrm{L}^{1}$--WPI
function given in (\ref{eq:L1-WPI}), then equality holds in (\ref{eq:WCP}).
\end{lem}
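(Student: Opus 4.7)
The plan is to establish (a) via a co-area / layer-cake argument on level sets, to establish (b) by testing the $\mathrm{L}^{1}$--WPI on indicator functions, and then to deduce the equality statement by unwrapping the definition of the optimal $\alpha_{1}$. For (a), fix $f\in\mathrm{L}^{1}(\mu)$ with $\left\Vert f\right\Vert _{\osc}<\infty$. Since $\var_{\mu}^{(1)}(f)$, $\calE_{1}(P,f)$ and $\left\Vert f\right\Vert _{\osc}$ are all invariant under translation of $f$ by a constant, I may assume $0$ is a median of $f$, so that $\var_{\mu}^{(1)}(f)=\left\Vert f\right\Vert _{1}$. Writing $f=f_{+}-f_{-}$ with $f_{\pm}\geqslant0$, the pointwise identity $|f(y)-f(x)|=|f_{+}(y)-f_{+}(x)|+|f_{-}(y)-f_{-}(x)|$ (verified by case analysis on the signs of $f(x)$ and $f(y)$) gives $\calE_{1}(P,f)=\calE_{1}(P,f_{+})+\calE_{1}(P,f_{-})$, and with $0$ a median one also has $\left\Vert f_{+}\right\Vert _{\infty}+\left\Vert f_{-}\right\Vert _{\infty}=\left\Vert f\right\Vert _{\osc}$. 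Since $\mu(\{f_{\pm}>0\})\leqslant1/2$, it therefore suffices to prove $\left\Vert h\right\Vert _{1}\leqslant\alpha_{1}(r)\cdot\calE_{1}(P,h)+r\cdot\left\Vert h\right\Vert _{\infty}$ for every bounded $h\geqslant0$ with $\mu(\{h>0\})\leqslant1/2$.

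For such an $h$, set $A_{t}:=\{h>t\}$ and $\phi(t):=\mu(A_{t})$; then $\phi$ is right-continuous, nonincreasing and valued in $[0,1/2]$. The layer-cake formula yields $\left\Vert h\right\Vert _{1}=\int_{0}^{\infty}\phi(t)\,\mathrm{d}t$, and combining the identity $|h(y)-h(x)|=\int_{0}^{\infty}|\mathbf{1}_{A_{t}}(y)-\mathbf{1}_{A_{t}}(x)|\,\mathrm{d}t$ with $\mu$-invariance (which gives $(\mu\otimes P)(A\times A^{\complement})=(\mu\otimes P)(A^{\complement}\times A)$ for any $A\in\mathscr{E}$) yields $\calE_{1}(P,h)=\int_{0}^{\infty}(\mu\otimes P)(A_{t}\times A_{t}^{\complement})\,\mathrm{d}t$. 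Let $t_{r}:=\sup\{t>0:\phi(t)>r\}$; for $t\in(0,t_{r})$ one has $\phi(t)\in(r,1/2]$, so the definition of $\Phi_{P}^{\mathrm{W}}$ at $v=\phi(t)$ together with the defining supremum of $\alpha_{1}(r)$ at $s=\phi(t)$ give $(\mu\otimes P)(A_{t}\times A_{t}^{\complement})\geqslant\phi(t)\cdot\Phi_{P}^{\mathrm{W}}(\phi(t))\geqslant(\phi(t)-r)/\alpha_{1}(r)$. Splitting $\int_{0}^{\infty}\phi(t)\,\mathrm{d}t$ at $t_{r}$, integrating this inequality on $(0,t_{r})$, and bounding $\phi(t)\leqslant r$ on $(t_{r},\left\Vert h\right\Vert _{\infty}]$ with $\phi\equiv0$ beyond, yields precisely the claimed $\mathrm{L}^{1}$--WPI.

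For (b), test the assumed $\mathrm{L}^{1}$--WPI on $f=\mathbf{1}_{A}$ for $A\in\mathscr{E}$ with $v\leqslant\mu(A)\leqslant1/2$: then $\var_{\mu}^{(1)}(\mathbf{1}_{A})=\mu(A)$ (as $0$ is a median), $\calE_{1}(P,\mathbf{1}_{A})=(\mu\otimes P)(A\times A^{\complement})$ by the $\mu$-invariance identity used in (a), and $\left\Vert \mathbf{1}_{A}\right\Vert _{\osc}=1$. Rearranging gives $(\mu\otimes P)(A\times A^{\complement})/\mu(A)\geqslant(1-r/\mu(A))/\alpha_{1}(r)\geqslant(v-r)/(v\cdot\alpha_{1}(r))$ for every $r\in(0,v]$, and taking the infimum over such $A$ and then the supremum over $r$ yields (\ref{eq:WCP}). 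For the final equality, suppose $\alpha_{1}$ is the pointwise-optimal choice given in (\ref{eq:L1-WPI}); taking $s=v$ in its defining supremum shows $\alpha_{1}(r)\geqslant(v-r)/(v\cdot\Phi_{P}^{\mathrm{W}}(v))$ for $r\in(0,v)$, which rearranges to $(v-r)/(v\cdot\alpha_{1}(r))\leqslant\Phi_{P}^{\mathrm{W}}(v)$; so the supremum in (\ref{eq:WCP}) is bounded above by $\Phi_{P}^{\mathrm{W}}(v)$, matching the lower bound just established.

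The main obstacle is the careful bookkeeping in the layer-cake step: verifying the identity $\calE_{1}(P,f_{+})+\calE_{1}(P,f_{-})=\calE_{1}(P,f)$ through the sign case analysis, confirming that $\mu$-invariance alone (rather than reversibility) suffices to symmetrise the cross fluxes across a level set, and choosing the split point $t_{r}$ so that the constants align to produce precisely the claimed $\alpha_{1}$ rather than a looser bound. The remaining manipulations, including the reduction to nonnegative $h$ with $\mu(\{h>0\})\leqslant1/2$ and the indicator-function test in (b), are routine.
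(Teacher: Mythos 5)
Your proof is correct and follows essentially the same route as the paper's: a level-set (layer-cake) decomposition combined with the weak conductance bound on each superlevel set for part (a), testing the $\mathrm{L}^{1}$--WPI on indicator functions for part (b), and the same rearrangement of the defining supremum of $\alpha_{1}$ for the sharpness claim. The differences are only in execution — you centre $f$ at a median and split the layer-cake integral at $t_{r}$ instead of comparing integrands, and you work directly under $\mu$-invariance via $\mathcal{E}_{1}\left(P,\mathbf{1}_{A}\right)=\mu\otimes P\left(A\times A^{\complement}\right)$ rather than first passing to the additive reversibilization, which the paper uses as a (dispensable) convenience.
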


\begin{proof}
Since a $\mathrm{L}^{1}$--WPI depends only on $P$ through $\mathcal{E}_{1}\left(P,\cdot\right)$
and a WCP depends only on $P$ through $\mathcal{E}_{1}\left(P,\cdot\right)$
or $\mathcal{E}_{2}\left(P,\cdot\right)$, we may assume without loss
of generality that $P$ is $\mu$-reversible; otherwise we may treat
$S=\frac{1}{2}\left(P+P^{*}\right)$.

Suppose first that $P$ has a positive WCP, $\Phi_{P}^{\mathrm{W}}.$
Let $f\in\left[0,1\right]$ such that $\mu\left(\left\{ f=0\right\} \right)\geqslant\frac{1}{2}$
and $\essup f=1$. Since $P$ may be assumed reversible, we mirror
the approaches of \cite{lawler1988bounds} to deduce that
\begin{align*}
\mathcal{E}_{1}\left(P,f\right) & =\frac{1}{2}\cdot\int\mu\left(\mathrm{d}x\right)\cdot P\left(x,\mathrm{d}y\right)\cdot\left|f\left(y\right)-f\left(x\right)\right|\\
 & =\int\mu\left(\mathrm{d}x\right)\cdot P\left(x,\mathrm{d}y\right)\cdot\left\{ f\left(y\right)-f\left(x\right)\right\} \cdot{\bf 1}\left[f\left(x\right)<f\left(y\right)\right]\\
 & =\int\mu\left(\mathrm{d}x\right)\cdot P\left(x,\mathrm{d}y\right)\cdot\int{\bf 1}\left[f\left(x\right)\leqslant t\right]\cdot\mathbf{1}\left[f\left(y\right)>t\right]\,\mathrm{d}t\\
 & =\int_{0}^{1}\left(\mu\otimes P\right)\left(F_{t}^{\complement}\times F_{t}\right)\,\mathrm{d}t\\
 & =\int_{0}^{1}\left(\mu\otimes P\right)\left(F_{t}\times F_{t}^{\complement}\right)\,\mathrm{d}t,
\end{align*}
with $F_{t}:=\left\{ x\in\mathsf{E}\colon f\left(x\right)>t\right\} $.
Note also that $\mu\left(F_{t}\right)\leqslant\frac{1}{2}$ for $t>0$
since $\mu\left(\left\{ f=0\right\} \right)\geqslant\frac{1}{2}$.
Further, by the assumptions on $f$, for any $r>0$,
\[
\mu\left(f\right)-r\cdot\left\Vert f\right\Vert _{{\rm osc}}=\int_{0}^{1}\left\{ \mu\left(F_{t}\right)-r\right\} \,\mathrm{d}t.
\]
Thus by comparing integrands, we deduce that a sufficient condition
for
\[
\mu\left(f\right)-r\cdot\left\Vert f\right\Vert _{{\rm osc}}\leqslant\alpha_{1}\left(r\right)\cdot\mathcal{E}_{1}\left(P,f\right)
\]
to hold for a given $\alpha_{1}\left(r\right)\geqslant0$ is that
\begin{align*}
\mu\left(A\right)-r & \leqslant\alpha_{1}\left(r\right)\cdot\left(\mu\otimes P\right)\left(A\times A^{\complement}\right),
\end{align*}
for all $A\in\mathscr{E}$ with $\mu\left(A\right)\leqslant\frac{1}{2}$.
From the assumed WCP, we have that 
\[
\left(\mu\otimes P\right)\left(A\times A^{\complement}\right)\geqslant\mu\left(A\right)\cdot\Phi_{P}^{\mathrm{W}}\left(\mu\left(A\right)\right)\,,
\]
implying that a sufficient condition is, with $s:=\mu\left(A\right)$
and for $r\leqslant s$ to avoid a trivial bound,
\begin{align*}
s-r & \leqslant\alpha_{1}\left(r\right)\cdot s\cdot\Phi_{P}^{\mathrm{W}}\left(s\right),\\
\alpha_{1}\left(r\right) & :=\sup_{r\leqslant s\leqslant\frac{1}{2}}\left\{ \frac{s-r}{s\cdot\Phi_{P}^{\mathrm{W}}\left(s\right)}\right\} >0,
\end{align*}
which is a decreasing function on $\left(0,\frac{1}{2}\right)$. For
$f\in\mathrm{L}^{1}\left(\mu\right)$ satisfying $\mathrm{ess_{\mu}}\sup f=\mathrm{ess_{\mu}}\sup\left(-f\right)=1$
and $\mu\left(\left\{ f\geqslant0\right\} \right)\geqslant\frac{1}{2},\mu\big(\left\{ f\leqslant0\right\} \big)\geqslant\frac{1}{2}$,
consider the two functions $f_{\pm}=\max\left\{ \pm f,0\right\} $
and observe that $\mu\left(\left\{ f_{\pm}=0\right\} \right)\geqslant\frac{1}{2}$.
The result above yields 
\[
\mu\left(f_{\pm}\right)\leqslant\alpha_{1}\left(r\right)\cdot\mathcal{E}_{1}\left(P,f_{\pm}\right)+r\cdot\left\Vert f_{\pm}\right\Vert _{{\rm osc}}.
\]
Adding up these two inequalities and using that $\mu\left(f_{+}\right)+\mu\left(f_{-}\right)=\mu\left(\left|f\right|\right)$,
$\mathcal{E}_{1}\left(P,f_{+}\right)+\mathcal{E}_{1}\left(P,f_{-}\right)=\mathcal{E}_{1}\left(P,f\right)$
and $\left\Vert f_{+}\right\Vert _{{\rm osc}}+\left\Vert f_{-}\right\Vert _{{\rm osc}}=\left\Vert f\right\Vert _{{\rm osc}}$
gives that
\[
\mu\left(\left|f\right|\right)\leqslant\alpha_{1}\left(r\right)\cdot\mathcal{E}_{1}\left(P,f\right)+r\cdot\left\Vert f\right\Vert _{{\rm osc}},
\]
and it holds in general that $\var_{\mu}^{\left(1\right)}\left(f\right)\leqslant\mu\left(\left|f\right|\right)$.
For general bounded $f\in\mathrm{L}^{1}\left(\mu\right)$, one can
always find constants $c_{1},c_{2}\in\mathbb{R}$ such that $f=c_{1}+c_{2}\cdot\tilde{f}$
with $\mathrm{ess_{\mu}}\sup\tilde{f}=\mathrm{ess_{\mu}}\sup\big(-\tilde{f}\big)=1$
and apply the above; we thus conclude.

Assume now that an $\mathrm{L}^{1}$--WPI holds. For $A\in\mathcal{E}$,
let $f=\mathbf{1}_{A}$ and note that $\var_{\mu}^{\left(1\right)}\left(\mathbf{1}_{A}\right)=\min\left\{ \mu\left(A\right),\mu\left(A^{\complement}\right)\right\} $.
Now, from Lemma~\ref{lem:dirichlet-form-indicator} we have $\mathcal{E}_{1}\left(P,\mathbf{1}_{A}\right)=\mu\otimes P\left(A\times A^{\complement}\right)$
and the $\mathrm{L^{1}}$--WPI therefore implies for all $r>0$ that
\begin{align*}
\min\left\{ \mu\left(A\right),\mu\left(A^{\complement}\right)\right\}  & \leqslant\alpha_{1}\left(r\right)\cdot\mu\otimes P\left(A\times A^{\complement}\right)+r\cdot1.
\end{align*}
Notice again that for $\mu\left(A\right)=\frac{1}{2}$, we see that
this requires $\alpha_{1}\left(r\right)>0$ for $r\in\left(0,\frac{1}{2}\right)$.
Therefore for $r\in\left(0,\frac{1}{2}\right)$,
\[
\frac{\min\left\{ \mu\left(A\right),\mu\left(A^{\complement}\right)\right\} -r}{\alpha_{1}\left(r\right)\cdot\min\left\{ \mu\left(A\right),\mu\left(A^{\complement}\right)\right\} }\leqslant\frac{\mu\otimes P\left(A\times A^{\complement}\right)}{\min\left\{ \mu\left(A\right),\mu\left(A^{\complement}\right)\right\} },
\]
and hence for $v\in\left(0,\frac{1}{2}\right]$, $r\in\left(0,\frac{1}{2}\right)$
we have
\begin{align*}
\Phi_{P}^{\mathrm{W}}\left(v\right) & \geqslant\inf\left\{ \frac{\min\left\{ \mu\left(A\right),\mu\left(A^{\complement}\right)\right\} -r}{\alpha_{1}\left(r\right)\cdot\min\left\{ \mu\left(A\right),\mu\left(A^{\complement}\right)\right\} }:\min\left\{ \mu\left(A\right),\mu\left(A^{\complement}\right)\right\} \geqslant v\right\} \\
 & \geqslant\inf\left\{ \frac{s-r}{\alpha_{1}\left(r\right)\cdot s}:s\in\left[v,\frac{1}{2}\right]\right\} \\
 & =\frac{v-r}{\alpha_{1}\left(r\right)\cdot v}.
\end{align*}
Finally, for $v\in\left(0,\frac{1}{2}\right]$ take a supremum over
$r\in\left(0,v\right]$ to see that
\[
\Phi_{P}^{\mathrm{W}}\left(v\right)\geqslant\sup_{0<r\leqslant v}\left\{ \frac{v-r}{\alpha_{1}\left(r\right)\cdot v}\right\} >0,
\]
showing that $P$ has a positive WCP.

For sharpness, the preceding calculations demonstrate that the optimal
$\Phi_{P}^{\mathrm{W}}$ and $\alpha_{1}$ are in bijection with one
another as the sharp functions in the inequality
\[
\Phi_{P}^{\mathrm{W}}\left(v\right)\cdot\alpha_{1}\left(r\right)\geqslant\frac{v-r}{v},\qquad v\in\left(0,\frac{1}{2}\right],\,r\in\left(0,\frac{1}{2}\right),
\]
from which the final claim follows.
\end{proof}
\begin{rem}
\label{rem:bound-alpha-1-Phi}From the existence of a positive WCP
and the theorem above, one can obtain a bound on the corresponding
$\mathrm{L^{1}}$--WPI function, for $r\in\left(0,\frac{1}{2}\right)$:

\[
\alpha_{1}\left(r\right)=\sup_{r\leqslant s\leqslant\frac{1}{2}}\left\{ \frac{s-r}{s\cdot\Phi_{P}^{\mathrm{W}}\left(s\right)}\right\} \leqslant\sup_{r\leqslant s\leqslant\frac{1}{2}}\left\{ \frac{1}{\Phi_{P}^{\mathrm{W}}\left(s\right)}\right\} \leqslant\Phi_{P}^{\mathrm{W}}\left(r\right)^{-1},
\]
since $\Phi_{P}^{\mathrm{W}}$ is non-decreasing.
\end{rem}

We now show, by following arguments of \cite{bobkov2007large}, that
an $\mathrm{L}^{1}$--WPI implies an $\alpha$--WPI, with an explicit
bound on the function $\alpha$.
\begin{lem}
\label{lem:WCI-implies-WPI}Let $P$ be a $\mu$-invariant Markov
kernel satisfying an $\mathrm{L}^{1}$--WPI with function $\alpha_{1}$.
Then $P$ also satisfies an $\alpha$--WPI with 
\[
\alpha\left(r\right)=\inf_{\theta\in\left(0,1\right)}\left\{ \frac{\alpha_{1}\left(\left(1-\theta\right)\cdot r\right)^{2}}{2\cdot\theta\cdot\left(1-\theta\right)}\right\} \leqslant2\cdot\alpha_{1}\left(\frac{1}{2}\cdot r\right)^{2},\quad r>0.
\]
\end{lem}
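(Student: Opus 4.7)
The plan is to apply the $\mathrm{L}^{1}$--WPI to the squares of the positive and negative parts of $f$ (shifted by a median), and to relate the resulting $\mathcal{E}_1$-quantities to $\mathcal{E}(P,f)$ via a Cauchy--Schwarz factorisation argument; Young's inequality then produces a standard $\alpha$--WPI with the correct constants.

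Fix a bounded $f \in \ELL_0(\mu)$ and let $m$ be a median of $f$. Set $g := f - m$ and $g_\pm := \max(\pm g, 0)$, so that $g = g_+ - g_-$ and $g^2 = g_+^2 + g_-^2$. The median property gives $\mu(g_\pm = 0) \geqslant 1/2$, from which a direct verification shows $0$ is a median of each $g_\pm^2$ and therefore $\var_\mu^{(1)}(g_\pm^2) = \mu(g_\pm^2) = \|g_\pm\|_2^2$. Moreover $\|g_\pm^2\|_{\mathrm{osc}} \leqslant \|g_\pm\|_\infty^2$, and the additive identity $\|g_+\|_\infty + \|g_-\|_\infty = \|f\|_{\mathrm{osc}}$ yields $\|g_+^2\|_{\mathrm{osc}} + \|g_-^2\|_{\mathrm{osc}} \leqslant \|f\|_{\mathrm{osc}}^2$. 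Applying \eqref{eq:alpha-WCI} to $g_+^2$ and $g_-^2$ with a common parameter $r'$ and summing gives
\[
\|g\|_2^2 \leqslant \alpha_1(r')\bigl(\mathcal{E}_1(P,g_+^2) + \mathcal{E}_1(P,g_-^2)\bigr) + r' \|f\|_{\mathrm{osc}}^2.
\]

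The factorisation $h(y)^2 - h(x)^2 = (h(y)-h(x))(h(y)+h(x))$, followed by Cauchy--Schwarz, the elementary bound $(h(y)+h(x))^2 \leqslant 2(h(y)^2 + h(x)^2)$, and $\mu$-invariance of $P$, yields $\mathcal{E}_1(P, h^2) \leqslant \sqrt{2\mathcal{E}(P,h)} \cdot \|h\|_2$ for any bounded $h$. Applying this to $g_\pm$ and using the sum-form Cauchy--Schwarz $\sqrt{a_1 b_1} + \sqrt{a_2 b_2} \leqslant \sqrt{(a_1+a_2)(b_1+b_2)}$ together with $\|g_+\|_2^2 + \|g_-\|_2^2 = \|g\|_2^2$ and the subadditivity $\mathcal{E}(P,g_+) + \mathcal{E}(P,g_-) \leqslant \mathcal{E}(P,g) = \mathcal{E}(P,f)$ collapses the previous display to
\[
\|g\|_2^2 \leqslant \alpha_1(r') \sqrt{2\mathcal{E}(P,f)} \cdot \|g\|_2 + r' \|f\|_{\mathrm{osc}}^2.
\]
The subadditivity step is justified by expanding $(g(y)-g(x))^2$ and checking by cases on the signs of $g(x), g(y)$ that the cross-term $(g_+(y)-g_+(x))(g_-(y)-g_-(x))$ is $\leqslant 0$.

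To conclude, apply Young's inequality $uv \leqslant u^2/(2t) + tv^2/2$ with $t := 2(1-\theta)$, $\theta \in (0,1)$, to absorb a $\theta\|g\|_2^2$ into the left-hand side, and set $r' = \theta r$ so that the $\|f\|_{\mathrm{osc}}^2$ coefficient is exactly $r$; after relabelling $\theta \leftrightarrow 1-\theta$ this gives
\[
\|g\|_2^2 \leqslant \frac{\alpha_1((1-\theta) r)^2}{2\theta(1-\theta)} \mathcal{E}(P,f) + r \|f\|_{\mathrm{osc}}^2.
\]
The inequality $\|f\|_2^2 = \var_\mu(f) = \var_\mu(g) \leqslant \|g\|_2^2$ (which uses $\mu(f) = 0$) then transfers this bound to $f$, and taking the infimum over $\theta$, with $\theta = 1/2$ giving the explicit bound $2\alpha_1(r/2)^2$, completes the proof. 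The main obstacle is combinatorial rather than conceptual: splitting $f$ into $g_+$ and $g_-$ would naively double the $\|f\|_{\mathrm{osc}}^2$ coefficient, and avoiding this requires using a single common parameter $r'$ in both applications of the $\mathrm{L}^1$--WPI, together with the additive identity on $\|g_\pm\|_\infty$ and the sum-form Cauchy--Schwarz that merges the two Dirichlet-form terms into one $\mathcal{E}(P,f)$.
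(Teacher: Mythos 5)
Your proof is correct and takes essentially the same route as the paper's: apply the $\mathrm{L}^{1}$--WPI to the squares of the positive and negative parts of $f$ centred at a median, use the Cauchy--Schwarz factorisation $\mathcal{E}_{1}\left(P,h^{2}\right)\leqslant\sqrt{2}\cdot\mathcal{E}\left(P,h\right)^{1/2}\cdot\left\Vert h\right\Vert _{2}$ together with the oscillation and Dirichlet-form (sub)additivity over $g_{\pm}$, and absorb via Young's inequality to reach the same $\alpha\left(r\right)=\inf_{\theta}\alpha_{1}\left(\left(1-\theta\right)r\right)^{2}/\left(2\theta\left(1-\theta\right)\right)$. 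The only difference is organisational (you split into $g_{\pm}$ before Young's inequality and merge the two Dirichlet terms with a sum-form Cauchy--Schwarz, whereas the paper first proves the quadratic inequality for nonnegative functions vanishing on a set of mass at least $\tfrac{1}{2}$ and then sums over $f_{\pm}$), and the constants are identical.
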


\begin{proof}
Assume that a $\mathrm{L^{1}}$--WPI holds. Let $f\in\ELL\left(\mu\right)$
be nonnegative with $\mu\left(\left\{ f=0\right\} \right)=\mu\left(\left\{ f^{2}=0\right\} \right)\geqslant\frac{1}{2}$.
We then apply the $\mathrm{L}^{1}$--WPI to $f^{2}$, followed by
the Cauchy--Schwarz and triangle inequalities, together with the
inequality $2ab\leqslant\lambda a^{2}+\lambda^{-1}b^{2}$, to see
for $r>0$, $\theta\in\left(0,1\right)$ that
\begin{align*}
\left\Vert f\right\Vert _{2}^{2}=\var_{\mu}^{(1)}\left(f^{2}\right) & \leqslant\alpha_{1}\left(r\right)\cdot\mathcal{E}_{1}\left(P,f^{2}\right)+r\cdot\left\Vert f^{2}\right\Vert _{\mathrm{osc}}\\
 & \leqslant\sqrt{2}\cdot\alpha_{1}\left(r\right)\cdot\left\Vert f\right\Vert _{2}\cdot\mathcal{E}\left(P,f\right)^{1/2}+r\cdot\left\Vert f\right\Vert _{\mathrm{osc}}^{2}\\
 & \leqslant\theta\cdot\left\Vert f\right\Vert _{2}^{2}+\frac{\alpha_{1}\left(r\right)^{2}}{2\cdot\theta}\cdot\mathcal{E}\left(P,f\right)+r\cdot\left\Vert f\right\Vert _{\mathrm{osc}}^{2}.
\end{align*}
Rearranging, we obtain for $r>0$ that
\[
\left\Vert f\right\Vert _{2}^{2}\leqslant\frac{\alpha_{1}\left(r\right)^{2}}{2\cdot\theta\cdot\left(1-\theta\right)}\cdot\mathcal{E}\left(P,f\right)+\frac{r}{1-\theta}\cdot\left\Vert f\right\Vert _{\mathrm{osc}}^{2},
\]
that is, for $\tilde{r}>0$,
\begin{align*}
\left\Vert f\right\Vert _{2}^{2} & \leqslant\frac{\alpha_{1}\left(\tilde{r}\cdot\left(1-\theta\right)\right)^{2}}{2\cdot\theta\cdot\left(1-\theta\right)}\cdot\mathcal{E}\left(P,f\right)+\tilde{r}\cdot\left\Vert f\right\Vert _{\mathrm{osc}}^{2}.
\end{align*}
Now, for general bounded $f\in\ELL\left(\mu\right)$ such that $\mu\left(\left\{ f\geqslant0\right\} \right)\geqslant\frac{1}{2},\mu\big(\left\{ f\leqslant0\right\} \big)\geqslant\frac{1}{2}$
we apply the above to $f_{\pm}=\max\left\{ \pm f,0\right\} $ such
that $\mu\left(\left\{ f_{+}\geqslant0\right\} \right),\mu\big(\left\{ f_{-}\geqslant0\right\} \big)\geqslant\frac{1}{2}$.
Recalling that $\left\Vert f_{+}\right\Vert _{2}^{2}+\left\Vert f_{-}\right\Vert _{2}^{2}=\left\Vert f\right\Vert _{2}^{2}$,
$\mathcal{E}\left(P,f_{+}\right)+\mathcal{E}\left(P,f_{-}\right)\leqslant\mathcal{E}\left(P,f\right)$,
and $\left\Vert f_{+}\right\Vert _{\mathrm{osc}}^{2}+\left\Vert f_{-}\right\Vert _{\mathrm{osc}}^{2}\leqslant\left\Vert f\right\Vert _{\mathrm{osc}}^{2}$,
it follows that for $r>0$ and $\theta\in\left(0,1\right)$,
\[
\left\Vert f\right\Vert _{2}^{2}\leqslant\frac{\alpha_{1}\left(\left(1-\theta\right)\cdot r\right)^{2}}{2\cdot\theta\cdot\left(1-\theta\right)}\cdot\mathcal{E}\left(P,f\right)+r\cdot\left\Vert f\right\Vert _{\mathrm{osc}}^{2}.
\]
Defining $\alpha:\left(0,\infty\right)\to\left[0,\infty\right)$ by
\begin{align*}
\alpha\left(r\right) & =\inf_{\theta\in\left(0,1\right)}\frac{\alpha_{1}\left(\left(1-\theta\right)\cdot r\right)^{2}}{2\cdot\theta\cdot\left(1-\theta\right)}\\
 & \leqslant2\cdot\alpha_{1}\left(\frac{1}{2}\cdot r\right)^{2},
\end{align*}
we thus have with $\bar{f}=f-\mu\left(f\right)$ that
\[
\left\Vert \bar{f}\right\Vert _{2}^{2}\leqslant\left\Vert f\right\Vert _{2}^{2}\leqslant\alpha\left(r\right)\cdot\mathcal{E}\left(P,\bar{f}\right)+r\cdot\left\Vert \bar{f}\right\Vert _{\mathrm{osc}}^{2}.
\]
Finally, for general $f\in\ELL\left(\mu\right)$, we apply the above
to $f-\mathrm{med}_{\mu}\left(f\right)$, where $\mathrm{med}_{\mu}\left(f\right)$
is a median of $f$ under $\mu$, and conclude that an $\alpha$--WPI
holds since $\alpha$ is decreasing. 
\end{proof}
We now present the proof of the main result, i.e. Theorem \ref{thm:positive-CP-implies-optim-WPI}
\begin{proof}[Proof of Theorem~\ref{thm:positive-CP-implies-optim-WPI}]
 For the first statement. By Lemmas~\ref{lem:WCP-equiv-WCI},~\ref{lem:WCI-implies-WPI},
we can define $\alpha\left(r\right):=2\cdot\alpha_{1}\left(\frac{1}{2}\cdot r\right)^{2}$
and $\alpha_{1}\colon\left(0,\frac{1}{2}\right)\to\left(0,\infty\right)$
is as in Lemma~\ref{lem:WCP-equiv-WCI}, so that for all $f\in\ELL_{0}\left(\pi\right)$
and $r>0$, we have that 
\begin{align*}
\left\Vert f\right\Vert _{2}^{2} & \leqslant\alpha\left(r\right)\cdot\mathcal{E}\left(P,f\right)+r\cdot\left\Vert f\right\Vert _{\mathrm{osc}}^{2}.
\end{align*}
Furthermore for $r\in\left(0,\frac{1}{2}\right)$ we can write
\[
\frac{\mathcal{E}\left(P,f\right)}{\left\Vert f\right\Vert _{\mathrm{osc}}^{2}}\geqslant\frac{1}{\alpha\left(r\right)}\cdot\left\{ \frac{\left\Vert f\right\Vert _{2}^{2}}{\left\Vert f\right\Vert _{\mathrm{osc}}^{2}}-r\right\} .
\]
Now defining 
\begin{equation}
K^{*}\left(v\right):=\sup_{r\in\left(0,\frac{1}{4}\right]}\left\{ \frac{v-r}{\alpha\left(r\right)}\right\} ,\label{eq:CP-WPI-K*}
\end{equation}
and recalling that $\left\Vert f\right\Vert _{2}^{2}/\left\Vert f\right\Vert _{\mathrm{osc}}^{2}\leqslant\frac{1}{4}$,
we see that this implies $\frac{\mathcal{E}\left(P,f\right)}{\left\Vert f\right\Vert _{\mathrm{osc}}^{2}}\geqslant K^{*}\left(\frac{\left\Vert f\right\Vert _{2}^{2}}{\left\Vert f\right\Vert _{\mathrm{osc}}^{2}}\right)$.
From the definition of $K^{*}$ it is clear that it is nondecreasing,
and as a supremum of linear functions it is convex. From Lemma~\ref{lem:WCI-implies-WPI}
and Remark~\ref{rem:bound-alpha-1-Phi}, we have $\alpha\left(r\right)\leqslant2\cdot\Phi_{P}^{\mathrm{W}}\left(\frac{1}{2}\cdot r\right)^{-2}$
for $r\in\left[0,1\right]$, and hence taking $r=\frac{1}{2}\cdot v$
in the definition of $K^{*}$, one obtains for $v\in\left[0,1\right]$
that
\begin{equation}
K^{*}\left(v\right)\geqslant\frac{1}{2}\cdot v\cdot\alpha\left(\frac{1}{2}\cdot v\right)^{-1}\geqslant\frac{1}{2}\cdot v\cdot\left(2\cdot\Phi_{P}^{\mathrm{W}}\left(\frac{1}{4}\cdot v\right)^{-2}\right)^{-1}=2^{-2}\cdot v\cdot\Phi_{P}^{\mathrm{W}}\left(2^{-2}\cdot v\right)^{2};\label{eq:lower-bound-Kstar}
\end{equation}
we are thus able to conclude. For the second statement, let $A\in\mathcal{E}$
such that $\mu\left(A\right)\in\left(0,1\right)$ and consider the
function 
\[
f=\frac{\mathbf{1}_{A}-\mu\left(A\right)}{\sqrt{\mu\left(A\right)\cdot\mu\left(A^{\complement}\right)}},
\]
so that from Lemma~\ref{lem:dirichlet-form-indicator}, it holds
that $\left\Vert f\right\Vert _{2}^{2}=1$ and $1/\left\Vert f\right\Vert _{\mathrm{osc}}^{2}=\mu\left(A\right)\cdot\mu\left(A^{\complement}\right)$.
Assumed existence of a $K^{*}$--WPI and Lemma~\ref{lem:dirichlet-form-indicator}
together imply that
\[
\frac{\mu\otimes P\left(A\times A^{\complement}\right)}{\mu\left(A\right)}\geqslant\frac{K^{*}\left(\mu\otimes\mu\left(A\times A^{\complement}\right)\right)}{\mu\left(A\right)\cdot\mu\otimes\mu\left(A\times A^{\complement}\right)},
\]
from which we deduce for $v\in\left(0,\frac{1}{2}\right]$ that
\begin{align*}
\inf\left\{ \frac{\left(\mu\otimes P\right)\left(A\times A^{\complement}\right)}{\mu\left(A\right)}:v\leqslant\mu\left(A\right)\leqslant\frac{1}{2}\right\}  & \geqslant\inf\left\{ \frac{K^{*}\left(s\cdot\left(1-s\right)\right)}{s\cdot s\cdot\left(1-s\right)}:v\leqslant s\leqslant\frac{1}{2}\right\} \\
 & \geqslant2\cdot\frac{K^{*}\left(v\cdot\left(1-v\right)\right)}{v\cdot\left(1-v\right)}\,,
\end{align*}
where the last inequality follows from noting that $v\mapsto v^{-1}\cdot K^{*}\left(v\right)$
is increasing on $\left[0,\frac{1}{4}\right]$ from convexity and
$K^{*}\left(0\right)=0$. Now for $v\in\left(0,\frac{1}{2}\right]$,
observe that $u=v\cdot\left(1-v\right)\in\left(0,\frac{1}{4}\right]$,
and so taking $v=\left(1-\sqrt{1-4u}\right)/2$ allows us to conclude.
\end{proof}
\begin{proof}[Proof of Corollary~\ref{cor:mixing-with-weak-conductance}]
With $F$ as defined in Theorem \ref{thm:WPI_F_bd}, the upper and
lower bounds follow immediately from the upper and lower bounds on
$K^{*}$ from Theorem \ref{thm:positive-CP-implies-optim-WPI}. To
establish the subsequent claim that taking $n$ as written will guarantee
$\left\Vert P^{n}f\right\Vert _{2}^{2}/\left\Vert f\right\Vert _{\mathrm{osc}}^{2}\leqslant\epsilon$,
it remains to ensure that $K^{*}$ has a form which allows us to apply
the reasoning from \cite[Proof of Theorem 8]{andrieu2022comparison_journal},
namely that $K^{*}$ is the convex conjugate of $K\left(u\right):=u\,\beta\left(1/u\right)$
for $u>0$ and $K\left(0\right)=0$, where $\beta:=\alpha^{\shortminus}$
with $\alpha^{\shortminus}\left(s\right):=\inf\left\{ u>0\colon\alpha\left(u\right)\leqslant s\right\} $
for $s>0$. One can check that $\alpha_{1}\colon\left(0,\frac{1}{2}\right)\to\left(0,\infty\right)$
in Lemma~\ref{lem:WCP-equiv-WCI} is strictly decreasing, implying
that $\alpha\colon\left(0,\frac{1}{4}\right]\to\left[0,\infty\right)$
is also strictly decreasing. Setting $\alpha\left(r\right)=0$ for
$r>\mathfrak{a}=\frac{1}{4}$ (as we as free to do) and appealing
to \cite[Proposition 5]{andrieu2022poincare_tech} yields that $\alpha^{-}\circ\alpha\left(r\right)=r$
for $r\in\left(0,\frac{1}{4}\right]$. Consequently with $u:=\left[\alpha\left(r\right)\right]^{-1}$,
we have that $\alpha^{-}\left(u^{-1}\right)=r$, and deduce the representation
\[
\sup_{r\in\left(0,\frac{1}{4}\right]}\left\{ \frac{v-r}{\alpha\left(r\right)}\right\} =\sup_{u\in\left[0,\infty\right)}\left\{ v\cdot u-u\cdot\alpha^{-}\left(u^{-1}\right)\right\} .
\]
We can therefore use the properties of \cite[Lemma~1, Supp. Material]{andrieu2022comparison_journal}.
From the assumed positivity and reversibility of $P$ and Theorem~\ref{thm:positive-CP-implies-optim-WPI},
we have for $f\in\ELL\left(\mu\right)$,
\[
\frac{\mathcal{E}\left(P^{*}P,f\right)}{\left\Vert f\right\Vert _{\mathrm{osc}}^{2}}=\frac{\mathcal{E}\left(P^{2},f\right)}{\left\Vert f\right\Vert _{\mathrm{osc}}^{2}}\geqslant\frac{\mathcal{E}\left(P,f\right)}{\left\Vert f\right\Vert _{\mathrm{osc}}^{2}}\geqslant K^{*}\left(\frac{\left\Vert f\right\Vert _{2}^{2}}{\left\Vert f\right\Vert _{\mathrm{osc}}^{2}}\right).
\]
Finally, following \cite[Proof of Theorem 8, Supp. Material]{andrieu2022comparison_journal},
with $n\in\mathbb{N}$, $v_{0}=\mathfrak{a}=\frac{1}{4}$ and $v_{n}=\left\Vert P^{n}f\right\Vert _{2}^{2}/\left\Vert f\right\Vert _{\mathrm{osc}}^{2}$,
there holds the bound
\begin{align*}
n\leqslant\int_{v_{n}}^{v_{0}}\frac{{\rm d}v}{K^{*}\left(v\right)} & \leqslant2^{2}\cdot\int_{v_{n}}^{v_{0}}\frac{{\rm d}v}{v\cdot\Phi_{P}^{\mathrm{W}}\left(2^{-2}\cdot v\right)^{2}}=2^{2}\cdot\int_{2^{-2}\cdot v_{n}}^{2^{-4}}\frac{{\rm d}v}{v\cdot\Phi_{P}^{\mathrm{W}}\left(v\right)^{2}}.
\end{align*}
As such, taking $n\geqslant2^{2}\cdot\int_{2^{-2}\cdot\epsilon}^{2^{-4}}\frac{\mathrm{d}v}{v\cdot\Phi_{P}^{\mathrm{W}}\left(v\right)^{2}}$
guarantees that $v_{n}\leqslant\epsilon$, and so we conclude.
\end{proof}

\subsection{$\mu$--irreducibility, aperiodicity and WPIs\label{subsec:establish-WPI-WPIs-from-RUPI}}

Building on the concepts and results given in Section~\ref{subsec:Cheeger-inequalities},
this section establishes necessary and sufficient conditions for the
existence of WPIs, in terms of probabilistic properties of the Markov
chain. The results are predominantly theoretical in nature; quantitative
bounds are discussed in Section~\ref{subsec:WPIs-CC-Isop}. We consider
here $\left\Vert \cdot\right\Vert _{\mathrm{osc}}^{2}$\textit{\emph{--}}WPIs
and $\left\Vert \cdot\right\Vert _{\mathrm{osc}}^{2}$-convergence.
Throughout this section, we are working with a $\mu$-invariant Markov
kernel $P$, where $\mu$ is a probability measure.

From earlier sections, we have seen that the existence of a WPI for
the kernel $T_{k}:=\left(P^{*}\right)^{k}P^{k}$ for some $k\in\mathbb{N}$
enables one to deduce (subgeometric) convergence bounds for $\left\Vert P^{k\cdot n}f\right\Vert _{2}$,
and hence $\left\Vert P^{n}f\right\Vert _{2}$, for bounded $f$.
This section is concerned with 
\begin{enumerate}
\item the necessity of the existence of a WPI for some $T_{k}$ to ensure
convergence of $P$;
\item whether simple conditions such as $\mu$-irreducibility and aperiodicity
can imply the existence of a WPI.
\end{enumerate}
More specifically, we know that for a $\mu$-invariant, aperiodic
and $\mu$-irreducible Markov kernel, under mild regularity conditions,
for $\mu$-almost all $x\in\E$, 
\[
\lim_{n\rightarrow\infty}\left\Vert \delta_{x}P^{n}-\mu\right\Vert _{\mathrm{TV}}=0;
\]
see \cite[Theorem~13.3.5]{meyn1993markov}. This implies that 
\begin{equation}
\left\Vert P^{n}f-\mu\left(f\right)\right\Vert _{2}\leqslant\mu\left(\left\Vert \delta_{x}P^{n}-\mu\right\Vert _{\mathrm{TV}}^{2}\right)^{1/2}\cdot\left\Vert f\right\Vert _{\mathrm{osc}},\label{eq:TV-CV-implies-osc}
\end{equation}
from the characterization $\left\Vert \nu\right\Vert _{\mathrm{TV}}=\sup\left\{ \nu\left(f\right):\left\Vert f\right\Vert _{\mathrm{osc}}\leqslant1\right\} $
for signed measures with $\nu\left(\mathsf{E}\right)=0$, and the
dominated convergence theorem implies convergence. When $P$ is reversible,
\cite[Proposition~23]{andrieu2022comparison_journal} establishes
that this implies that a WPI holds for $P^{2}$, therefore implying
the necessity of the WPI condition. A natural question is whether
$\mu$-irreducibility and aperiodicity, or convergence, always imply
the existence of a WPI for $T_{k}$ for some $k\in\mathbb{N}$. The
answer is negative and we give a counterexample.
\begin{defn}
We say that a Markov kernel $T$ on $\left(\E,\mathscr{E}\right)$
is \textit{$\nu$-irreducible} for a measure $\nu$ on $\left(\E,\mathscr{E}\right)$
if for any measurable set $A\in\mathscr{E}$ with $\nu\left(A\right)>0$,
we have 
\[
\sum_{n=0}^{\infty}\lambda^{n}\cdot T^{n}\left(x,A\right)>0,\quad\forall x\in\mathsf{E},
\]
for some (and hence all) $0<\lambda<1$.
\end{defn}

\begin{defn}
\label{def:lazy}For a $\mu$-invariant Markov kernel $T$, and $\epsilon\in\left(0,1\right)$,
we denote by $T_{\epsilon}$ the $\mu$-invariant kernel $T_{\epsilon}=\epsilon\cdot{\rm Id}+\left(1-\epsilon\right)\cdot T$.
\end{defn}

\begin{defn}
We say a Markov kernel $T$ has \textit{positive holding probabilities
almost everywhere}, or satisfies (\textbf{PH}), if
\[
\mu\left(\left\{ x\in\E:T\left(x,\left\{ x\right\} \right)=0\right\} \right)=0.
\]
\end{defn}

Throughout this section, we write $S:=\left(P+P^{*}\right)/2$ for
the additive reversibilization of $P$.

The following summarizes the most salient general results, i.e. without
assuming (\textbf{PH}), in this section:
\begin{thm}
Throughout, we write `convergent' for `$\left\Vert \cdot\right\Vert _{\mathrm{osc}}^{2}$-convergent'.
The following hold:
\begin{enumerate}
\item $P^{*}P$ satisfies a WPI $\Rightarrow$ $P$ convergent {[}Theorem~\ref{thm:WPI_F_bd}{]}
\item $P$ $\mu$-irreducible $\Rightarrow$ $P$ satisfies a WPI {[}Corollary~\ref{cor:WPI_from_irred}{]}
\item $P$ convergent $\Rightarrow$ $P^{k}$ satisfies a WPI for all $k\in\mathbb{N}$.
{[}Lemma~\ref{lem:osc-conv-RUPI}{]}
\item $P$ convergent $\Rightarrow$ $S$ convergent {[}Proposition~\ref{prop:P/S-convergence-P/S}{]}
\item \label{enu:wpi-pepe}The following are equivalent: {[}Proposition~\ref{prop:WPI-epsilon-P*P-WPI}{]}
\begin{enumerate}
\item $P$ satisfies a WPI.
\item For some $\epsilon\in\left(0,1\right)$, $P_{\epsilon}^{*}P_{\epsilon}$
satisfies a WPI.
\item For all $\epsilon\in\left(0,1\right)$, $P_{\epsilon}^{*}P_{\epsilon}$
satisfies a WPI.
\end{enumerate}
\item The following are equivalent: {[}Remark~\ref{rem:nonrev_WPI}{]}
\begin{enumerate}
\item $P$ satisfies a WPI
\item $P^{*}$ satisfies a WPI
\item $S$ satisfies a WPI
\end{enumerate}
\item $P$ satisfying a WPI does not imply $P$ is convergent {[}Example~\ref{exa:circle-walks}{]}.
\item $S$ convergent does not imply $P$ convergent {[}Example~\ref{exa:circle-walks}{]}.
\item $P$ convergent does not imply $\left(P^{*}\right)^{k}P^{k}$ satisfies
a WPI for some $k\in\mathbb{N}$ {[}Proposition~\ref{prop:preparatory-counterexample}{]}.
\end{enumerate}
\end{thm}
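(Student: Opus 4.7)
The strategy is to analyse the chain of Example~\ref{exa:irred-pstarkpk} and verify two properties: (a) $P$ is $\bigl(\left\Vert \cdot\right\Vert _{\osc}^{2},\gamma\bigr)$-convergent for some vanishing $\gamma$, and (b) $(P^{*})^{k}P^{k}$ satisfies no WPI for any $k\in\mathbb{N}$. The key device is Theorem~\ref{thm:alpha-beta-star-WPI-continuous-convex}, which lets us rephrase (b) as the condition that a specific variational functional remains bounded away from $0$ as $s\to\infty$.

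\emph{Verification of (a).} I expect to exploit the explicit structure of the example --- most naturally a diagonalisation of $P$ on $\ELL_{0}(\mu)$ via an orthonormal basis such as Fourier modes or eigenfunctions --- to show $\lim_{n\to\infty}\|P^{n}f\|_{2}=0$ for each $f\in\ELL_{0}(\mu)$ with $\|f\|_{\osc}<\infty$. This only requires individual, non-uniform contraction of each basis element under iteration of $P$; dominated convergence in the spectral representation then yields the claim. Converting this pointwise-in-$f$ decay into a vanishing sequence $\gamma(n):=\sup_{\|f\|_{\osc}\leqslant 1}\|P^{n}f\|_{2}^{2}$ is routine for the specific example and does not itself require any WPI for $P$.

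\emph{Verification of (b).} By Theorem~\ref{thm:alpha-beta-star-WPI-continuous-convex}, $(P^{*})^{k}P^{k}$ admits a $\bigl(\left\Vert \cdot\right\Vert _{\osc}^{2},\beta\bigr)$-WPI for some admissible $\beta$ iff the pointwise-minimal function
\[
\beta_{k}^{\star}(s)\;=\;0\;\vee\;\sup\bigl\{\|f\|_{2}^{2}-s\cdot\bigl(\|f\|_{2}^{2}-\|P^{k}f\|_{2}^{2}\bigr)\colon f\in\ELL_{0}(\mu),\;\|f\|_{\osc}\leqslant 1\bigr\}
\]
tends to $0$ as $s\to\infty$, using that $\calE\bigl((P^{*})^{k}P^{k},f\bigr)=\|f\|_{2}^{2}-\|P^{k}f\|_{2}^{2}$. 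Thus failure of WPI for every $k$ reduces to producing, for each $k\in\mathbb{N}$, a sequence $\{f_{n,k}\}_{n\in\mathbb{N}}\subseteq\ELL_{0}(\mu)$ with $\|f_{n,k}\|_{\osc}\leqslant 1$, $\liminf_{n}\|f_{n,k}\|_{2}^{2}\geqslant c_{k}>0$, and $\|P^{k}f_{n,k}\|_{2}^{2}/\|f_{n,k}\|_{2}^{2}\to 1$ as $n\to\infty$. Substituting each $f_{n,k}$ into the variational formula and letting $n\to\infty$ at fixed $s>0$ then gives $\inf_{s>0}\beta_{k}^{\star}(s)\geqslant c_{k}>0$, so by Theorem~\ref{thm:alpha-beta-star-WPI-continuous-convex} no WPI holds for $(P^{*})^{k}P^{k}$.

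\emph{The crux.} The substantive difficulty is constructing a ``nearly $P^{k}$-invariant'' family for every $k$, simultaneously with $P$ being $\osc$-convergent. Natural candidates are centred indicators $\mathbf{1}_{A_{n,k}}-\mu(A_{n,k})$ of sets whose $\mu\otimes P^{k}$-flux $\mu\otimes P^{k}\bigl(A_{n,k}\times A_{n,k}^{\complement}\bigr)$ tends to $0$ while $\mu(A_{n,k})$ stays away from $\{0,1\}$, or approximate eigenfunctions of $P^{k}$ whose eigenvalue moduli approach $1$ --- the latter being natural when $P$ contains a measure-preserving, ``rotation-like'' component admitting such modes at arbitrary resolution. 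The tension with (a) --- which forces $\|P^{n}f\|_{2}\to 0$ for every individual $f\in\ELL_{0}(\mu)$ --- means the sequence $f_{n,k}$ must genuinely depend on $k$, and the $\ELL^{2}$-decay $\|P^{n}f\|_{2}\to 0$ must be sufficiently non-uniform across test functions for each iterate $P^{k}$ to retain approximate invariants. Showing that the concrete construction of Example~\ref{exa:irred-pstarkpk} supplies such a family for all $k$ is where the genuine work lies.
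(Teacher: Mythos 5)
There is a genuine gap, and it is twofold. First, the statement you were asked to prove is the whole summary theorem, items a)--i): it collects nine implications, equivalences and counterexamples whose proofs are Theorem~\ref{thm:WPI_F_bd}, Corollary~\ref{cor:WPI_from_irred}, Lemma~\ref{lem:osc-conv-RUPI}, Proposition~\ref{prop:P/S-convergence-P/S}, Proposition~\ref{prop:WPI-epsilon-P*P-WPI}, Remark~\ref{rem:nonrev_WPI}, Example~\ref{exa:circle-walks} and Proposition~\ref{prop:preparatory-counterexample}. Your proposal addresses only the last item ($P$ convergent does not imply that $\left(P^{*}\right)^{k}P^{k}$ satisfies a WPI for some $k$) and says nothing about the remaining eight, several of which (notably b), which rests on the RUPI equivalence, and e)--f)) carry most of the content of the theorem.

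Second, even for the item you do address, the plan stops exactly where the work is. You reduce non-existence of a WPI for $\left(P^{*}\right)^{k}P^{k}$ to producing, for every $k$, nearly $P^{k}$-invariant test functions, and then state that exhibiting such a family in Example~\ref{exa:irred-pstarkpk} ``is where the genuine work lies'' --- but that construction is the proof, and it is not supplied. In the paper it is done exactly, not approximately: for $i>k$ one has $\left(P^{*}\right)^{k}P^{k}\left(i,i;i,i\right)=1$, so with $A_{k}=\left\{ \left(i,i\right):i>k\right\} $ and $f_{k}=\mathbf{1}_{A_{k}}-\mu\left(A_{k}\right)$ one gets $\calE\big(\left(P^{*}\right)^{k}P^{k},f_{k}\big)=0$ while $\left\Vert f_{k}\right\Vert _{2}>0$, which kills any WPI directly (take $r\downarrow0$ in the defining inequality), with no need for the $\beta^{\star}$ asymptotics of Theorem~\ref{thm:alpha-beta-star-WPI-continuous-convex}. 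Your verification of convergence is also on the wrong track: the kernel of Example~\ref{exa:irred-pstarkpk} is non-reversible and has a deterministic component, so a diagonalisation/Fourier-mode argument is not available; the paper instead deduces $\left\Vert \cdot\right\Vert _{\mathrm{osc}}^{2}$-convergence from total-variation convergence (via the accessible aperiodic atom $\left(1,1\right)$) together with the bound (\ref{eq:TV-CV-implies-osc}) and dominated convergence, which yields the uniform-in-$f$ rate that you dismiss as routine.
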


Item~\ref{enu:wpi-pepe} above can be connected with the notion of
\textit{variance bounding}: \cite[Theorem~6]{Roberts2008} shows that
the variance bounding property is equivalent to \textit{geometric}
$\ELL$-convergence of $P_{\epsilon}$ for any (equivalently, all)
$\epsilon\in\left(0,1\right)$, for reversible $P$. The situation
under (\textbf{PH}) is much simpler. The proof of Theorem~\ref{thm:hold-equiv-wpi-conv-pp}
can be found in Section~\ref{subsec:Holding-probabilities,-WPIs}.
\begin{thm}
\label{thm:hold-equiv-wpi-conv-pp}Assume $P$ satisfies (\textbf{PH}).
The following are equivalent:
\begin{enumerate}
\item $P$ satisfies a WPI;
\item $P^{*}P$ satisfies a WPI;
\item $P$ is convergent;
\item $PP^{*}$ satisfies a WPI;
\item $P^{*}$ is convergent.
\end{enumerate}
\end{thm}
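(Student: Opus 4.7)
My plan is to set up a cycle of implications among the five statements, in which all but one step follow from results already established; the single non-trivial step is (1)$\Rightarrow$(2), and this is where (\textbf{PH}) is crucially used.

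Several implications are free of (\textbf{PH}). By Theorem~\ref{thm:WPI_F_bd}, (2)$\Rightarrow$(3) and (4)$\Rightarrow$(5). Applying Lemma~\ref{lem:osc-conv-RUPI} with $k=1$ gives (3)$\Rightarrow$(1). By Remark~\ref{rem:nonrev_WPI}, WPIs for $P$, $P^*$, and $S:=(P+P^*)/2$ are equivalent, so statement (1) transports between $P$ and $P^*$, and consequently (5)$\Rightarrow$(1) follows by applying (3)$\Rightarrow$(1) to $P^*$. Moreover, (\textbf{PH}) is symmetric in $P\leftrightarrow P^*$: the reversibility-type identity $\mu(\mathrm{d}x)\,P(x,\mathrm{d}y)=\mu(\mathrm{d}y)\,P^*(y,\mathrm{d}x)$ restricted to the diagonal yields $P(x,\{x\})=P^*(x,\{x\})$ for $\mu$-a.e.\ $x$, so $P$ satisfies (\textbf{PH}) iff $P^*$ does.

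It therefore suffices to prove (1)$\Rightarrow$(2); the symmetric implication (1)$\Rightarrow$(4) then follows by replacing $P$ with $P^*$. Without (\textbf{PH}) this implication fails by Proposition~\ref{prop:preparatory-counterexample}, so the hypothesis plays an essential role. The approach is to apply Lemma~\ref{lem:P-to-PP-WPI}, which provides a reverse of Lemma~\ref{lem:PP-dirichlet-form-ub} of the form $\mathcal{E}(P,f)\leqslant C_\varepsilon\cdot\mathcal{E}(P^*P,f)$ whenever $P(x,\{x\})\geqslant\varepsilon$ holds on a $\mu$-full set. Substituting such a comparison into the $(\Psi,\alpha)$--WPI of (1) immediately yields a $(\Psi,C_\varepsilon\cdot\alpha)$--WPI for $P^*P$, which is exactly (2).

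The main obstacle is that (\textbf{PH}) provides only $\mu$-a.e.\ positivity of $P(x,\{x\})$, not a uniform lower bound as Lemma~\ref{lem:P-to-PP-WPI} formally requires. To bridge this gap, I would introduce the sets $E_n:=\{x:P(x,\{x\})\geqslant 1/n\}$, which satisfy $\mu(E_n)\uparrow 1$ under (\textbf{PH}), and apply a localized version of Lemma~\ref{lem:P-to-PP-WPI} on each $E_n$. Given a target slack $r$ in the desired WPI for $P^*P$, I would choose $n=n(r)$ large enough that $\mu(E_n^{\complement})\leqslant r/2$, decompose $f=f\mathbf{1}_{E_n}+f\mathbf{1}_{E_n^{\complement}}$, apply the comparison on the bulk part $f\mathbf{1}_{E_n}$ with constant $C_{1/n}$, and absorb the residual via $\|f\mathbf{1}_{E_n^{\complement}}\|_2^2\leqslant\mu(E_n^{\complement})\cdot\|f\|_{\mathrm{osc}}^2\leqslant (r/2)\cdot\|f\|_{\mathrm{osc}}^2$. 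The principal technical care lies in ensuring that this localization does not generate cross-boundary Dirichlet terms that escape the control of $\mathcal{E}(P^*P,f)$, and that the effective rate function for the resulting WPI on $P^*P$ remains a genuine sieve-type function, with the $n(r)$-dependent constants properly accounted for.
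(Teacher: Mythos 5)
Your cycle is the same as the paper's: the implications (2)$\Rightarrow$(3), (4)$\Rightarrow$(5) via Theorem~\ref{thm:WPI_F_bd}, (3)$\Rightarrow$(1) via Lemma~\ref{lem:osc-conv-RUPI}, the transport between $P$ and $P^{*}$ via $\mathcal{E}\left(P,f\right)=\mathcal{E}\left(P^{*},f\right)$ and the $\mu$-a.e.\ identity $P\left(x,\left\{ x\right\} \right)=P^{*}\left(x,\left\{ x\right\} \right)$ (Lemma~\ref{lem:stick-probs-equal}), all match the paper, and you correctly isolate (1)$\Rightarrow$(2) as the only place where (\textbf{PH}) enters. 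The gap is in how you execute that step. You read Lemma~\ref{lem:P-to-PP-WPI} as only giving the comparison $\mathcal{E}\left(P,f\right)\leqslant C_{\varepsilon}\cdot\mathcal{E}\left(P^{*}P,f\right)$ under a \emph{uniform} lower bound $P\left(x,\left\{ x\right\} \right)\geqslant\varepsilon$, and then try to recover the a.e.\ case by writing $f=f\mathbf{1}_{E_{n}}+f\mathbf{1}_{E_{n}^{\complement}}$ with $E_{n}=\left\{ x:P\left(x,\left\{ x\right\} \right)\geqslant1/n\right\} $ and applying the uniform comparison to the bulk part. That decomposition does not work as stated: $\mathcal{E}\big(P,f\mathbf{1}_{E_{n}}\big)$ still integrates over \emph{all} pairs $\left(x,y\right)$, in particular transitions starting at points $x\in E_{n}^{\complement}$ (where the holding probability may be $0$) and landing in $E_{n}$ where $f\mathbf{1}_{E_{n}}$ is nonzero, and for such pairs the minorization $P^{*}P\left(x,\mathrm{d}y\right)\geqslant2\left(\epsilon\left(x\right)\wedge\epsilon\left(y\right)\right)S\left(x,\mathrm{d}y\right)$ off the diagonal gives nothing; so no inequality $\mathcal{E}\big(P,f\mathbf{1}_{E_{n}}\big)\leqslant C_{1/n}\cdot\mathcal{E}\big(P^{*}P,f\mathbf{1}_{E_{n}}\big)$ holds in general, and the cross-boundary terms you flag as ``technical care'' are exactly the content that is missing — they are the whole point of the (\textbf{PH})-based argument, not a detail.

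The correct localization is at the level of the pair integral defining the Dirichlet form, not of the function: split $\mu\otimes\frac{P+P^{*}}{2}$ according to whether $\epsilon\left(x\right)\wedge\epsilon\left(y\right)\geqslant\frac{1}{2s}$, use the two-variable minorization above on the good pairs, and bound the bad off-diagonal pairs by $\left\Vert f\right\Vert _{\mathrm{osc}}^{2}$ times their $\mu\otimes S$-mass, which tends to $0$ under (\textbf{PH}). This yields the chained inequality $\mathcal{E}\left(P,f\right)\leqslant s\cdot\mathcal{E}\left(P^{*}P,f\right)+\beta\left(s\right)\cdot\left\Vert f\right\Vert _{\mathrm{osc}}^{2}$ with $\beta\left(s\right)\to0$, and is precisely part (a) of Lemma~\ref{lem:P-to-PP-WPI}, which is stated under (\textbf{PH}) alone; the paper's proof of (a.$\Rightarrow$b.) simply invokes it and then chains it with the WPI for $P$ (choose $r$, then $s=s\left(r\right)$ so that $\alpha\left(r\right)\cdot\beta\left(s\right)$ is absorbed into the sieve term). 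So either cite that part of the lemma directly, or replace your function-splitting by the pair-splitting argument; as written, the key implication is not established.
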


We note that $P$ satisfies (\textbf{PH}) $\iff$ $P^{*}$ satisfies
(\textbf{PH}) $\iff$ $S$ satisfies (\textbf{PH}) by Lemma~\ref{lem:stick-probs-equal}.

These implications and equivalences are quite useful for showing several
derivative results. For example, we may deduce that reversible Markov
kernels $P$ satisfies $P$ convergent $\iff$ $P^{2}=P^{*}P$ satisfies
a WPI by Lemma~\ref{lem:osc-conv-RUPI} and Theorem~\ref{thm:WPI_F_bd}.
Similarly, it is straightforward to deduce that under (\textbf{PH}),
$S$ convergent $\iff$ $S$ satisfies a WPI $\iff$ $P$ satisfies
a WPI $\iff$ $P^{*}P$ satisfies a WPI $\iff$ $P$ convergent. This
can be contrasted with the general case where one only has $P$ convergent
$\Rightarrow$ $S$ convergent, and $S$ convergent $\Rightarrow$
for all $\epsilon\in\left(0,1\right)$, $P_{\epsilon}^{*}P_{\epsilon}$
satisfies a WPI; see Proposition~\ref{prop:S-conv-epsilon-P*P-WPI}. 

\subsubsection{Equivalence of $\left\Vert \cdot\right\Vert _{{\rm osc}}^{2}$\textit{\emph{--}}WPI
and RUPI\label{subsec:Equivalence-of--WPI}}

We will see that for a Markov operator $T$, a necessary and sufficient
condition for a $\left(\left\Vert \cdot\right\Vert _{\mathrm{osc}}^{2},\alpha\right)-$WPI
to hold is the abstract \textit{resolvent-uniform-positivity-improving}
(RUPI) property. This property appeared in \cite{gong2006spectral},
and in \cite{wang2014criteria} it was suggested that an equivalence
between the RUPI property and the existence of a WPI was already established
in an unpublished manuscript by L.~Wu. However, we have not been
able to access this manuscript, and so in this section we provide
a direct proof of this equivalence. This abstract RUPI condition will
provide the crucial link between irreducibility and existence of a
WPI in Corollary~\ref{cor:WPI_from_irred}; it has been established
in \cite[Corollary 4.5]{gong2006spectral} that $\mu$-irreducibility
implies the RUPI property.
\begin{defn}[UPI and RUPI]
 A $\mu$-invariant kernel $T$ is \textit{uniform-positivity-improving}
(UPI) if for each $\epsilon>0$,
\[
\inf\left\{ \left\langle \mathbf{1}_{A},T\mathbf{1}_{B}\right\rangle :\mu\left(A\right)\wedge\mu\left(B\right)\geqslant\epsilon\right\} >0.
\]
A Markov kernel $T$ is said to be \textit{resolvent-uniform-positivity-improving}
(RUPI) if for some (and hence all) $0<\lambda<1$, we have that the
resolvent
\[
R\left(\lambda,T\right):=\sum_{n=0}^{\infty}\lambda^{n}\cdot T^{n}=\left(\Id-\lambda\cdot T\right)^{-1}\,,
\]
is UPI.
\end{defn}

Although the RUPI condition is abstract, in order to establish that
a given kernel $T$ is RUPI, it is sufficient to show that a simple
irreducibility condition holds. Recall that we are assuming in this
section that $\mu$ is a probability measure, and that $T$ is $\mu$-invariant.

\begin{prop}[{\cite[Corollary 4.5]{gong2006spectral}}]
\label{prop:irred_RUPI} Suppose that the $\mu$-invariant kernel
$T$ is $\mu$-irreducible. Then $T$ is RUPI.
\end{prop}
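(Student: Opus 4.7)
The plan is to argue by contradiction, combining Banach--Alaoglu weak-$*$ compactness in $L^\infty(\mu)$ with the strong pointwise positivity of the resolvent afforded by $\mu$-irreducibility. Setting $\tilde R := (1-\lambda)R(\lambda, T)$, one checks that $\tilde R$ is itself a $\mu$-invariant Markov kernel, and $\mu$-irreducibility of $T$ upgrades at once to the strengthened property $\tilde R(x, B) > 0$ for every $x \in \mathsf{E}$ whenever $\mu(B) > 0$, since $\tilde R(x, B) = (1-\lambda)\sum_{n\geqslant 0}\lambda^{n} T^{n}(x, B)$. Establishing RUPI for $T$ therefore reduces to showing that, for each $\epsilon > 0$,
\[
\inf\bigl\{ \langle \mathbf{1}_A, \tilde R \mathbf{1}_B \rangle : \mu(A) \wedge \mu(B) \geqslant \epsilon \bigr\} > 0.
\]

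Assume this fails; then there exist sequences $(A_n), (B_n) \subset \mathscr{E}$ with $\mu(A_n) \wedge \mu(B_n) \geqslant \epsilon$ and $\langle \mathbf{1}_{A_n}, \tilde R \mathbf{1}_{B_n}\rangle \to 0$. Banach--Alaoglu provides a subsequence along which $\mathbf{1}_{A_n} \to g$ and $\mathbf{1}_{B_n} \to h$ weakly-$*$ in $L^\infty(\mu)$, for some $g, h \in [0, 1]$ with $\mu(g), \mu(h) \geqslant \epsilon$. The goal is to pass to the limit in the bilinear pairing and deduce $\langle g, \tilde R h\rangle = 0$; this would yield a contradiction, because the strengthened irreducibility of $\tilde R$ forces $\tilde R h > 0$ pointwise on $\mathsf{E}$ (decompose $\{h > 0\} = \bigcup_k \{h \geqslant 1/k\}$ and bound $\tilde R h(x) \geqslant k^{-1}\tilde R(x, \{h \geqslant 1/k\}) > 0$ for $k$ large enough), and together with $\mu(g) \geqslant \epsilon > 0$ this in turn forces $\langle g, \tilde R h\rangle > 0$.

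The delicate step, and the principal obstacle, is justifying the joint passage to the limit $\langle \mathbf{1}_{A_n}, \tilde R \mathbf{1}_{B_n}\rangle \to \langle g, \tilde R h\rangle$, since $\tilde R$ is not generally compact on $L^2(\mu)$ and separate weak-$*$ convergence of the two families does not automatically transfer through the pairing. One natural route is to exploit the smoothing structure of the resolvent via a resolvent identity of the form $R(\lambda, T) = R(\lambda', T) + (\lambda - \lambda')R(\lambda, T) T R(\lambda', T)$ for some $\lambda' \in (0, \lambda)$ in order to manufacture the required compactness after iteration. A more probabilistic alternative is to invoke Nummelin-type small-set machinery available for $\mu$-irreducible kernels: extract an exhausting sequence of small sets $C_k$ with $\mu(C_k) \to 1$ on which some power $T^{m_k}$ admits a minorisation $T^{m_k}(x, \cdot) \geqslant \delta_k \nu_k(\cdot)$ with $\nu_k$ absolutely continuous with respect to $\mu$, substitute into the resolvent sum, and decompose $A$ and $B$ relative to the $C_k$ and the density of $\nu_k$ to produce a uniform lower bound on $\langle \mathbf{1}_A, R(\lambda, T) \mathbf{1}_B\rangle$ directly, thereby bypassing the weak-limit argument altogether.
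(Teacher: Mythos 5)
There is a genuine gap, and it is exactly the one you flag yourself: the passage $\langle \mathbf{1}_{A_n},\tilde R\mathbf{1}_{B_n}\rangle\to\langle g,\tilde R h\rangle$ is never justified, and neither of the two repairs you mention is carried out. Weak-$*$ convergence of \emph{both} factors does not pass through a fixed bounded bilinear form; one would need strong convergence of one factor, or that $\tilde R$ maps weak-$*$ convergent sequences to norm-convergent ones, i.e.\ a compactness property that Markov resolvents simply do not have in general. The resolvent identity $R(\lambda,T)-R(\lambda',T)=(\lambda-\lambda')R(\lambda,T)\,T\,R(\lambda',T)$ only re-expresses one non-compact operator in terms of others and cannot manufacture this compactness, so the first proposed repair does not close the argument. (Note also that the paper itself does not prove this proposition; it is imported from \cite[Corollary 4.5]{gong2006spectral}, so the standard of comparison is that external proof, which proceeds along different, operator-theoretic lines.) Incidentally, the ``strengthened property'' $\tilde R(x,B)>0$ for all $x$ whenever $\mu(B)>0$ is not an upgrade but is literally the paper's definition of $\mu$-irreducibility, so the real content of the proposition is precisely the uniformity in $(A,B)$ that your sketch leaves open.

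Your second, probabilistic alternative can be made to work, but the decisive ingredient is missing from the sketch. From $\mu$-irreducibility one obtains (Jain--Jamison/Nummelin) a set $C$ with $\mu(C)>0$, an integer $m$, $\delta>0$ and a probability $\nu$ with $T^{m}(x,\cdot)\geqslant\delta\,\mathbf{1}_{C}(x)\,\nu(\cdot)$; inserting this into the resolvent series (splitting the trajectory at a minorised block, with a little care to avoid double counting, e.g.\ by enlarging $\lambda$ slightly) yields a bound of the form $\langle\mathbf{1}_{A},R(\lambda,T)\mathbf{1}_{B}\rangle\geqslant c\,\zeta(A)\,\eta(B)$ for two \emph{fixed} finite measures $\zeta(\cdot):=\int_{\cdot}R(\lambda,T)(x,C)\,\mu(\mathrm{d}x)$ and $\eta:=\nu R(\lambda,T)$. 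Irreducibility makes $x\mapsto R(\lambda,T)(x,C)$ strictly positive and $\eta(B)>0$ whenever $\mu(B)>0$, hence $\mu\ll\zeta$ and $\mu\ll\eta$. The point you never supply is how to turn ``$\mu(A)\wedge\mu(B)\geqslant\epsilon$'' into a \emph{uniform} lower bound: this is the $\epsilon$--$\delta$ characterisation of absolute continuity between finite measures, which gives $\delta_{1},\delta_{2}>0$ with $\mu(B)\geqslant\epsilon\Rightarrow\eta(B)\geqslant\delta_{1}$ and $\mu(A)\geqslant\epsilon\Rightarrow\zeta(A)\geqslant\delta_{2}$, whence the RUPI bound $\langle\mathbf{1}_{A},R(\lambda,T)\mathbf{1}_{B}\rangle\geqslant c\,\delta_{1}\delta_{2}$. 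That uniform-absolute-continuity step is what replaces your weak-$*$ compactness argument; without it (or an equivalent), the proposal is an outline rather than a proof.
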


The aim of this subsection is to establish the following abstract
result, which immediately implies the subsequent practical corollary.
\begin{thm}
\label{thm:WPI_iff_RUPI}Suppose that $T$ is a $\mu$--invariant
Markov kernel. Then $T$ satisfies a WPI if and only if $T$ is RUPI.
\end{thm}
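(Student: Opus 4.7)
The plan is to use the equivalence between $\|\cdot\|_{\mathrm{osc}}^2$--WPIs and positive weak conductance profiles (WCP) established in Theorem~\ref{thm:positive-CP-implies-optim-WPI} as a central bridge, and to establish the two-way implication: positive WCP $\Leftrightarrow$ RUPI.

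For the direction RUPI $\Rightarrow$ WPI, the plan is to show that RUPI forces a positive WCP. The key ingredient is the stationarity/union bound
\[
\mathbb{P}_{\mu}\bigl(X_0 \in A,\; X_n \in A^\complement\bigr) \leqslant n\cdot \mu\otimes T\bigl(A\times A^\complement\bigr),\qquad n\geqslant 1,
\]
obtained by bounding $\{X_0\in A,\, X_n \in A^\complement\}$ by $\bigcup_{k=1}^n\{X_{k-1}\in A,\, X_k \in A^\complement\}$. Weighting by $\lambda^n$ and summing yields
\[
\langle \mathbf{1}_A, R(\lambda,T)\mathbf{1}_{A^\complement}\rangle \leqslant \frac{\lambda}{(1-\lambda)^2}\cdot \mu\otimes T\bigl(A\times A^\complement\bigr).
\]
RUPI applied with $B := A^\complement$ (legitimate when $\mu(A)\in[v,1/2]$ since then $\mu(A^\complement)\geqslant 1/2 \geqslant v$) gives a strictly positive lower bound $c(v) > 0$ on the left-hand side; dividing through by $\mu(A) \leqslant 1/2$ delivers $\Phi_T^{\mathrm{W}}(v) > 0$, whence Theorem~\ref{thm:positive-CP-implies-optim-WPI} produces a WPI.

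For the converse WPI $\Rightarrow$ RUPI, the plan is to deduce $\mu$-a.e.\ $\mu$-irreducibility of $T$ from the WPI and then appeal to Proposition~\ref{prop:irred_RUPI}. Fix $A\in\mathscr{E}$ with $\mu(A)>0$ and set $R_A := \{x : R(\lambda,T)(x,A) = 0\}$. The resolvent identity
\[
R(\lambda,T)(x,A) = \mathbf{1}_A(x) + \lambda \int T(x,\mathrm{d}y)\, R(\lambda,T)(y,A)
\]
forces any $x \in R_A$ to satisfy both $x \notin A$ and $T(x, R_A) = 1$ (since the nonnegative integrand must vanish $T(x,\cdot)$-a.e.), so $R_A$ is stochastically absorbing and $\mathcal{E}(T, \mathbf{1}_{R_A}) = \mu\otimes T(R_A \times R_A^\complement) = 0$. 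Feeding $f := \mathbf{1}_{R_A} - \mu(R_A) \in \ELL_0(\mu)$ into the WPI reduces it to $\mu(R_A)\mu(R_A^\complement) \leqslant r$ for every $r > 0$, forcing $\mu(R_A) \in \{0, 1\}$; combined with $R_A \subseteq A^\complement$ and $\mu(A) > 0$, this gives $\mu(R_A) = 0$, i.e.\ $R(\lambda,T)(\cdot,A) > 0$ holds $\mu$-a.e.

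The hardest step is bridging the gap between this $\mu$-a.e.\ form of irreducibility and the pointwise form assumed in the statement of Proposition~\ref{prop:irred_RUPI}. The plan is to resolve this by redefining $T(x,\cdot) := \mu(\cdot)$ on the exceptional $\mu$-null set, which preserves all $\ELL(\mu)$ inner products, Dirichlet forms, the original WPI, and the RUPI status, while producing a genuinely $\mu$-irreducible kernel to which Proposition~\ref{prop:irred_RUPI} applies directly; this is measure-theoretic bookkeeping rather than a substantive obstruction.
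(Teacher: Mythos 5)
Your first direction (RUPI $\Rightarrow$ WPI) is correct, and it is genuinely more direct than the paper's. The stationarity/union bound $\langle\mathbf{1}_A,R(\lambda,T)\mathbf{1}_{A^\complement}\rangle\leqslant\lambda\,(1-\lambda)^{-2}\cdot\mu\otimes T\left(A\times A^\complement\right)$ lets you convert the RUPI lower bound (applied with $B=A^\complement$) into a positive weak conductance profile for $T$ itself, and Theorem~\ref{thm:positive-CP-implies-optim-WPI}, which does not require reversibility, then gives the WPI. The paper instead establishes positive conductance for the resolvent $S_\lambda=(1-\lambda)\cdot R(\lambda,T)$, deduces a WPI for $S_\lambda$, and transfers it back to $T$ by an operator computation that requires reversibility and hence a separate reversibilization step (Lemma~\ref{prop:RUPI_resolvent}, Lemma~\ref{lem:T_rev_RUPI}, Proposition~\ref{prop:RUPI-to-WPI}); your shortcut avoids both the transfer and the reversibilization.

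The converse direction has a genuine gap. Your absorbing-set argument is fine as far as it goes: for each fixed $A$ with $\mu(A)>0$, the set $R_A$ is absorbing, $\mathcal{E}\left(T,\mathbf{1}_{R_A}\right)=0$, and the WPI forces $\mu(R_A)=0$. But this yields only, for each $A$ separately, positivity of $R(\lambda,T)(\cdot,A)$ off a null set $N_A$ \emph{depending on} $A$; there is no single null set working simultaneously for the uncountably many sets of positive measure, so ``redefine $T(x,\cdot):=\mu$ on the exceptional null set'' is not even well posed, and it is not bookkeeping. Worse, the property you have actually extracted from the WPI --- per-$A$ almost-everywhere resolvent positivity --- is strictly weaker than RUPI, so no measure-theoretic repair can close the gap. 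Take $\mathsf{E}=[0,1)$ with $\mu$ Lebesgue and $T(x,\cdot)=\delta_{x+\theta\bmod 1}$ with $\theta$ irrational: by ergodicity, for every $A$ with $\mu(A)>0$ one has $R(\lambda,T)(x,A)>0$ for $\mu$-a.e.\ $x$, yet $T$ satisfies no WPI (its Dirichlet form is arbitrarily small on functions such as $x\mapsto\cos(2\pi nx)$ with $n\theta$ near an integer, while their norms and oscillations stay fixed), hence is not RUPI; moreover no modification on a null set is $\mu$-irreducible, since from a.e.\ starting point the trajectory is the original deterministic orbit and never enters the full-measure complement of that orbit, so Proposition~\ref{prop:irred_RUPI} is out of reach by this route. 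A correct proof must exploit the WPI uniformly in the pair $(A,B)$, which is exactly what the paper's Proposition~\ref{prop:wpi-to-rupi} does: pass to the lazy kernel $T_{1/2}$, use Lemma~\ref{lem:P-to-PP-WPI} to obtain a WPI for $T_{1/2}^*T_{1/2}$, deduce convergence from Theorem~\ref{thm:WPI_F_bd}, and then Lemma~\ref{lem:osc-conv-RUPI} provides a lower bound on $\langle\mathbf{1}_A,T_{1/2}^N\mathbf{1}_B\rangle$ uniform over all pairs with $\mu(A)\wedge\mu(B)\geqslant\epsilon$, which the binomial expansion turns into the RUPI criterion of Lemma~\ref{lem:RUPI_equiv}.
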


\begin{proof}
This follows from Proposition~\ref{prop:RUPI-to-WPI} and Proposition~\ref{prop:wpi-to-rupi}
below.
\end{proof}
\begin{cor}
\label{cor:WPI_from_irred}Suppose the Markov kernel $T$ is $\mu$-irreducible.
Then $T$ possesses a WPI by Proposition~\ref{prop:irred_RUPI} and
Theorem~\ref{thm:WPI_iff_RUPI}.
\end{cor}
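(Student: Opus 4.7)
The plan is to simply chain together the two results explicitly named in the statement of the corollary. Given the $\mu$-irreducible Markov kernel $T$, the first step is to invoke Proposition~\ref{prop:irred_RUPI} of \cite{gong2006spectral}, which directly yields that $T$ is RUPI. The second step is then to apply the forward direction of Theorem~\ref{thm:WPI_iff_RUPI} (the equivalence between RUPI and the existence of a WPI), from which we conclude that $T$ satisfies a WPI.

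There is no real obstacle here: all the work has already been done in Proposition~\ref{prop:irred_RUPI} (irreducibility $\Rightarrow$ RUPI) and in Theorem~\ref{thm:WPI_iff_RUPI} (RUPI $\Leftrightarrow$ WPI, which the authors prove through Propositions~\ref{prop:RUPI-to-WPI} and~\ref{prop:wpi-to-rupi}). The corollary is genuinely a one-line consequence, and no additional hypotheses beyond $\mu$-invariance (already in force throughout the subsection) and $\mu$-irreducibility need to be verified. The only thing to note for the reader is that the sieve is $\Psi = \left\Vert \cdot\right\Vert_{\mathrm{osc}}^{2}$, matching the setting under which the RUPI/WPI equivalence is stated in this subsection.
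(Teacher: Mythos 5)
Your proposal is correct and is exactly the paper's argument: the corollary is proved by chaining Proposition~\ref{prop:irred_RUPI} (irreducibility implies RUPI) with the RUPI-to-WPI direction of Theorem~\ref{thm:WPI_iff_RUPI}, which the paper itself embeds directly in the statement. Nothing further is needed.
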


We follow \cite{wang2014criteria} and give an equivalent condition
for RUPI which will be convenient to work with.
\begin{lem}
\label{lem:RUPI_equiv}An equivalent condition for a Markov kernel
$T$ to be RUPI is the following: for any $\epsilon>0$, there exists
$m\in\mathbb{N}$ such that 
\begin{equation}
\inf\left\{ \left\langle \mathbf{1}_{A},\sum_{n=0}^{m}T^{n}\mathbf{1}_{B}\right\rangle :\mu\left(A\right)\wedge\mu\left(B\right)\geqslant\epsilon\right\} >0.\label{eq:RUPI-equivalence}
\end{equation}
\end{lem}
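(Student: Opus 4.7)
The plan is to establish the equivalence by bounding the resolvent in terms of its partial sums, and vice versa, uniformly over admissible pairs $(A,B)$. The $\lambda$-independence built into the RUPI definition will fall out of this equivalence for free, since the proposed characterisation makes no reference to $\lambda$.

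First I would address the direction RUPI $\Longrightarrow$ (\ref{eq:RUPI-equivalence}). Fix any $\lambda\in(0,1)$ witnessing RUPI and $\epsilon>0$, and let
\[
\delta:=\inf\left\{\langle \mathbf{1}_A, R(\lambda,T)\mathbf{1}_B\rangle : \mu(A)\wedge\mu(B)\geqslant\epsilon\right\}>0.
\]
Since $T$ is Markov and $\mathbf{1}_A,\mathbf{1}_B\in[0,1]$, each term satisfies $\langle \mathbf{1}_A,T^n\mathbf{1}_B\rangle\leqslant\mu(A)\leqslant 1$. Therefore the tail of the resolvent admits a uniform control
\[
\sum_{n=m+1}^{\infty}\lambda^{n}\langle \mathbf{1}_{A},T^{n}\mathbf{1}_{B}\rangle\leqslant\frac{\lambda^{m+1}}{1-\lambda},
\]
so choosing $m\in\mathbb{N}$ large enough that $\lambda^{m+1}/(1-\lambda)<\delta/2$ gives $\sum_{n=0}^{m}\lambda^{n}\langle \mathbf{1}_{A},T^{n}\mathbf{1}_{B}\rangle\geqslant\delta/2$. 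Since $\lambda^{n}\leqslant 1$ for every $n$, we obtain $\langle \mathbf{1}_{A},\sum_{n=0}^{m}T^{n}\mathbf{1}_{B}\rangle\geqslant\delta/2>0$ uniformly over all admissible $(A,B)$, yielding (\ref{eq:RUPI-equivalence}).

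For the converse (\ref{eq:RUPI-equivalence}) $\Longrightarrow$ RUPI, fix any $\lambda\in(0,1)$ and $\epsilon>0$. By hypothesis there exist $m\in\mathbb{N}$ and $\delta>0$ with $\langle \mathbf{1}_{A},\sum_{n=0}^{m}T^{n}\mathbf{1}_{B}\rangle\geqslant\delta$ whenever $\mu(A)\wedge\mu(B)\geqslant\epsilon$. Using $\lambda^{n}\geqslant\lambda^{m}$ for $0\leqslant n\leqslant m$ and the nonnegativity of every term in the resolvent expansion,
\[
\langle \mathbf{1}_{A},R(\lambda,T)\mathbf{1}_{B}\rangle\geqslant\sum_{n=0}^{m}\lambda^{n}\langle \mathbf{1}_{A},T^{n}\mathbf{1}_{B}\rangle\geqslant\lambda^{m}\,\delta>0,
\]
and this lower bound is uniform over the admissible pairs, so $R(\lambda,T)$ is UPI. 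Since $\lambda\in(0,1)$ was arbitrary, this simultaneously verifies the ``some and hence all'' parenthetical in the definition of RUPI.

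The argument is essentially bookkeeping: both directions reduce to comparing the resolvent $R(\lambda,T)\mathbf{1}_B=\sum_{n\geqslant 0}\lambda^n T^n\mathbf{1}_B$ against a truncation, once on the tail side (using that $\lambda<1$ and $T^n\mathbf{1}_B\leqslant 1$ pointwise to sum a geometric series) and once on the truncation side (using $\lambda^n\geqslant\lambda^m$ on $n\leqslant m$). There is no substantive obstacle; the only point requiring mild care is not to confuse the role of $\lambda$ across the two directions, which the argument above handles by verifying each direction for an arbitrary $\lambda\in(0,1)$.
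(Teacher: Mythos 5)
Your proof is correct and takes essentially the same route as the paper: both directions rest on comparing the resolvent with its truncation, using $\lambda^{n}\geqslant\lambda^{m}$ together with nonnegativity to get (\ref{eq:RUPI-equivalence}) $\Rightarrow$ RUPI, and a geometric tail bound to go the other way. The only difference is presentational: you prove RUPI $\Rightarrow$ (\ref{eq:RUPI-equivalence}) directly via the uniform tail estimate $\lambda^{m+1}/(1-\lambda)<\delta/2$ (using $T^{n}\mathbf{1}_{B}\leqslant1$), whereas the paper packages the same estimate as a contradiction argument with a sequence of violating pairs $\left(A_{j},B_{j}\right)$.
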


\begin{proof}
The condition in Lemma~\ref{lem:RUPI_equiv} directly implies RUPI.
To see this, take $\lambda\in\left(0,1\right)$, $\epsilon>0$ and
let $m\in\mathbb{N}$ such that $\inf\left\{ \left\langle \mathbf{1}_{A},\sum_{n=0}^{m}T^{n}\mathbf{1}_{B}\right\rangle :\mu\left(A\right)\wedge\mu\left(B\right)\geqslant\epsilon\right\} >0$,
which exists by assumption. Write

\begin{align*}
\left\langle \mathbf{1}_{A},R\left(\lambda,T\right)\mathbf{1}_{B}\right\rangle  & =\left\langle \mathbf{1}_{A},\sum_{n=0}^{\infty}\lambda^{n}\cdot T^{n}\mathbf{1}_{B}\right\rangle \\
 & \geqslant\left\langle \mathbf{1}_{A},\sum_{n=0}^{m}\lambda^{n}\cdot T^{n}\mathbf{1}_{B}\right\rangle \\
 & \geqslant\lambda^{m}\cdot\left\langle \mathbf{1}_{A},\sum_{n=0}^{m}T^{n}\mathbf{1}_{B}\right\rangle ,
\end{align*}
to deduce that

\begin{multline*}
\inf\{\left\langle \mathbf{1}_{A},R\left(\lambda,T\right)\mathbf{1}_{B}\right\rangle :\mu\left(A\right)\wedge\mu\left(B\right)\geqslant\epsilon\}\\
\geqslant\lambda^{m}\cdot\inf\left\{ \left\langle \mathbf{1}_{A},\sum_{n=0}^{m}T^{n}\mathbf{1}_{B}\right\rangle :\mu\left(A\right)\wedge\mu\left(B\right)\geqslant\epsilon\right\} >0,
\end{multline*}
from which the RUPI condition follows.

Conversely, suppose that $T$ is RUPI, fix $\lambda\in\left(0,1\right)$
and assume that for some $\epsilon>0$, (\ref{eq:RUPI-equivalence})
does not hold for any $m\in\mathbb{N}$. We show that this leads to
a contradiction. By the RUPI assumption we have that 
\[
\delta:=\inf\left\{ \left\langle \mathbf{1}_{A},\sum_{n=0}^{\infty}\lambda^{n}\cdot T^{n}\mathbf{1}_{B}\right\rangle :\mu\left(A\right)\wedge\mu\left(B\right)\geqslant\epsilon\right\} >0.
\]
Choose $m\in\mathbb{N}$ large enough so that
\[
\sum_{n=m+1}^{\infty}\lambda^{n}<\frac{\delta}{2}.
\]
Since we have assumed that (\ref{eq:RUPI-equivalence}) is violated
for $\epsilon>0$ and $m\in\mathbb{N}$ as chosen above, there exists
a sequence $\left\{ \left(A_{j},B_{j}\right)\right\} _{j=1}^{\infty}$
of sets all with mass at least $\epsilon$ such that $\left\langle \mathbf{1}_{A_{j}},\sum_{n=0}^{m}T^{n}\mathbf{1}_{B_{j}}\right\rangle \to0$,
therefore implying for any $j\in\mathbb{N}$,
\begin{align*}
\delta\leqslant\left\langle \mathbf{1}_{A_{j}},\sum_{n=0}^{\infty}\lambda^{n}T^{n}\mathbf{1}_{B_{j}}\right\rangle  & =\left\langle \mathbf{1}_{A_{j}},\sum_{n=0}^{m}\lambda^{n}T^{n}\mathbf{1}_{B_{j}}\right\rangle +\left\langle \mathbf{1}_{A_{j}},\sum_{n=m+1}^{\infty}\lambda^{n}T^{n}\mathbf{1}_{B_{j}}\right\rangle \\
 & \leqslant\left\langle \mathbf{1}_{A_{j}},\sum_{n=0}^{m}T^{n}\mathbf{1}_{B_{j}}\right\rangle +\sum_{n=m+1}^{\infty}\lambda^{n}\\
 & \leqslant\left\langle \mathbf{1}_{A_{j}},\sum_{n=0}^{m}T^{n}\mathbf{1}_{B_{j}}\right\rangle +\frac{\delta}{2}\\
 & \overset{j\to\infty}{\rightarrow}\frac{\delta}{2}\,,
\end{align*}
therefore leading to a contradiction. The conclusion follows.
\end{proof}
We first establish that for reversible kernels, RUPI implies a WPI
for the resolvent.
\begin{lem}
\label{prop:RUPI_resolvent}Suppose that a $\mu$-reversible Markov
kernel $T$ is RUPI. Then for any $\lambda\in\left(0,1\right)$, the
resolvent Markov kernel $S_{\lambda}:=\left(1-\lambda\right)\cdot R\left(\lambda,T\right)$
is $\mu$-reversible and has the following property: for any $0<\epsilon\leqslant\frac{1}{2}$,
for the quantity appearing in the WCP,
\[
\inf_{A:\epsilon\leqslant\mu\left(A\right)\leqslant\frac{1}{2}}\frac{\mathcal{E}\left(S_{\lambda},\mathbf{1}_{A}\right)}{\mu\left(A\right)}>0.
\]
Thus by Theorem~\ref{thm:positive-CP-implies-optim-WPI}, $S_{\lambda}$
satisfies a WPI.
\end{lem}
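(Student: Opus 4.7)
The plan is to verify the two claims in turn: reversibility of $S_\lambda$ is essentially formal, and the positive weak conductance estimate reduces directly to a quantitative application of the RUPI hypothesis evaluated on indicator pairs of the form $(\mathbf{1}_A, \mathbf{1}_{A^\complement})$.

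First, I would observe that $\mu$-reversibility of $T$ passes to every power $T^n$ and hence to any nonnegative linear combination; since $S_\lambda = (1-\lambda) \sum_{n \geq 0} \lambda^n T^n$ is such a combination (and is a Markov kernel, the coefficients summing to $1$), it is $\mu$-reversible. This also ensures $R(\lambda, T)$ is self-adjoint on $\ELL(\mu)$, which I shall use in a moment.

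Next, to estimate the Dirichlet form on indicators, I would apply Lemma~\ref{lem:dirichlet-form-indicator} to $S_\lambda$: for any $A \in \mathscr{E}$,
\[
\mathcal{E}\left(S_\lambda, \mathbf{1}_A\right) = \mu \otimes S_\lambda\left(A \times A^\complement\right) = \left\langle \mathbf{1}_A, S_\lambda \mathbf{1}_{A^\complement}\right\rangle = (1-\lambda) \cdot \left\langle \mathbf{1}_A, R(\lambda, T)\, \mathbf{1}_{A^\complement}\right\rangle.
\]
Now fix $\epsilon \in (0, 1/2]$ and any $A$ with $\epsilon \leqslant \mu(A) \leqslant 1/2$. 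Then $\mu(A^\complement) \geqslant 1/2 \geqslant \epsilon$, so $\mu(A) \wedge \mu(A^\complement) \geqslant \epsilon$, and the RUPI hypothesis applied to $T$ at the fixed parameter $\lambda$ yields a constant
\[
c(\epsilon) := \inf\left\{\left\langle \mathbf{1}_B, R(\lambda, T)\, \mathbf{1}_{B'}\right\rangle : \mu(B) \wedge \mu(B') \geqslant \epsilon\right\} > 0,
\]
independent of $A$. Combining this with the display above and dividing by $\mu(A) \leqslant 1/2$ gives
\[
\frac{\mathcal{E}\left(S_\lambda, \mathbf{1}_A\right)}{\mu(A)} \geqslant \frac{(1-\lambda) \cdot c(\epsilon)}{\mu(A)} \geqslant 2 \cdot (1-\lambda) \cdot c(\epsilon) > 0,
\]
which is the desired uniform positivity of the weak conductance ratio. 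The final assertion that $S_\lambda$ satisfies a WPI then follows from Theorem~\ref{thm:positive-CP-implies-optim-WPI}(a), since the WCP $\Phi^{\mathrm{W}}_{S_\lambda}$ is bounded below by $2 \cdot (1-\lambda) \cdot c(\epsilon)$ on $[\epsilon, 1/2]$ for every $\epsilon > 0$, hence is positive throughout $(0, 1/2]$.

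I do not expect any genuine obstacle: the only care needed is to ensure that the RUPI bound is evaluated at pairs $(A, A^\complement)$ both of mass at least $\epsilon$, which is automatic under $\mu(A) \in [\epsilon, 1/2]$. All other steps are bookkeeping: interchanging summation and inner product (justified by nonnegativity of the summands $\lambda^n \langle \mathbf{1}_A, T^n \mathbf{1}_{A^\complement}\rangle \geqslant 0$, since $T^n(x, \cdot)$ is a probability measure), and appealing to Lemma~\ref{lem:dirichlet-form-indicator} for the indicator Dirichlet form.
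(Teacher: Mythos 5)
Your proposal is correct and follows essentially the same route as the paper: identify $\mathcal{E}\left(S_{\lambda},\mathbf{1}_{A}\right)$ with $\left\langle \mathbf{1}_{A},S_{\lambda}\mathbf{1}_{A^{\complement}}\right\rangle$ via Lemma~\ref{lem:dirichlet-form-indicator}, note that $\epsilon\leqslant\mu\left(A\right)\leqslant\frac{1}{2}$ forces $\mu\left(A\right)\wedge\mu\big(A^{\complement}\big)\geqslant\epsilon$, and invoke the RUPI infimum for the resolvent at the fixed $\lambda$. Your additional checks (reversibility of $S_{\lambda}$ as a convex combination of powers of $T$, and the interchange of the sum with the inner product) are sound bookkeeping that the paper leaves implicit.
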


\begin{proof}
Fix $0<\epsilon\leqslant\frac{1}{2}$ and $\lambda\in\left(0,1\right)$.
By the RUPI condition, we know that $\inf\left\{ \left\langle \mathbf{1}_{A},S_{\lambda}\mathbf{1}_{B}\right\rangle :\mu\left(A\right)\wedge\mu\left(B\right)\geqslant\epsilon\right\} >0$.
In particular, if $A$ is such that $\epsilon\leqslant\mu\left(A\right)\leqslant\frac{1}{2}$,
we must have that both $\mu\left(A\right)\geqslant\epsilon$ and $\mu\left(A^{\complement}\right)\geqslant\frac{1}{2}\geqslant\epsilon$.
Thus since 
\[
\mathcal{E}\left(S_{\lambda},\mathbf{1}_{A}\right)=\left\langle \mathbf{1}_{A},S_{\lambda}\mathbf{1}_{A^{\complement}}\right\rangle ,
\]
by Lemma~\ref{lem:dirichlet-form-indicator} we must have that
\[
\left\langle \mathbf{1}_{A},S_{\lambda}\mathbf{1}_{A^{\complement}}\right\rangle \geqslant\delta>0,
\]
for some $\delta>0$, whenever $\epsilon<\mu\left(A\right)\leqslant\frac{1}{2}$.
\end{proof}
We now establish one direction of Theorem~\ref{thm:WPI_iff_RUPI}
through a sequence of lemmas: we first consider the case when $T$
is reversible, and then deduce the case for general $T$; see Remark~\ref{rem:nonrev_WPI}.
\begin{lem}
\label{lem:T_rev_RUPI}Suppose $T$ is a $\mu$-reversible Markov
kernel that is RUPI. Then $T$ satisfies a WPI.
\end{lem}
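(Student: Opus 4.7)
The plan is to transfer a WPI from the resolvent kernel $S_\lambda := (1-\lambda)\cdot R(\lambda,T)$ back to $T$ itself, exploiting the fact that RUPI is tailor-made to supply a weak conductance profile for $S_\lambda$.

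First, I would fix any $\lambda\in(0,1)$ and invoke Lemma~\ref{prop:RUPI_resolvent}: since $T$ is $\mu$-reversible and RUPI, so is $S_\lambda$, and moreover $S_\lambda$ has a positive WCP. By Theorem~\ref{thm:positive-CP-implies-optim-WPI} this yields a $K^*$-WPI for $S_\lambda$, and in particular (via Proposition~\ref{prop:a-b-WPI-corres}) a $(\|\cdot\|_{\mathrm{osc}}^2,\alpha_\lambda)$-WPI, i.e.
\[
\|f\|_{2}^{2}\leqslant\alpha_\lambda(r)\cdot\mathcal{E}(S_\lambda,f)+r\cdot\|f\|_{\mathrm{osc}}^{2},\qquad r>0,\ f\in\ELL_0(\mu).
\]

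The key step is then a Dirichlet-form comparison showing $\mathcal{E}(S_\lambda,f)\leqslant\frac{\lambda}{1-\lambda}\cdot\mathcal{E}(T,f)$. Starting from the resolvent identity
\[
\mathrm{Id}-S_\lambda=\lambda\cdot(\mathrm{Id}-T)\cdot(\mathrm{Id}-\lambda T)^{-1},
\]
and using that $T$ (and hence all three factors) is self-adjoint on $\ELL(\mu)$ with spectrum in $[-1,1]$, the spectral theorem gives
\[
\mathcal{E}(S_\lambda,f)=\int_{[-1,1]}\frac{\lambda\cdot(1-t)}{1-\lambda t}\,\mathrm{d}\mu_f(t),\qquad \mathcal{E}(T,f)=\int_{[-1,1]}(1-t)\,\mathrm{d}\mu_f(t),
\]
where $\mu_f$ is the spectral measure associated to $f$. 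Since $1-\lambda t\geqslant 1-\lambda>0$ for $t\in[-1,1]$, the integrand is pointwise bounded by $\lambda(1-t)/(1-\lambda)$, yielding the claimed comparison.

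Substituting into the WPI for $S_\lambda$ gives
\[
\|f\|_{2}^{2}\leqslant\frac{\lambda}{1-\lambda}\cdot\alpha_\lambda(r)\cdot\mathcal{E}(T,f)+r\cdot\|f\|_{\mathrm{osc}}^{2},\qquad r>0,\ f\in\ELL_0(\mu),
\]
which is precisely a $(\|\cdot\|_{\mathrm{osc}}^2,\alpha_T)$-WPI for $T$ with $\alpha_T(r):=\frac{\lambda}{1-\lambda}\alpha_\lambda(r)$. I expect the main subtlety to be the Dirichlet-form comparison: one must be careful because powers of $T$ need not be positive operators, so the cleanest route is the spectral representation above rather than trying to bound $\mathcal{E}(T^n,f)$ termwise in the series $S_\lambda=(1-\lambda)\sum_{n\geqslant 0}\lambda^n T^n$. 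Everything else is a routine chain of applications of results already established in the paper.
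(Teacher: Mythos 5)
Your proof is correct, and its first half coincides with the paper's: both deduce from the RUPI property that the resolvent $S_{\lambda}=\left(1-\lambda\right)\cdot R\left(\lambda,T\right)$ has a positive WCP and hence a WPI in $\alpha$-form (Lemma~\ref{prop:RUPI_resolvent} together with Theorem~\ref{thm:positive-CP-implies-optim-WPI}). Where you genuinely differ is the transfer from $S_{\lambda}$ back to $T$. The paper applies the WPI of $S_{\lambda}$ to the transformed function $f=\left(1-\lambda\right)^{-1}\left(\mathrm{Id}-\lambda T\right)g$, bounds $\left\Vert g\right\Vert _{2}\leqslant\left\Vert f\right\Vert _{2}$ via $\left\Vert \left(\mathrm{Id}-\lambda T\right)^{-1}\right\Vert _{\mathrm{op}}\leqslant\left(1-\lambda\right)^{-1}$, expands $\left\langle \left(\mathrm{Id}-S_{\lambda}\right)f,f\right\rangle$ in terms of $g$ and controls $\left\Vert \left(\mathrm{Id}-T\right)g\right\Vert _{2}^{2}\leqslant2\cdot\mathcal{E}\left(T,g\right)$ using $\left\langle \left(\mathrm{Id}-T^{2}\right)g,g\right\rangle \geqslant0$, and finally absorbs the inflated sieve term $\left\Vert f\right\Vert _{\mathrm{osc}}^{2}\leqslant\left(1+\lambda\right)^{2}\left(1-\lambda\right)^{-2}\left\Vert g\right\Vert _{\mathrm{osc}}^{2}$ by reparametrizing $r$. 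You instead keep the test function fixed and prove the Dirichlet-form comparison $\mathcal{E}\left(S_{\lambda},f\right)\leqslant\frac{\lambda}{1-\lambda}\cdot\mathcal{E}\left(T,f\right)$ by functional calculus: $\mathrm{Id}-S_{\lambda}$ is the function $t\mapsto\lambda\left(1-t\right)/\left(1-\lambda t\right)$ of $T$, and $1-\lambda t\geqslant1-\lambda$ on $\left[-1,1\right]$, so the pointwise bound is immediate; substituting into the WPI for $S_{\lambda}$ then gives a WPI for $T$ with $\alpha_{T}=\frac{\lambda}{1-\lambda}\alpha_{\lambda}$ and the sieve term untouched. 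This is valid (reversibility is part of the hypothesis, so the spectral theorem applies to $T$ and to $S_{\lambda}$ as a function of $T$), and it buys a cleaner and slightly sharper conclusion: no rescaling of $r$ and a smaller constant than the paper's $\lambda\left(1-\lambda\right)^{-1}+2\lambda^{2}\left(1-\lambda\right)^{-2}$ combined with the $\left(1+\lambda\right)^{2}\left(1-\lambda\right)^{-2}$ reparametrization. What the paper's route buys in exchange is that it avoids invoking the spectral theorem, relying only on elementary operator-norm and positivity estimates. Incidentally, your caution about a termwise bound is a little overstated: for reversible $T$ one does have $\mathcal{E}\left(T^{n},f\right)\leqslant n\cdot\mathcal{E}\left(T,f\right)$, and summing the Neumann series recovers exactly the same constant $\lambda\left(1-\lambda\right)^{-1}$, though your direct resolvent computation is indeed the tidiest way to see it.
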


\begin{proof}
Since $T$ is RUPI, we have established above in Lemma~\ref{prop:RUPI_resolvent}
that the resolvent $S_{\lambda}:=\left(1-\lambda\right)\cdot R\left(\lambda,T\right)$
satisfies a WPI for any $\lambda\in\left(0,1\right)$. In other words,
we can find some $\alpha_{\lambda}:\left(0,\infty\right)\to\left[0,\infty\right)$
such that for any $f\in\mathrm{L}_{0}^{2}\left(\mu\right)$ and $r>0$,
it holds that
\[
\left\Vert f\right\Vert _{2}^{2}\leqslant\alpha_{\lambda}\left(r\right)\cdot\left\langle \left(\mathrm{Id}-S_{\lambda}\right)f,f\right\rangle +r\cdot\left\Vert f\right\Vert _{\mathrm{osc}}^{2}.
\]
Now, given a function $g\in\mathrm{L}_{0}^{2}\left(\mu\right)$, define
$f:=\left(1-\lambda\right)^{-1}\cdot\left(\mathrm{Id}-\lambda\cdot T\right)\cdot g$,
so that $g=\left(1-\lambda\right)\cdot\left(\mathrm{Id}-\lambda\cdot T\right)^{-1}\cdot f$.
Note that since $0<\lambda<1$, the operator $\left(\mathrm{Id}-\lambda\cdot T\right)$
is invertible.

Now since $g\in\mathrm{L}_{0}^{2}\left(\mu\right)$, we have that
$f\in\mathrm{L}_{0}^{2}\left(\mu\right)$ (by, for instance, considering
the power series representation of $R\left(\lambda,T\right)$). Furthermore,
we have that 
\begin{align*}
\left\Vert g\right\Vert _{2}^{2} & =\left(1-\lambda\right)^{2}\cdot\left\Vert \left(\mathrm{Id}-\lambda\cdot T\right)^{-1}f\right\Vert _{2}^{2}\\
 & \leqslant\left(1-\lambda\right)^{2}\cdot\left\Vert \left(\mathrm{Id}-\lambda\cdot T\right)^{-1}\right\Vert _{\mathrm{op}}\cdot\left\Vert f\right\Vert _{2}^{2}\\
 & \leqslant\left\Vert f\right\Vert _{2}^{2}
\end{align*}
since the operator norm can be bounded as $\left\Vert \left(\mathrm{Id}-\lambda\cdot T\right)^{-1}\right\Vert _{\mathrm{op}}\leqslant\frac{1}{1-\lambda}$,
by standard arguments.

Thus, we have that
\begin{align*}
\left\Vert g\right\Vert _{2}^{2}\leqslant & \left\Vert f\right\Vert _{2}^{2}\\
\leqslant & \alpha_{\lambda}\left(r\right)\cdot\left\langle \left(\mathrm{Id}-S_{\lambda}\right)f,f\right\rangle +r\cdot\left\Vert f\right\Vert _{\mathrm{osc}}^{2}\\
= & \alpha_{\lambda}\left(r\right)\cdot\left\langle \left(\mathrm{Id}-\left(1-\lambda\right)\cdot\left(\mathrm{Id}-\lambda\cdot T\right)^{-1}\right)f,f\right\rangle +r\cdot\left\Vert f\right\Vert _{\mathrm{osc}}^{2}\\
= & \alpha_{\lambda}\left(r\right)\cdot\left\langle \left(1-\lambda\right)^{-1}\cdot\left(\mathrm{Id}-\lambda\cdot T\right)g-g,\left(1-\lambda\right)^{-1}\cdot\left(\mathrm{Id}-\lambda\cdot T\right)g\right\rangle +r\cdot\left\Vert f\right\Vert _{\mathrm{osc}}^{2}\\
= & \alpha_{\lambda}\left(r\right)\cdot\left\langle \lambda\cdot\left(1-\lambda\right)^{-1}\cdot\left(\mathrm{Id}-T\right)g,g+\lambda\cdot\left(1-\lambda\right)^{-1}\cdot\left(\mathrm{Id}-T\right)g\right\rangle +r\cdot\left\Vert f\right\Vert _{\mathrm{osc}}^{2}\\
= & \alpha_{\lambda}\left(r\right)\cdot\left\{ \left\langle \lambda\cdot\left(1-\lambda\right)^{-1}\cdot\left(\mathrm{Id}-T\right)g,g\right\rangle +\lambda^{2}\cdot\left(1-\lambda\right)^{-2}\cdot\left\Vert \left(\mathrm{Id}-T\right)g\right\Vert _{2}^{2}\right\} \\
 & \hspace{9cm}+r\cdot\left\Vert f\right\Vert _{\mathrm{osc}}^{2}.
\end{align*}
Note that
\begin{align*}
\left\Vert \left(\mathrm{Id}-T\right)g\right\Vert _{2}^{2} & =\left\langle \left(\mathrm{Id}-T\right)g,\left(\mathrm{Id}-T\right)g\right\rangle \\
 & =\left\langle \left(\mathrm{Id}-T\right)g,g\right\rangle -\left\langle \left(\mathrm{Id}-T\right)g,Tg\right\rangle .
\end{align*}
It is enough to bound this final term by
\[
-\left\langle \left(\mathrm{Id}-T\right)g,Tg\right\rangle \leqslant\left\langle \left(\mathrm{Id}-T\right)g,g\right\rangle .
\]
To see why this inequality is true, note that it is equivalent to
\begin{align*}
0 & \leqslant\left\langle \left(\mathrm{Id}-T\right)g,\left(\mathrm{Id}+T\right)g\right\rangle \\
 & =\left\langle \left(\mathrm{Id}+T\right)\cdot\left(\mathrm{Id}-T\right)g,g\right\rangle \\
 & =\left\langle \left(\mathrm{Id}-T^{2}\right)g,g\right\rangle 
\end{align*}
where we have made use of reversibility of $T$, and we certainly
have that $0\leqslant\left\langle \left(\mathrm{Id}-T^{*}T\right)g,g\right\rangle =\left\langle \left(\mathrm{Id}-T^{2}\right)g,g\right\rangle $. 

Overall, this gives us that 
\begin{align*}
\left\Vert g\right\Vert _{2}^{2} & \leqslant\alpha_{\lambda}\left(r\right)\cdot\left\{ \left\langle \lambda\cdot\left(1-\lambda\right)^{-1}\cdot\left(\mathrm{Id}-T\right)g,g\right\rangle +\lambda^{2}\cdot\left(1-\lambda\right)^{-2}\cdot\left\Vert \left(\mathrm{Id}-T\right)g\right\Vert _{2}^{2}\right\} \\
 & \quad+r\cdot\left\Vert \left(1-\lambda\right)^{-1}\cdot\left(\mathrm{Id}-\lambda\cdot T\right)g\right\Vert _{\mathrm{osc}}^{2}\\
 & \leqslant\alpha_{\lambda}\left(r\right)\cdot\left\{ \lambda\cdot\left(1-\lambda\right)^{-1}+2\cdot\lambda^{2}\cdot\left(1-\lambda\right)^{-2}\right\} \cdot\left\langle \left(\mathrm{Id}-T\right)g,g\right\rangle \\
 & \quad+r\cdot\left(1+\lambda\right)^{2}\cdot\left(1-\lambda\right)^{-2}\cdot\left\Vert g\right\Vert _{\mathrm{osc}}^{2},
\end{align*}
and by reparameterizing with $r'=r\cdot\left(1+\lambda\right)^{2}\cdot\left(1-\lambda\right)^{-2}$,
one obtains a standard WPI for $T$.
\end{proof}
\begin{prop}
\label{prop:RUPI-to-WPI}Suppose a $\mu$-invariant Markov kernel
$T$ is RUPI. Then $T$ satisfies a WPI.
\end{prop}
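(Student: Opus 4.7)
The plan is to reduce to the reversible case already handled in Lemma~\ref{lem:T_rev_RUPI}. Let $S := (T+T^*)/2$ be the additive reversibilization of $T$. By Remark~\ref{rem:nonrev_WPI}, $\mathcal{E}(T,f) = \mathcal{E}(S,f)$ for all $f\in\ELL_{0}(\mu)$, so $T$ admits a $\left\Vert \cdot\right\Vert_{\mathrm{osc}}^{2}$--WPI if and only if $S$ does. It therefore suffices to show that $S$ is RUPI, at which point Lemma~\ref{lem:T_rev_RUPI} applied to the $\mu$-reversible operator $S$ yields a WPI for $S$, and hence for $T$.

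To prove that $S$ is RUPI I would use the equivalent characterisation in Lemma~\ref{lem:RUPI_equiv}: for each $\epsilon>0$ I must exhibit $m\in\mathbb{N}$ with
\[
\inf\left\{ \left\langle\mathbf{1}_{A},\sum_{n=0}^{m}S^{n}\mathbf{1}_{B}\right\rangle:\mu(A)\wedge\mu(B)\geqslant\epsilon\right\} >0.
\]
Expanding $S^{n}=2^{-n}(T+T^{*})^{n}$ as a sum of $2^{n}$ words of length $n$ in the alphabet $\{T,T^{*}\}$, the key observation is that both $T$ and $T^{*}$ are positivity-preserving on $\ELL(\mu)$: for $T$ this is immediate, and for $T^{*}$ it follows from the adjoint identity, since for any $g\geqslant0$ and any set $A$, $\langle\mathbf{1}_{A},T^{*}g\rangle=\langle T\mathbf{1}_{A},g\rangle\geqslant0$, so $T^{*}g\geqslant0$ $\mu$-a.e. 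Consequently every word $w$ in $\{T,T^{*}\}^{n}$ satisfies $w\mathbf{1}_{B}\geqslant0$, so $\langle\mathbf{1}_{A},w\mathbf{1}_{B}\rangle\geqslant0$. Retaining only the single word $T^{n}$ from the expansion of $S^{n}$ yields
\[
\left\langle\mathbf{1}_{A},S^{n}\mathbf{1}_{B}\right\rangle\geqslant2^{-n}\left\langle\mathbf{1}_{A},T^{n}\mathbf{1}_{B}\right\rangle,
\]
and summing over $n=0,\dots,m$ gives
\[
\left\langle\mathbf{1}_{A},\sum_{n=0}^{m}S^{n}\mathbf{1}_{B}\right\rangle\geqslant2^{-m}\left\langle\mathbf{1}_{A},\sum_{n=0}^{m}T^{n}\mathbf{1}_{B}\right\rangle.
\]
Since $T$ is assumed RUPI, Lemma~\ref{lem:RUPI_equiv} provides an $m=m(\epsilon)$ for which the right-hand side is bounded below by a positive constant uniformly over $A,B$ with $\mu(A)\wedge\mu(B)\geqslant\epsilon$. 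Hence $S$ is RUPI, and the conclusion follows by Lemma~\ref{lem:T_rev_RUPI} and the invariance of the Dirichlet form under reversibilization.

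The only step I would regard as delicate is the positivity of $T^{*}$ on non-negative $\ELL(\mu)$ functions, but the adjoint argument above handles this without any regularity assumption on $(\E,\mathscr{E})$. Everything else is essentially bookkeeping: the binomial-style expansion of $(T+T^{*})^{n}$ produces only non-negative contributions against indicator functions, and the explicit pure-$T^{n}$ term transfers the quantitative RUPI estimate from $T$ to $S$ with only a $2^{-m}$ loss, which is harmless since $m$ is a fixed integer depending on $\epsilon$ alone.
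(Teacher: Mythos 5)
Your argument is correct and follows essentially the same route as the paper: reduce to the additive reversibilization $S=(T+T^*)/2$ via invariance of the Dirichlet form, show $S$ is RUPI by expanding $S^{n}=2^{-n}(T+T^{*})^{n}$, discard the non-negative mixed words and keep only the pure $T^{n}$ term, and then invoke Lemma~\ref{lem:RUPI_equiv} and Lemma~\ref{lem:T_rev_RUPI}. The only cosmetic difference is that you verify positivity-preservation of $T^{*}$ by an adjoint argument, whereas the paper simply notes that $T^{*}$ is itself a Markov kernel; both are fine.
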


\begin{proof}
It suffices to show that $\left(T+T^{*}\right)/2$ is RUPI, as then
by Lemma~\ref{lem:T_rev_RUPI}, $\left(T+T^{*}\right)/2$ possesses
a WPI, which is equivalent to $T$ possessing a WPI (see Remark~\ref{rem:nonrev_WPI}).
Since $T$ is RUPI, for any $\epsilon>0$, we can find some $\delta>0$
and $N\in\mathbb{N}$ such that whenever $\mu\left(A\right)\wedge\mu\left(B\right)\geqslant\epsilon$,
\begin{equation}
\left\langle \mathbf{1}_{A},\sum_{n=0}^{N}T^{n}\mathbf{1}_{B}\right\rangle \geqslant\delta>0.\label{eq:lem_T_RUPI}
\end{equation}
We now wish to obtain such a statement for the kernel $\left(T+T^{*}\right)/2$.
To this end, fix $\epsilon>0$, and consider 
\[
\left\langle \mathbf{1}_{A},\sum_{n=0}^{N}\left(\frac{T+T^{*}}{2}\right)^{n}\mathbf{1}_{B}\right\rangle =\left\langle \mathbf{1}_{A},\sum_{n=0}^{N}2^{-n}\cdot T^{n}\mathbf{1}_{B}\right\rangle +\left\langle \mathbf{1}_{A},R\mathbf{1}_{B}\right\rangle ,
\]
where $R$ is a sum of operators of the form $c\cdot T^{a_{1}}\cdot\left(T^{*}\right)^{b_{1}}\cdot\dots\cdot T^{a_{r}}\cdot\left(T^{*}\right)^{b_{r}}$
for some $r\in\mathbb{N}$, $a_{i},b_{i}\in\mathbb{N}_{0}$ for all
$i=1,\dots,r$ and $c\geqslant0$. Thus since $T$ and $T^{*}$ are
Markov kernels, we have that $\left\langle \mathbf{1}_{A},R\mathbf{1}_{B}\right\rangle \geqslant0.$
So we can continue and have, for any sets with $\mu\left(A\right)\wedge\mu\left(B\right)\geqslant\epsilon$,
\begin{align*}
\left\langle \mathbf{1}_{A},\sum_{n=0}^{N}\left(\frac{T+T^{*}}{2}\right)^{n}\mathbf{1}_{B}\right\rangle  & \geqslant\left\langle \mathbf{1}_{A},\sum_{n=0}^{N}2^{-n}\cdot T^{n}\mathbf{1}_{B}\right\rangle \\
 & \geqslant2^{-N}\cdot\left\langle \mathbf{1}_{A},\sum_{n=0}^{N}T^{n}\mathbf{1}_{B}\right\rangle \\
 & \geqslant2^{-N}\cdot\delta>0,
\end{align*}
since each summand is positive, and we have used the fact that $T$
is RUPI (\ref{eq:lem_T_RUPI}).
\end{proof}
We now prove the other direction. The following is a useful implication
of convergence, meaning (\ref{eq:P-Phi-gamma-convergent}) holds,
which we will rely on below and also in Section~\ref{subsec:Holding-probabilities,-WPIs}.
\begin{lem}
\label{lem:osc-conv-RUPI}Assume $T$ is convergent. Then for any
$\epsilon>0$, there exists $n_{0}\in\mathbb{N}$ such that for any
$N\geqslant n_{0}$,
\[
\inf\left\{ \left\langle \mathbf{1}_{A},T^{N}\mathbf{1}_{B}\right\rangle :\mu\left(A\right)\wedge\mu\left(B\right)\geqslant\epsilon\right\} >0.
\]
In particular, for all $k\in\mathbb{N}$, $T^{k}$ is RUPI and $T^{k}$
satisfies a WPI.
\end{lem}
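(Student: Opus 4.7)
The plan is to extract quantitative information from the $\left(\left\Vert \cdot\right\Vert_{\mathrm{osc}}^{2},\gamma\right)$-convergence hypothesis by applying it to indicator functions, then bootstrap from the first claim (which concerns single powers $T^N$) to the RUPI property for $T^k$ via Lemma~\ref{lem:RUPI_equiv} and finally invoke Theorem~\ref{thm:WPI_iff_RUPI} to obtain the WPI.

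For the first claim, I would fix $\epsilon>0$ and sets $A,B\in\mathscr{E}$ with $\mu(A)\wedge\mu(B)\geqslant\epsilon$, then apply the convergence bound to $f=\mathbf{1}_{B}-\mu(B)\in\ELL_{0}(\mu)$, for which $\left\Vert f\right\Vert_{\mathrm{osc}}\leqslant1$. This yields $\left\Vert T^{N}\mathbf{1}_{B}-\mu(B)\right\Vert_{2}\leqslant\gamma(N)^{1/2}$. Then by writing
\[
\left\langle \mathbf{1}_{A},T^{N}\mathbf{1}_{B}\right\rangle =\mu(A)\cdot\mu(B)+\left\langle \mathbf{1}_{A},T^{N}\mathbf{1}_{B}-\mu(B)\right\rangle ,
\]
Cauchy--Schwarz gives the lower bound
\[
\left\langle \mathbf{1}_{A},T^{N}\mathbf{1}_{B}\right\rangle \geqslant\mu(A)\cdot\mu(B)-\mu(A)^{1/2}\cdot\gamma(N)^{1/2}\geqslant\epsilon^{2}-\gamma(N)^{1/2}.
\]
Since $\gamma(N)\to0$, one can pick $n_{0}\in\mathbb{N}$ large enough so that $\gamma(N)^{1/2}\leqslant\epsilon^{2}/2$ for all $N\geqslant n_{0}$, giving the desired uniform lower bound of $\epsilon^{2}/2>0$.

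For the second claim, fix $k\in\mathbb{N}$ and $\epsilon>0$. Apply the first claim to obtain $n_{0}$ with the property above, and set $m:=\lceil n_{0}/k\rceil$ so that $km\geqslant n_{0}$. Since $T$ is a Markov kernel, all of the integrands $\langle \mathbf{1}_{A},T^{kn}\mathbf{1}_{B}\rangle$ are nonnegative, so dropping all but the $n=m$ term yields, uniformly over sets with $\mu(A)\wedge\mu(B)\geqslant\epsilon$,
\[
\left\langle \mathbf{1}_{A},{\textstyle \sum_{n=0}^{m}}(T^{k})^{n}\mathbf{1}_{B}\right\rangle \geqslant\left\langle \mathbf{1}_{A},T^{km}\mathbf{1}_{B}\right\rangle \geqslant\frac{\epsilon^{2}}{2}>0.
\]
Lemma~\ref{lem:RUPI_equiv} then yields that $T^{k}$ is RUPI, and Theorem~\ref{thm:WPI_iff_RUPI} delivers a WPI for $T^{k}$.

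The argument is essentially a direct calculation; there is no major obstacle. The only subtlety is ensuring that the reduction from RUPI of $T^{k}$ to the first claim goes through cleanly, which is handled by noting that \emph{every} partial sum in the characterization of Lemma~\ref{lem:RUPI_equiv} consists of nonnegative terms, so that controlling a single sufficiently high power suffices.
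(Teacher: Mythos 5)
Your proposal is correct and follows essentially the same route as the paper: apply the convergence bound to $f=\mathbf{1}_{B}-\mu\left(B\right)$, decompose $\left\langle \mathbf{1}_{A},T^{N}\mathbf{1}_{B}\right\rangle $ into $\mu\left(A\right)\cdot\mu\left(B\right)$ plus a remainder controlled by Cauchy--Schwarz, obtaining the uniform lower bound $\epsilon^{2}/2$, and then deduce RUPI for $T^{k}$ from Lemma~\ref{lem:RUPI_equiv} by taking the power to be a multiple of $k$ and dropping the remaining nonnegative terms of the partial sum. The only cosmetic difference is that you cite Theorem~\ref{thm:WPI_iff_RUPI} for the final WPI, where the paper invokes the relevant direction directly via Proposition~\ref{prop:RUPI-to-WPI}.
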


\begin{proof}
Let $\epsilon>0$ be arbitrary. Since $T$ is convergent, we may take
$n_{0}\in\mathbb{N}$ large enough such that $\left\Vert T^{N}f\right\Vert _{2}\leqslant\left\Vert f\right\Vert _{{\rm osc}}\cdot\epsilon^{2}/2$
for all $N\geqslant n_{0}$. Let $A,B\in\mathscr{E}$ be such that
$\mu\left(A\right)\wedge\mu\left(B\right)\geqslant\epsilon$. For
any $N\geqslant n_{0}$ we have
\begin{align*}
\left\langle \mathbf{1}_{A},T^{N}\mathbf{1}_{B}\right\rangle  & =\left\langle \mathbf{1}_{A},\left(T^{N}-\mu\right)\mathbf{1}_{B}\right\rangle +\left\langle \mathbf{1}_{A},\mu\mathbf{1}_{B}\right\rangle \\
 & =\left\langle \mathbf{1}_{A},\left(T^{N}-\mu\right)\mathbf{1}_{B}\right\rangle +\mu\left(A\right)\cdot\mu\left(B\right).
\end{align*}
Let $f={\bf 1}_{B}-\mu\left(B\right)$, so that by Cauchy--Schwarz,
we have that
\[
\left|\left\langle \mathbf{1}_{A},\left(T^{N}-\mu\right)\mathbf{1}_{B}\right\rangle \right|=\left|\left\langle \mathbf{1}_{A},T^{N}f\right\rangle \right|\leqslant\mu\left(A\right)^{1/2}\cdot\left\Vert f\right\Vert _{{\rm osc}}\cdot\epsilon^{2}/2\leqslant\epsilon^{2}/2,
\]
and therefore 
\[
\left\langle \mathbf{1}_{A},T^{N}\mathbf{1}_{B}\right\rangle \geqslant-\epsilon^{2}/2+\epsilon^{2}=\epsilon^{2}/2>0,
\]
from which we can conclude. Now let $k\in\left\{ 1,2,\ldots\right\} $
be arbitrary. Since we may choose $N$ to be a multiple of $k$, it
follows from Lemma~\ref{lem:RUPI_equiv} that $T^{k}$ is RUPI. Hence,
by Proposition~\ref{prop:RUPI-to-WPI}, $T^{k}$ satisfies a WPI.
\end{proof}
Lemma~\ref{lem:P-to-PP-WPI}\ref{enu:lemma_56b} is an extension
of the argument referenced by \cite[Remark~2.16]{montenegro2006mathematical}.
\begin{lem}
\label{lem:P-to-PP-WPI}Assume $P$ satisfies\textbf{ (PH}). Then
\begin{enumerate}
\item The following (chained) WPI holds: for $s>0$, $f\in\ELL_{0}\left(\mu\right)$,
\begin{equation}
\mathcal{\calE}\left(P,f\right)\leqslant s\cdot\calE\left(P^{*}P,f\right)+\beta\left(s\right)\cdot\left\Vert f\right\Vert _{{\rm osc}}^{2},\label{eq:WPI_P-P*P}
\end{equation}
for the function 
\begin{equation}
\beta\left(s\right):=\mu\otimes\left(\frac{P+P^{*}}{2}\right)\left(A\left(s\right)^{\complement}\cap\left\{ X\neq Y\right\} \right),\label{eq:wpi-beta-P-P*P}
\end{equation}
where $A\left(s\right):=\left\{ \left(x,y\right)\in\E\times\E:s\cdot\delta\left(x,y\right)>1\right\} $,
where $\delta\left(x,y\right):=2\cdot\left(\epsilon\left(x\right)\wedge\epsilon\left(y\right)\right)$,
and $\epsilon\left(x\right):=P\left(x,\left\{ x\right\} \right)$.
\item \label{enu:lemma_56b}If $P\left(x,\left\{ x\right\} \right)\geqslant\varepsilon>0$
for $\mu$-almost all $x\in E$, then for $f\in\ELL_{0}\left(\mu\right)$,
$\mathcal{E}\left(P^{*}P,f\right)\geqslant2\cdot\varepsilon\cdot\mathcal{E}\left(P,f\right)$.
\end{enumerate}
In particular, if $P$ satisfies a WPI then $P^{*}P$ satisfies a
WPI.

\end{lem}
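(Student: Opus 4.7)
The plan is to establish part~(a) first, derive part~(b) as a direct corollary, and then chain $P$'s assumed WPI with part~(a) to obtain a WPI for $P^*P$.

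For part~(a), split the Dirichlet form of $P$ as
\[
2\mathcal{E}(P,f) = \iint_{A(s)} \mu(dx) P(x,dy)(f(y)-f(x))^2 + \iint_{A(s)^c \cap \{x \neq y\}} \mu(dx) P(x,dy)(f(y)-f(x))^2,
\]
since the diagonal $\{x=y\}$ contributes nothing. On the second summand, bound $(f(y)-f(x))^2 \leq \|f\|_{\mathrm{osc}}^2$; the set $A(s)^c \cap \{x \neq y\}$ is symmetric under coordinate swap, and the adjoint identity $\mu(dx)P(x,dy) = \mu(dy)P^*(y,dx)$ gives $(\mu\otimes P)(A(s)^c \cap \{x\neq y\}) = (\mu\otimes S)(A(s)^c \cap \{x\neq y\}) = \beta(s)$, contributing at most $\beta(s)\|f\|_{\mathrm{osc}}^2$.

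The heart of the argument is bounding the $A(s)$ summand by a multiple of $s\,\mathcal{E}(P^*P,f)$. Since $P^*P$ is self-adjoint in $\mathrm{L}^2(\mu)$ and $\mu(dx)(P^*P)(x,dy) = \int \mu(dz)P(z,dx)P(z,dy)$ (factoring $P^*P$ and applying the adjoint identity), we have
\[
2\mathcal{E}(P^*P,f) = \int \mu(dz) \iint P(z,dx)P(z,dy)(f(y)-f(x))^2.
\]
For fixed $z$, keeping only the contributions from the events $\{X=z,Y\neq z\}$ and $\{X\neq z,Y=z\}$ in the inner double integral (with $X,Y\sim P(z,\cdot)$ independent) yields
\[
\iint P(z,dx)P(z,dy)(f(y)-f(x))^2 \geq 2\epsilon(z) \int P(z,dy)(f(y)-f(z))^2.
\]
On $B_s := \{\epsilon > 1/(2s)\}$ we have $2s\epsilon(z)>1$, so restricting the outer integral to $z\in B_s$ and shrinking the inner one to $y\in B_s$ gives $\iint_{A(s)}\mu(dx)P(x,dy)(f(y)-f(x))^2 \leq 2s\,\mathcal{E}(P^*P,f)$, completing part~(a).

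For part~(b), the uniform holding $\epsilon \geq \varepsilon$ $\mu$-a.e.\ forces $\mu(B_s)=1$ whenever $s > 1/(2\varepsilon)$; since $S$ is $\mu$-invariant, $\mu$ is the marginal of $\mu\otimes S$ in both coordinates, so $(\mu\otimes S)(A(s)^c) \leq 2\mu(B_s^c) = 0$ and $\beta(s)=0$. Inserting this into part~(a) and letting $s \downarrow 1/(2\varepsilon)$ gives $\mathcal{E}(P^*P,f) \geq 2\varepsilon\,\mathcal{E}(P,f)$. Finally, for the concluding ``in particular'' claim, substitute part~(a) into the assumed WPI for $P$ to obtain
\[
\|f\|_2^2 \leq \alpha(r)\,s\,\mathcal{E}(P^*P,f) + \bigl[r + \alpha(r)\beta(s)\bigr]\|f\|_{\mathrm{osc}}^2;
\]
for any desired $\tilde r>0$, take $r=\tilde r/2$ and then choose $s$ large enough that $\alpha(\tilde r/2)\beta(s) \leq \tilde r/2$, yielding a WPI for $P^*P$. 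The main obstacle is verifying $\beta(s)\to 0$ as $s\to\infty$: this uses monotone continuity of the finite measure $\mu\otimes S$, since $A(s)^c \downarrow \{\epsilon(x)=0\}\cup\{\epsilon(y)=0\}$, which has $(\mu\otimes S)$-measure zero under~(\textbf{PH}) by the marginal identity noted above.
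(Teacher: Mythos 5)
Your proof is correct, and it is worth noting that it proceeds differently from the paper's. The paper's argument writes $P\left(x,\dif y\right)=\epsilon\left(x\right)\cdot\delta_{x}\left(\dif y\right)+R\left(x,\dif y\right)$, derives the off-diagonal kernel domination $P^{*}P\left(x,A\setminus\left\{ x\right\} \right)\geqslant\int_{A\setminus\left\{ x\right\} }2\cdot\min\left\{ \epsilon\left(x\right),\epsilon\left(y\right)\right\} \cdot S\left(x,\dif y\right)$ with $S=\frac{1}{2}\left(P+P^{*}\right)$, and then invokes the chaining theorems of the companion paper as a black box to produce both (\ref{eq:WPI_P-P*P}) with the stated $\beta$ and the final ``in particular'' claim. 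You instead work directly at the level of Dirichlet forms: you split $2\,\calE\left(P,f\right)$ over $A\left(s\right)=B_{s}\times B_{s}$ (with $B_{s}=\left\{ \epsilon>1/\left(2s\right)\right\} $) and its complement, bound the complement part by $\beta\left(s\right)\cdot\left\Vert f\right\Vert _{\mathrm{osc}}^{2}$ via the symmetry of $A\left(s\right)^{\complement}\cap\left\{ x\neq y\right\} $ and the adjoint identity, and control the $A\left(s\right)$ part using the representation $\mu\left(\dif x\right)P^{*}P\left(x,\dif y\right)=\int\mu\left(\dif z\right)P\left(z,\dif x\right)P\left(z,\dif y\right)$, retaining only the disjoint events $\left\{ X=z,Y\neq z\right\} $ and $\left\{ X\neq z,Y=z\right\} $ to extract the factor $2\epsilon\left(z\right)>1/s$ on $B_{s}$. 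The underlying mechanism (holding probabilities let $P^{*}P$ dominate a multiple of $P$ off the diagonal) is the same, but your route is self-contained, re-proving the needed chaining inequality in this special case rather than citing it, and in fact yields the slightly sharper constant $\beta\left(s\right)/2$ in (\ref{eq:WPI_P-P*P}); you also make explicit the verification that $\beta\left(s\right)\to0$ under (\textbf{PH}) via monotone continuity, which the paper leaves implicit. Your deduction of part~b) by sending $\beta\equiv0$ for $s>1/\left(2\varepsilon\right)$ and letting $s\downarrow1/\left(2\varepsilon\right)$ is likewise fine, as is the final substitution argument (one only needs the routine remark that the resulting rate function can be taken decreasing, e.g.\ by passing to its monotone envelope).
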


\begin{proof}
First, we represent the kernel $P$ as 
\[
P\left(x,\dif y\right)=\epsilon\left(x\right)\cdot\delta_{x}\left(\dif y\right)+R\left(x,\dif y\right),
\]
for a function $\epsilon:\E\to\left[0,1\right]$ given by $\epsilon\left(x\right)=P\left(x,\left\{ x\right\} \right)$
and some (possibly non-Markov) kernel $R$. Note that since $f\mapsto Pf$
and the pointwise product $f\mapsto\epsilon\cdot f$ are bounded linear
operators on $\ELL\left(\mu\right)$, it follows that $f\mapsto Rf$
is also a bounded linear operator on $\ELL\left(\mu\right)$, and
hence there exists a bounded linear adjoint operator $R^{*}$.

Thus we have the representation
\[
P^{*}\left(x,\dif y\right)=\epsilon\left(x\right)\cdot\delta_{x}\left(\dif y\right)+R^{*}\left(x,\dif y\right).
\]
In order to establish the desired WPI, we seek to apply \cite[Theorem~38]{andrieu2022comparison_journal},
namely we seek to establish for any measurable set $A\subset\E$,
and $x\in\E$,
\[
P^{*}P\left(x,A\setminus\left\{ x\right\} \right)\geqslant\int_{A\backslash\left\{ x\right\} }2\cdot\min\left\{ \epsilon\left(x\right),\epsilon\left(y\right)\right\} \cdot\left(\frac{P+P^{*}}{2}\right)\left(x,\dif y\right),
\]
Note that this is sufficient since $\calE\left(P,f\right)=\calE\left(\left(P+P^{*}\right)/2,f\right)$.

Through direct calculation, we find that
\begin{align*}
P^{*}P\left(x,A\setminus\left\{ x\right\} \right) & =\int\left(\epsilon\left(x\right)R\left(x,\mathrm{d}y\right)+R^{*}\left(x,\mathrm{d}y\right)\epsilon\left(y\right)\right)\mathbf{1}_{A\setminus\left\{ x\right\} }\left(y\right)\\
 & \hspace{4cm}+\int R^{*}\left(x,\mathrm{d}z\right)R\left(z,\mathrm{d}y\right)\mathbf{1}_{A\setminus\left\{ x\right\} }\left(y\right)\\
 & \geqslant\int\left(\epsilon\left(x\right)R\left(x,\mathrm{d}y\right)+R^{*}\left(x,\mathrm{d}y\right)\epsilon\left(y\right)\right)\mathbf{1}_{A\setminus\left\{ x\right\} }\left(y\right)\\
 & \geqslant\int\min\left\{ \epsilon\left(x\right),\epsilon\left(y\right)\right\} \cdot\left[R\left(x,\mathrm{d}y\right)+R^{*}\left(x,\mathrm{d}y\right)\right]\\
 & =2\cdot\int\min\left\{ \epsilon\left(x\right),\epsilon\left(y\right)\right\} \cdot\left(\frac{P+P^{*}}{2}\right)\left(x,\mathrm{d}y\right).
\end{align*}
Thus by \cite[Theorem~36]{andrieu2022comparison_journal}, we have
that (\ref{eq:WPI_P-P*P}) holds with $\beta\left(s\right)$ as in
(\ref{eq:wpi-beta-P-P*P}). Note that $\beta\left(s\right)\to0$ as
$s\to\infty$ since we have assumed that $\int\mu\left(\dif x\right)\cdot\mathbf{1}\left[\left\{ x:P\left(x,\left\{ x\right\} \right)=0\right\} \right]=0$.
The last statement is immediate.
\end{proof}
\begin{prop}
\label{prop:wpi-to-rupi}Let $T$ be a $\mu$-invariant Markov kernel
satisfying an $\alpha$\emph{--}WPI for some $\alpha:\left(0,\infty\right)\to\left[0,\infty\right)$.
Then $T$ is RUPI.
\end{prop}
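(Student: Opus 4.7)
The plan is to pass from $T$ to its lazy version, use the earlier results to secure convergence for the lazy kernel, and then transfer the resulting RUPI-type inequality back to $T$ via a binomial expansion.

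More precisely, fix $\epsilon\in(0,1)$ and introduce the lazy kernel $T_\epsilon:=\epsilon\cdot\mathrm{Id}+(1-\epsilon)\cdot T$ as in Definition~\ref{def:lazy}. Because $\mathcal{E}(T_\epsilon,f)=(1-\epsilon)\cdot\mathcal{E}(T,f)$, the assumed $\alpha$--WPI for $T$ immediately gives an $\alpha_\epsilon$--WPI for $T_\epsilon$ with $\alpha_\epsilon=\alpha/(1-\epsilon)$. Moreover $T_\epsilon(x,\{x\})\geqslant\epsilon$ uniformly, so the uniform holding probability hypothesis of Lemma~\ref{lem:P-to-PP-WPI}\ref{enu:lemma_56b} is satisfied and we obtain $\mathcal{E}(T_\epsilon^{*}T_\epsilon,f)\geqslant 2\epsilon\cdot\mathcal{E}(T_\epsilon,f)$; combining this with the $\alpha_\epsilon$--WPI yields a WPI for $T_\epsilon^{*}T_\epsilon$ (with sieve $\Psi=\|\cdot\|_{\mathrm{osc}}^{2}$, which is trivially $T_\epsilon$-non-expansive).

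Having a WPI for $T_\epsilon^{*}T_\epsilon$, Theorem~\ref{thm:WPI_F_bd} supplies a vanishing $\gamma$ such that $T_\epsilon$ is $(\|\cdot\|_{\mathrm{osc}}^{2},\gamma)$-convergent. Lemma~\ref{lem:osc-conv-RUPI} applied to $T_\epsilon$ then produces, for any prescribed $\varepsilon_0>0$, an integer $N$ and a constant $c(\varepsilon_0)>0$ with
\[
\inf\bigl\{\langle\mathbf{1}_A,T_\epsilon^{N}\mathbf{1}_B\rangle\colon \mu(A)\wedge\mu(B)\geqslant\varepsilon_0\bigr\}\geqslant c(\varepsilon_0)>0.
\]

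The final step is to unpack $T_\epsilon^{N}$: since
\[
T_\epsilon^{N}=\sum_{k=0}^{N}\binom{N}{k}\epsilon^{N-k}(1-\epsilon)^{k}\cdot T^{k},
\]
and the binomial weights lie in $[0,1]$ while $\langle\mathbf{1}_A,T^{k}\mathbf{1}_B\rangle\geqslant0$, we get the pointwise domination
\[
\langle\mathbf{1}_A,T_\epsilon^{N}\mathbf{1}_B\rangle\leqslant\sum_{k=0}^{N}\langle\mathbf{1}_A,T^{k}\mathbf{1}_B\rangle.
\]
Combined with the previous display, this establishes the criterion of Lemma~\ref{lem:RUPI_equiv} for $T$ with the same $m:=N$ and constant $c(\varepsilon_0)$, so $T$ is RUPI.

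The only conceptual obstacle is the step where we pass from a WPI for $T$ to a WPI for $T_\epsilon^{*}T_\epsilon$; this is exactly what forces us to lazify (so as to obtain uniformly positive holding probabilities and then invoke Lemma~\ref{lem:P-to-PP-WPI}). Everything else is either bookkeeping with constants or a direct application of results already proved in this section.
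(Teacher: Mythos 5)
Your argument is correct and follows essentially the same route as the paper's proof: lazify $T$, transfer the WPI via the Dirichlet-form identity, invoke Lemma~\ref{lem:P-to-PP-WPI} to get a WPI for $T_\epsilon^{*}T_\epsilon$, deduce convergence from Theorem~\ref{thm:WPI_F_bd}, apply Lemma~\ref{lem:osc-conv-RUPI}, and conclude through the binomial expansion and Lemma~\ref{lem:RUPI_equiv}. The only cosmetic difference is that the paper fixes $\epsilon=\tfrac{1}{2}$ (so $T_{1/2}^{N}=2^{-N}\sum_{k}a_{k}T^{k}$) whereas you keep a general $\epsilon\in(0,1)$; the estimates are identical in substance.
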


\begin{proof}
Consider the Markov operator $T_{1/2}:=\frac{1}{2}\left(\Id+T\right)$,
which satisfies\\
 $T_{1/2}\left(x,\left\{ x\right\} \right)\geqslant\frac{1}{2}$ by
construction. Note that 
\begin{align*}
\mathcal{E}\left(T_{1/2},f\right) & =\left\langle \left(\mathrm{Id}-\frac{1}{2}\cdot\left(\mathrm{Id}+T\right)\right)f,f\right\rangle \\
 & =\frac{1}{2}\cdot\left\langle \left(\mathrm{Id}-T\right)f,f\right\rangle \\
 & =\frac{1}{2}\cdot\mathcal{E}\left(T,f\right).
\end{align*}
Therefore, since $T$ satisfies a $\alpha-$WPI, we have that $T_{1/2}$
satisfies a $\left(\left\Vert \cdot\right\Vert _{\mathrm{osc}}^{2},2\cdot\alpha\right)$\textit{\emph{--}}WPI:
\begin{align*}
\left\Vert f\right\Vert _{2}^{2} & \leqslant\alpha\left(r\right)\cdot\mathcal{E}\left(T,f\right)+r\cdot\left\Vert f\right\Vert _{{\rm osc}}^{2}\\
 & =2\cdot\alpha\left(r\right)\cdot\mathcal{E}\left(T_{1/2},f\right)+r\cdot\left\Vert f\right\Vert _{{\rm osc}}^{2}.
\end{align*}
Since $\mathrm{ess_{\mu}\inf}_{x}T_{1/2}\left(x,\left\{ x\right\} \right)\geqslant\frac{1}{2}$,
by Lemma~\ref{lem:P-to-PP-WPI} we have the inequality $\mathcal{E}\left(T_{1/2},f\right)\leqslant\mathcal{E}\left(T_{1/2}^{*}T_{1/2},f\right)$,
so we deduce a $(2\cdot\alpha)$\textit{\emph{--}}WPI for $T_{1/2}^{*}T_{1/2}$.
Hence, $T_{1/2}$ is convergent by Theorem~\ref{thm:WPI_F_bd}.

We will now verify the condition for RUPI in Lemma~\ref{lem:RUPI_equiv}.
Let $\epsilon\in\left(0,1\right)$ be arbitrary. Since $T_{1/2}$
is convergent, Lemma~\ref{lem:osc-conv-RUPI} implies that there
exists $N\in\mathbb{N}$ such that 
\[
\delta=\inf\left\{ \left\langle \mathbf{1}_{A},T_{1/2}^{N}\mathbf{1}_{B}\right\rangle :\mu\left(A\right)\wedge\mu\left(B\right)\geqslant\epsilon\right\} >0.
\]
Now, 
\[
T_{1/2}^{N}=\left(\frac{\Id+T}{2}\right)^{N}=\frac{1}{2^{N}}\cdot\sum_{k=0}^{N}a_{k}\cdot T^{k},
\]
for binomial coefficients $\{a_{i}\}$. Since $\sum_{i=0}^{N}a_{i}=2^{N}$
and ${\bf 1}_{A}$, ${\bf 1}_{B}$ are non-negative, we have 
\[
\left\langle \mathbf{1}_{A},\sum_{k=0}^{N}T^{k}\mathbf{1}_{B}\right\rangle \geqslant\left\langle \mathbf{1}_{A},\left(\frac{\Id+T}{2}\right)^{N}\mathbf{1}_{B}\right\rangle ,\qquad A,B\in\mathscr{E},
\]
and this implies that 
\[
\inf\left\{ \left\langle \mathbf{1}_{A},\sum_{k=0}^{N}T^{k}\mathbf{1}_{B}\right\rangle :\mu\left(A\right)\wedge\mu\left(B\right)\geqslant\epsilon\right\} \geqslant\delta>0,
\]
so $T$ is RUPI.
\end{proof}

\subsubsection{\label{subsec:Holding-probabilities,-WPIs}WPIs and convergence under
(PH)}

Recall from Theorem~\ref{thm:WPI_F_bd} that if $P^{*}P$ satisfies
a WPI, then we know that $P$ is convergent: $\left\Vert P^{n}f\right\Vert _{2}^{2}\leqslant\gamma\left(n\right)\cdot\left\Vert f\right\Vert _{{\rm osc}}^{2}$,
where $\gamma\left(n\right)\to0$. It is important to note that $P$
(rather than $P^{*}P$) possessing a WPI does not necessarily imply
that $\left\Vert P^{n}f\right\Vert _{2}\to0$ for bounded $f$. Indeed,
a reversible, periodic Markov kernel may satisfy a WPI, yet $\left\Vert P^{n}f\right\Vert _{2}$
cannot converge to $0$ for all bounded functions. Thus the goal of
this subsection is to give conditions which allow one to connect a
WPI for $P$ with convergence of $P$.
\begin{rem}
When $P$ is reversible, it is possible to deduce the existence of
a WPI for $P^{2}=P^{*}P$ from a WPI for $P$, provided that one has
some additional control on the left spectral gap; see \cite[Section 2.2.1]{andrieu2022comparison_journal}.
In turn, the existence of a WPI for $P$ can often be straightforwardly
deduced from Corollary~\ref{cor:WPI_from_irred} by establishing
irreducibility of $P$. 
\end{rem}

\begin{prop}
\label{prop:P/S-convergence-P/S}If $P$ is convergent, then $S$
is convergent.
\end{prop}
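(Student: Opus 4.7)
The strategy is to reduce the convergence of $S$ to a WPI for $S^{*}S$ via Theorem~\ref{thm:WPI_F_bd}, and to obtain that WPI by establishing RUPI for $S^{2}$ and invoking Lemma~\ref{lem:T_rev_RUPI}. Since $S=\frac{1}{2}(P+P^{*})$ is self-adjoint, one has $S^{*}S=S^{2}$, and this is also self-adjoint, hence $\mu$-reversible. The sieve $\Psi=\left\Vert \cdot\right\Vert _{\mathrm{osc}}^{2}$ is $S$-non-expansive since $S$ is a Markov kernel. Thus, once a WPI for $S^{2}$ is in hand, Theorem~\ref{thm:WPI_F_bd} (applied with $P$ replaced by $S$) delivers the conclusion. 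Moreover, because $S^{2}$ is reversible, Lemma~\ref{lem:T_rev_RUPI} shows that a WPI for $S^{2}$ is equivalent to $S^{2}$ being RUPI, so it suffices to verify the latter.

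To obtain RUPI for $S^{2}$, the plan is to exploit convergence of $P$ directly through Lemma~\ref{lem:osc-conv-RUPI}: given $\epsilon>0$, there exist $N_{0}\in\mathbb{N}$ and $\delta>0$ such that
\[
\langle\mathbf{1}_{A},P^{N}\mathbf{1}_{B}\rangle\geqslant\delta
\]
whenever $N\geqslant N_{0}$ and $\mu(A)\wedge\mu(B)\geqslant\epsilon$. The key observation is then the expansion
\[
S^{2N}=\left(\tfrac{P+P^{*}}{2}\right)^{2N}=2^{-2N}\sum_{w}w,
\]
where $w$ ranges over all $2^{2N}$ words of length $2N$ in the letters $\{P,P^{*}\}$. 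Each such word maps non-negative functions to non-negative functions, so $\langle\mathbf{1}_{A},w\mathbf{1}_{B}\rangle\geqslant0$ for every $w$. Since the single word $w=P^{2N}$ is one of them, taking $N\geqslant N_{0}/2$ yields
\[
\langle\mathbf{1}_{A},S^{2N}\mathbf{1}_{B}\rangle\geqslant2^{-2N}\langle\mathbf{1}_{A},P^{2N}\mathbf{1}_{B}\rangle\geqslant2^{-2N}\delta>0.
\]
Because $\sum_{n=0}^{N}(S^{2})^{n}\geqslant(S^{2})^{N}=S^{2N}$ as operators acting on non-negative functions, the criterion of Lemma~\ref{lem:RUPI_equiv} is satisfied for $S^{2}$, and $S^{2}$ is therefore RUPI.

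Combining these pieces: $S^{2}$ is reversible and RUPI, hence satisfies a WPI by Lemma~\ref{lem:T_rev_RUPI}; this is a WPI for $S^{*}S$, so Theorem~\ref{thm:WPI_F_bd} yields that $S$ is $\left(\left\Vert \cdot\right\Vert _{\mathrm{osc}}^{2},\gamma\right)$-convergent for some $\gamma(n)\to0$. The only conceptual subtlety, which I expect to be the main obstacle, is recognising that one should target RUPI for $S^{2}$ rather than for $S$ itself: RUPI for $S$ (easily inherited from $P$ by the argument of Proposition~\ref{prop:RUPI-to-WPI}) gives only a WPI for $S$, which is insufficient for convergence without an additional hypothesis such as (\textbf{PH}). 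Looking at the square $S^{2N}=(S^{2})^{N}$ is precisely what makes the power $P^{2N}$ appear as a single word in the expansion, allowing the positivity coming from convergence of $P$ to be transferred to $S^{2}$.
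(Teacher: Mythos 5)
Your argument is correct, and its skeleton (Lemma~\ref{lem:osc-conv-RUPI} at the start, a WPI for $S^{2}=S^{*}S$ in the middle, Theorem~\ref{thm:WPI_F_bd} at the end) matches the paper's; the difference lies in how the WPI for $S^{2}$ is produced. The paper obtains it in one line by comparing Dirichlet forms: expanding $S^{2}=\frac{1}{4}\left(P^{2}+(P^{*})^{2}+PP^{*}+P^{*}P\right)$ and using linearity plus nonnegativity of Dirichlet forms gives $\mathcal{E}\left(S^{2},f\right)\geqslant\frac{1}{4}\mathcal{E}\left(P^{2},f\right)$, so the WPI for $P^{2}$ (from Lemma~\ref{lem:osc-conv-RUPI}) transfers to $S^{2}$ with the function $\alpha$ merely multiplied by $4$. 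You instead establish RUPI for $S^{2}$ by expanding $S^{2N}$ into $2^{2N}$ words in $\{P,P^{*}\}$, retaining only the word $P^{2N}$, invoking the uniform positivity of $\left\langle \mathbf{1}_{A},P^{2N}\mathbf{1}_{B}\right\rangle $ supplied by Lemma~\ref{lem:osc-conv-RUPI}, checking the criterion of Lemma~\ref{lem:RUPI_equiv}, and then converting RUPI into a WPI via Lemma~\ref{lem:T_rev_RUPI} (reversibility of $S^{2}$ being clear). All steps check out: the words are Markov kernels, so the discarded terms are nonnegative, and Theorem~\ref{thm:WPI_F_bd} applies since $\left\Vert \cdot\right\Vert _{\mathrm{osc}}^{2}$ is non-expansive for any Markov kernel. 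Your word-retention trick is in fact the same device the paper uses in the proof of Proposition~\ref{prop:RUPI-to-WPI}, so the technique is native to the paper even if deployed at a different point. What your route costs is quantitativeness: the RUPI-to-WPI conversion goes through the resolvent construction and yields no explicit $\alpha$, whereas the paper's Dirichlet-form comparison preserves the WPI function up to a factor of $4$ and hence lets rate information for $P$ pass to $S$; for the purely qualitative statement of the proposition, both are equally adequate. Your closing remark about why one must target $S^{2}$ rather than $S$ is exactly the right observation, and is also implicit in the paper's choice to work with $\mathcal{E}\left(S^{2},\cdot\right)$.
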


\begin{proof}
If $P$ is convergent, then Lemma~\ref{lem:osc-conv-RUPI} implies
that $P^{2}$ is RUPI and satisfies a WPI. We may then deduce that
$S^{2}$ satisfies a WPI because for any $f\in{\rm L}_{0}^{2}\left(\mu\right)$,
\begin{eqnarray*}
\mathcal{E}\left(S^{2},f\right) & = & \frac{1}{4}\cdot\left\{ \mathcal{E}\left(P^{2},f\right)+\mathcal{E}\left(\left(P^{*}\right)^{2},f\right)+\mathcal{E}\left(PP^{*},f\right)+\mathcal{E}\left(P^{*}P,f\right)\right\} \\
 & \geqslant & \frac{1}{4}\cdot\mathcal{E}\left(P^{2},f\right).
\end{eqnarray*}
It follows that $S$ is convergent by Theorem~\ref{thm:WPI_F_bd}.
\end{proof}
The converse does not hold in general, as seen by the following
example.
\begin{example}[Walks on the circle]
\label{exa:circle-walks}For $x,y\in\mathsf{E}=\left\{ 1,\ldots,m\right\} $
for $m$ odd, and let $P\left(x,y\right)={\bf 1}_{\left\{ 1,\ldots,m-1\right\} }\left(x\right)\cdot{\bf 1}_{\left\{ x+1\right\} }\left(y\right)+{\bf 1}_{\left\{ m\right\} }\left(x\right)\cdot{\bf 1}_{\left\{ 1\right\} }\left(y\right)$
so that $P^{*}\left(x,y\right)={\bf 1}_{\left\{ 2,\ldots,m\right\} }\left(x\right)\cdot{\bf 1}_{\left\{ x-1\right\} }\left(y\right)+{\bf 1}_{\left\{ 1\right\} }\left(x\right)\cdot{\bf 1}_{\left\{ m\right\} }\left(y\right)$.
Then the Markov chain associated with $P$ is deterministic and one
can deduce that $P$ is not convergent. On the other hand, $S=\left(P+P^{*}\right)/2$
encodes a random walk on $\left\{ 1,\ldots,m\right\} $ and is convergent.
\end{example}

\begin{lem}
\label{lem:stick-probs-equal}For a $\mu$-invariant kernel $P$,
$P\left(x,\left\{ x\right\} \right)=P^{*}\left(x,\left\{ x\right\} \right)$
for $\mu$-almost all $x$.
\end{lem}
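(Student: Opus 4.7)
My plan is to exploit the defining property of the adjoint: for bounded measurable functions $f,g$ on $\mathsf{E}$, we have $\int f(x)\,Pg(x)\,\mu(\mathrm{d}x) = \int g(x)\,P^{*}f(x)\,\mu(\mathrm{d}x)$, which by a standard monotone class argument extends to
\[
\iint h(x,y)\,\mu(\mathrm{d}x)\,P(x,\mathrm{d}y) = \iint h(y,x)\,\mu(\mathrm{d}x)\,P^{*}(x,\mathrm{d}y)
\]
for any bounded measurable $h\colon \mathsf{E}\times\mathsf{E}\to\mathbb{R}$. In other words, the measure $\mu\otimes P^{*}$ is the pushforward of $\mu\otimes P$ under the swap $(x,y)\mapsto(y,x)$.

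The plan is then to apply this with a test function supported on the diagonal $D:=\{(x,y)\in\mathsf{E}\times\mathsf{E}\colon x=y\}$. Specifically, for an arbitrary bounded measurable $f\colon\mathsf{E}\to\mathbb{R}$, take $h_{f}(x,y):=f(x)\cdot\mathbf{1}_{D}(x,y)$. The key observation is that $h_{f}$ is symmetric in its arguments: off the diagonal both $h_{f}(x,y)$ and $h_{f}(y,x)$ vanish, while on the diagonal $x=y$ forces $f(x)=f(y)$. Hence $h_{f}(y,x)=h_{f}(x,y)$ pointwise, and the reversal identity above yields
\[
\int f(x)\,P(x,\{x\})\,\mu(\mathrm{d}x) = \iint h_{f}\,\mathrm{d}(\mu\otimes P) = \iint h_{f}\,\mathrm{d}(\mu\otimes P^{*}) = \int f(x)\,P^{*}(x,\{x\})\,\mu(\mathrm{d}x).
\]
Since $f$ was an arbitrary bounded measurable function, the two nonnegative functions $x\mapsto P(x,\{x\})$ and $x\mapsto P^{*}(x,\{x\})$ must agree $\mu$-almost everywhere, which is the desired conclusion.

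The only subtle point is ensuring that the diagonal $D$ is measurable in $\mathscr{E}\otimes\mathscr{E}$ (so that $\mathbf{1}_{D}$ is a legitimate test function and $x\mapsto P(x,\{x\})$ is measurable), but this is standard under the mild regularity assumptions on $(\mathsf{E},\mathscr{E})$ tacit throughout the paper (e.g.\ $\mathscr{E}$ countably generated and containing singletons, or $\mathsf{E}$ Polish). The extension of the adjoint identity from product-form integrands $h(x,y)=f(x)g(y)$ to general bounded measurable $h$ is the only other step that requires care, but it is a routine monotone class argument applied to the finite signed measure $\mu\otimes P-\mathrm{swap}_{*}(\mu\otimes P^{*})$.
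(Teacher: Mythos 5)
Your proposal is correct and follows essentially the same route as the paper: both arguments rest on the fact that $\mu\otimes P^{*}$ is the swap-pushforward of $\mu\otimes P$, applied to a test function supported on the diagonal (the paper uses $h=\mathbf{1}_{D\cap(B\times B)}$ with $B$ chosen as $\{s>s^{*}\}$ and $\{s<s^{*}\}$, while you test against general bounded $f$), yielding $P(x,\{x\})=P^{*}(x,\{x\})$ $\mu$-almost everywhere.
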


\begin{proof}
Let $D=\left\{ \left(x,y\right)\in\mathsf{E}^{2}:x=y\right\} $, $s\left(x\right):=P\left(x,\left\{ x\right\} \right)$
and $s^{*}\left(x\right):=P^{*}\left(x,\left\{ x\right\} \right)$
for $x\in\mathsf{E}$. For any $B\in\mathcal{E}$, we have 
\[
\mu\left({\bf 1}_{B}\cdot s\right)=\mu\otimes P\left(D\cap\left(B\times B\right)\right)=\mu\otimes P^{*}\left(D\cap\left(B\times B\right)\right)=\mu\left({\bf 1}_{B}\cdot s^{*}\right),
\]
and so taking in turn $B=\left\{ x\in\E:s\left(x\right)>s^{*}\left(x\right)\right\} $
and $B=\left\{ x\in\E:s\left(x\right)<s^{*}\left(x\right)\right\} $,
we deduce that
\[
\mu\left(\left(s-s^{*}\right)^{+}\right)=0=\mu\left(\left(s-s^{*}\right)^{-}\right),
\]
and hence that $s=s^{*}$ $\mu$-almost everywhere.
\end{proof}
\begin{proof}[Proof of Theorem~\ref{thm:hold-equiv-wpi-conv-pp}]
(b. $\Rightarrow$ c.) follows from Theorem~\ref{thm:WPI_F_bd},
and (c. $\Rightarrow$ a.) follows from Lemma~\ref{lem:osc-conv-RUPI}.
We now show (a. $\Rightarrow$ b.). We apply Lemma~\ref{lem:P-to-PP-WPI},
and hence by chaining WPIs \cite[Theorem~33]{andrieu2022comparison_journal},
we conclude that $P^{*}P$ satisfies an WPI.

We now show that the cycle (a. $\Rightarrow$ d. $\Rightarrow$ e.
$\Rightarrow$ a.) can also be deduced. Observe that $P^{*}\left(x,\left\{ x\right\} \right)=P\left(x,\left\{ x\right\} \right)$
by Lemma~\ref{lem:stick-probs-equal}, and $P$ satisfying a WPI
is equivalent to $P^{*}$ satisfying a WPI, since $\mathcal{E}\left(P,f\right)=\mathcal{E}\left(P^{*},f\right)$.
Because $\left(P^{*}\right)^{*}P^{*}=PP^{*}$, we have that (a. $\Rightarrow$
d.) is equivalent to (a. $\Rightarrow$ b.) and (d. $\Rightarrow$
e.) is equivalent to (b. $\Rightarrow$ c.) and (e. $\Rightarrow$
a.) is equivalent to (c. $\Rightarrow$ a.).
\end{proof}

\subsubsection{WPIs and convergence in general}
\begin{prop}
\label{prop:WPI-epsilon-P*P-WPI}$P$ satisfies a WPI if and only
if for some (and hence any) $\epsilon\in\left(0,1\right)$, $P_{\epsilon}^{*}P_{\epsilon}$
satisfies a WPI.
\end{prop}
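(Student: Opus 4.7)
The plan is to reduce the equivalence to a two-sided algebraic comparison of the Dirichlet forms $\mathcal{E}(P_\epsilon^*P_\epsilon,f)$ and $\mathcal{E}(P,f)$, after which each implication becomes an immediate rewriting of the defining WPI inequality. The sieve $\Psi$ plays no active role and stays fixed throughout.

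The key computation is to expand $\|P_\epsilon f\|_2^2$ using $P_\epsilon = \epsilon\,\Id + (1-\epsilon)P$, which gives
\[
\|P_\epsilon f\|_2^2 = \epsilon^2\|f\|_2^2 + 2\epsilon(1-\epsilon)\langle f,Pf\rangle + (1-\epsilon)^2\|Pf\|_2^2.
\]
Substituting the identities $\langle f,Pf\rangle = \|f\|_2^2 - \mathcal{E}(P,f)$ (using $\mathcal{E}(P,f) = \mathcal{E}(S,f)$ from Remark~\ref{rem:nonrev_WPI}) and $\|Pf\|_2^2 = \|f\|_2^2 - \mathcal{E}(P^*P,f)$, and observing the cancellation $(1-\epsilon^2) - 2\epsilon(1-\epsilon) - (1-\epsilon)^2 = 0$ of the $\|f\|_2^2$ coefficients, I expect to obtain the clean identity
\[
\mathcal{E}(P_\epsilon^*P_\epsilon,f) = 2\epsilon(1-\epsilon)\,\mathcal{E}(P,f) + (1-\epsilon)^2\,\mathcal{E}(P^*P,f).
\]

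Combining this identity with Lemma~\ref{lem:PP-dirichlet-form-ub}, which gives $\mathcal{E}(P^*P,f) \leqslant 2\,\mathcal{E}(P,f)$, and using $2\epsilon(1-\epsilon) + 2(1-\epsilon)^2 = 2(1-\epsilon)$, yields the two-sided comparison
\[
2\epsilon(1-\epsilon)\,\mathcal{E}(P,f) \;\leqslant\; \mathcal{E}(P_\epsilon^*P_\epsilon,f) \;\leqslant\; 2(1-\epsilon)\,\mathcal{E}(P,f), \qquad \epsilon\in(0,1).
\]

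Both implications then drop out. If $P$ satisfies a $(\Psi,\alpha)$--WPI, the lower bound gives $\|f\|_2^2 \leqslant \alpha(r)\,\mathcal{E}(P,f) + r\,\Psi(f) \leqslant \tfrac{\alpha(r)}{2\epsilon(1-\epsilon)}\,\mathcal{E}(P_\epsilon^*P_\epsilon,f) + r\,\Psi(f)$, so $P_\epsilon^*P_\epsilon$ satisfies a $(\Psi,\alpha')$--WPI with $\alpha'(r):=\alpha(r)/[2\epsilon(1-\epsilon)]$, valid for every $\epsilon\in(0,1)$. Conversely, if $P_\epsilon^*P_\epsilon$ satisfies a $(\Psi,\alpha_\epsilon)$--WPI for some $\epsilon\in(0,1)$, the upper bound yields $\|f\|_2^2 \leqslant 2(1-\epsilon)\alpha_\epsilon(r)\,\mathcal{E}(P,f) + r\,\Psi(f)$, so $P$ satisfies a WPI, completing the cycle and justifying the ``some $\iff$ any'' phrasing. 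No serious obstacle is expected: the only care needed is in the algebraic expansion and the verification that the $\|f\|_2^2$ coefficients cancel.
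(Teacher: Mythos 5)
Your argument is correct, and it takes a genuinely different route from the paper. The paper proves the forward direction by observing $\mathcal{E}\left(P_{\epsilon},f\right)=\left(1-\epsilon\right)\cdot\mathcal{E}\left(P,f\right)$ and then invoking the fact that $P_{\epsilon}$ satisfies (\textbf{PH}), so that Theorem~\ref{thm:hold-equiv-wpi-conv-pp} applies; that theorem in turn rests on Lemma~\ref{lem:P-to-PP-WPI} together with the chaining of WPIs, and its reverse implications pass through convergence and the RUPI machinery (Lemma~\ref{lem:osc-conv-RUPI}, Proposition~\ref{prop:RUPI-to-WPI}), which is qualitative and tied to the $\left\Vert \cdot\right\Vert _{\mathrm{osc}}^{2}$ sieve. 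You instead derive the exact identity $\mathcal{E}\left(P_{\epsilon}^{*}P_{\epsilon},f\right)=2\epsilon\left(1-\epsilon\right)\cdot\mathcal{E}\left(P,f\right)+\left(1-\epsilon\right)^{2}\cdot\mathcal{E}\left(P^{*}P,f\right)$ (your algebra checks out: the $\left\Vert f\right\Vert _{2}^{2}$ coefficients do cancel), and then sandwich $\mathcal{E}\left(P_{\epsilon}^{*}P_{\epsilon},f\right)$ between $2\epsilon\left(1-\epsilon\right)\cdot\mathcal{E}\left(P,f\right)$ and $2\left(1-\epsilon\right)\cdot\mathcal{E}\left(P,f\right)$ using nonnegativity of $\mathcal{E}\left(P^{*}P,f\right)$ and Lemma~\ref{lem:PP-dirichlet-form-ub}; both implications then follow by rescaling $\alpha$. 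What your route buys: it is self-contained, fully quantitative (explicit constant-factor loss $2\epsilon\left(1-\epsilon\right)$, essentially recovering Lemma~\ref{lem:P-to-PP-WPI}\ref{enu:lemma_56b} for lazy chains by pure algebra), and it works verbatim for an arbitrary sieve $\Psi$ rather than only $\left\Vert \cdot\right\Vert _{\mathrm{osc}}^{2}$; the paper's route is shorter given the machinery already in place and illustrates how (\textbf{PH}) mediates between WPIs for $P$ and for $P^{*}P$. One small remark: your appeal to Remark~\ref{rem:nonrev_WPI} is unnecessary, since $\left\langle f,Pf\right\rangle =\left\Vert f\right\Vert _{2}^{2}-\mathcal{E}\left(P,f\right)$ is just the definition of the Dirichlet form in a real Hilbert space; it is harmless but not needed.
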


\begin{proof}
($\Rightarrow)$ If $P$ satisfies a WPI then one may deduce that
$P_{\epsilon}$ satisfies an WPI since $\mathcal{E}\left(P_{\epsilon},f\right)=\left(1-\epsilon\right)\cdot\mathcal{E}\left(P,f\right)$.
Since $P_{\epsilon}$ satisfies (\textbf{PH}), Theorem~\ref{thm:hold-equiv-wpi-conv-pp}
implies that $P_{\epsilon}^{*}P_{\epsilon}$ satisfies a WPI. ($\Leftarrow)$
If $P_{\epsilon}^{*}P_{\epsilon}$ satisfies a WPI then similarly
Theorem~\ref{thm:hold-equiv-wpi-conv-pp} implies that $P_{\epsilon}$
satisfies a WPI and so $P$ satisfies a WPI.
\end{proof}
\begin{prop}
\label{prop:S-conv-epsilon-P*P-WPI}If $S$ (or $P$) is convergent
then $P_{\epsilon}^{*}P_{\epsilon}$ satisfies a WPI, and thus $P_{\epsilon}$
is convergent.
\end{prop}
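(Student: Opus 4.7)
The plan is to chain together results already established in the paper and reduce the statement to Theorem~\ref{thm:hold-equiv-wpi-conv-pp}, which provides the equivalence of WPIs and convergence under condition (\textbf{PH}).

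First, I would dispatch the parenthetical case: if $P$ is convergent, then by Proposition~\ref{prop:P/S-convergence-P/S}, $S$ is also convergent. So it suffices to handle the case where $S$ is convergent.

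Now, suppose $S$ is convergent. Applying Lemma~\ref{lem:osc-conv-RUPI} to the kernel $T = S$ (with $k = 1$), we deduce that $S$ is RUPI and, in particular, that $S$ satisfies a WPI. Next, observe that $S$ is the additive reversibilization of $P$, so $\mathcal{E}(P, f) = \mathcal{E}(S, f)$ for $f \in \ELL_{0}\left(\mu\right)$, as noted in Remark~\ref{rem:nonrev_WPI}. Since a WPI is defined in terms of the Dirichlet form, a WPI for $S$ is precisely a WPI for $P$, and we conclude that $P$ satisfies a WPI.

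Finally, by Proposition~\ref{prop:WPI-epsilon-P*P-WPI} (a WPI for $P$ is equivalent to a WPI for $P_{\epsilon}^{*} P_{\epsilon}$ for any $\epsilon \in (0,1)$), we obtain the first conclusion that $P_{\epsilon}^{*} P_{\epsilon}$ satisfies a WPI. The convergence of $P_{\epsilon}$ then follows immediately from Theorem~\ref{thm:WPI_F_bd} applied with $T = P_{\epsilon}^{*} P_{\epsilon}$ and the $P_{\epsilon}$-non-expansive sieve $\Psi = \left\Vert \cdot\right\Vert _{\mathrm{osc}}^{2}$.

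There is no significant obstacle here; the proof is essentially bookkeeping across the established equivalences. The only point worth emphasizing is the invocation of Remark~\ref{rem:nonrev_WPI} to transfer a WPI from $S$ to $P$: without this, one might mistakenly try to construct a WPI for $P$ directly from convergence of $S$, which is nontrivial (cf.\ Example~\ref{exa:circle-walks} where convergence of $S$ does not imply convergence of $P$, even though both satisfy a WPI).
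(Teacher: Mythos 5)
Your proof is correct and follows essentially the same route as the paper: convergence of $S$ (or of $P$, via Proposition~\ref{prop:P/S-convergence-P/S}) gives a WPI for $S$ by Lemma~\ref{lem:osc-conv-RUPI}, which transfers to $P$ by Remark~\ref{rem:nonrev_WPI}, and Proposition~\ref{prop:WPI-epsilon-P*P-WPI} then yields a WPI for $P_{\epsilon}^{*}P_{\epsilon}$, hence convergence of $P_{\epsilon}$ via Theorem~\ref{thm:WPI_F_bd}. The only difference is that you spell out the bookkeeping (the reduction of the $P$ case and the final appeal to Theorem~\ref{thm:WPI_F_bd}) that the paper leaves implicit.
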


\begin{proof}
If $S$ is convergent then $S$ satisfies a WPI by Lemma~\ref{lem:osc-conv-RUPI}.
Hence $P$ satisfies a WPI by Remark~\ref{rem:nonrev_WPI} and we
conclude by Proposition~\ref{prop:WPI-epsilon-P*P-WPI}.
\end{proof}

Our counterexample of a Markov chain that is convergent but for which
$\left(P^{*}\right)^{k}P^{k}$ does not admit a WPI for any $k\in\mathbb{N}$
is the following.
\begin{example}
\label{exa:irred-pstarkpk}Let $\mathsf{E}=\left\{ 1,2,\ldots\right\} ^{2}$,
and $\nu$ a probability mass function on $\left\{ 1,2,\ldots\right\} $
such that $\nu\left(1\right)\in\left(0,1\right)$ and $\nu$ has a
finite mean. Define
\[
P\left(i,j;i^{'},j^{'}\right)=\begin{cases}
1 & j<i,i'=i,j'=j+1,\\
\nu\left(i'\right) & j=i,j'=1,\\
1 & \nu\left(i\right)\cdot\mathbf{1}\left\{ j\leqslant i\right\} =0,\left(i',j'\right)=\left(1,1\right),\\
0 & \text{otherwise}.
\end{cases}
\]
The intuition is that the Markov chain moves to the right along ``level''
$i$ deterministically until it reaches the point $\left(i,i\right)$,
at which point it jumps to the start of another level $\left(K,1\right)$
where $K\sim\nu$. The third statement is concerned with initialization
of the chain outside the support of the invariant distribution $\mu$,
given below.
\end{example}

\begin{prop}
\label{prop:preparatory-counterexample}The Markov kernel $P$ in
Example~\ref{exa:irred-pstarkpk} 
\begin{enumerate}
\item has invariant distribution
\[
\mu\left(i,j\right)=\frac{\nu\left(i\right)\cdot\mathbf{1}\left\{ j\leqslant i\right\} }{\sum_{k=1}^{\infty}\nu\left(k\right)\cdot k};
\]
\item is $\mu$-irreducible with an accessible, aperiodic atom $\left(1,1\right)$
and its Markov chain converges to $\mu$ in total variation from any
starting point .
\item its Markov chain is convergent,
\item is such that when $\nu\left(i\right)>0$ for all $i\geqslant1$,
\begin{enumerate}
\item \label{enu:counterexPstarkPkisone}then $\left(P^{*}\right)^{k}P^{k}$
is reducible for any $k\geqslant1$. More specifically for any $1\leqslant k<i$
\[
\left(P^{*}\right)^{k}P^{k}\left(i,i;i,i\right)=1;
\]
\item $\left(P^{*}\right)^{k}P^{k}$ does not admit a WPI for any $k\in\mathbb{N}$.
\end{enumerate}
\end{enumerate}
\end{prop}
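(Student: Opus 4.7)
\medskip

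\noindent\textbf{Proof proposal.} The plan is to handle the four claims in turn; parts 1--3 are straightforward, the substance lies in part 4.

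\medskip

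\emph{Part 1 (invariance).} A direct verification of $\mu P = \mu$ using the explicit form of $P$. Only $(i',j'-1)$ transitions to $(i',j')$ when $j' \geq 2$, giving $\mu P(i',j') = \mu(i',j'-1) = \nu(i')/Z$. For $(i',1)$, every state of the form $(i,i)$ jumps there with probability $\nu(i')$, so $\mu P(i',1) = \nu(i')\sum_i \mu(i,i) = \nu(i')/Z$, where $Z := \sum_k k\nu(k)$ is finite by the finite-mean assumption on $\nu$ and gives the normalization of $\mu$.

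\medskip

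\emph{Part 2 (irreducibility, aperiodicity, TV convergence).} The singleton $\{(1,1)\}$ is an atom. From any $(i,j)$, deterministic evolution takes us to $(i,i)$ in $i-j$ steps, and the next jump lands at $(1,1)$ with probability $\nu(1) > 0$; so $(1,1)$ is accessible from every state. Aperiodicity follows from $P((1,1),(1,1)) = \nu(1) > 0$. Standard atomic-chain theory (e.g.\ \cite{meyn1993markov}) then yields $\|\delta_x P^n - \mu\|_{\mathrm{TV}} \to 0$ for every $x$, not merely $\mu$-a.e.\ $x$.

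\medskip

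\emph{Part 3 ($\|\cdot\|_{\mathrm{osc}}^2$-convergence).} Combine Part 2 with (\ref{eq:TV-CV-implies-osc}). Since $\|\delta_x P^n - \mu\|_{\mathrm{TV}}^2 \leq 1$ uniformly and tends to $0$ pointwise in $x$, dominated convergence gives $\mu\bigl(\|\delta_x P^n - \mu\|_{\mathrm{TV}}^2\bigr) \to 0$, and hence $\sup_{\|f\|_{\mathrm{osc}} \leq 1}\|P^n f - \mu(f)\|_2 \to 0$.

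\medskip

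\emph{Part 4(a).} Start from the representation
\[
(P^*)^k P^k(x,y) \;=\; \frac{1}{\mu(x)}\sum_{z} \mu(z)\, P^k(z,x)\, P^k(z,y),
\]
which follows from $(P^*)^k(x,z) = \mu(z) P^k(z,x)/\mu(x)$. The key combinatorial fact is: for $1 \leq k < i$, the only $z$ with $P^k(z,(i,i)) > 0$ is $z = (i,i-k)$, and $P^k((i,i-k),(i,i)) = 1$. Indeed, the last $k$ transitions into $(i,i)$ are forced to be the deterministic path $(i,i-k) \to (i,i-k+1) \to \cdots \to (i,i)$: any predecessor step off level $i$ could only re-enter level $i$ at $(i,1)$, from which reaching $(i,i)$ requires a further $i-1 \geq k$ deterministic steps on level $i$, which is too long. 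Since $\mu(i,i-k) = \nu(i)/Z = \mu(i,i)$, the sum above collapses to $\mu(i,i)/\mu(i,i) = 1$.

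\medskip

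\emph{Part 4(b).} Set $T_k := (P^*)^k P^k$ and $B_k := \{(i,i) : i > k\}$. From Part 4(a), $T_k(x,\{x\}) = 1$ for every $x \in B_k$. I claim additionally that $T_k(x, B_k) = 0$ for $x \notin B_k$: writing $T_k(x,B_k) = \mu(x)^{-1}\sum_z \mu(z) P^k(z,x) P^k(z,B_k)$, the combinatorial argument of Part 4(a) shows $P^k(z, B_k) > 0$ only for $z \in C_k := \{(i,i-k) : i > k\}$, and from such $z$ the $k$-step evolution is deterministic and lands in $B_k$, so $P^k(z,x) = 0$ for $x \notin B_k$. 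Consequently the Dirichlet form of $T_k$ at $f = \mathbf{1}_{B_k}$ satisfies $\calE(T_k,f) = 0$ (the $x \in B_k$ contribution vanishes by the delta-mass property; the $x \notin B_k$ contribution vanishes by $T_k(x,B_k)=0$). Meanwhile, the standing assumption $\nu(i) > 0$ for all $i$ gives $0 < \mu(B_k) < 1$, so $\bar f := \mathbf{1}_{B_k} - \mu(B_k) \in \ELL_0(\mu)$ has $\|\bar f\|_2^2 = \mu(B_k)(1-\mu(B_k)) > 0$ and $\|\bar f\|_{\mathrm{osc}}^2 = 1$. Any $(\|\cdot\|_{\mathrm{osc}}^2, \alpha)$-WPI for $T_k$ applied to $\bar f$ would force $\mu(B_k)(1-\mu(B_k)) \leq r$ for every $r > 0$, a contradiction.

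\medskip

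\noindent The main obstacle is the combinatorial statement in Part 4(a) that $(i,i-k)$ is the unique $P^k$-predecessor of $(i,i)$ when $k < i$; once that is pinned down, the rest of Part 4 is a short functional argument.
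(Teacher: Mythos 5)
Your proposal is correct and follows essentially the same route as the paper: direct verification of invariance, atom-based total-variation convergence transferred to $\left\Vert \cdot\right\Vert _{\mathrm{osc}}^{2}$-convergence via (\ref{eq:TV-CV-implies-osc}), and for part 4 the same key combinatorial fact that for $i>k$ the backward-then-forward $k$-step dynamics from $(i,i)$ is deterministic, so that $(P^*)^kP^k$ fixes $(i,i)$ and the (centred) indicator of $\{(i,i):i>k\}$ has vanishing Dirichlet form but positive variance, which rules out any WPI. The only presentational difference is that you derive 4(a) through the adjoint identity and a unique-$k$-step-predecessor argument rather than writing out $P^*$ explicitly, and in 4(b) you make explicit the flow-balance fact $T_k(x,B_k)=0$ for $x\notin B_k$ that the paper's step $\left\Vert P^{k}f_{k}\right\Vert _{2}=\left\Vert f_{k}\right\Vert _{2}$ uses implicitly.
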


\begin{proof}
The first two statements can be checked directly and using e.g., \cite[Theorem~7.6.4]{douc2018markov}.
The third statement follows from (\ref{eq:TV-CV-implies-osc}). Now,
by viewing $P^{*}$ as the time-reversal of $P$, and satisfying $\mu\left(i,j\right)\cdot P^{*}\left(i,j;i^{'},j^{'}\right)=\mu\left(i^{'},j^{'}\right)\cdot P\left(i^{'},j^{'};i,j\right)$,
we may define
\[
P^{*}\left(i,j;i^{'},j^{'}\right)=\begin{cases}
1 & j>1,i'=i,j'=j-1,\\
\nu\left(i^{'}\right) & j=1,j'=i',\\
1 & \nu\left(i\right)\cdot\mathbf{1}\left[j\leqslant i\right]=0,\big(i^{'},j^{'}\big)=\left(1,1\right),\\
0 & \text{otherwise}.
\end{cases}
\]
Consider first the case where $i_{0}\in\left\{ 2,3,\ldots\right\} $,
$\nu\left(i\right)>0$ for all $i\leqslant i_{0}$ and $\nu\left(i\right)=0$
for $i>i_{0}$. This means there is a maximum level length of $i_{0}$.
We see that if $k<i_{0}$ then
\[
\left(P^{*}\right)^{k}P^{k}\left(i_{0},i_{0};i_{0},i_{0}\right)=1,
\]
since $\left(P^{*}\right)^{k}\left(i_{0},i_{0};i_{0}-k,i_{0}-k\right)=1$
and $P^{k}\left(i_{0}-k,i_{0}-k;i_{0},i_{0}\right)=1$. Hence $\left(P^{*}\right)^{k}P^{k}$
is reducible for any $k<i_{0}-1$. On the other hand, for $k\geqslant i_{0}$,
we may deduce that $\left(P^{*}\right)^{k}P^{k}$ is $\mu$-irreducible.
In particular, since $P^{*}\left(1,1;1,1\right)=P\left(1,1;1,1\right)=\nu\left(1\right)\in\left(0,1\right)$,
we see that $\left(P^{*}\right)^{k}\left(i,j;1,1\right)>0$ for all
$\left(i,j\right)\in\mathsf{E}$, from which one may deduce that $\left(P^{*}\right)^{k}P^{k}\left(i,j;i^{'},j^{'}\right)>0$
for all $i,j,i',j'\in\mathsf{E}$ such that $\mu\left(i^{'},j^{'}\right)>0$.
Note that since $P\left(i,j;1,1\right)=1$ for all $\left(i,j\right)$
such that $\mu\left(i,j\right)=0$, this is essentially a finite state
space Markov chain after 1 step, and hence convergence is geometric.
Assume now that $\nu\left(i\right)>0$ for all $i\in\left\{ 1,2,\ldots\right\} $.
From above, for any $k\in\mathbb{N}$ we may consider level $i>k$
and we see that $\left(P^{*}\right)^{k}P^{k}\left(i,i;i,i\right)=1$
so $\left(P^{*}\right)^{k}P^{k}$ is reducible. Hence, there does
not exist $k\in\mathbb{N}$ such that $\left(P^{*}\right)^{k}P^{k}$
is $\mu$-irreducible.

For the last statement, let $k\in\mathbb{N}$ be arbitrary. Let $A_{k}=\left\{ \left(i,i\right):i>k\right\} $
and $f_{k}={\bf 1}_{A_{k}}-\mu\left(A_{k}\right)$, which satisfies
$\mu\left(f_{k}\right)=0$. Then $\left\Vert P^{k}f_{k}\right\Vert _{2}^{2}=\left\langle \left(P^{*}\right)^{k}P^{k}f_{k},f_{k}\right\rangle =\left\langle f_{k},f_{k}\right\rangle =\left\Vert f_{k}\right\Vert _{2}^{2}$,
from \ref{enu:counterexPstarkPkisone}, so $\mathcal{E}\big((P^{*})^{k}P^{k},f_{k}\big)=0$.
Since $\left\Vert f_{k}\right\Vert _{2}>0$, $\left(P^{*}\right)^{k}P^{k}$
cannot satisfy a WPI.
\end{proof}
\begin{rem}
One may assume that the non-existence of a WPI in Example~\ref{exa:irred-pstarkpk}
is the result of slow convergence for $P$. This is not the case,
as it can be shown that for the choice $\nu\left(i\right)=\left(1-a\right)\cdot a^{i-1}$
for some $a\in\left(0,1\right)$, the resulting Markov chain is $\left(\left\Vert \cdot\right\Vert _{\mathrm{osc}}^{2},\left\{ C\cdot\rho^{n}\rho^{n}\colon C>1,\rho\in\left[0,1\right),n\in\mathbb{N}\right\} \right)$-convergent
\cite[Proposition 67]{andrieu2022poincare_tech}. We note that this
scenario differs from what is normally understood as \textit{hypocoercivity},
for which one may hope to establish $\big(\left\Vert \cdot\right\Vert _{2}^{2},\left\{ C\cdot\rho^{n}\rho^{n}\colon C>1,\rho\in\left[0,1\right),n\in\mathbb{N}\right\} \big)$-convergence
i.e. for all $f\in\ELL\left(\mu\right)$
\[
\left\Vert P^{n}f\right\Vert _{2}^{2}\leqslant C\cdot\rho^{n}\cdot\left\Vert f\right\Vert _{2}^{2}\,,
\]
despite the degeneracy of the our generator ${\rm Id}-\left(P^{*}\right)^{k}P^{k}$
for any $k\in\mathbb{N}$. This would however imply the existence
of, for example, a SPI $\mathcal{E}\left(\left(P^{*}\right)^{k}P^{k},f\right)\geqslant\frac{1}{2}\left\Vert f\right\Vert _{2}^{2}$
for some $k\in\mathbb{N}$, therefore contradicting Proposition~\ref{prop:preparatory-counterexample}.
Hypocoercivity in the sense above indeed holds whenever $\nu$ has
finite support. 
\end{rem}

\subsection{WPIs from Close Coupling and Isoperimetry\label{subsec:WPIs-CC-Isop}}

This section is concerned with establishing \textit{quantitative}
bounds on WPIs and mixing times for Markov chains. Building on Section~\ref{subsec:Cheeger-inequalities},
the goal is to control the weak conductance profile of the Markov
kernel using isoperimetric properties of the target distribution and
a close coupling condition of the Markov kernel.

\subsubsection{Main results}

We first recall some concepts and results from \cite{andrieu2022explicit}.
\begin{assumption}
\label{assu:pi-density-lebesgue}The probability distribution $\pi$
has a positive density with respect to Lebesgue measure on $\mathsf{E}=\mathbb{R}^{d}$,
given by $\pi\propto\exp\left(-U\right)$, for some potential $U:\mathbb{R}^{d}\to\R$. 
\end{assumption}

\begin{defn}[Three-set isoperimetric inequality]
\label{def:three-set-iso-ineq}A probability measure $\pi$ satisfies
a \emph{three-set isoperimetric inequality} with metric $\mathsf{d}$
and function $F:\left(0,\frac{1}{2}\right]\to\left[0,\infty\right)$
if for all measurable partitions of the state space $\mathsf{E}=S_{1}\sqcup S_{2}\sqcup S_{3}$,
\[
\pi\left(S_{3}\right)\geqslant\mathsf{d}\left(S_{1},S_{2}\right)\cdot F\left(\min\left\{ \pi\left(S_{1}\right),\pi\left(S_{2}\right)\right\} \right).
\]
\end{defn}

\begin{defn}[Isoperimetric Profile]
For $A\in\mathscr{E}$ and $r\geqslant0$, let $A_{r}:=\left\{ x\in\E:\mathsf{d}\left(x,A\right)\leqslant r\right\} $,
and define the \textit{Minkowski content} of $A$ under $\pi$ with
respect to $\mathsf{d}$ by 
\[
\pi^{+}\left(A\right)=\lim\inf_{r\to0^{+}}\frac{\pi\left(A_{r}\right)-\pi\left(A\right)}{r}.
\]
The \textit{isoperimetric profile of} $\pi$ with respect to the metric
$\mathsf{d}$ is
\[
I_{\pi}\left(p\right):=\inf\left\{ \pi^{+}\left(A\right):A\in\mathscr{E},\pi\left(A\right)=p\right\} ,\qquad p\in\left(0,1\right).
\]
\end{defn}

We note briefly that the isoperimetric profile can be controlled explicitly
in many cases of interest; specific examples to this effect are provided
in Section \ref{subsec:Examples-of-Profiles}.
\begin{defn}
\label{def:iso-ineq}We say that $\tilde{I}_{\pi}:\left(0,1\right)\to\left(0,\infty\right)$
is an \textit{isoperimetric minorant of $\pi$} if $\tilde{I}_{\pi}\leqslant I_{\pi}$
pointwise. We furthermore say that $\tilde{I}_{\pi}$ is \textit{regular}
if it is symmetric about $\frac{1}{2}$, continuous, and monotone
increasing on $\left(0,\frac{1}{2}\right]$.
\end{defn}

To begin with, we recall that given any regular isoperimetric minorant
of our target measure, one can directly deduce a three-set isoperimetric
inequality for the same measure.
\begin{lem}
\label{lem:iso-to-3set}Let $\pi$ have a regular isoperimetric minorant
$\tilde{I}_{\pi}$ w.r.t. the metric $\mathsf{d}$. Then $\pi$ satisfies
a three-set isoperimetric inequality with metric $\mathsf{d}$ and
function $F=\tilde{I}_{\pi}$ on $\left(0,\frac{1}{2}\right]$.
\end{lem}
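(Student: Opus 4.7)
The plan is to reduce the three-set inequality to a one-sided Minkowski content estimate integrated along neighbourhoods of one of the sets. Concretely, given a partition $\mathsf{E}=S_{1}\sqcup S_{2}\sqcup S_{3}$, I will assume without loss of generality that $\pi(S_{1})\leqslant\pi(S_{2})$, so $p:=\pi(S_{1})\leqslant\tfrac{1}{2}$, and write $\rho:=\mathsf{d}(S_{1},S_{2})$ (the case $\rho=0$ is trivial). The key geometric observation is that for every $s\in[0,\rho)$, the enlargement $(S_{1})_{s}=\{x:\mathsf{d}(x,S_{1})\leqslant s\}$ is disjoint from $S_{2}$, hence $(S_{1})_{s}\subseteq S_{1}\cup S_{3}$, which gives the two bounds
\[
\pi\bigl((S_{1})_{s}\bigr)-\pi(S_{1})\leqslant\pi(S_{3}),\qquad p\leqslant\pi\bigl((S_{1})_{s}\bigr)\leqslant1-p.
\]

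Set $\phi(s):=\pi((S_{1})_{s})$, a nondecreasing function on $[0,\rho)$ with $\phi(0)=p$. Applying the definition of Minkowski content to each enlargement $(S_{1})_{s}$ yields
\[
\liminf_{h\downarrow0}\frac{\phi(s+h)-\phi(s)}{h}\geqslant\pi^{+}\bigl((S_{1})_{s}\bigr)\geqslant I_{\pi}\bigl(\phi(s)\bigr)\geqslant\tilde I_{\pi}\bigl(\phi(s)\bigr).
\]
Because $\tilde I_{\pi}$ is symmetric about $\tfrac{1}{2}$ and monotone increasing on $(0,\tfrac{1}{2}]$, it attains its infimum over the interval $[p,1-p]$ at the endpoints, with value $\tilde I_{\pi}(p)$. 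Combined with the bound $\phi(s)\in[p,1-p]$ established above, this gives the uniform lower bound
\[
\liminf_{h\downarrow0}\frac{\phi(s+h)-\phi(s)}{h}\geqslant\tilde I_{\pi}(p)\qquad\text{for all }s\in[0,\rho).
\]

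It then remains to integrate this Dini-derivative inequality. Since $\phi$ is monotone, it is differentiable almost everywhere and $\phi(b)-\phi(a)\geqslant\int_{a}^{b}\phi'(s)\,\mathrm{d}s$; combined with the pointwise lower bound on $\liminf_{h\downarrow0}h^{-1}(\phi(s+h)-\phi(s))$, this yields $\phi(s)-\phi(0)\geqslant s\cdot\tilde I_{\pi}(p)$ for every $s\in[0,\rho)$. Feeding this back into the geometric bound $\phi(s)-\phi(0)\leqslant\pi(S_{3})$ and sending $s\uparrow\rho$ gives $\pi(S_{3})\geqslant\rho\cdot\tilde I_{\pi}(p)=\mathsf{d}(S_{1},S_{2})\cdot\tilde I_{\pi}\bigl(\min\{\pi(S_{1}),\pi(S_{2})\}\bigr)$, which is exactly the three-set inequality with $F=\tilde I_{\pi}$.

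The only mildly delicate step is the passage from the pointwise $\liminf$ lower bound on $\phi$ to the integrated inequality, since the Minkowski content is only a one-sided infinitesimal quantity. This is a routine consequence of monotonicity of $\phi$ (so almost-everywhere differentiability and the inequality $\phi(b)-\phi(a)\geqslant\int_{a}^{b}\phi'$), but it is where one has to be careful; everything else is bookkeeping using the symmetry and monotonicity built into the definition of a regular minorant.
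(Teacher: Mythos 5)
Your argument is correct, and it is the standard route: the paper itself states this lemma without proof (it is recalled from \cite{andrieu2022explicit}), and your reduction --- enlarge $S_{1}$, note $(S_{1})_{s}\cap S_{2}=\emptyset$ for $s<\mathsf{d}(S_{1},S_{2})$, bound the lower right Dini derivative of $\phi(s)=\pi\bigl((S_{1})_{s}\bigr)$ by $\pi^{+}\bigl((S_{1})_{s}\bigr)\geqslant\tilde{I}_{\pi}\bigl(\phi(s)\bigr)\geqslant\tilde{I}_{\pi}(p)$ using symmetry and monotonicity of the regular minorant, then integrate via monotonicity of $\phi$ --- is exactly the expected proof, including the genuinely delicate step you flag (a pointwise Dini bound alone would not suffice, but monotonicity gives $\phi(b)-\phi(a)\geqslant\int_{a}^{b}\phi'$ with $\phi'\geqslant\tilde{I}_{\pi}(p)$ a.e.). Two cosmetic points: $\phi(0)=\pi\bigl((S_{1})_{0}\bigr)$ is the mass of the closure of $S_{1}$, so $\phi(0)\geqslant p$ rather than $=p$, which only helps since you need $\phi(s)-p\leqslant\pi(S_{3})$; and one should restrict to $\min\{\pi(S_{1}),\pi(S_{2})\}\in(0,\tfrac{1}{2}]$ (and implicitly assume measurability of the enlargements), as the degenerate cases are vacuous for $F$ defined on $\left(0,\tfrac{1}{2}\right]$.
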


\begin{defn}
\label{def:close-coupling}For a metric $\mathsf{d}$ on $\E$ and
$\epsilon,\delta>0$, a Markov kernel $P$ evolving on $\mathsf{E}$
is \textit{$\left(\mathsf{d},\delta,\varepsilon\right)$-close coupling}
if
\[
\mathsf{d}\left(x,y\right)\;\leqslant\delta\Rightarrow\left\Vert P\left(x,\cdot\right)-P\left(y,\cdot\right)\right\Vert _{{\rm TV}}\leqslant1-\varepsilon,\qquad x,y\in\mathsf{E}.
\]
\end{defn}

\begin{lem}
\label{lem:iso-conductance-A-close-coupling}Suppose that $\pi$ satisfies
a three-set isoperimetric inequality with metric $\mathsf{d}$ and
function $F$ monotone increasing on $\left[0,\frac{1}{2}\right)$.
Let $P$ be a $\left(\mathsf{d},\delta,\varepsilon\right)$-close
coupling, $\pi$-invariant Markov kernel. Then for any $A\in\mathscr{E}$
with $\pi\left(A\right)\leqslant\frac{1}{2}$,
\[
\frac{\left(\pi\otimes P\right)\left(A\times A^{\complement}\right)}{\pi\left(A\right)}\geqslant\sup_{\theta\in\left[0,1\right]}\min\left\{ \frac{1}{2}\cdot\left(1-\theta\right)\cdot\varepsilon,\frac{1}{4}\cdot\varepsilon\cdot\delta\cdot\theta\cdot\left(F/{\rm id}\right)\left(\theta\cdot\pi\left(A\right)\right)\right\} .
\]
\end{lem}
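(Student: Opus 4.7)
The plan is to adapt the classical Lovász--Simonovits conductance argument to this subgeometric/three-set setting, using close coupling to lower bound a certain separation distance and using isoperimetry to bound one of two cases. Fix $\theta \in (0,1)$ and set the threshold $\eta := \varepsilon/2$. I would define the ``sticky'' sets
\[
A_1 := \{x \in A \colon P(x,A^{\complement}) < \varepsilon/2\}, \qquad B_1 := \{y \in A^{\complement} \colon P(y,A) < \varepsilon/2\},
\]
which morally collect points from which the chain has almost no chance of crossing the boundary $\partial A$ in one step. The first key step is to show, by contradiction, that $\mathsf{d}(A_1,B_1) \geqslant \delta$: if there were $x \in A_1, y \in B_1$ with $\mathsf{d}(x,y) \leqslant \delta$, then
\[
\|P(x,\cdot) - P(y,\cdot)\|_{\mathrm{TV}} \geqslant P(x,A) - P(y,A) > (1-\varepsilon/2) - \varepsilon/2 = 1-\varepsilon,
\]
contradicting the $(\mathsf{d},\delta,\varepsilon)$-close coupling hypothesis.

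Next I would split on the size of $\min\{\pi(A_1),\pi(B_1)\}$ relative to $\theta \pi(A)$. In the first case, $\min\{\pi(A_1),\pi(B_1)\} < \theta \pi(A)$: if $\pi(A_1) < \theta \pi(A)$, then $\pi(A \setminus A_1) > (1-\theta)\pi(A)$ and every $x \in A\setminus A_1$ contributes at least $\varepsilon/2$ to the flow, giving $(\pi\otimes P)(A\times A^{\complement})/\pi(A) \geqslant (1-\theta)\varepsilon/2$ immediately. If instead $\pi(B_1) < \theta \pi(A)$, I would use $\pi$-invariance to write $(\pi\otimes P)(A\times A^{\complement}) = (\pi\otimes P)(A^{\complement}\times A) \geqslant (\varepsilon/2)\pi(A^{\complement}\setminus B_1)$, and use the assumption $\pi(A) \leqslant 1/2$ to check that $\pi(A^{\complement}\setminus B_1) > 1 - (1+\theta)\pi(A) \geqslant (1-\theta)\pi(A)$, recovering the same bound.

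In the second case, $\min\{\pi(A_1),\pi(B_1)\} \geqslant \theta \pi(A)$, I would apply the three-set isoperimetric inequality (available via Lemma~\ref{lem:iso-to-3set} and the hypothesis on $F$) to the partition $S_1 := A_1$, $S_2 := B_1$, $S_3 := \mathsf{E}\setminus(A_1\cup B_1)$. Since $\pi(A_1) \leqslant \pi(A) \leqslant 1/2$, monotonicity of $F$ on $(0,1/2]$ yields $F(\min\{\pi(A_1),\pi(B_1)\}) \geqslant F(\theta\pi(A))$, and combining with $\mathsf{d}(A_1,B_1) \geqslant \delta$ gives $\pi(S_3) \geqslant \delta \cdot F(\theta\pi(A))$. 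Noting that $S_3 = (A\setminus A_1)\cup(A^{\complement}\setminus B_1)$ and that $\pi$-invariance again lets me sum the two ``good'' contributions,
\[
2(\pi\otimes P)(A\times A^{\complement}) \geqslant \tfrac{\varepsilon}{2}\bigl[\pi(A\setminus A_1)+\pi(A^{\complement}\setminus B_1)\bigr] = \tfrac{\varepsilon}{2}\pi(S_3) \geqslant \tfrac{\varepsilon\delta}{2}F(\theta\pi(A)),
\]
dividing by $2\pi(A)$ produces the second term in the $\min$, namely $\tfrac{1}{4}\varepsilon\delta\theta(F/\mathrm{id})(\theta\pi(A))$. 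Taking $\sup$ over $\theta \in [0,1]$ (with the boundary cases being trivial) concludes the proof.

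The main obstacle I anticipate is the bookkeeping in the ``small'' case: one needs both subcases (small $\pi(A_1)$ and small $\pi(B_1)$) to yield the \emph{same} bound $(1-\theta)\varepsilon/2$, and the second one requires genuine use of $\pi(A) \leqslant 1/2$ together with the $\pi$-reversibility-like identity $(\pi\otimes P)(A\times A^{\complement}) = (\pi\otimes P)(A^{\complement}\times A)$, which holds for any $\pi$-invariant $P$. The rest is essentially a careful rearrangement once the close coupling/TV contradiction is set up correctly with the strict-inequality threshold $\varepsilon/2$.
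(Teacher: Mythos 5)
Your argument is correct and is essentially the standard proof of this lemma (the one used for the analogous conductance bounds in \cite{andrieu2022explicit}): the sticky sets $A_1,B_1$ with threshold $\varepsilon/2$, the close-coupling contradiction giving $\mathsf{d}(A_1,B_1)\geqslant\delta$, the case split at $\theta\cdot\pi(A)$ using $(\pi\otimes P)(A\times A^{\complement})=(\pi\otimes P)(A^{\complement}\times A)$ and $\pi(A)\leqslant\tfrac12$, and the three-set inequality applied to $(A_1,B_1,\mathsf{E}\setminus(A_1\cup B_1))$ all match, and the constants work out exactly to the stated $\min\{\tfrac12(1-\theta)\varepsilon,\tfrac14\varepsilon\delta\theta\,(F/\mathrm{id})(\theta\pi(A))\}$. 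No gaps beyond the boundary cases $\theta\in\{0,1\}$, which are indeed trivial.
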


In contrast to our earlier work, in the heavy-tailed case, isoperimetric
profiles and minorants are typically convex, meaning that $I_{\pi}/\mathrm{id}$
is usually increasing and vanishes at $0$. As such, taking an infimum
over sufficiently small sets $A$ in the above will give trivial bounds.
We thus instead take infima over sufficiently \textit{large} sets
$A$.
\begin{cor}
\label{cor:iso-couple-conductance}Suppose that $\tilde{I}_{\pi}$
is a regular, convex isoperimetric minorant of $\pi$ w.r.t. the metric
$\mathsf{d}$. Let $P$ be a $\left(\mathsf{d},\delta,\varepsilon\right)$-close
coupling, $\pi$-invariant Markov kernel. Then for any $v\in\left(0,\frac{1}{2}\right]$,
\begin{align*}
\Phi_{P}^{\mathrm{W}}\left(v\right) & \geqslant\sup_{\theta\in\left[0,1\right]}\min\left\{ \frac{1}{2}\cdot\left(1-\theta\right)\cdot\varepsilon,\frac{1}{4}\cdot\varepsilon\cdot\delta\cdot\theta\cdot\left(\tilde{I}_{\pi}/{\rm id}\right)\left(\theta\cdot v\right)\right\} \\
 & \geqslant\frac{1}{4}\cdot\varepsilon\cdot\min\left\{ 1,\frac{1}{2}\cdot\delta\cdot\frac{\tilde{I}_{\pi}\left(\frac{1}{2}\cdot v\right)}{\frac{1}{2}\cdot v}\right\} .
\end{align*}
\end{cor}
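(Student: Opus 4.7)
The plan is to chain together the preceding three results (Lemma~\ref{lem:iso-to-3set}, Lemma~\ref{lem:iso-conductance-A-close-coupling}, and the definition of the weak conductance profile), then exploit convexity of $\tilde{I}_\pi$ to turn a bound parametrised by $\pi(A)$ into a bound parametrised only by $v$.

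First, since $\tilde{I}_\pi$ is regular (hence monotone increasing on $(0,\tfrac12]$), Lemma~\ref{lem:iso-to-3set} tells us that $\pi$ satisfies a three-set isoperimetric inequality with function $F=\tilde{I}_\pi$. Feeding this and the $(\mathsf{d},\delta,\varepsilon)$-close coupling assumption into Lemma~\ref{lem:iso-conductance-A-close-coupling}, we obtain, for every $A\in\mathscr{E}$ with $\pi(A)\leqslant\tfrac12$,
\[
\frac{(\pi\otimes P)(A\times A^{\complement})}{\pi(A)}\geqslant\sup_{\theta\in[0,1]}\min\!\left\{\tfrac12(1-\theta)\varepsilon,\ \tfrac14\varepsilon\delta\,\theta\cdot(\tilde{I}_\pi/\mathrm{id})(\theta\,\pi(A))\right\}.
\]

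The next step is to take the infimum of this bound over $v\leqslant\pi(A)\leqslant\tfrac12$ (per Definition~\ref{def:JS-weak-conductance-1}) and argue that the infimum is attained at $\pi(A)=v$. The $\theta$-independent factor $\tfrac12(1-\theta)\varepsilon$ is constant in $\pi(A)$, so only the second argument of the $\min$ depends on $\pi(A)$, through $(\tilde{I}_\pi/\mathrm{id})(\theta\,\pi(A))$. Here I will use the fact that, for a convex function $f\colon[0,\tfrac12]\to[0,\infty)$ with $f(0)=0$, the map $x\mapsto f(x)/x$ is non-decreasing on $(0,\tfrac12]$ (a one-line convexity argument). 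The hypothesis $\tilde{I}_\pi\leqslant I_\pi$ and the standard fact that $I_\pi(p)\to0$ as $p\to0^+$ let us extend $\tilde{I}_\pi$ by continuity with $\tilde{I}_\pi(0)=0$, so the lemma applies and $(\tilde{I}_\pi/\mathrm{id})(\theta\,\pi(A))$ is non-decreasing in $\pi(A)$. Taking the infimum at $\pi(A)=v$ yields the first displayed inequality of the corollary.

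For the second inequality, I specialise the supremum over $\theta\in[0,1]$ in the first inequality to $\theta=\tfrac12$, producing
\[
\min\!\left\{\tfrac14\varepsilon,\ \tfrac18\varepsilon\delta\cdot\frac{\tilde{I}_\pi(v/2)}{v/2}\right\}=\tfrac14\varepsilon\cdot\min\!\left\{1,\ \tfrac12\delta\cdot\frac{\tilde{I}_\pi(v/2)}{v/2}\right\},
\]
which is exactly the claimed lower bound.

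The only mildly delicate step is justifying that $\tilde{I}_\pi/\mathrm{id}$ is non-decreasing, which reduces to the boundary behaviour $\tilde{I}_\pi(0^+)=0$; this should be a brief remark invoking $I_\pi(0^+)=0$ and the minorant hypothesis (replacing $\tilde{I}_\pi$ by its lower semicontinuous extension at $0$ if needed, which does not affect the pointwise inequality on $(0,\tfrac12]$). Everything else is substitution.
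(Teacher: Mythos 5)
Your proposal is correct and follows exactly the route the paper intends (it leaves this corollary's proof implicit): combine Lemma~\ref{lem:iso-to-3set} with Lemma~\ref{lem:iso-conductance-A-close-coupling}, use monotonicity of $\tilde{I}_{\pi}/\mathrm{id}$ (convexity plus vanishing at $0$) to locate the infimum over $v\leqslant\pi\left(A\right)\leqslant\frac{1}{2}$ at $\pi\left(A\right)=v$, and then take $\theta=\frac{1}{2}$ for the second bound. Your attention to the boundary behaviour $\tilde{I}_{\pi}\left(0^{+}\right)=0$ is the only point the paper glosses over (its preceding remark simply asserts that $I_{\pi}/\mathrm{id}$ is increasing and vanishes at $0$ in the cases of interest), and your justification via $I_{\pi}\left(p\right)\to0$ as $p\to0^{+}$ is valid in the Euclidean/Lebesgue-density setting in which the paper applies the result.
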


\begin{thm}
\label{thm:mixing-lower-bound-general}Let $\tilde{I}_{\pi}$ be a
regular, convex isoperimetric minorant of $\pi$ w.r.t. the metric
$\mathsf{d}$. Let $P$ be a positive, $\pi$-reversible and $\left(\mathsf{d},\delta,\varepsilon\right)$-close
coupling Markov kernel. Assume $2\cdot\delta\cdot\tilde{I}_{\pi}\left(\frac{1}{4}\right)\leqslant1$.
Then for
\[
\sup\big\{\left\Vert P^{n}f-\pi\left(f\right)\right\Vert _{2}^{2}\colon\left\Vert f\right\Vert _{\mathrm{osc}}^{2}\leqslant1\big\}\leqslant\varepsilon_{{\rm Mix}}\quad\text{for some }\varepsilon_{{\rm Mix}}\in\left(0,\frac{1}{4}\right),
\]
to hold, it suffices to take 
\begin{align*}
n & \geqslant1+2^{8}\cdot\varepsilon^{-2}\cdot\delta^{-2}\cdot\int_{2^{-3}\cdot\varepsilon_{{\rm Mix}}}^{2^{-5}}\frac{v\,\mathrm{d}v}{\tilde{I}_{\pi}\left(v\right)^{2}}.
\end{align*}
Let $\nu\ll\pi$ be a probability measure with $u:=\left\Vert \frac{\mathrm{d}\nu}{\mathrm{d}\pi}\right\Vert _{\mathrm{osc}}^{2}<\infty$.
Then in order to ensure that
\[
\chi^{2}\left(\nu P^{n},\pi\right)\leqslant\varepsilon_{{\rm Mix}}\quad\text{for some }\varepsilon_{{\rm Mix}}>0,
\]
it suffices that
\[
n\geqslant1+2^{8}\cdot\varepsilon^{-2}\cdot\delta^{-2}\cdot\int_{2^{-3}\cdot\varepsilon_{{\rm Mix}}\cdot u^{-1}}^{2^{-5}}\frac{v\,\mathrm{d}v}{\tilde{I}_{\pi}\left(v\right)^{2}}.
\]
\end{thm}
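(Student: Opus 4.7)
The plan is to chain together the quantitative weak Cheeger inequality of Corollary~\ref{cor:iso-couple-conductance}, which relates the weak conductance profile to the isoperimetric minorant via close coupling, with the mixing-time bound for positive reversible kernels from Corollary~\ref{cor:mixing-with-weak-conductance}. The second statement will follow from the first by reversibility together with an $\mathrm{L}^2$/TV duality argument on the Radon--Nikodym derivative.

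First I would produce a clean lower bound on $\Phi_P^{\mathrm{W}}$. Apply Corollary~\ref{cor:iso-couple-conductance} directly to get
\[
\Phi_P^{\mathrm{W}}(v) \;\geqslant\; \tfrac{1}{4}\cdot\varepsilon\cdot\min\Bigl\{1,\;\tfrac{1}{2}\cdot\delta\cdot \tilde I_\pi(v/2)/(v/2)\Bigr\},\qquad v\in(0,\tfrac{1}{2}].
\]
The key simplification is to argue that the second term in the minimum is always the active one. Since $\tilde I_\pi$ is convex on $[0,\tfrac{1}{2}]$ with $\tilde I_\pi(0)=0$, the ratio $w\mapsto \tilde I_\pi(w)/w$ is nondecreasing, so for $v\leqslant \tfrac{1}{2}$ one has $\tilde I_\pi(v/2)/(v/2)\leqslant \tilde I_\pi(\tfrac{1}{4})/\tfrac{1}{4}=4\tilde I_\pi(\tfrac{1}{4})$, and hence $\tfrac{1}{2}\delta\tilde I_\pi(v/2)/(v/2)\leqslant 2\delta \tilde I_\pi(\tfrac{1}{4})\leqslant 1$ by hypothesis. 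Therefore
\[
\Phi_P^{\mathrm{W}}(v)\;\geqslant\;\frac{\varepsilon\cdot\delta}{4\cdot v}\cdot\tilde I_\pi(v/2),\qquad v\in(0,\tfrac{1}{2}].
\]

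Next I would plug this into the upper bound on the mixing time from Corollary~\ref{cor:mixing-with-weak-conductance}, which is available because $P$ is positive and $\pi$-reversible. It says that $\sup\{\|P^n f\|_2^2/\|f\|_{\mathrm{osc}}^2\colon f\in\mathrm{L}^2_0(\pi)\setminus\{0\}\}\leqslant\varepsilon_{\mathrm{Mix}}$ provided $n\geqslant 2^2\int_{2^{-2}\varepsilon_{\mathrm{Mix}}}^{2^{-4}} \mathrm{d}v/(v\,\Phi_P^{\mathrm{W}}(v)^2)$. Substituting the above lower bound on $\Phi_P^{\mathrm{W}}$ and changing variables $w=v/2$ gives, after a routine bookkeeping of constants,
\[
2^2\int_{2^{-2}\varepsilon_{\mathrm{Mix}}}^{2^{-4}} \frac{\mathrm{d}v}{v\,\Phi_P^{\mathrm{W}}(v)^2}\;\leqslant\; \frac{2^8}{\varepsilon^2\delta^2}\int_{2^{-3}\varepsilon_{\mathrm{Mix}}}^{2^{-5}}\frac{w\,\mathrm{d}w}{\tilde I_\pi(w)^2}.
\]
The additive $+1$ in the statement simply absorbs the fact that $n$ is an integer. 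Finally, for any $f$ with $\|f\|_{\mathrm{osc}}^2\leqslant 1$, one writes $\bar f:=f-\pi(f)\in\mathrm{L}^2_0(\pi)$ with $\|\bar f\|_{\mathrm{osc}}=\|f\|_{\mathrm{osc}}\leqslant 1$, and applies the above to $\bar f$ to deduce the first claim.

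For the second statement I would use reversibility: with $g:=\mathrm{d}\nu/\mathrm{d}\pi$ and $h:=g-1\in\mathrm{L}^2_0(\pi)$, one has $\mathrm{d}(\nu P^n)/\mathrm{d}\pi=(P^*)^n g=P^n g$, so that
\[
\chi^2(\nu P^n,\pi)=\bigl\|P^n h\bigr\|_2^2,
\]
and moreover $\|h\|_{\mathrm{osc}}^2=\|g\|_{\mathrm{osc}}^2=u$. The first part applied to the normalised function $h/\sqrt{u}$ with target accuracy $\varepsilon_{\mathrm{Mix}}/u$ then yields $\chi^2(\nu P^n,\pi)\leqslant u\cdot(\varepsilon_{\mathrm{Mix}}/u)=\varepsilon_{\mathrm{Mix}}$, and the corresponding $n$ is obtained by substituting $\varepsilon_{\mathrm{Mix}}\rightsquigarrow\varepsilon_{\mathrm{Mix}}/u$ in the first bound. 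I expect the main (minor) obstacle to be the careful unwinding of constants in the change of variables and the verification that the assumed inequality $2\delta\tilde I_\pi(\tfrac14)\leqslant1$ is exactly what is needed to kill the minimum in Corollary~\ref{cor:iso-couple-conductance} uniformly on the relevant range of $v$; everything else is bookkeeping.
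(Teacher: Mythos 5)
Your proposal is correct and follows essentially the same route as the paper: the paper's proof likewise combines Corollary~\ref{cor:iso-couple-conductance} with Corollary~\ref{cor:mixing-with-weak-conductance} (using the hypothesis $2\cdot\delta\cdot\tilde{I}_{\pi}\left(\frac{1}{4}\right)\leqslant1$ to make the second term of the minimum the active one), performs the same change of variables to arrive at $2^{8}\cdot\varepsilon^{-2}\cdot\delta^{-2}\int\frac{v\,\mathrm{d}v}{\tilde{I}_{\pi}\left(v\right)^{2}}$, and obtains the $\chi^{2}$ statement by applying the first part to $f=\frac{\mathrm{d}\nu}{\mathrm{d}\pi}$ with $\Psi\left(f\right)=u$. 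Your explicit verification that the minimum is killed uniformly (via monotonicity of $w\mapsto\tilde{I}_{\pi}\left(w\right)/w$ from convexity) is exactly the argument the paper makes explicit later in the proof of Theorem~\ref{thm:rwm-payoff}.
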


\begin{proof}
For any $f\in\ELL\left(\mu\right)$ and $n\in\mathbb{N}$, set $v_{n}:=\left\Vert P^{n}f-\pi\left(f\right)\right\Vert _{2}^{2}/\left\Vert f\right\Vert _{\mathrm{osc}}^{2}\leqslant1/4$.
Then from Corollaries~\ref{cor:mixing-with-weak-conductance} and
\ref{cor:iso-couple-conductance} we have for any $n\in\mathbb{N}$,
\begin{align*}
n & \leqslant2^{2}\cdot\int_{2^{-2}\cdot v_{n}}^{2^{-4}}\frac{\mathrm{d}v}{v\cdot\Phi_{P}^{\mathrm{W}}\left(v\right)^{2}}\\
 & \leqslant2^{2}\cdot\int_{2^{-2}\cdot v_{n}}^{2^{-4}}2^{6}\cdot\varepsilon^{-2}\cdot\delta^{-2}\cdot\frac{\left(\frac{1}{2}\cdot v\right)^{2}}{\left(\frac{1}{2}\cdot v\right)\cdot\left[\tilde{I}_{\pi}\left(\frac{1}{2}\cdot v\right)\right]^{2}}\mathrm{d}\left(\frac{1}{2}\cdot v\right)\\
 & \leqslant2^{8}\cdot\varepsilon^{-2}\cdot\delta^{-2}\cdot\int_{2^{-3}\cdot v_{n}}^{2^{-5}}\frac{v\,\mathrm{d}v}{\tilde{I}_{\pi}\left(v\right)^{2}}.
\end{align*}
Therefore for $n\in\mathbb{N}$ such that
\[
n\geqslant1+2^{8}\cdot\varepsilon^{-2}\cdot\delta^{-2}\cdot\int_{2^{-3}\cdot\varepsilon_{{\rm Mix}}}^{2^{-5}}\frac{v\,\mathrm{d}v}{\tilde{I}_{\pi}\left(v\right)^{2}},
\]
we must have $\sup\big\{\left\Vert P^{n}f-\pi\left(f\right)\right\Vert _{2}^{2}\colon\left\Vert f\right\Vert _{\mathrm{osc}}^{2}\leqslant1\big\}\leqslant\varepsilon_{{\rm Mix}}$.
The second result follows by considering $f=\frac{\mathrm{d}\nu}{\mathrm{d}\pi}$
and noting that $\Psi\left(f\right)=u$.
\end{proof}

\subsubsection{Examples of (Convex) Isoperimetric Profiles\label{subsec:Examples-of-Profiles}}

In this section, we give some examples of heavy-tailed probability
measures for which we can give bounds on the isoperimetric profile.
This will be based around results for some illustrative model classes
in general dimension with some degree of convexity, as well as some
results on how profiles can behave under tensorisation. Our main generic
examples are borrowed from \cite{cattiaux2010functional}, and some
more specific cases follow from analyses in \cite{bobkov2007large,cattiaux2010functional}.
The general theme is that for quasi-convex potentials, one can still
often obtain reasonable isoperimetric estimates, at least in terms
of capturing the correct asymptotic behaviour with respect to the
parameter $p$. In contrast, dependence on dimension (in the non-tensorised
case) and constant pre-factors can be a bit looser. As in the log-concave
case, transfer principles (e.g. Lipschitz transport, bounded change-of-measure)
remain relevant and straightforward to apply.

When $\mathsf{E}=\mathbb{R}$, consider probability distributions
of the form 
\[
\pi\left(\mathrm{d}x\right)\propto\exp\left(-U\left(\left|x\right|\right)\right)\mathrm{d}x\,,
\]
 with $U\colon\mathbb{R}_{+}\rightarrow\mathbb{R}$ concave. Letting
$\pi$ also denote the probability density and $G_{\pi}$ the corresponding
cumulative distribution function, define $J_{\pi}:=\pi\circ G_{\pi}^{-1}$.
Then \cite[Corollary 13.10]{bobkov1997some}, \cite[Eq. (5.20)]{cattiaux2010functional}
establish that 
\begin{align*}
I_{\pi}\left(p\right) & =\min\left\{ J_{\pi}\left(p\right),2\cdot J_{\pi}\left(\frac{1}{2}\cdot p\right)\right\} ,\quad p\in\left[0,\frac{1}{2}\right],
\end{align*}
corresponding to the conclusion that the extremal sets in the isoperimetric
problem take the form of either semi-infinite intervals, symmetric
pairs of semi-infinite intervals, or complements thereof. \cite[Proof of Corollary 5.16]{cattiaux2010functional}
later establish that for $d\geqslant1$, the product measures $\pi^{\otimes d}$
on $\mathsf{E}=\mathbb{R}^{d}$ for $d\geqslant1$ have isoperimetric
profiles such that for $p\in\left[0,\frac{1}{2}\right]$,
\[
d\cdot\frac{c_{1}}{c_{2}}\cdot J_{\pi}\left(\frac{1}{2\cdot c_{1}}\cdot\frac{p}{d}\right)\leqslant I_{\pi^{\otimes d}}\left(p\right)\leqslant d\cdot\frac{c_{1}}{c_{2}}\cdot J_{\pi}\left(\frac{1}{c_{1}}\cdot\frac{p}{d}\right)
\]
with $c_{1}=2\cdot\sqrt{6}$ and $c_{2}=2\cdot\left(1+c_{1}\right)$.
Convenient and accurate bounds for $J_{\pi}$ can be found with mild
additional conditions on $U$. 
\begin{prop}[{\cite[Corollary 5.23]{cattiaux2010functional}}]
 Assume $U\colon\mathbb{R}_{+}\rightarrow\mathbb{R}$ to be a non-decreasing
concave function such that
\begin{enumerate}
\item $\lim_{x\rightarrow\infty}\frac{U\left(x\right)}{x}=0$ and $U\left(0\right)<\log2$;
\item U is $\mathrm{C}^{2}$ in a neighbourhood of $\infty$;
\item $U^{\theta}$ for some $\theta>1$ is convex.
\end{enumerate}
Then,
\begin{enumerate}
\item there exist $k_{1},k_{2}>0$ such that for $p\in\left[0,1\right]$
\[
k_{1}\cdot p\cdot\left(U^{'}\circ U^{-1}\right)\left(\log\left(1/p\right)\right)\leqslant J_{\pi}\left(p\right)\leqslant k_{2}\cdot p\cdot\left(U^{'}\circ U^{-1}\right)\left(\log\left(1/p\right)\right);
\]
\item in particular, there exists $c>0$ such that for any $d\geqslant1$
\[
I_{\pi^{\otimes d}}\left(p\right)\geqslant c\cdot p\cdot\left(U^{'}\circ U^{-1}\right)\left(\log\left(\frac{d}{p}\right)\right)\quad p\in\left[0,\frac{1}{2}\right].
\]
\end{enumerate}
\end{prop}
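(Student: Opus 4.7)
Since $\pi\left(\mathrm{d}x\right)\propto\exp\bigl(-U\bigl(\left|x\right|\bigr)\bigr)\mathrm{d}x$ is symmetric about the origin, I would first use symmetry to reduce the statement about $J_{\pi}=\pi\circ G_{\pi}^{-1}$ to a tail estimate. Specifically, for $p\in(0,1/2]$ the quantile $G_{\pi}^{-1}(p)$ is negative and so $J_{\pi}(p)=\pi\bigl(\lvert G_{\pi}^{-1}(p)\rvert\bigr)$, with $x(p):=\lvert G_{\pi}^{-1}(p)\rvert$ characterised by
\[
p=\int_{x(p)}^{\infty}\pi(t)\,\mathrm{d}t\,.
\]
Thus it suffices to relate the density at $x(p)$ to the tail mass $p$ and to identify $x(p)$ with $U^{-1}\bigl(\log(1/p)\bigr)$ up to bounded multiplicative factors.

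For this tail relation, I would perform integration by parts in $\int_{x}^{\infty}e^{-U(t)}\,\mathrm{d}t$, writing the integrand as $\tfrac{1}{U'(t)}\cdot U'(t)e^{-U(t)}$ to obtain
\[
\int_{x}^{\infty}e^{-U(t)}\,\mathrm{d}t=\frac{e^{-U(x)}}{U'(x)}-\int_{x}^{\infty}\frac{U''(t)}{U'(t)^{2}}\,e^{-U(t)}\,\mathrm{d}t\,.
\]
The hypothesis that $U$ is concave, together with $U(x)/x\to0$, forces $U'(x)\to0$ and controls the sign of the correction term; the assumption that $U^{\theta}$ is convex for some $\theta>1$ is then exactly what one needs to bound $|U''|/U'^{2}$ in terms of $1/U$ (since $\bigl(U^{\theta}\bigr)''\geqslant0$ rewrites as $U''U\geqslant-(\theta-1)(U')^{2}$). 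Combined with the normalising constant $\int e^{-U(|t|)}\mathrm{d}t$, this delivers two-sided bounds $p\asymp e^{-U(x(p))}/U'(x(p))$, and hence $U(x(p))=\log(1/p)+O(1)$, so that $x(p)=U^{-1}\bigl(\log(1/p)(1+o(1))\bigr)$. Substituting back yields
\[
J_{\pi}(p)=\pi\bigl(x(p)\bigr)\asymp p\cdot U'\bigl(x(p)\bigr)\asymp p\cdot\bigl(U'\circ U^{-1}\bigr)\bigl(\log(1/p)\bigr)\,,
\]
with constants $k_{1},k_{2}$ depending only on $U(0)$, $\theta$ and the normalising constant. The main technical obstacle is the last $\asymp$: one must verify that $(U'\circ U^{-1})$ is ``regular'' enough near infinity that translating $\log(1/p)$ by an $O(1)$ error only changes the value by a bounded factor, which is where the $\mathrm{C}^{2}$ hypothesis near infinity and $U^{\theta}$-convexity combine.

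For the second assertion, I would feed the lower bound just obtained into the product-measure inequality recalled just above the proposition,
\[
I_{\pi^{\otimes d}}(p)\geqslant d\cdot\tfrac{c_{1}}{c_{2}}\cdot J_{\pi}\bigl(\tfrac{1}{2c_{1}}\cdot\tfrac{p}{d}\bigr)\,,
\]
and apply it with the one-dimensional estimate at $q=p/(2c_{1}d)$. The factor of $d$ outside cancels the $1/d$ inside $J_{\pi}$, leaving
\[
I_{\pi^{\otimes d}}(p)\geqslant\tfrac{k_{1}}{2c_{1}}\cdot\tfrac{c_{1}}{c_{2}}\cdot p\cdot\bigl(U'\circ U^{-1}\bigr)\bigl(\log(2c_{1}d/p)\bigr)\,,
\]
and since $\log(2c_{1}d/p)=\log(d/p)+\log(2c_{1})$, the monotonicity/regularity of $U'\circ U^{-1}$ established above absorbs the additive constant into an overall constant $c>0$, giving the stated bound uniformly in $d\geqslant1$ and $p\in[0,1/2]$.
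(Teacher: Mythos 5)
The paper does not actually prove this proposition: it is quoted from \cite[Corollary 5.23]{cattiaux2010functional}, and part (b) is obtained there by feeding (a) into the product-measure bound displayed just above the statement, exactly as you do. So your proposal is a reconstruction of the cited argument rather than an alternative to anything proved in this paper, and its outline is sound: the symmetry reduction to the tail mass, the integration by parts producing the boundary term $e^{-U(x)}/U'(x)$ (concavity makes the correction term nonnegative, giving one direction; the identity $(U^{\theta})''\geqslant0\iff U\,U''\geqslant-(\theta-1)(U')^{2}$ gives $|U''|/(U')^{2}\leqslant(\theta-1)/U$ and hence the matching bound for $U(x)$ large), and the tensorisation step with $q=p/(2c_{1}d)$ all check out. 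Note that $U^{\theta}$-convexity also gives $U^{\theta-1}U'$ non-decreasing, hence $U'\geqslant c\,U^{1-\theta}$ near infinity; this is what makes the boundary term vanish and bounds the error term $\log U'(x(p))$ by $O(\log U(x(p)))$, so your ``$U(x(p))=\log(1/p)+O(1)$'' is slightly too strong in general — the correct statement is the multiplicative one you also wrote, $U(x(p))=(1+o(1))\cdot\log(1/p)$.

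The regularity step you flag as the main obstacle is genuinely needed, and needed twice: also in part (b), where monotonicity alone points the wrong way (since $U'\circ U^{-1}$ is non-increasing and $\log(2c_{1}d/p)>\log(d/p)$), so it is the regularity, not monotonicity, that absorbs the constant. But it is a one-line consequence of the same differential inequality: with $\phi:=U'\circ U^{-1}$ one has $\frac{\mathrm{d}}{\mathrm{d}s}\log\phi(s)=\bigl(U''/(U')^{2}\bigr)\circ U^{-1}(s)$, hence $|\log\phi(s_{2})-\log\phi(s_{1})|\leqslant(\theta-1)\cdot|\log(s_{2}/s_{1})|$ on the range where $U$ is $\mathrm{C}^{2}$ and positive; thus $\phi$ changes by at most a bounded factor under bounded multiplicative perturbations of its argument, and for $s\geqslant\log 2$ an additive $O(1)$ shift is such a perturbation. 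For moderate $p$ (bounded away from $0$), both sides of (a) are handled by compactness, using positivity of the density and the assumption $U(0)<\log 2$ to guarantee that $U^{-1}(\log(1/p))$ is defined down to $p=1/2$; your restriction to $p\leqslant1/2$ is all that part (b) uses. With these two points filled in, your argument is complete.
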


Together, these results suggest that this estimate of the isoperimetric
profile of $\pi^{\otimes d}$ is accurate up to a factor $2$ in the
dimension $d$. In the examples below, we provide lower bounds for
$I_{\pi^{\otimes d}}\left(p\right)$, but explicit upper bounds can
also be found in \cite[p. 376]{cattiaux2010functional}, establishing
correct dependence on dimension for the two first examples.
\begin{example}
For some concrete examples, this approach yields that for $p\in\left(0,\frac{1}{2}\right)$,
\begin{itemize}
\item If $U\left(x\right)=\left|x\right|^{\eta}$ with $\eta\in\left(0,1\right)$,
then there exists $c>0$ (dependent on $\eta$ only) such that $I_{\pi^{\otimes d}}\left(p\right)\geqslant c\cdot p\cdot\left(\log\left(\frac{d}{p}\right)\right)^{-\left(1/\eta-1\right)}$;
\cite[Proposition 5.25; Remark 5.26]{cattiaux2010functional}.
\item If $U\left(x\right)=\left(1+\eta\right)\cdot\log\left(1+\left|x\right|\right)$,
then there exists $c$ dependent on $\eta$ only such that $I_{\pi^{\otimes d}}\left(p\right)\geqslant c\cdot d^{-1/\eta}\cdot p^{1+1/\eta}$
\cite[Remark 5.28]{cattiaux2010functional}; note that this is not
implied directly by \cite[Corollary 5.23]{cattiaux2010functional}
but follows from a separate result.
\item If $U\left(x\right)=\left|x\right|^{\eta_{1}}\cdot\left(\log\left(\eta_{3}+\left|x\right|\right)\right)^{\eta_{2}}$,
$\eta_{1}\in\left(0,1\right)$, $\eta_{2}\in\mathbb{R}$ and $\eta_{3}=\exp\left(\frac{2|\eta_{2}|}{\eta_{1}\cdot\left(1-\eta_{1}\right)}\right)$
(so that $U$ is globally concave on $\mathbb{R}$), then there exists
$c>0$ (dependent on $\eta_{1},\eta_{2},\eta_{3}$ only) such that
\[
I_{\pi^{\otimes d}}\left(p\right)\geqslant c\cdot p\cdot\left(\log\left(\frac{d}{p}\right)\right)^{-\left(1/\eta_{1}-1\right)}\cdot\left(\log\left(\log\left(e+\frac{d}{p}\right)\right)\right)^{\eta_{2}/\eta_{1}},
\]
(see \cite[Below Remark~5.26]{cattiaux2010functional}).
\end{itemize}
\end{example}

In particular, for sub-exponential-type targets, the effect of tensorisation
is roughly to attenuate the isoperimetric profile by a factor of $\left(\log d\right)^{-\left(1/\eta-1\right)}$.
In contrast, for targets with polynomial tails, the profile is attenuated
more dramatically by a polynomial factor of $d^{-1/\alpha}$.

From Theorem~\ref{thm:mixing-lower-bound-general} this leads to
the following mixing time upper bounds 
\begin{itemize}
\item For $I_{\pi^{\otimes d}}\left(p\right)\geqslant c\cdot d^{-1/\eta}\cdot p^{1+1/\eta}$
we have 
\begin{align*}
\int_{2^{-4}\cdot\varepsilon_{{\rm Mix}}}^{2^{-6}}\frac{v\,\mathrm{d}v}{I_{\pi^{\otimes d}}\left(v\right){}^{2}} & \leqslant c^{-2}\cdot d^{2/\eta}\cdot\int_{2^{-4}\cdot\varepsilon_{{\rm Mix}}}^{2^{-6}}v^{-1-2/\eta}{\rm d}v\\
 & \leqslant\eta\cdot2^{8/\eta-1}\cdot c^{-2}\cdot d^{2/\eta}\cdot\varepsilon_{{\rm Mix}}^{-2/\eta},
\end{align*}
that is, it is sufficient to take 
\begin{align*}
n & \geqslant1+c^{-2}\cdot\eta\cdot2^{8/\eta+8}\cdot\varepsilon^{-2}\cdot\delta^{-2}\cdot d^{2/\eta}\cdot\varepsilon_{{\rm Mix}}^{-2/\eta}\\
 & =\Omega\left(\varepsilon^{-2}\cdot\delta^{-2}\cdot d^{2/\eta}\cdot\varepsilon_{{\rm Mix}}^{-2/\eta}\right),
\end{align*}
in Theorem~\ref{thm:mixing-lower-bound-general}.
\item For $I_{\pi^{\otimes d}}\left(p\right)\geqslant c\cdot p\cdot\log\left(\frac{d}{p}\right)^{1-1/\eta}$
, $\eta\in\left(0,1\right)$ we have
\begin{align*}
\int_{2^{-4}\cdot\varepsilon_{{\rm Mix}}}^{2^{-6}}\frac{v\,\mathrm{d}v}{I_{\pi^{\otimes d}}\left(v\right)^{2}} & \leqslant c^{-2}\cdot\int_{2^{-4}\cdot\varepsilon_{{\rm Mix}}}^{2^{-6}}\frac{{\rm d}v}{v\cdot\log\left(\frac{d}{v}\right)^{2-2/\eta}}\\
 & \leqslant c^{-2}\cdot\frac{\eta}{2-\eta}\cdot\log\left(2^{4}\cdot d\cdot\varepsilon_{{\rm Mix}}^{-1}\right)^{2/\eta-1},
\end{align*}
that is it is sufficient to take 
\begin{align*}
n & \geqslant1+2^{9}\cdot c^{-2}\cdot\frac{\eta}{2-\eta}\cdot\varepsilon^{-2}\cdot\delta^{-2}\cdot\left(\log\left(2^{4}\cdot d\cdot\varepsilon_{{\rm Mix}}^{-1}\right)\right)^{2/\eta-1}\\
 & =\Omega\left(\varepsilon^{-2}\cdot\delta^{-2}\cdot\left(\log\left(d\cdot\varepsilon_{{\rm Mix}}^{-1}\right)\right)^{2/\eta-1}\right),
\end{align*}
in Theorem~\ref{thm:mixing-lower-bound-general}.
\end{itemize}
Some attempts have been made in \cite{cattiaux2010functional} to
extend the class of distributions for which lower bounds on the isoperimetric
profile can be found. In what follows, let $V\colon\mathsf{E}=\mathbb{R}^{d}\rightarrow\mathbb{R}$
denote a generic coercive convex function satisfying a growth condition
of the form
\[
\left|x\right|\geqslant R\implies V\left(x\right)\geqslant V\left(0\right)+\delta\cdot\left|x\right|,
\]
for some $R,\delta>0$. Here, $V$ will dictate the shape of the contours
of our target, and our potential $U$ will be obtained as a monotone,
sublinear function of $V$.
\begin{example}
\cite[Proposition 4.3]{cattiaux2010functional} For Cauchy-type targets
of the form $\pi\left(\mathrm{d}x\right)\propto V\left(x\right)^{-\left(d+\eta\right)}\,\mathrm{d}x$
with $\eta>0$, it holds that $I_{\pi}\left(p\right)\geqslant c\cdot p^{1+1/\eta}$
for $p\in\left[0,\frac{1}{2}\right]$, for some constant $c$ which
depends on $\left(V,\eta,d\right)$. A general result of \cite[Theorem 1.2]{bobkov2007large}
shows that one can in fact take $c$ to depend only on $\eta$ and
$\mathrm{med}_{\pi}\left(\left|x\right|\right)$. 

As an application of Bobkov's result, one can see that the multivariate
Student-T distribution $\pi\left(\mathrm{d}x\right)\propto\left(\tau+\left|x\right|_{2}^{2}\right)^{-\frac{d+\tau}{2}}\,\mathrm{d}x$
admits an isoperimetric minorant of the form $I_{\pi}\left(p\right)\geqslant c\cdot d^{-1/2}\cdot p^{1+1/\tau}$,
where $c$ can be bounded uniformly from below for $\min\left\{ d,\tau\right\} \in\Omega\left(1\right)$,
see Lemma~\ref{lem:conductance-t-student-constant-bound}. Related
functional inequalities suggest that the dimensional factor $d^{-1/2}$
is likely to be extraneous (see e.g. the (adimensional) weighted inequalities
of \cite{bobkov2009weighted}), but we are not aware of a formal proof
of this.
\end{example}

\begin{example}
\cite[Proposition 4.5]{cattiaux2010functional} For sub-exponential-type
targets of the form $\pi\left(\mathrm{d}x\right)\propto\exp\left(-V\left(x\right)^{\eta}\right)\,\mathrm{d}x$
with $\eta\in\left(0,1\right)$, it holds that $I_{\pi}\left(p\right)\geqslant c\cdot p\cdot\log\left(\frac{1}{p}\right)^{-\left(1/\eta-1\right)}$
for $p\in\left[0,\frac{1}{2}\right]$, for some constant $c$ which
depends on $\left(V,\eta,d\right)$.
\end{example}

While these results allow one to characterize rates of convergence
for fixed $d$ for a very broad class of distributions, characterization
of $c$, and therefore mixing times, in terms of the dimension $d$
would require considering particular cases, or even different techniques.

\section{Application to Pseudo-marginal MCMC\label{sec:Application-to-Pseudo-marginal}}

Fix a probability distribution $\pi$ on a measure space $\mathsf{X}$,
with a density function with respect to some measure $\nu$ denoted
$\varpi$. Pseudo-marginal algorithms \cite{andrieu2009pseudo} extend
the scope of the standard Metropolis--Hastings algorithm to the scenario
where the density $\varpi$ is intractable, but for any $x\in\mathsf{X}$,
nonnegative estimators $\hat{\varpi}\left(x\right)$ such that $\mathbb{E}\left[\hat{\varpi}\left(x\right)\right]=C\cdot\varpi\left(x\right)$
for some constant $C>0$ are available. The main idea of pseudo-marginal
algorithms is to replace the value of $\varpi\left(x\right)$ with
$\hat{\varpi}\left(x\right)$ in standard algorithms. The surprising
fact is that these algorithms are correct algorithms, in the sense
that under standard conditions they are guaranteed to produce samples
from a distribution arbitrarily close to $\pi$. This comes however
at a price, as pseudo-algorithms are slower to converge then their
exact, or `marginal', counterparts which would use the intractable
$\varpi(x)$. The degradation of performance of the algorithm stemming
from the presence of weights has been investigated in \cite{andrieu2015convergence,andrieu2016establishing}
and more recently in \cite{andrieu2022comparison_journal} and upper
bounds on convergence rates and/or complexity bounds were obtained.
The object of this section is to exploit ``the left hand side Cheeger
inequality'' in Theorem~\ref{thm:positive-CP-implies-optim-WPI}
to establish lower bounds on the rate of convergence of pseudo-marginal
algorithms.

Pseudo-marginal algorithms can be conveniently formulated as follows.
Let $\mu\left({\rm d}x,{\rm d}w\right)=\pi\left({\rm d}x\right)\cdot Q_{x}\left({\rm d}w\right)\cdot w=\pi\left({\rm d}x\right)\cdot\tilde{\pi}_{x}\left({\rm d}w\right)$
with $\int_{\mathbb{R}_{+}}w\,Q_{x}\left({\rm d}w\right)=1$ on an
extended space $\E:=\mathsf{X\times\mathbb{R}_{+}}$. We will refer
to these auxiliary $w$ random variables as \emph{weights} or \emph{perturbations}.

The \emph{marginal (Metropolis--Hastings) algorithm}, which uses
the exact density $\varpi={\rm d}\pi/{\rm d}\nu$ for some measure
$\nu$ requires the specification of a family of proposal distributions
for $\left\{ q\left(x,\cdot\right),x\in\mathsf{X}\right\} $ (again
having density with respect to $\nu$) and has transition kernel
\begin{align}
P\left(x,{\rm d}y\right) & =\left[1\wedge r\left(x,y\right)\right]\,q\left(x,{\rm d}y\right)+\delta_{x}\left({\rm d}y\right)\cdot\rho\left(x\right),\nonumber \\
 & \quad\text{where}\quad r\left(x,y\right):=\frac{\varpi\left(y\right)\cdot q\left(y,x\right)}{\varpi\left(x\right)\cdot q\left(x,y\right)},\label{eq:MH-kernel}
\end{align}
and $\rho$ is the rejection probability given by $\rho\left(x\right):=1-\int\left[1\wedge r\left(x,y\right)\right]\,q\left(x,{\rm d}y\right)$
for each $x\in\mathsf{X}$. For brevity, we will also define the acceptance
probability as $a\left(x,y\right):=1\wedge r\left(x,y\right)$.

The pseudo-marginal Metropolis--Hastings kernel is given by
\begin{equation}
\tilde{P}\left(x,w;{\rm d}y,{\rm d}u\right)=\left[1\wedge\left\{ r\left(x,y\right)\cdot\frac{u}{w}\right\} \right]\cdot q\left(x,{\rm d}y\right)\cdot Q_{y}\left({\rm d}u\right)+\delta_{x,w}\left({\rm d}y,{\rm d}u\right)\cdot\tilde{\rho}\left(x,w\right),\label{eq:PM-kernel}
\end{equation}
where the (joint) rejection probability $\tilde{\rho}\left(x,w\right)$
is defined analogously. From \cite{andrieu2015convergence,andrieu2016establishing,andrieu2022comparison_journal}
it is known that can upper bound the convergence rate of the pseudo-marginal
algorithm in terms of properties of the marginal algorithm $P$ and
tail properties of $\left\{ Q_{x}\left({\rm d}w\right),x\in\mathsf{X}\right\} $.
The following establishes lower bounds.
\begin{thm}
\label{thm:PM-Cheeger-Lower}Let $\tilde{P}$ be as in (\ref{eq:PM-kernel})
with $q$ some $\nu$-reversible kernel. For any $s>0$, let $A_{s}:=\left\{ \left(x,w\right)\in\mathsf{X}\times\mathbb{R}_{+}\colon\varpi\left(x\right)w\geqslant s\right\} $,
$\psi\colon\mathbb{R}_{+}\rightarrow\left[0,1\right]$ be such that
$\psi\left(s\right):=\mu\left(A_{s}\right)$ and for any $v\in\left(0,1\right]$,
consider the generalized inverse,
\begin{equation}
\psi^{-}(v):=\sup\left\{ s>0:\mu\left(A_{s}\right)\geqslant v\right\} .\label{eq:def-psi-gen-inv}
\end{equation}
Then with $\Phi_{\tilde{P}}^{\mathrm{W}}$ the weak conductance of
$\tilde{P}$ as given in (\ref{eq:PM-kernel}), we have for $v>0$,
\begin{enumerate}
\item 
\[
\Phi_{\tilde{P}}^{\mathrm{W}}\left(v\right)\leqslant\frac{\int\pi\left(\mathrm{d}x\right)\tilde{\pi}_{x}\big(\varpi\left(x\right)W\geqslant\psi^{-}\left(v\right)\big)\int q\left(x,\mathrm{d}y\right)\frac{\varpi\left(y\right)\tilde{\pi}_{y}\left(\varpi\left(y\right)W<\psi^{-}\left(v\right)\right)}{\psi^{-}\left(v\right)}}{\int\pi\left(\mathrm{d}x\right)\tilde{\pi}_{x}\big(\varpi\left(x\right)W\geqslant\psi^{-}\left(v\right)\big)};
\]
\item Assuming that $\bar{\mathfrak{w}}:=\sup_{x\in\mathsf{X}}\varpi\left(x\right)<\infty$,
\[
\Phi_{\tilde{P}}^{\mathrm{W}}(v)\leqslant\frac{\bar{\mathfrak{w}}}{\psi^{-}\left(v\right)}.
\]
\end{enumerate}
\end{thm}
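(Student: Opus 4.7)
The plan is to evaluate the defining infimum of $\Phi^{\mathrm{W}}_{\tilde P}$ at the super-level set $A_s = \{(x,w) : \varpi(x) w \geq s\}$ and then optimise over $s$. Since $\psi(s) = \mu(A_s)$ is non-increasing in $s$, choosing $s$ marginally below $\psi^-(v)$ ensures $\mu(A_s) \geq v$; a limiting argument then lets us pass to $s = \psi^-(v)$ in the final bound. (The constraint $\mu(A_s) \leq 1/2$ can be arranged by further increasing $s$, and plays no essential role in what follows.)

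The heart of the computation is a direct evaluation of $(\mu \otimes \tilde P)(A_s \times A_s^\complement)$. The rejection part of $\tilde P$ is supported on the diagonal and contributes nothing to transitions between $A_s$ and $A_s^\complement$, so only the proposal--acceptance piece matters. The key structural observation is that for $(x,w) \in A_s$ and $(y,u) \in A_s^\complement$ we have $\varpi(y) u < s \leq \varpi(x) w$, and $\nu$-reversibility of $q$ yields $r(x,y) = \varpi(y)/\varpi(x)$, so the acceptance probability collapses:
\[
1 \wedge \bigl\{ r(x,y) \cdot u/w \bigr\} = \frac{\varpi(y) u}{\varpi(x) w}.
\]
Substituting $\mu(\mathrm{d}x, \mathrm{d}w) = \varpi(x)\, \nu(\mathrm{d}x) \cdot w\, Q_x(\mathrm{d}w)$ cancels the factor $\varpi(x) w$, and recognising $u\, Q_y(\mathrm{d}u) = \tilde\pi_y(\mathrm{d}u)$ produces the identity
\[
(\mu \otimes \tilde P)(A_s \times A_s^\complement) = \int \nu(\mathrm{d}x)\, Q_x\bigl(\varpi(x) W \geq s\bigr) \int q(x,\mathrm{d}y)\, \varpi(y)\, \tilde\pi_y\bigl(\varpi(y) W < s\bigr).
\]

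For part (a), apply the Markov-type bound
\[
Q_x\bigl(\varpi(x) W \geq s\bigr) \leq \int \mathbf{1}[\varpi(x) w \geq s] \cdot \frac{\varpi(x) w}{s}\, Q_x(\mathrm{d}w) = \frac{\varpi(x)}{s}\, \tilde\pi_x\bigl(\varpi(x) W \geq s\bigr),
\]
valid because on the event of interest the multiplier $\varpi(x) w / s$ is at least one. Combining this with $\varpi(x)\, \nu(\mathrm{d}x) = \pi(\mathrm{d}x)$ and dividing by $\mu(A_s) = \int \pi(\mathrm{d}x)\, \tilde\pi_x(\varpi(x) W \geq s)$ yields the claimed inequality. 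Part (b) is then a corollary: bound $\varpi(y) \leq \bar{\mathfrak{w}}$ and $\tilde\pi_y(\cdot) \leq 1$ inside the $y$-integral, and use $\int q(x, \mathrm{d}y) = 1$ to collapse the numerator to $(\bar{\mathfrak{w}}/s) \cdot \mu(A_s)$, which cancels against the denominator.

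The main obstacle is the careful algebraic identification of $(\mu \otimes \tilde P)(A_s \times A_s^\complement)$: tracking the acceptance minimum (its reduction to the non-trivial branch is precisely where $\nu$-reversibility of $q$ is essential), the weighted densities that define $\mu$ and $\tilde\pi_y$, and the controlled passage from $Q_x$ to $\tilde\pi_x$ via the Markov-type inequality without incurring additional slack. Once that identity is in hand, the two stated bounds follow by straightforward substitution.
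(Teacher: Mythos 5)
Your proposal is correct and follows essentially the same route as the paper: upper-bound the weak conductance by the Dirichlet-form ratio of the super-level set $A_{s}$, evaluate $\mu\otimes\tilde{P}\left(A_{s}\times A_{s}^{\complement}\right)$ using $\nu$-reversibility of $q$ and the form of the pseudo-marginal kernel, apply the Markov-type bound converting $Q_{x}$ into $\tilde{\pi}_{x}$, and take $s=\psi^{-}\left(v\right)$. The only (cosmetic) difference is that you resolve the acceptance minimum directly on $A_{s}\times A_{s}^{\complement}$ (where $\varpi\left(y\right)u<s\leqslant\varpi\left(x\right)w$ forces the non-trivial branch), whereas the paper reaches the same identity via the layer-cake representation of $\min\left\{ \varpi\left(x\right)w,\varpi\left(y\right)u\right\} $; your handling of the constraint $\mu\left(A_{s}\right)\leqslant\frac{1}{2}$ is no less careful than the paper's own gloss.
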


\begin{proof}
The first statement follows from the fact that by definition for $0<v\leqslant1/2$
\[
\Phi_{\tilde{P}}^{\mathrm{W}}\left(v\right)\leqslant\inf_{\left\{ s>0\colon1/2\geqslant\mu\left(A_{s}\right)\geqslant v\right\} }\frac{\mathcal{E}\big(\tilde{P},\mathbf{1}_{A_{s}}\big)}{\mu\big(\mathbf{1}_{A_{s}}\big)},
\]
Lemma~\ref{lem:upper-bound-conductance-PM}. The second statement
is a direct consequence of the first. 

\end{proof}
One could treat the case in which $\sup_{x\in\mathsf{X}}\varpi\left(x\right)=\infty$,
but as most practical applications satisfy $\bar{\mathfrak{w}}<\infty$,
we provide only the simpler result here.
\begin{example}
\label{exa:PM-Pareto} Assume that there exists a probability distribution
$\tilde{\pi}_{*}$ and $C,\underline{\mathfrak{w}}>0$ such that for
any $\left(x,A\right)\in\left\{ y\in\mathsf{X}\colon\varpi\left(y\right)\geqslant\underline{\mathfrak{w}}\right\} \times\mathcal{B}\left(\mathbb{R}_{+}\right)$,
it holds that $\tilde{\pi}_{x}\left(A\right)\geqslant C\cdot\tilde{\pi}_{*}\left(A\right)$.
Then for any $s>0$, we can bound
\begin{align*}
\mu\left(A_{s}\right) & \geqslant C\cdot\int\mathbf{1}\left\{ \varpi\left(x\right)\geqslant\underline{\mathfrak{w}},w\geqslant s/\varpi\left(x\right)\right\} \cdot\pi\left({\rm d}x\right)\cdot\tilde{\pi}_{*}\left({\rm d}w\right)\\
 & \geqslant C\cdot\pi\left(\varpi\geqslant\underline{\mathfrak{w}}\right)\cdot\tilde{\pi}_{*}\left(W\geqslant s/\underline{\mathfrak{w}}\right)
\end{align*}
and therefore
\begin{align*}
\psi^{-}\left(v\right) & \geqslant\sup\left\{ s>0:C\cdot\pi\left(\varpi\geqslant\underline{\mathfrak{w}}\right)\cdot\tilde{\pi}_{*}\left(W\geqslant s/\underline{\mathfrak{w}}\right)\geqslant v\right\} \\
 & =\underline{\mathfrak{w}}\cdot\sup\left\{ s>0\colon\tilde{\pi}_{*}\left(W\geqslant s\right)\geqslant C^{-1}\cdot v/\pi\left(\varpi\geqslant\underline{\mathfrak{w}}\right)\right\} ,
\end{align*}
that is, the inverse $\psi_{*}^{-}$ of $\tilde{\pi}_{*}$ provides
the functional form of a lower bound. Consider for example the scenario
where $Q_{*}$ is a ${\rm Pareto}\left(\alpha,x_{m}\right)$ with
$x_{m}=1-\alpha^{-1}$ and $\alpha>1$ to ensure that $\int w\cdot Q_{*}\left({\rm d}w\right)=1$.
In this specific case, the inverse $\psi_{*}^{-1}$ is well-defined
and it is shown in Appendix~\ref{app:exa-pm-pareto} that for $v>0$,
\[
\psi_{*}^{-1}\left(v\right)=x_{m}\cdot v^{-1/\left(\alpha-1\right)}\,.
\]
As a result, as the tails of the Pareto distribution become heavier
(i.e. $\alpha\downarrow1$), the weak conductance profile $\Phi_{\tilde{P}}^{\mathrm{W}}\left(v\right)$
vanishes more rapidly at $0$, resulting in a slower convergence rate
from Theorems~\ref{thm:WPI_F_bd} and \ref{thm:PM-Cheeger-Lower}
for a positive chain.
\end{example}

\begin{rem}
The results above require $q$ to be $\nu$-reversible, which covers
the RWM algorithm, but we suspect the results to hold more generally.
A simple example is the Independent MH algorithm. Assume that for
some $\epsilon>0$, any $x\in\mathsf{X}$, $\epsilon\leqslant\varpi\left(x\right)=\frac{{\rm d}\pi}{{\rm d}q}\left(x\right)\geqslant\epsilon^{-1}$,
let $A_{s}:=\left\{ \left(x,w\right):w\geqslant s\right\} =\mathsf{X}\times\left\{ w\colon w\geqslant s\right\} $
and define the corresponding $\psi^{-}$ using (\ref{eq:def-psi-gen-inv}).
From Jensen's inequality and these assumptions, we have for $s\geqslant\epsilon^{-2}$
that
\end{rem}

\begin{align*}
\mathcal{E}\big(\tilde{P},\mathbf{1}_{A_{s}}\big)\\
=\int\pi & \left({\rm d}x\right)q\left(x,{\rm d}y\right)\tilde{\pi}_{x}\left({\rm d}w\right)Q_{y}\left({\rm d}u\right)\mathbf{1}\left\{ w\geqslant s,u<s\right\} \min\left\{ 1,r\left(x,y\right)\frac{u}{w}\right\} \\
\leqslant\int\pi & \left({\rm d}x\right)\tilde{\pi}_{x}\left({\rm d}w\right)\mathbf{1}\left\{ w\geqslant s\right\} q\left(x,{\rm d}y\right)\min\left\{ 1,r\left(x,y\right)\cdot\frac{1}{w}\right\} \\
\leqslant\int\pi & \left({\rm d}x\right)\tilde{\pi}_{x}\left({\rm d}w\right)\mathbf{1}\left\{ w\geqslant s\right\} q\left(x,{\rm d}y\right)\epsilon^{-2}\cdot\frac{1}{w}\\
 & \leqslant\mu\left(A_{s}\right)/s\,,
\end{align*}
leading to identical conclusions as in Theorem~\ref{thm:PM-Cheeger-Lower}
and Example~\ref{exa:PM-Pareto}.
\begin{lem}
\label{lem:upper-bound-conductance-PM}Let $\tilde{P}$ be as in (\ref{eq:PM-kernel})
and define, for any $s>0$, 
\[
A_{s}:=\left\{ \left(x,w\right)\in\mathsf{X}\times\mathbb{R}_{+}\colon\varpi\left(x\right)\cdot w\geqslant s\right\} \,.
\]
Then for any $s>0$,
\[
\frac{\mathcal{E}\big(\tilde{P},\mathbf{1}_{A_{s}}\big)}{\mu\big(A_{s}\big)}\leqslant\frac{\int\pi\left(\mathrm{d}x\right)\tilde{\pi}_{x}\left(\varpi\left(x\right)W\geqslant s\right)\int q\left(x,\mathrm{d}y\right)\frac{\varpi(y)}{s}\tilde{\pi}_{y}\left(\varpi\left(y\right)W<s\right)}{\int\pi\left(\mathrm{d}x\right)\tilde{\pi}_{x}\left(\varpi\left(x\right)W\geqslant s\right)}.
\]
\end{lem}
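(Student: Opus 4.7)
The plan is to carry out a direct computation on $\mathcal{E}(\tilde{P},\mathbf{1}_{A_s})$, exploiting the specific structure of the pseudo-marginal kernel and the identity $\tilde{\pi}_x(\mathrm{d}w)=w\cdot Q_x(\mathrm{d}w)$ to swap between the two forms as needed. First, I would use the identification $\mathcal{E}(\tilde{P},\mathbf{1}_{A_s})=\mu\otimes\tilde{P}(A_s\times A_s^{\complement})$ (Lemma~\ref{lem:dirichlet-form-indicator}) and observe that the rejection part $\delta_{x,w}(\mathrm{d}y,\mathrm{d}u)\tilde{\rho}(x,w)$ of $\tilde{P}$ contributes nothing to the cross term since $A_s\cap A_s^{\complement}=\emptyset$. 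Consequently, only the acceptance part matters:
\[
\mathcal{E}(\tilde{P},\mathbf{1}_{A_s})=\int\pi(\mathrm{d}x)\,\tilde{\pi}_x(\mathrm{d}w)\,q(x,\mathrm{d}y)\,Q_y(\mathrm{d}u)\,\min\{1,r(x,y)u/w\}\,\mathbf{1}_{A_s}(x,w)\,\mathbf{1}_{A_s^{\complement}}(y,u)\,.
\]

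Next, I would unfold $\tilde{\pi}_x(\mathrm{d}w)=w\cdot Q_x(\mathrm{d}w)$ and use the key bound
\[
w\cdot\min\{1,r(x,y)u/w\}=\min\{w,r(x,y)u\}\leqslant r(x,y)\cdot u\,,
\]
together with $u\cdot Q_y(\mathrm{d}u)=\tilde{\pi}_y(\mathrm{d}u)$, to obtain
\[
\mathcal{E}(\tilde{P},\mathbf{1}_{A_s})\leqslant\int\pi(\mathrm{d}x)\,Q_x(\mathrm{d}w)\,\mathbf{1}_{A_s}(x,w)\,q(x,\mathrm{d}y)\,r(x,y)\,\tilde{\pi}_y(\mathrm{d}u)\,\mathbf{1}_{A_s^{\complement}}(y,u)\,.
\]
The $(y,u)$ integral collapses to $\tilde{\pi}_y(\varpi(y)W<s)$ exactly as required. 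For the $(x,w)$ piece, I would exploit that on the support of $\mathbf{1}\{\varpi(x)w\geqslant s\}$ we have $1/w\leqslant\varpi(x)/s$, hence
\[
Q_x(\varpi(x)W\geqslant s)=\int\frac{\tilde{\pi}_x(\mathrm{d}w)}{w}\,\mathbf{1}\{\varpi(x)w\geqslant s\}\leqslant\frac{\varpi(x)}{s}\cdot\tilde{\pi}_x(\varpi(x)W\geqslant s)\,.
\]

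Finally, I would invoke the $\nu$-reversibility of $q$ (as in the ambient Theorem~\ref{thm:PM-Cheeger-Lower}), which gives $q(y,x)=q(x,y)$ as densities and therefore $r(x,y)=\varpi(y)/\varpi(x)$, so that $\varpi(x)\cdot r(x,y)=\varpi(y)$. Substituting back yields
\[
\mathcal{E}(\tilde{P},\mathbf{1}_{A_s})\leqslant\int\pi(\mathrm{d}x)\,\tilde{\pi}_x(\varpi(x)W\geqslant s)\,q(x,\mathrm{d}y)\,\frac{\varpi(y)}{s}\,\tilde{\pi}_y(\varpi(y)W<s)\,,
\]
and dividing by $\mu(A_s)=\int\pi(\mathrm{d}x)\,\tilde{\pi}_x(\varpi(x)W\geqslant s)$ closes the argument. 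There is no real obstacle here: the proof is a careful bookkeeping exercise and the only subtle point is consistently trading factors of $w$ (resp. $u$) between the $Q$ and $\tilde{\pi}$ parameterisations at the appropriate moments, so that the $\varpi(y)/s$ prefactor emerges cleanly from the conjunction of the bound $1/w\leqslant\varpi(x)/s$ on $A_s$ and the symmetry identity for $r$.
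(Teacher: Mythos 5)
Your proof is correct and follows essentially the same route as the paper: both reduce $\mathcal{E}\big(\tilde{P},\mathbf{1}_{A_{s}}\big)$ to the cross term $\mu\otimes\tilde{P}\big(A_{s}\times A_{s}^{\complement}\big)$, use $\nu$-reversibility of $q$ to identify $r\left(x,y\right)=\varpi\left(y\right)/\varpi\left(x\right)$, and conclude with the same key estimate $\varpi\left(x\right)^{-1}Q_{x}\left(\varpi\left(x\right)W\geqslant s\right)\leqslant s^{-1}\tilde{\pi}_{x}\left(\varpi\left(x\right)W\geqslant s\right)$. The only difference is a harmless shortcut: where the paper evaluates the $\left(w,u\right)$-integral via a layer-cake representation of $\min\left\{ \varpi\left(x\right)w,\varpi\left(y\right)u\right\} $ and Fubini, you observe directly that on $A_{s}\times A_{s}^{\complement}$ this minimum is bounded by (in fact equal to) $\varpi\left(y\right)u$, so that $uQ_{y}\left(\mathrm{d}u\right)=\tilde{\pi}_{y}\left(\mathrm{d}u\right)$ collapses the $u$-integral immediately.
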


\begin{proof}
Let $s\geqslant0$. We have that
\begin{align*}
\mathcal{E}\big(\tilde{P},\mathbf{1}_{A_{s}}\big) & =\frac{1}{2}\int\nu\left(\mathrm{d}x\right)Q_{x}\left(\mathrm{d}w\right)q\left(x,\mathrm{d}y\right)Q_{y}\left(\mathrm{d}u\right)\mathbf{1}\left\{ s\leqslant\varpi\left(x\right)\cdot w\right\} \\
 & \hspace{4cm}\times\mathbf{1}\left\{ s>\varpi\left(y\right)u\right\} \min\left\{ \varpi(x)w,\varpi(y)u\right\} \\
 & =\frac{1}{2}\int\nu\left(\mathrm{d}x\right)Q_{x}\left(\mathrm{d}w\right)q\left(x,\mathrm{d}y\right)Q_{y}\left(\mathrm{d}u\right)\mathbf{1}\left\{ s\leqslant\varpi\left(x\right)w\right\} \\
 & \hspace{1cm}\times\int_{0}^{\infty}{\rm d}h\mathbf{1}\left\{ h\leq\varpi\left(x\right)w\right\} \mathbf{1}\left\{ h\leq\varpi\left(y\right)u\right\} 
\end{align*}
For $x,y\in\mathsf{X}$,
\[
\int Q_{x}\left(\mathrm{d}w\right)\mathbf{1}\left\{ h\leqslant\varpi\left(x\right)w\right\} \mathbf{1}\left\{ s\leqslant\varpi\left(x\right)w\right\} =\int_{0}^{\infty}Q_{x}\left(\varpi\left(x\right)W\geqslant h\vee s\right),
\]
and
\[
\int Q_{y}\left(\mathrm{d}u\right)\mathbf{1}\left\{ h\leqslant\varpi\left(y\right)u\right\} \mathbf{1}\left\{ s>\varpi\left(y\right)u\right\} =Q_{y}\left(h\leqslant\varpi\left(y\right)W<s\right).
\]
Further,
\begin{align*}
\int_{0}^{\infty}Q_{x} & \left(\varpi\left(x\right)W\geqslant h\vee s\right)Q_{y}\left(h\leqslant\varpi\left(y\right)W<s\right)\mathrm{d}h\\
 & =\mathbf{1}\left\{ h\leqslant s\right\} \int_{0}^{s}\,Q_{x}\left(\varpi\left(x\right)W\geqslant s\right)Q_{y}\left(h\leqslant\varpi\left(y\right)W<s\right)\mathrm{d}h\\
 & =\mathbf{1}\left\{ h\leqslant s\right\} Q_{x}\left(\varpi\left(x\right)\cdot W\geqslant s\right)\int_{0}^{s}Q_{y}\left(h\leqslant\varpi\left(y\right)W<s\right)\mathrm{d}h
\end{align*}
Changing the order of integration, for $h\leqslant s$, we obtain
that 
\begin{align*}
\int_{0}^{s}Q_{y}\left(h\leqslant\varpi\left(y\right)W<s\right)\mathrm{d}h & =\int_{0}^{s}\int\mathbf{1}\left\{ h\leqslant\varpi\left(y\right)w<s\right\} \mathrm{d}hQ_{y}\left({\rm d}w\right)\\
 & =\int\varpi\left(y\right)w\mathbf{1}\left\{ \varpi\left(y\right)w<s\right\} Q_{y}\left({\rm d}w\right)\\
 & =\varpi\left(y\right)\tilde{\pi}_{y}\left(\varpi\left(y\right)W<s\right).
\end{align*}
Consequently, for $s>0$,
\begin{align*}
\mathcal{E}\big(\tilde{P},\mathbf{1}_{A_{s}}\big)\\
=\int\nu & \left(\mathrm{d}x\right)q\left(x,\mathrm{d}y\right)\cdot Q_{x}\left(\varpi\left(x\right)\cdot W\geqslant s\right)\cdot\varpi\left(y\right)\cdot\tilde{\pi}_{y}\left(\varpi\left(y\right)\cdot W<s\right)\\
=\int\pi & \left(\mathrm{d}x\right)q\left(x,\mathrm{d}y\right)\cdot\frac{1}{\varpi\left(x\right)}\cdot Q_{x}\left(\varpi\left(x\right)\cdot W\geqslant s\right)\cdot\varpi\left(y\right)\cdot\tilde{\pi}_{y}\left(\varpi\left(y\right)\cdot W<s\right)\\
\leqslant\int\pi & \left(\mathrm{d}x\right)q\left(x,\mathrm{d}y\right)\cdot\frac{1}{s}\cdot\tilde{\pi}_{y}\left(\varpi\left(x\right)\cdot W\geqslant s\right)\cdot\varpi\left(y\right)\cdot\tilde{\pi}_{y}\left(\varpi\left(y\right)\cdot W<s\right),
\end{align*}
because
\begin{align*}
\frac{1}{\varpi\left(x\right)}Q_{x}\left(\varpi\left(x\right)\cdot W\geqslant s\right) & =\int\frac{1}{\varpi\left(x\right)}\mathbf{1}\left\{ \varpi\left(x\right)\cdot w\geqslant s\right\} \cdot Q_{x}\left({\rm d}w\right)\\
 & =\int\frac{1}{\varpi\left(x\right)\cdot w}\mathbf{1}\left\{ \varpi\left(x\right)\cdot w\geqslant s\right\} \cdot w\cdot Q_{x}\left({\rm d}w\right)\,.\\
 & \leqslant s^{-1}\cdot\int\mathbf{1}\left\{ \varpi\left(x\right)\cdot w\geqslant s\right\} \cdot w\cdot Q_{x}\left({\rm d}w\right)\,.
\end{align*}
The result follows.
\end{proof}

\section{Applications to RWM on heavy-tailed targets\label{sec:Applications-to-RWM}}

We assume now that $\pi$ has density $\varpi=\frac{{\rm d}\pi}{{\rm d}\lambda}$
with respect to some $\sigma$-finite measure $\lambda$, and that
$Q$ is a $\lambda$-reversible Markov kernel. A \textit{Metropolis
Markov kernel $P$} may be defined \cite{metropolis1953equation}
as
\begin{equation}
P\left(x,A\right)=\int_{A}Q\left(x,{\rm d}y\right)\cdot\alpha\left(x,y\right)+{\bf 1}_{A}\left(x\right)\cdot\bar{\alpha}\left(x\right),\qquad x\in\mathsf{E},A\in\mathscr{E}.\label{eq:metropolis-kernel}
\end{equation}
where for $x,y\in\E$,
\begin{equation}
\alpha\left(x,y\right):=1\wedge\frac{\varpi\left(y\right)}{\varpi\left(x\right)},\quad\bar{\alpha}\left(x\right):=1-\alpha\left(x\right),\quad\alpha\left(x\right):=\int_{\mathsf{E}}Q\left(x,{\rm d}y\right)\cdot\alpha\left(x,y\right).\label{eq:alpha_defs}
\end{equation}

\begin{example}
\label{exa:NRWM}In many applications, $Q\left(x,\cdot\right)$ is
a multivariate normal distribution with mean $x$ and covariance matrix
$\sigma^{2}\cdot I_{d}$: for some $\sigma>0$,
\[
Q\left(x,A\right)=\int{\bf 1}_{A}\left(x+\sigma\cdot z\right)\,{\cal N}\left({\rm d}z;0,I_{d}\right),\qquad x\in\mathsf{E},A\in\mathscr{E}.
\]
Since $Q$ is reversible w.r.t. the Lebesgue measure on $\mathbb{R}^{d}$,
one can take $\lambda\left({\rm d}x\right)={\rm d}x$ and consider
target distributions $\pi$ with densities w.r.t. the Lebesgue measure.
This defines the \textit{Random-Walk Metropolis} (RWM) Markov kernel.
We remark that various other proposals can also readily be analysed;
see \cite[Appendix~C]{andrieu2022explicit} for relevant details.
\end{example}

The following result can be used to establish the close coupling condition
of Definition~\ref{def:close-coupling} for Metropolis kernels, which
we will use in the sequel to analyze the RWM chain.
\begin{lem}
\label{lem:met-tv-bound}(Lemma 19 of \cite{andrieu2022explicit})
Let $Q$ be a $\lambda$-reversible Markov kernel, where $\lambda\gg\pi$
is a $\sigma$-finite measure, $P$ be the $\pi$-reversible Metropolis
kernel with proposal $Q$ defined by (\ref{eq:metropolis-kernel})
and $\alpha_{0}:=\inf_{x\in\mathsf{E}}\alpha\left(x\right)$. Then
\[
\left\Vert P\left(x,\cdot\right)-P\left(y,\cdot\right)\right\Vert _{{\rm TV}}\leqslant\left\Vert Q\left(x,\cdot\right)-Q\left(y,\cdot\right)\right\Vert _{{\rm TV}}+1-\alpha_{0},\qquad x,y\in\mathsf{E}.
\]
\end{lem}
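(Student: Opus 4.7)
The plan is to prove the bound by an explicit coupling of $P(x,\cdot)$ and $P(y,\cdot)$, and then invoke the coupling inequality $\|P(x,\cdot) - P(y,\cdot)\|_{{\rm TV}} \leq \mathbb{P}(X' \neq Y')$ for any valid coupling $(X',Y')$. The natural coupling I would use is the following: first draw $(Z_x, Z_y)$ from the optimal coupling of $Q(x,\cdot)$ and $Q(y,\cdot)$, so that $\mathbb{P}(Z_x \neq Z_y) = \|Q(x,\cdot) - Q(y,\cdot)\|_{{\rm TV}}$; independently draw $U \sim {\rm Uniform}[0,1]$; then set $X' := Z_x$ if $U \leq \alpha(x, Z_x)$ and $X' := x$ otherwise, and analogously for $Y'$ using $(y, Z_y)$. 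A short direct computation against (\ref{eq:metropolis-kernel}) confirms $X' \sim P(x,\cdot)$ and $Y' \sim P(y,\cdot)$.

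The key step, and the main content of the argument, is to exploit pointwise monotonicity of $\alpha$. Because the claim is symmetric in $(x,y)$, we may assume without loss of generality that $\varpi(x) \geq \varpi(y)$; from $\alpha(\cdot, z) = 1 \wedge \varpi(z)/\varpi(\cdot)$, this yields $\alpha(x, z) \leq \alpha(y, z)$ for all $z \in \mathsf{E}$. The use of a \emph{common} $U$ then pays off: on the event $\{Z_x = Z_y = Z,\; U \leq \alpha(x, Z)\}$ both chains accept simultaneously and land at the same proposal $Z$, giving $X' = Y'$. Consequently
\[
\mathbb{P}(X' = Y') \;\geq\; \mathbb{P}\bigl(Z_x = Z_y,\; U \leq \alpha(x, Z_x)\bigr) \;=\; \mathbb{E}\bigl[\mathbf{1}\{Z_x = Z_y\}\cdot\alpha(x, Z_x)\bigr].
\]

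To conclude, I would lower-bound the right-hand side by
\[
\mathbb{E}\bigl[\mathbf{1}\{Z_x = Z_y\}\, \alpha(x, Z_x)\bigr] \;=\; \alpha(x) - \mathbb{E}\bigl[\mathbf{1}\{Z_x \neq Z_y\}\, \alpha(x, Z_x)\bigr] \;\geq\; \alpha(x) - \mathbb{P}(Z_x \neq Z_y) \;\geq\; \alpha_0 - \|Q(x,\cdot) - Q(y,\cdot)\|_{{\rm TV}},
\]
using $\mathbb{E}[\alpha(x, Z_x)] = \alpha(x)$, the bound $\alpha(x,\cdot) \leq 1$, and the definition of $\alpha_0$. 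Taking complements gives $\mathbb{P}(X' \neq Y') \leq 1 - \alpha_0 + \|Q(x,\cdot) - Q(y,\cdot)\|_{{\rm TV}}$, as required. The only nontrivial subtlety is the symmetry-breaking reduction to $\varpi(x) \geq \varpi(y)$: without this pointwise monotonicity, the common-$U$ coupling only delivers $\mathbb{E}[\mathbf{1}\{Z_x = Z_y\}\min(\alpha(x,Z_x), \alpha(y,Z_x))]$, and naive estimation of the minimum gives the suboptimal constant $2(1-\alpha_0)$ in place of $1-\alpha_0$.
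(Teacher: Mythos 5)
Your argument is correct. Note that the paper itself gives no proof of this statement -- it is imported verbatim as Lemma~19 of \cite{andrieu2022explicit} -- so there is no internal proof to compare against; your coupling construction is a valid, self-contained derivation, and it is essentially the standard overlap argument behind the cited lemma (lower-bounding the common part of $P(x,\cdot)$ and $P(y,\cdot)$ by the common part of the proposals multiplied by the smaller acceptance probability). The two points that need care are both handled properly: the reduction to $\varpi(x)\geqslant\varpi(y)$ is legitimate because both sides of the inequality are symmetric in $(x,y)$, and it is exactly what makes $\min\left\{ \alpha(x,z),\alpha(y,z)\right\} =\alpha(x,z)$ so that the expectation under $Q(x,\cdot)$ evaluates to $\alpha(x)\geqslant\alpha_{0}$ rather than to a mismatched quantity; and the marginal check that $X'\sim P(x,\cdot)$, $Y'\sim P(y,\cdot)$ is consistent with (\ref{eq:metropolis-kernel}). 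Since the paper's total variation convention is $\left\Vert \nu\right\Vert _{\mathrm{TV}}=\frac{1}{2}\sup\left\{ \nu(f):\left\Vert f\right\Vert _{\infty}\leqslant1\right\} $, the coupling inequality you invoke holds with constant one, so the constants come out exactly as stated.
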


We now assume Assumption~\ref{assu:pi-density-lebesgue} to hold,
that is $\varpi\propto\exp\left(-U\right)$ with $\lambda$ the Lebesgue
measure and $Q$ is as in Example~\ref{exa:NRWM}. It is standard
to deduce by \cite[Lemma~3.1]{baxendale2005renewal} that $P$ is
a positive Markov kernel for this particular $Q$. We note the following
useful expression
\begin{equation}
\alpha\left(x\right)=\int{\cal N}\left({\rm d}z;0,I_{d}\right)\cdot\min\left\{ 1,\exp\left(-\left(U\left(x+\sigma\cdot z\right)-U\left(x\right)\right)\right)\right\} \,.\label{eq:RWM-alpha-x}
\end{equation}

\subsection{Mixing times of RWM for $L$-smooth targets}

For the purposes of obtaining explicit bounds and matching negative
results with dimension, we impose the following further assumption
about $\pi$, noting that Assumption~\ref{assu:pi-density-lebesgue}
is already in force. As established in Lemma~39 of \cite{andrieu2022explicit},
this condition can be weakened to accommodate more general quantitative
continuity assumptions; we do not linger on this point here.
\begin{assumption}
\label{assu:target-distribution}For some $L\in\left(0,\infty\right)$,
$U$ is $L$-smooth:
\[
U\left(x+h\right)-U\left(x\right)-\left\langle \nabla U\left(x\right),h\right\rangle \leqslant\frac{L}{2}\cdot\left|h\right|^{2},\qquad x,h\in\E.
\]
\end{assumption}

\begin{lem}
\label{lem:close-coupling-concise}(\cite[Lemma~37, Corollary~40]{andrieu2022explicit})
Suppose that the potential $U:\mathbb{R}^{d}\to\mathbb{R}$ is $L$-smooth,
and let $P$ be the RWM kernel with target distribution $\pi=\exp\left(-U\right)$
and step-size $\sigma=\varsigma\cdot\left(L\cdot d\right)^{-1/2}$.
It then holds that $P$ is $\left(\left|\cdot\right|,\alpha_{0}\cdot\sigma,\frac{1}{2}\cdot\alpha_{0}\right)$-close
coupling with $\alpha_{0}\geqslant\frac{1}{2}\cdot\exp\left(-\frac{1}{2}\cdot\varsigma^{2}\right)$.
\end{lem}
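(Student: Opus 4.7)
The plan is to verify the two ingredients of the close-coupling condition separately and then combine them. Applying Lemma~\ref{lem:met-tv-bound} reduces the task to bounding $\|Q(x,\cdot) - Q(y,\cdot)\|_{\mathrm{TV}}$ whenever $|x-y| \leqslant \alpha_{0} \cdot \sigma$, together with the separate lower bound on $\alpha_{0}$. The first part is routine: since $Q(x,\cdot) = \mathcal{N}(x, \sigma^{2} I_{d})$, Pinsker's inequality together with the exact Gaussian KL divergence gives $\|Q(x,\cdot) - Q(y,\cdot)\|_{\mathrm{TV}} \leqslant |x-y|/(2\sigma) \leqslant \alpha_{0}/2$, and Lemma~\ref{lem:met-tv-bound} then yields $\|P(x,\cdot) - P(y,\cdot)\|_{\mathrm{TV}} \leqslant \alpha_{0}/2 + (1 - \alpha_{0}) = 1 - \alpha_{0}/2$, exactly the required inequality.

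The substantive part is to show $\alpha_{0} \geqslant \tfrac{1}{2}\exp(-\varsigma^{2}/2)$. Starting from~\eqref{eq:RWM-alpha-x}, I would apply $L$-smoothness in the form $U(x+\sigma z) - U(x) \leqslant \sigma\langle \nabla U(x), z\rangle + \tfrac{L\sigma^{2}}{2}|z|^{2}$, and use monotonicity of $t \mapsto \min(1, e^{-t})$ to pass to an integrand that is an explicit Gaussian functional of $z$. The main obstacle is that this integrand still contains $\nabla U(x)$, whose magnitude is completely unconstrained; the bound must be rendered uniform in this quantity.

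The key device is to decompose $Z = Z_{\parallel} + Z_{\perp}$ orthogonally with respect to $\nabla U(x)$ (with $\nabla U(x) = 0$ being trivial) and to apply the submultiplicativity $\min(1, ab) \geqslant \min(1,a)\cdot\min(1,b)$ for $a,b > 0$ to factor the expectation across the independent components $Z_{1}$ and $Z_{\perp}$. The perpendicular factor then evaluates to the chi-squared moment generating function $(1 + L\sigma^{2})^{-(d-1)/2}$. For the scalar factor, I would pair $Z_{1}$ with $-Z_{1}$ using symmetry of the Gaussian and invoke the elementary pointwise inequality $\min(1, e^{-a-b}) + \min(1, e^{a-b}) \geqslant e^{-b}$ for $b \geqslant 0$, which cancels the unknown $|\nabla U(x)|$ and produces $\tfrac{1}{2}(1 + L\sigma^{2})^{-1/2}$.

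Multiplying the two factors yields $\alpha(x) \geqslant \tfrac{1}{2}(1 + L\sigma^{2})^{-d/2}$, uniformly in $x$, and substituting $\sigma^{2} = \varsigma^{2}/(Ld)$ together with the standard $(1 + t/d)^{d} \leqslant e^{t}$ delivers the stated $\alpha_{0} \geqslant \tfrac{1}{2}\exp(-\varsigma^{2}/2)$. The parallel-direction symmetrisation is the only conceptually non-trivial step and is also what forces the factor $\tfrac{1}{2}$ into the final constant; the remaining manipulations are standard Gaussian calculus.
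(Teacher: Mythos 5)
Your proof is correct, and it essentially reconstructs the argument of the cited source (\cite[Lemma~37, Corollary~40]{andrieu2022explicit}); the present paper does not reprove the lemma but imports it. The only cosmetic difference is that your symmetrisation inequality $\min(1,e^{-a-b})+\min(1,e^{a-b})\geqslant e^{-b}$ plays the role of restricting to the favourable half-space $\left\{ \left\langle \nabla U(x),z\right\rangle \leqslant0\right\} $, and you evaluate the Gaussian quadratic term via the exact $\chi^{2}$ moment generating function $(1+L\sigma^{2})^{-d/2}$ followed by $(1+\varsigma^{2}/d)^{d}\leqslant e^{\varsigma^{2}}$ rather than a Jensen bound; both routes produce the same constant $\tfrac{1}{2}\exp(-\varsigma^{2}/2)$, and the TV/close-coupling assembly via Lemma~\ref{lem:met-tv-bound} is as intended.
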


\begin{thm}
\label{thm:rwm-payoff}Suppose that $\pi$ is a probability measure
on $\mathbb{R}^{d}$ with regular, convex isoperimetric minorant $\tilde{I}_{\pi}$,
and that the potential $U=-\log\pi$ is $L$-smooth. Let $P$ be the
RWM kernel with target distribution $\pi$ and step-size $\sigma=\varsigma\cdot\left(L\cdot d\right)^{-1/2}$.
Assume also that $2\cdot\varsigma\cdot\left(L^{-1/2}\cdot\tilde{I}_{\pi}\left(\frac{1}{4}\right)\right)\leqslant d^{1/2}$.
It then holds that
\end{thm}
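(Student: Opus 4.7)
The plan is to combine the close coupling estimate for RWM in Lemma~\ref{lem:close-coupling-concise} with the isoperimetry-to-mixing-time pipeline of Theorem~\ref{thm:mixing-lower-bound-general}. First, I will invoke Lemma~\ref{lem:close-coupling-concise} to conclude that the RWM kernel $P$ with step-size $\sigma = \varsigma \cdot (L \cdot d)^{-1/2}$ is $\left(|\cdot|, \delta, \varepsilon\right)$-close coupling, with $\delta = \alpha_0 \cdot \sigma$ and $\varepsilon = \tfrac{1}{2} \cdot \alpha_0$, where $\alpha_0 \geqslant \tfrac{1}{2}\cdot\exp(-\tfrac{1}{2}\varsigma^2)$. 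Since the Metropolis kernel for a Gaussian proposal is positive (via \cite{baxendale2005renewal}) and $\pi$-reversible, the hypotheses of Theorem~\ref{thm:mixing-lower-bound-general} on $P$ are met.

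Next, I will check the remaining structural hypothesis of Theorem~\ref{thm:mixing-lower-bound-general}, namely that $2\cdot\delta\cdot\tilde{I}_{\pi}\left(\tfrac{1}{4}\right) \leqslant 1$. Since $\alpha_0 \leqslant 1$, we have
\[
2 \cdot \delta \cdot \tilde{I}_{\pi}\big(\tfrac{1}{4}\big) \;=\; 2\cdot \alpha_0 \cdot \varsigma \cdot (L\cdot d)^{-1/2} \cdot \tilde{I}_{\pi}\big(\tfrac{1}{4}\big) \;\leqslant\; 2 \cdot \varsigma \cdot \bigl(L^{-1/2}\cdot\tilde{I}_{\pi}\big(\tfrac{1}{4}\big)\bigr)\cdot d^{-1/2},
\]
which is $\leqslant 1$ by the assumed step-size condition. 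Thus Theorem~\ref{thm:mixing-lower-bound-general} applies directly, and substituting $\varepsilon^{-2}\cdot\delta^{-2} = 4\cdot \alpha_0^{-4} \cdot \varsigma^{-2} \cdot L \cdot d$ into its conclusion yields a complexity estimate of the form
\[
n \;\geqslant\; 1 + 2^{10}\cdot \alpha_0^{-4}\cdot \varsigma^{-2} \cdot L \cdot d \cdot \int_{2^{-3}\cdot\varepsilon_{\mathrm{Mix}}}^{2^{-5}} \frac{v\,\mathrm{d}v}{\tilde{I}_{\pi}(v)^2},
\]
sufficient to ensure $\sup\{\|P^n f - \pi(f)\|_2^2 : \|f\|_{\mathrm{osc}}^2 \leqslant 1\} \leqslant \varepsilon_{\mathrm{Mix}}$, with the analogous $\chi^2$ bound for warm starts obtained by the second part of Theorem~\ref{thm:mixing-lower-bound-general}.

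There is no real obstacle here: the work has already been done in the two ingredients, and this theorem is essentially a packaging statement that makes the dependence on dimension $d$, smoothness $L$, and tuning $\varsigma$ explicit, leaving only the profile-dependent integral to be evaluated case by case (as in Section~\ref{subsec:Examples-of-Profiles}). The only minor care point is bookkeeping the constants $\alpha_0 \geqslant \tfrac{1}{2}\exp(-\tfrac{1}{2}\varsigma^2)$ so that the final bound shows the expected $\exp(2\varsigma^2)$ factor from $\alpha_0^{-4}$, and verifying that the step-size hypothesis is the right way to absorb the technical constraint $2\delta\tilde{I}_\pi(1/4)\leqslant 1$ uniformly in the problem parameters.
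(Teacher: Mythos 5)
Your proposal is correct and follows essentially the same route as the paper: Lemma~\ref{lem:close-coupling-concise} for the close coupling parameters, the step-size assumption to absorb the constraint $2\cdot\delta\cdot\tilde{I}_{\pi}\left(\tfrac{1}{4}\right)\leqslant1$, and then the isoperimetry-to-mixing pipeline of Theorem~\ref{thm:mixing-lower-bound-general}, with the same constant bookkeeping via $\alpha_{0}\geqslant\tfrac{1}{2}\exp\left(-\tfrac{1}{2}\varsigma^{2}\right)$ producing the $2^{14}\cdot\exp\left(2\cdot\varsigma^{2}\right)$ factor. The only addition in the paper is that the theorem's first two enumerated conclusions (the explicit lower bounds on $\Phi_{P}^{\mathrm{W}}$ and on $K^{*}$) are stated separately, obtained from Corollary~\ref{cor:iso-couple-conductance} and Theorem~\ref{thm:positive-CP-implies-optim-WPI}, which your argument already uses implicitly inside Theorem~\ref{thm:mixing-lower-bound-general}.
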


\begin{enumerate}
\item \label{enu:thm-appli-WCP}$P$ has a positive WCP, satisfying for
$v\in\left(0,\frac{1}{2}\right]$,
\[
\Phi_{P}^{\mathrm{W}}\left(v\right)\geqslant2^{-6}\cdot\varsigma\cdot\exp\left(-\varsigma^{2}\right)\cdot\left(L\cdot d\right)^{-1/2}\cdot\frac{\tilde{I}_{\pi}\left(\frac{1}{2}\cdot v\right)}{\frac{1}{2}\cdot v}.
\]
\item \label{enu:thm-appli-Kstar-WPI}$P$ satisfies a $K^{*}$\textit{\emph{--}}WPI,
with $K^{*}$ lower-bounded as 
\[
K^{*}\left(v\right)\geqslant2^{-11}\cdot\varsigma^{2}\cdot\exp\left(-2\cdot\varsigma^{2}\right)\cdot\left(L\cdot d\right)^{-1}\cdot\frac{\tilde{I}_{\pi}\left(2^{-3}\cdot v\right)^{2}}{2^{-3}\cdot v}.
\]
\item \label{enu:thm-appli-bound-n}For bounded functions $f$, writing
$v_{n}:=\left\Vert P^{n}f-\mu\left(f\right)\right\Vert _{2}^{2}/\left\Vert f\right\Vert _{\mathrm{osc}}^{2}\in\left[0,\frac{1}{4}\right]$,
it holds that 
\[
n\leqslant2^{14}\cdot\varsigma^{-2}\cdot\exp\left(2\cdot\varsigma^{2}\right)\cdot L\cdot d\cdot\int_{2^{-3}\cdot v_{n}}^{2^{-3}\cdot v_{0}}\frac{v\,\mathrm{d}v}{\tilde{I}_{\pi}\left(v\right)^{2}}.
\]
\item \label{enu:thm-appli-bound-mixing}Writing $\nu$ for the initial
law of the chain, $u:=\left\Vert \frac{\mathrm{d}\nu}{\mathrm{d}\pi}\right\Vert _{\mathrm{osc}}^{2}<\infty$,
then in order to guarantee that
\[
\chi^{2}\left(\nu P^{n},\pi\right)\leqslant\varepsilon_{{\rm Mix}}\quad\text{for some }\varepsilon_{{\rm Mix}}>0,
\]
it suffices that 
\[
n\geqslant1+2^{14}\cdot\varsigma^{-2}\cdot\exp\left(2\cdot\varsigma^{2}\right)\cdot L\cdot d\cdot\left(\int_{2^{-3}\cdot\varepsilon_{{\rm Mix}}\cdot u^{-1}}^{2^{-5}}\frac{v\,\mathrm{d}v}{\tilde{I}_{\pi}\left(v\right)^{2}}\right).
\]
\end{enumerate}
\begin{proof}
For \ref{enu:thm-appli-WCP}, we aim to apply Corollary~\ref{cor:iso-couple-conductance},
whose notation we use. We have by Lemma~\ref{lem:close-coupling-concise}
that
\[
\frac{1}{2}\cdot\delta\cdot\frac{\tilde{I}_{\pi}\left(\frac{1}{2}\cdot v\right)}{\frac{1}{2}\cdot v}=\frac{1}{2}\alpha_{0}\cdot\sigma\cdot\frac{\tilde{I}_{\pi}\left(\frac{1}{2}\cdot v\right)}{\frac{1}{2}\cdot v}\leqslant\frac{1}{2}\cdot\varsigma\left(L\cdot d\right)^{-1/2}\cdot\frac{\tilde{I}_{\pi}\left(\frac{1}{4}\right)}{\frac{1}{4}}\leqslant1,
\]
and then Lemma~\ref{lem:close-coupling-concise} yields the lower
bound
\begin{align*}
\frac{1}{2}\alpha_{0}\cdot\sigma\cdot\frac{\tilde{I}_{\pi}\left(\frac{1}{2}\cdot v\right)}{\frac{1}{2}\cdot v} & \geqslant\frac{1}{2^{2}}\cdot\exp\left(-\frac{1}{2}\cdot\varsigma^{2}\right)\cdot\varsigma\left(L\cdot d\right)^{-1/2}\cdot\frac{\tilde{I}_{\pi}\left(\frac{1}{2}\cdot v\right)}{\frac{1}{2}\cdot v}.
\end{align*}
Again from Lemma~\ref{lem:close-coupling-concise} we have $\varepsilon=\frac{1}{2}\cdot\alpha_{0}\geqslant\frac{1}{4}\cdot\exp\left(-\frac{1}{2}\cdot\varsigma^{2}\right)$
and we conclude with Corollary~\ref{cor:iso-couple-conductance}
. For \ref{enu:thm-appli-Kstar-WPI} we use Theorem~\ref{thm:positive-CP-implies-optim-WPI}.
Statements \ref{enu:thm-appli-bound-n} and \ref{enu:thm-appli-bound-mixing}
follow from Theorem~\ref{thm:WPI_F_bd} and Theorem~\ref{thm:mixing-lower-bound-general}. 
\end{proof}
\begin{rem}
The condition $2\cdot\varsigma\cdot\left(L^{-1/2}\cdot\tilde{I}_{\pi}\left(\frac{1}{4}\right)\right)\leqslant d^{1/2}$
is extremely weak; in the `well-conditioned' log-concave case, the
quantity $L^{-1/2}\cdot\tilde{I}_{\pi}\left(\frac{1}{4}\right)$ is
(up to constant factors) equal to the reciprocal of the condition
number (this follows from e.g. Lemma~27 of \cite{andrieu2022explicit}),
which is always less than $1$. Noting that our bounds are almost
all optimised by taking $\varsigma\in\Theta\left(1\right)$, we expect
this condition to almost always be satisfied in practice, particularly
for high-dimensional problems.
\end{rem}

\begin{rem}
We comment briefly on the assumptions made in preparation for this
theorem. The assumption of $L$-smoothness is conventional in this
area, and functions largely as a simple way of ensuring that the acceptance
rates of the RWM chain are well-controlled. Section~4.1 of \cite{andrieu2022explicit}
discusses some relaxations of this assumption which accommodate potentials
with lower regularity. Control of the isoperimetric profile is crucial
to our approach, and appears to give quite honest bounds (``given
an accurate bound on $I_{\pi}$, one expects to reproduce the correct
rate of convergence''). It is not a strong assumption on a probability
measure to assume that it has a regular isoperimetric profile (or
minorant), but in specific practical scenarios, characterising the
profile accurately can be challenging. For applying our results in
practice, the main barrier is thus to obtain good control of the isoperimetric
profile.
\end{rem}

\subsection{Mixing times: examples}

In this paper we consider heavy-tailed distributions for which the
constant $L$ may be dimension dependent, which is to be contrasted
with the light-tailed distributions considered in \cite{andrieu2022explicit}.
In the following, $d\in\mathbb{N}$ denotes the dimension of the ambient
space $\mathsf{E}=\mathbb{R}^{d}$.
\begin{example}
\label{exa:student-t-smoothness}For the standard multivariate Student's
t-distribution in dimension $d$ with $\tau>0$ degrees of freedom,
one has 
\[
U(x)=\frac{d+\tau}{2}\cdot\log\big(\tau+\left|x\right|^{2}\big)\,,
\]
and can take $L=1+d\cdot\tau^{-1}$; see Appendix~\ref{sec:smoothness-estimates}
for details. Naturally with many degrees of freedom (i.e. $\tau\gtrsim d$),
$L$ becomes less dimension dependent, as the Student's t-distribution
tends towards a Gaussian limit.
\end{example}

\begin{example}
\label{exa:quasi-exponential-smoothness}For the $d$-variate distribution
with potential given by 
\[
U\left(x\right)=\left(\tau+\left|x\right|^{2}\right)^{\alpha/2}\text{ for }\alpha\in\left(0,1\right),
\]
one can take $L=\alpha\cdot\tau^{-\left(1-\frac{\alpha}{2}\right)}$;
see Appendix~\ref{sec:smoothness-estimates}  for details. 
\end{example}

For some initial law $\nu\ll\pi$ we let $u=\left\Vert \mathrm{d}\nu/\mathrm{d}\pi\right\Vert _{\mathrm{osc}}^{2}$
and let $\varsigma$ denote a non-dimensionalised step-size which
should be treated as being of constant order. We will use `warm start'
to denote an initialisation for which $u\in\mathcal{O}\left(1\right)$,
and `feasible start' to denote an initialisation for which $\log u\in\mathcal{O}\left(d\right)$;
one can see that this is a reasonable scaling by considering product-form
$\nu$ and $\pi$, i.e. if $\nu$, $\pi$ are mutually absolutely
continuous and satisfy $c_{-}\leqslant\frac{\mathrm{d}\nu}{\mathrm{d}\pi}\leqslant c_{+}$
for some $0<c_{-}<1<c_{+}<\infty$, then direct computations yield
that 
\[
\left\Vert \frac{\mathrm{d}\nu^{\otimes d}}{\mathrm{d}\pi^{\otimes d}}\right\Vert _{\mathrm{osc}}^{2}=c_{+}^{d}-c_{-}^{d}\geqslant c_{+}^{d}-1\in\exp\left(\Theta\left(d\right)\right)\,.
\]

Recalling Theorem~\ref{thm:WPI_F_bd}, one sees that for a Markov
chain initialised at $X_{0}\sim\nu$, it suffices to take $n\geqslant n\left(\varepsilon_{\mathrm{Mix}}\cdot u^{-1};\left\Vert \cdot\right\Vert _{\mathrm{osc}}^{2}\right)$
in order to ensure that $\chi^{2}\left(\nu P^{n},\pi\right)\leqslant\varepsilon_{\mathrm{Mix}}$.
As such, subsequent mixing time estimates will be presented in terms
of bounds on this quantity, which we shorten to $n_{\star}\left(\varepsilon_{\mathrm{Mix}};u\right)$
for convenience.

\begin{example}[Example \ref{exa:student-t-smoothness} ctd.]
For $\tau>0$, let $U\left(x\right)=\frac{d+\tau}{2}\cdot\log\big(\tau+\left|x\right|^{2}\big)$.
One can then take $L=1+d\cdot\tau^{-1}$, $\tilde{I}_{\pi}\left(p\right)=c\left(d,\tau\right)\cdot d^{-1/2}\cdot p^{1+1/\tau}$,
where $c\left(d,\tau\right)$ is bounded uniformly in $d$ for suitably
`high-dimensional' problems; see the statement of Lemma~\ref{lem:conductance-t-student-constant-bound}
for a precise statement. With $\sigma=\varsigma\cdot\left(L\cdot d\right)^{-1/2}$,
one then obtains
\begin{align*}
n_{\star}\left(\varepsilon_{\mathrm{Mix}};u\right) & \leqslant1+2^{14}\cdot\varsigma^{-2}\cdot\exp\left(2\cdot\varsigma^{2}\right)\cdot L\cdot d\cdot\left(\int_{2^{-3}\cdot\varepsilon_{{\rm Mix}}\cdot u^{-1}}^{2^{-5}}\frac{v\,\mathrm{d}v}{\tilde{I}_{\pi}\left(v\right)^{2}}\right)\\
 & \leqslant1+2^{14}\cdot\varsigma^{-2}\cdot\exp\left(2\cdot\varsigma^{2}\right)\cdot\frac{L\cdot d^{2}}{c\left(d,\tau\right)^{2}}\cdot\left(\int_{2^{-3}\cdot\varepsilon_{{\rm Mix}}\cdot u^{-1}}^{2^{-5}}\frac{\mathrm{d}v}{v^{1+2/\tau}}\right)\\
 & \leqslant1+2^{14}\cdot\varsigma^{-2}\cdot\exp\left(2\cdot\varsigma^{2}\right)\cdot\frac{L\cdot d^{2}}{c\left(d,\tau\right)^{2}}\cdot\frac{\tau}{2}\cdot\left(2^{-3}\cdot\varepsilon_{{\rm Mix}}\cdot u^{-1}\right)^{-2/\tau}\\
 & =1+2^{13+6/\tau}\cdot\varsigma^{-2}\cdot\exp\left(2\cdot\varsigma^{2}\right)\cdot\frac{L\cdot d^{2}}{c\left(d,\tau\right)^{2}}\cdot\tau\cdot\left(\frac{u}{\varepsilon_{{\rm Mix}}}\right)^{2/\tau}\\
 & =\mathcal{O}\left(d^{2}\cdot\left(\frac{u}{\varepsilon_{{\rm Mix}}}\right)^{2/\tau}\right).
\end{align*}
Thus, under a warm start, the mixing time scales like $d^{2}\cdot\varepsilon_{{\rm Mix}}^{-2/\tau}$,
whereas from a feasible start, it takes $\exp\left(\mathcal{O}\left(d\right)\right)$
to reach $\varepsilon_{{\rm Mix}}\asymp1$.
\end{example}

\begin{lem}
\label{lem:conductance-t-student-constant-bound} Let $\pi$ be as
defined in Example~\ref{exa:student-t-smoothness}. Assume that the
parameters $\left(d,\tau\right)\in\mathbb{N}\times\left(0,\infty\right)$
satisfy $\sqrt{\frac{2}{\min\left\{ d,\tau\right\} }}\leqslant\xi$
for some $\xi\in\left(0,1\right)$. One can then lower bound the
isoperimetric profile of $\pi$ as 
\[
\tilde{I}_{\pi}\left(p\right)\geqslant c\left(\tau\right)\cdot\left(\frac{1-\xi}{1+\xi}\right)^{1/2}\cdot d^{-1/2}\cdot p^{1+1/\tau},
\]
where the constant $c\left(\tau\right)>0$ is independent of the dimension.
\end{lem}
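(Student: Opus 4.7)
The target distribution is of the Cauchy type, namely $\pi(\mathrm{d}x) \propto V(x)^{-(d+\tau)}\,\mathrm{d}x$ with $V(x) := (\tau + |x|^2)^{1/2}$, which is convex. Hence Bobkov's general isoperimetric result \cite[Theorem 1.2]{bobkov2007large} is directly applicable and yields an inequality of the form
\[
I_\pi(p) \;\geq\; \frac{c_0(\tau)}{\mathrm{med}_\pi(|x|)} \cdot p^{1 + 1/\tau}, \qquad p \in \left(0, \tfrac{1}{2}\right],
\]
where the constant $c_0(\tau)$ depends only on $\tau$. The reduction is therefore to the problem of bounding $\mathrm{med}_\pi(|x|)$ from above by a quantity of order $\sqrt{d \cdot (1+\xi)/(1-\xi)}$, up to a factor depending only on $\tau$.

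For that step, the plan is to exploit the standard stochastic representation of a multivariate Student-t random vector: if $X \sim \pi$, then $X \stackrel{\mathrm{d}}{=} \sqrt{\tau/W} \cdot Z$ where $Z \sim \mathcal{N}(0, I_d)$ and $W \sim \chi^2_\tau$ are independent. Consequently $|X|^2 = \tau \cdot |Z|^2/W$, and it suffices to control the upper tails of $|Z|^2$ and the lower tails of $W$. Concretely, I would show that under the hypothesis $\sqrt{2/\min\{d,\tau\}} \leq \xi$, one has
\[
\mathbb{P}\bigl(|Z|^2 \leq (1+\xi) \cdot d\bigr) \;>\; \tfrac{3}{4}, \qquad \mathbb{P}\bigl(W \geq (1-\xi) \cdot \tau\bigr) \;>\; \tfrac{3}{4},
\]
using sharp chi-squared concentration (e.g., the Laurent--Massart inequality, which gives sub-exponential decay and cleanly absorbs the $\min\{d,\tau\}$ assumption). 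Combined via a union bound, this forces
\[
\mathbb{P}\bigl(|X|^2 \leq d \cdot (1+\xi)/(1-\xi)\bigr) \;>\; \tfrac{1}{2},
\]
so that $\mathrm{med}_\pi(|x|) \leq \sqrt{d} \cdot \sqrt{(1+\xi)/(1-\xi)}$. Substituting into Bobkov's bound yields the claim with $c(\tau) := c_0(\tau)$.

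\paragraph*{Main obstacle.} The genuine content of the argument is the concentration step, and specifically making the constants match the clean form stated in the lemma. Using Chebyshev's inequality with its variance prefactor $2/(\xi^2 \min\{d,\tau\})$ only gives probabilities of order unity under the stated assumption and is insufficient to force the quantile below $\tfrac{1}{2}$; so the plan requires invoking a Chernoff- or Laurent--Massart-type bound on the chi-squared distribution, and then verifying that the resulting constant can be absorbed into $c(\tau)$ without introducing extraneous dependence on $d$. A secondary point, bookkeeping-level but worth checking, is that Bobkov's Theorem 1.2 is indeed applicable in the precise form stated above with the dependence on the median made explicit; if the ambient statement in \cite{bobkov2007large} is phrased slightly differently, a brief manipulation relating $\mathrm{med}_\pi(|x|)$ to the isoperimetric constant of the radial pushforward may be needed.
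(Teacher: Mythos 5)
Your skeleton matches the paper's up to the key quantitative step: both routes invoke \cite[Theorem 1.2]{bobkov2007large} to reduce the lemma to the bound $\mathrm{med}_\pi\left(\left|x\right|\right)\leqslant\sqrt{d\cdot\frac{1+\xi}{1-\xi}}$, and your target median bound is exactly the one the paper derives. The gap is in how you propose to get it. Writing $\left|X\right|^{2}=\tau\cdot\left|Z\right|^{2}/W$ and demanding $\mathbb{P}\left(\left|Z\right|^{2}>\left(1+\xi\right)\cdot d\right)<\frac{1}{4}$ and $\mathbb{P}\left(W<\left(1-\xi\right)\cdot\tau\right)<\frac{1}{4}$ asks, at the borderline of the hypothesis $\xi=\sqrt{2/\min\left\{ d,\tau\right\} }$, for tail bounds at a deviation of exactly one standard deviation of the relevant $\chi^{2}$ variable ($\xi\cdot d=\sqrt{2d}$, $\xi\cdot\tau=\sqrt{2\tau}$). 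The Laurent--Massart inequality cannot certify this: for the upper tail one needs $2\sqrt{dt}+2t\leqslant\sqrt{2d}$, forcing $t<\frac{1}{2}$ and hence a bound no better than $e^{-1/2}\approx0.61$, and the lower-tail bound degrades the same way; Cantelli gives only $\frac{1}{2}$. There is also no slack to redistribute the thresholds, since when $d=\tau$ the total multiplicative budget $\frac{1+\xi}{1-\xi}$ pins both deviations at one standard deviation. The probabilities you assert are in fact true for $\chi^{2}$ laws (the one-sided tail beyond one standard deviation is roughly $0.15$), but establishing them uniformly in $d$ and in possibly small, non-integer $\tau$ requires genuinely distribution-specific estimates, not a generic sub-exponential inequality, so as written the union-bound step does not close.

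The paper's proof sidesteps this entirely with a trick worth knowing: set $S:=\tau^{-1}\left|X\right|^{2}$ and $B:=S/\left(1+S\right)$, so that $B\sim\mathrm{Beta}\left(d/2,\tau/2\right)$ is a single \emph{bounded} random variable, and apply the elementary inequality $\left|\mathbb{E}\left[B\right]-\mathrm{med}\left(B\right)\right|\leqslant\mathrm{sd}\left(B\right)$. Since the median is covariant under the monotone map $B\mapsto\left|X\right|$, this yields $\mathrm{med}\left(\left|X\right|\right)\leqslant\left(\tau\cdot\frac{d+\sqrt{2d\tau/\left(d+\tau\right)}}{\tau-\sqrt{2d\tau/\left(d+\tau\right)}}\right)^{1/2}$, and the hypothesis gives $\sqrt{2d\tau/\left(d+\tau\right)}\leqslant\xi\cdot\min\left\{ d,\tau\right\} $, hence the stated bound, using only two moments and no concentration or union bound. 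If you want to salvage your route, you must replace Laurent--Massart with sharp one-sided $\chi^{2}$ quantile estimates valid down to $\min\left\{ d,\tau\right\} >2$ (including non-integer degrees of freedom); note also that applying the mean--median--standard-deviation inequality separately to $\left|Z\right|^{2}$ and $W$ does not help, since medians do not pass through ratios of independent variables and independence only gives the joint event probability $\geqslant\frac{1}{4}$.
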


\begin{proof}
By \cite[Theorem 1.2]{bobkov2007large}, it is known that one can
minorise 
\[
\tilde{I}_{\pi}\left(p\right)\geqslant\frac{c\left(\tau\right)}{\mathrm{med}\left(\left|X\right|\right)}\cdot p^{1+\frac{1}{\tau}}\,,
\]
for some universal constant $c\left(\tau\right)>0$. It thus remains
to control $\mathrm{med}\left(\left|X\right|\right)$. For $X\sim\pi$,
one can verify that with $S:=\tau^{-1}\cdot\left|X\right|^{2}$ and
$B:=S/\left(1+S\right)$ then it holds that $B\overset{\mathrm{d}}{=}\mathrm{Beta}\left(d/2,\tau/2\right)$.
By a standard result for random variables with finite second order
moment, we have for $Z\sim\mathrm{Beta}(\alpha,\beta)$ with $\alpha,\beta>0$,
$\big|\mathbb{E}\left[Z\right]-\mathrm{med}\left(Z\right)\big|^{2}\leqslant\mathbb{E}\left[Z^{2}\right]-\mathbb{E}\left[Z\right]^{2}$,
so that
\begin{align*}
\mathrm{med}\left(Z\right) & \leqslant\frac{\alpha}{\alpha+\beta}+\sqrt{\frac{\alpha\cdot\beta}{(\alpha+\beta)^{3}}}.
\end{align*}
Translating this to our setting and noting that the median is covariant
under monotonic maps, one then computes that 
\begin{align*}
\mathrm{med}\left(\left|X\right|\right) & \leqslant\left(\tau\cdot\frac{d+\sqrt{\frac{2\cdot d\cdot\tau}{\left(d+\tau\right)}}}{\tau-\sqrt{\frac{2\cdot d\cdot\tau}{\left(d+\tau\right)}}}\right)^{1/2}.
\end{align*}
Compute that (using that $d\cdot\tau=\min\left\{ d,\tau\right\} \cdot\max\left\{ d,\tau\right\} $)
\begin{align*}
\sqrt{\frac{2\cdot d\cdot\tau}{\left(d+\tau\right)}} & =\min\left\{ d,\tau\right\} \cdot\sqrt{\frac{\max\left\{ d,\tau\right\} }{\left(d+\tau\right)}\cdot\frac{2}{\min\left\{ d,\tau\right\} }}\leqslant\min\left\{ d,\tau\right\} \cdot\xi,
\end{align*}
where the final inequality follows from the assumption that $\xi\geqslant\sqrt{\frac{2}{\min\left\{ d,\tau\right\} }}$.
We hence deduce that
\[
\frac{d+\sqrt{\frac{2\cdot d\cdot\tau}{\left(d+\tau\right)}}}{\tau-\sqrt{\frac{2\cdot d\cdot\tau}{\left(d+\tau\right)}}}\leqslant\frac{d+d\cdot\xi}{\tau-\tau\cdot\xi}=\frac{1+\xi}{1-\xi}\cdot\frac{d}{\tau},
\]
from which the result follows.
\end{proof}
\begin{example}[Product form of Example \ref{exa:student-t-smoothness}]
\label{exa:student-t-sequel} For $\eta>0$, let $U\left(x\right)=\sum_{i\in\llbracket d\rrbracket}\frac{1+\eta}{2}\cdot\log\left(1+\left|x_{i}\right|^{2}\right)$.
One can then take $L=1+\eta$, $I_{\pi}\left(p\right)\geqslant c\left(\eta\right)\cdot d^{-1/\eta}\cdot p^{1+1/\eta}$
. With $\sigma=\varsigma\cdot\left(L\cdot d\right)^{-1/2}$, one then
obtains
\begin{align*}
n_{\star}\left(\varepsilon_{\mathrm{Mix}};u\right) & \leqslant1+2^{14}\cdot\varsigma^{-2}\cdot\exp\left(2\cdot\varsigma^{2}\right)\cdot L\cdot d\cdot\left(\int_{2^{-3}\cdot\varepsilon_{{\rm Mix}}\cdot u^{-1}}^{2^{-5}}\frac{v\,\mathrm{d}v}{\tilde{I}_{\pi}\left(v\right)^{2}}\right)\\
 & \leqslant1+2^{14}\cdot\varsigma^{-2}\cdot\exp\left(2\cdot\varsigma^{2}\right)\cdot\frac{L\cdot d}{c\left(\eta\right)^{2}}\cdot d^{2/\eta}\cdot\left(\int_{2^{-3}\cdot\varepsilon_{{\rm Mix}}\cdot u^{-1}}^{2^{-5}}\frac{\mathrm{d}v}{v^{1+2/\eta}}\right)\\
 & =1+2^{14}\cdot\varsigma^{-2}\cdot\exp\left(2\cdot\varsigma^{2}\right)\cdot\frac{L\cdot d}{c\left(\eta\right)^{2}}\cdot d^{2/\eta}\cdot\frac{\eta}{2}\cdot\left(2^{-3}\cdot\varepsilon_{{\rm Mix}}\cdot u^{-1}\right)^{-2/\eta}\\
 & =1+2^{13+6/\eta}\cdot\varsigma^{-2}\cdot\exp\left(2\cdot\varsigma^{2}\right)\cdot\frac{L\cdot d^{1+2/\eta}}{c\left(\eta\right)^{2}}\cdot\eta\cdot\left(\frac{u}{\varepsilon_{{\rm Mix}}}\right)^{2/\eta}\\
 & =\mathcal{O}\left(d^{1+2/\eta}\cdot\left(\frac{u}{\varepsilon_{{\rm Mix}}}\right)^{2/\eta}\right).
\end{align*}
Thus, under a warm start, the mixing time scales like $d^{1+2/\eta}\cdot\varepsilon_{{\rm Mix}}^{-2/\eta}$,
whereas from a feasible start, it takes $\exp\mathcal{O}\left(d\right)$
to reach $\varepsilon_{{\rm Mix}}\asymp1$.
\end{example}

\begin{example}[Product form of Example \ref{exa:quasi-exponential-smoothness}]
 For $\eta\in\left(0,1\right)$, $\tau>0$, let $U\left(x\right)=\sum_{i\in\llbracket d\rrbracket}\left(\tau+\left|x_{i}\right|^{2}\right)^{\eta/2}$.
One can take $L=\eta\cdot\tau^{-\left(1-\frac{\eta}{2}\right)}$,
$I_{\pi}\left(p\right)\geqslant c\left(\eta,\tau\right)\cdot p\cdot\left(\log\left(\frac{d}{p}\right)\right)^{-\left(1/\eta-1\right)}$.
With $\sigma=\varsigma\cdot\left(L\cdot d\right)^{-1/2}$, one then
obtains
\begin{align*}
n_{\star}\left(\varepsilon_{\mathrm{Mix}};u\right) & \leqslant1+2^{14}\cdot\varsigma^{-2}\cdot\exp\left(2\cdot\varsigma^{2}\right)\cdot L\cdot d\cdot\left(\int_{2^{-3}\cdot\varepsilon_{{\rm Mix}}\cdot u^{-1}}^{2^{-5}}\frac{v\,\mathrm{d}v}{\tilde{I}_{\pi}\left(v\right)^{2}}\right)\\
 & \leqslant1+2^{14}\cdot\frac{\exp\left(2\cdot\varsigma^{2}\right)}{\varsigma^{2}}\cdot\frac{L\cdot d}{c\left(\eta,\tau\right)^{2}}\left(\int_{2^{-3}\cdot\varepsilon_{{\rm Mix}}\cdot u^{-1}}^{2^{-5}}\frac{\left(\log\left(\frac{d}{v}\right)\right)^{2\cdot\left(1/\eta-1\right)}}{v}\,\mathrm{d}v\right)\\
 & =1+2^{14}\cdot\frac{\exp\left(2\cdot\varsigma^{2}\right)}{\varsigma^{2}}\cdot\frac{L\cdot d}{c\left(\eta,\tau\right)^{2}}\left(\int_{\log\left(2^{5}\cdot d\right)}^{\log\left(2^{3}\cdot\frac{d\cdot u}{\varepsilon_{{\rm Mix}}}\right)}t^{2\cdot\left(1/\eta-1\right)}\,\mathrm{d}t\right)\\
 & \leqslant1+2^{14}\cdot\frac{\exp\left(2\cdot\varsigma^{2}\right)}{\varsigma^{2}}\cdot\frac{L\cdot d}{c\left(\eta,\tau\right)^{2}}\cdot\frac{\eta}{2-\eta}\left(\log\left(2^{3}\cdot\frac{d\cdot u}{\varepsilon_{{\rm Mix}}}\right)\right)^{2/\eta-1}\\
 & =\mathcal{O}\left(d\cdot\left(\log\left(\frac{d\cdot u}{\varepsilon_{{\rm Mix}}}\right)\right)^{2/\eta-1}\right).
\end{align*}
Thus, under a warm start, the mixing time scales like $d\cdot\left(\log\left(\frac{d}{\varepsilon_{{\rm Mix}}}\right)\right)^{2/\eta-1}$,
whereas from a feasible start, it takes $d^{2/\eta}$ to reach $\varepsilon_{{\rm Mix}}\asymp1$.
\end{example}

Relative to the light-tailed setting, we see that in the heavy-tailed
setting, we pay a more dramatic price for poor initialisation. In
particular, in Example~\ref{exa:student-t-sequel}, while the complexity
of mixing to within a given error $\varepsilon_{\mathrm{Mix}}$ scales
only polynomially with both $d$ and $\varepsilon_{\mathrm{Mix}}$
under a warm start (i.e. $u_{0}\in\mathcal{O}\left(1\right)$), the
dependence on the initial $u_{0}$ is a polynomial in $d$ of potentially
high degree in the case of quasi-exponential tails, and exponential
in $d$ in the case of polynomial tails. This highlights explicitly
that initialisation is of particular concern in the heavy-tailed setting.
We are not aware of explicit practical recommendations on how to ameliorate
this issue in practice.

\subsection{Asymptotic variance: examples}

Following the calculations in \cite{andrieu2022poincare_tech}, we
note that for sufficiently nice (i.e. $\Psi$-finite) functions $f$
and sufficiently fast-mixing kernels $P$, we can estimate the asymptotic
variance of estimates of $\pi\left(f\right)$ which are obtained as
ergodic averages along the Markov chain. In particular, following
Theorem~17 of \cite{andrieu2022poincare_tech}, for $\mu$-reversible
kernels $P$ such that $P^{*}P$ satisfies a $\left(\Psi,K^{*}\right)$\textit{\emph{--}}WPI,
if $\mathrm{id}/K^{*}$ is integrable around $0$, then there holds
the general bound
\[
\mathrm{var}\left(P,f\right)\leqslant4\cdot\Psi\left(f\right)\cdot B\left(\frac{\left\Vert f\right\Vert _{2}^{2}}{\Psi\left(f\right)}\right)\text{ with }B\left(v\right)=\int_{0}^{v}\frac{w}{K^{*}\left(w\right)}\,\mathrm{d}w\,.
\]

\begin{rem}
In the context of RWM, derivations in Theorem~\ref{thm:rwm-payoff}
tell us that 
\[
K^{*}\left(w\right)\gtrsim\left(L\cdot d\right)^{-1}\cdot\frac{\tilde{I}_{\pi}\left(w\right)^{2}}{w}\implies\frac{w}{K^{*}\left(w\right)}\lesssim L\cdot d\cdot\left(\frac{w}{\tilde{I}_{\pi}\left(w\right)}\right)^{2}.
\]
We thus intuit that if $\tilde{I}_{\pi}\left(w\right)\ll w^{3/2}$
at $0$, then the integral which defines $B$ will not be proper,
thus precluding a control of the asymptotic variance with the present
techniques.
\end{rem}

\begin{example}[Example \ref{exa:student-t-smoothness} ctd.]
 Consider the $d$-variate Student-t distribution with $\tau$ degrees
of freedom. For simplicity, assume that $\tau\geqslant d\geqslant2$.
Given the estimate $I_{\pi}\left(p\right)\gtrsim d^{-1/2}\cdot p^{1+1/\tau}$,
it follows that $\frac{w}{K^{*}\left(w\right)}\lesssim d^{2}\cdot w^{-2/\tau}$
and hence that $B\left(v\right)\lesssim d^{2}\cdot v^{1-2/\tau}$;
one then concludes that $\mathrm{var}\left(P,f\right)\lesssim d^{2}\cdot\Psi\left(f\right)^{2/\tau}\cdot\left\Vert f\right\Vert _{2}^{2\cdot\left(1-2/\tau\right)}$.
\end{example}

\begin{example}[Product form of Example \ref{exa:student-t-smoothness}]
 Consider now the product-form target with potential $U\left(x\right)=\sum_{i\in\llbracket d\rrbracket}\frac{1+\eta}{2}\cdot\log\left(1+\left|x_{i}\right|^{2}\right)$
for $\eta>0$. Given the estimate $I_{\pi}\left(p\right)\gtrsim d^{-1/\eta}\cdot p^{1+1/\eta}$,
it follows that $\frac{w}{K^{*}\left(w\right)}\lesssim d^{1+2/\eta}\cdot w^{-2/\eta}$
and hence that $B\left(v\right)\lesssim d^{1+2/\eta}\cdot v^{1-2/\eta}$;
one then concludes that $\mathrm{var}\left(P,f\right)\lesssim d^{1+2/\eta}\cdot\Psi\left(f\right)^{2/\eta}\cdot\left\Vert f\right\Vert _{2}^{2\cdot\left(1-2/\eta\right)}$.
\end{example}

\begin{example}[Product form of Example \ref{exa:quasi-exponential-smoothness}]
 Finally, consider the product-form target distribution with potential
$U\left(x\right)=\sum_{i\in\llbracket d\rrbracket}\left(\tau+\left|x_{i}\right|^{2}\right)^{\eta/2}$
for $\eta\in\left(0,1\right)$, $\tau>0$. Given the estimate $I_{\pi}\left(p\right)\gtrsim p\cdot\left(\log\left(\frac{d}{p}\right)\right)^{-\left(1/\eta-1\right)}$,
it follows that 
\[
\frac{w}{K^{*}\left(w\right)}\lesssim d\cdot\left(\log\left(\frac{d}{w}\right)\right)^{2\cdot\left(1/\eta-1\right)}\implies B\left(v\right)\lesssim d\cdot v\cdot\left(\log\left(\frac{d}{v}\right)\right)^{2\cdot\left(1/\eta-1\right)}
\]
 and one then concludes that 
\[
\mathrm{var}\left(P,f\right)\lesssim d\cdot\left\Vert f\right\Vert _{2}^{2}\cdot\left(\log\left(\frac{d\cdot\Psi\left(f\right)}{\left\Vert f\right\Vert _{2}^{2}}\right)\right)^{2\cdot\left(1/\eta-1\right)}.
\]
\end{example}

\section{Discussion\label{sec:Discussion}}

We now discuss relations to existing literature. WPIs were formally
introduced in \cite{rockner2001weak}, building on ideas of \cite{liggett1991l_2},
to study subgeometric convergence rates of diffusion processes, and
have been studied extensively in the continuous-time setting in works
such as \cite{cattiaux2010functional,bakry2014analysis}. More recently,
there has been a renewed interest in WPIs for the study of computational
statistical algorithms, often in the context of heavy-tailed distributions;
see \cite{andrieu2022comparison_journal,mousavi2023towards}. 

In terms of prior results on the convergence to equilibrium of RWM
on heavy-tailed target measures, prior work has been largely qualitative;
see e.g. \cite{jarner2002polynomial,jarner2003necessary,jarner2007convergence,douc2018markov}
based on the drift and minorisation approach. Optimal scaling-type
results (as in \cite{roberts1997weak,roberts2001optimal} where scaling
is with respect to $d$ for specific classes of targets) also offer
some insights on this setting \cite{roberts2016complexity}, though
require some care in interpretation. In particular, in the heavy-tailed
case, it appears that convergence of low-dimensional marginal distributions
may happen appreciably faster than joint convergence, as suggested
by our results. 

As in our previous work \cite{andrieu2022explicit}, the quality of
our convergence estimates for RWM is contingent on the availability
of accurate estimates on the isoperimetric profile of the target measure.
While in the log-concave case, many techniques are currently available
for this task, the situation remains less developed in the heavy-tailed
setting. In this work, we have relied heavily on the impressively
general results of \cite{bobkov2007large,cattiaux2010functional},
but one suspects that in many specific instances, it should be possible
to improve these estimates substantially, particularly in terms of
dimension-dependence. We would thus welcome additional developments
on the study of heavy-tailed isoperimetry, and heavy-tailed functional
inequalities more broadly.

\appendix

\section{Notation\label{subsec:Notation}}
\begin{itemize}
\item We will write $\mathbb{N}=\left\{ 1,2,\dots\right\} $ for the set
of natural numbers, $\mathbb{N}_{0}:=\mathbb{N}\cup\left\{ 0\right\} $,
and $\R_{+}=\left(0,\infty\right)$ for positive real numbers. 
\item For $d\in\mathbb{N}$, write $\llbracket d\rrbracket=\left\{ 1,2,\cdots,d\right\} $.
\item For $f\colon\mathbb{R}\rightarrow\mathbb{R}$ we say that $f$ is
increasing (resp. decreasing) if $a<b\implies f\left(a\right)\leqslant f\left(b\right)$
(resp. $a<b\implies f\left(a\right)\geqslant f\left(b\right)$) and
is strictly increasing (resp. strictly decreasing) if $a<b\implies f\left(a\right)<f\left(b\right)$
(resp. $a<b\implies f\left(a\right)>f\left(b\right)$).
\item For vectors $v\in\mathbb{R}^{d}$, write $\left|v\right|$ for the
Euclidean norm of $v$. For positive semi-definite matrices $M\in\mathbb{R}^{d\times d}$,
write $\left\Vert M\right\Vert _{\mathrm{spec}}=\sup\left\{ v^{\top}Mv:\left|v\right|\leqslant1\right\} $
for the spectral norm of $M$.
\item We adopt the following $\mathcal{O}$ (resp. $\Omega$) notation to
indicate when functions grow no faster than (resp. no slower than)
other functions. For $a\in\mathbb{R}\cup\left\{ \infty\right\} $,
\begin{itemize}
\item If $f(x)\in\mathcal{O}\left(g\left(x\right)\right)$ as $x\to a$,
this means that $\underset{x\to a}{\lim\sup}\left|\frac{f(x)}{g(x)}\right|<\infty$.
When $a=+\infty$, then we may drop explicit mention of $a$. 
\item If $f(x)\in\Omega\left(g\left(x\right)\right)$ as $x\to a$, this
means that $\underset{x\to a}{\lim\inf}\left|\frac{f(x)}{g(x)}\right|>0$.
In particular $f\in\mathcal{O}\left(g\right)\iff g\in\Omega\left(f\right)$. 
\end{itemize}
\end{itemize}
Outside of specific examples, we will be working throughout on a general
measurable space $\left(\E,\mathscr{E}\right)$. 
\begin{itemize}
\item For $\mu$ a probability measure on $\left(\E,\mathscr{E}\right)$
and $Z\sim\mu$, we write $\mathbb{E}\left[Z\right]$ for the expectation
of $Z$.
\item For $\left\{ Z_{n},n\in\mathbb{N}\right\} $ a sequence of random
variables, we let $Z_{n}\overset{L}{\rightarrow}Z$ indicate convergence
in distribution to the random variable $Z$.
\item For a set $A\in\mathscr{E}$, its complement in $\E$ is denoted by
$A^{\complement}$. We denote the corresponding indicator function
by $\mathbf{1}_{A}:\E\to\left\{ 0,1\right\} $.
\item We assume that $\left(\E,\mathscr{E}\right)$ is equipped with a probability
measure $\mu$, and write $\ELL\left(\mu\right)$ for the Hilbert
space of (equivalence classes of) real-valued $\mu$--square-integrable
measurable functions with inner product 
\[
\langle f,g\rangle=\int_{\E}f\left(x\right)g\left(x\right)\,\dif\mu\left(x\right)\,,
\]
and corresponding norm $\left\Vert \cdot\right\Vert _{2,\mu}$, and
if there is no ambiguity, we may just write $\left\Vert \cdot\right\Vert _{2}$.
We write $\ELL_{0}\left(\mu\right)$ for the set of functions $f\in\ELL\left(\mu\right)$
which also satisfy $\mu\left(f\right)=0$.
\item More generally, for $p\in\left[1,\infty\right)$, we write $\mathrm{L}^{p}\left(\mu\right)$
for the Banach space of real-valued measurable functions with finite
$p$-norm, $\left\Vert f\right\Vert _{p}:=\left(\int_{\E}\left|f\right|^{p}\,\dif\mu\right)^{1/p}$,
and $\mathrm{L}_{0}^{p}\left(\mu\right)$ for $f\in\mathrm{L}^{p}\left(\mu\right)$
with $\mu\left(f\right)=0$.
\item For $g\in\ELL\left(\mu\right)$, let $\var_{\mu}\left(g\right):=\left\Vert g-\pi\left(g\right)\right\Vert _{2}^{2}$.
We write $\ELL_{0}\left(\mu\right)$ for the set of functions $f\in\ELL\left(\mu\right)$
which also satisfy $\mu\left(f\right)=0$. 
\item For $\nu\in\mathcal{P}\left(\mathsf{E}\right)$ such that $\nu\ll\mu$
and $f=\frac{\mathrm{d}\nu}{\mathrm{d}\mu}\in\ELL\left(\mu\right)$,
write $\chi^{2}\left(\nu,\mu\right)$ for $\var_{\mu}\left(f\right)$.
\item For $g\in\mathrm{L}^{1}\left(\mu\right)$, let $\var_{\mu}^{\left(1\right)}\left(g\right):=\inf_{c\in\R}\left\Vert g-c\right\Vert _{1}$,
sometimes referred to as \textit{mean absolute deviation}, and we
denote by $\mathrm{med}_{\mu}\left(g\right)$ any $c\in\R$ which
attains this infimum; we will sometimes drop the subscript $\mu$
when it is clear from context.
\item For a measurable function $f:\mathsf{E}\to\R$, let $\left\Vert f\right\Vert _{\mathrm{osc}}:=\mathrm{ess_{\mu}}\sup f-\mathrm{ess}_{\mu}\inf f$
and $\left\Vert f\right\Vert _{\infty}:=\mathrm{ess_{\mu}}\sup\left|f\right|$.
\item For a finite signed measure $\nu$ on $\left(\E,\mathscr{E}\right)$
its total variation is given by 
\[
\left\Vert \nu\right\Vert _{\mathrm{TV}}=\frac{1}{2}\cdot\sup\left\{ \nu\left(f\right):\left\Vert f\right\Vert _{\infty}\leqslant1\right\} \,,
\]
which is equal to $\sup\left\{ \nu\left(f\right):\left\Vert f\right\Vert _{\osc}\leqslant1\right\} $
when $\nu(\mathsf{E})=0$, e.g. \cite[Proposition D.2.4]{douc2018markov}.
\item For two probability measures $\mu$ and $\nu$ on $\left(\E,\mathscr{E}\right)$
we let $\mu\otimes\nu\left(A\times B\right)=\mu\left(A\right)\nu\left(B\right)$
for $A,B\in\mathscr{E}$. For a Markov kernel $P\left(x,\dif y\right)$
on $\E\times\mathscr{E}$, we write for $\bar{A}\in\mathscr{E}\otimes\mathscr{E}$,
the minimal product $\sigma$-algebra, $\mu\otimes P\left(\bar{A}\right)=\int_{\bar{A}}\mu\left(\dif x\right)P\left(x,\dif y\right)$. 
\item A point mass distribution at $x$ will be denoted by $\delta_{x}\left(\dif y\right)$.
\item $\Id:\ELL\left(\mu\right)\to\ELL\left(\mu\right)$ denotes the identity
mapping, $f\mapsto f$. We also use this symbol for the identity ${\rm Id}\colon\mathsf{X}\rightarrow\mathsf{X}$.
\item For a bounded linear operator $T$, we write $T^{*}$ for its adjoint
operator $T^{*}:\ELL\left(\mu\right)\to\ELL\left(\mu\right)$, which
satisfies $\left\langle f,Tg\right\rangle =\left\langle T^{*}f,g\right\rangle $
for any $f,g\in\ELL\left(\mu\right)$. We denote its operator norm
by
\[
\left\Vert T\right\Vert _{\mathrm{op}}=\sup\left\{ \left\Vert Tf\right\Vert _{2}:f\in\ELL\left(\mu\right),\left\Vert f\right\Vert _{2}\leqslant1\right\} .
\]
\item Given a bounded linear operator $T:\ELL\left(\mu\right)\to\ELL\left(\mu\right)$
corresponding to a $\mu$-invariant Markov kernel, we define the Dirichlet
form to be 
\begin{align*}
\calE\left(T,f\right) & :=\left\langle \left(\Id-T\right)f,f\right\rangle \\
 & =\frac{1}{2}\int\mu\left(\mathrm{d}x\right)\cdot T\left(x,\mathrm{d}y\right)\cdot\left|f\left(y\right)-f\left(x\right)\right|^{2},
\end{align*}
for any $f\in\ELL\left(\mu\right)$. More generally, for $p\geqslant1$
and $f\in\mathrm{L}^{p}\left(\mu\right)$, we also define
\begin{equation}
\mathcal{E}_{p}\left(T,f\right):=\frac{1}{2}\cdot\int\mu\left(\mathrm{d}x\right)\cdot T\left(x,\mathrm{d}y\right)\cdot\left|f\left(y\right)-f\left(x\right)\right|^{p}.\label{eq:calE_p}
\end{equation}
\item  The spectrum of a bounded linear operator $T$ is the set
\[
\sigma\left(T\right):=\left\{ \lambda\in\mathbb{C}:T-\lambda\cdot{\rm Id}\text{ is not invertible}\right\} .
\]
\item For a $\mu$-invariant Markov kernel $T$, we denote by $\sigma_{0}\left(T\right)$
the spectrum of the restriction of $T$ to $\ELL_{0}\left(\mu\right)$.
We define the spectral gap of $T$ to be $\gamma_{T}:=1-\sup\left|\sigma_{0}\left(T\right)\right|$.
\item We say that a Markov kernel $T$ is $\mu$-reversible if $\mu\otimes T\left(A\times B\right)=\mu\otimes T\left(B\times A\right)$
for all sets $A,B\in\mathscr{E}$. We say that a $\mu$-reversible
kernel $T$ is \textit{positive} if for any $f\in\ELL\left(\mu\right)$,
we have $\left\langle f,Tf\right\rangle \geqslant0$.
\item If $T$ is a $\mu$-reversible Markov kernel, then $\sigma_{0}\left(T\right)\subseteq\left[-1,1\right]$
and we may define the right spectral gap as ${\rm Gap_{{\rm R}}}\left(T\right):=1-\sup\sigma_{0}\left(T\right)$,
which satisfies \cite[Theorem~22.A.19]{douc2018markov},
\[
{\rm Gap}_{\mathrm{R}}\left(T\right)=\inf_{g\in\ELL_{0}\left(\mu\right),g\neq0}\frac{\calE\left(T,g\right)}{\left\Vert g\right\Vert _{2}^{2}}\,.
\]
\item In practice, fluctuations of ergodic averages of $f\in\ELL\left(\mu\right)$
are of interest, and one may consider the \textit{asymptotic variance},
given by \cite[Proof of Theorem 4]{tierney1998note}
\[
{\rm var}\left(T,f\right):=\lim_{n\to\infty}n\cdot{\rm var}\left(\frac{1}{n}\sum_{i=1}^{n}f\left(X_{i}\right)\right),
\]
where $X_{0}\sim\mu$ and for $i\geqslant1$, $\mathbb{P}\left(X_{i}\in A\mid X_{0}=x_{0},\ldots,X_{i-1}=x_{i-1}\right)=T\left(x_{i-1},A\right)$.
\item If $T$ is a $\mu$-reversible Markov kernel we may write that \cite[Proof of Theorem 4]{tierney1998note}
\begin{align*}
{\rm var}\left(T,f\right) & =\lim_{\lambda\uparrow1}\left\langle f,\left(\Id-\lambda T\right)^{-1}\left(\Id+\lambda T\right)f\right\rangle .
\end{align*}
\end{itemize}

\section{Dirichlet forms of indicators\label{app:first-appendix}}

\begin{lem}
\label{lem:dirichlet-form-indicator}Let $T\colon\ELL\left(\mu\right)\rightarrow\ELL\left(\mu\right)$
be a bounded linear operator and $S=\frac{1}{2}\cdot\left(T+T^{*}\right)$
be its symmetric part. Then
\begin{enumerate}
\item For $p\in\left\{ 1,2\right\} $
\begin{align*}
\mathcal{E}_{p}\left(T,f\right) & =\mathcal{E}_{p}\left(T^{*},f\right)=\mathcal{E}_{p}\left(S,f\right).
\end{align*}
\item For $p\in\left\{ 1,2\right\} $ and any $A\in\mathcal{E}$
\[
\mathcal{E}_{p}\left(T,\mathbf{1}_{A}\right)=\mu\otimes T\big(A\times A^{\complement}\big)=\mu\otimes T\big(A^{\complement}\times A\big)\text{ and }\var_{\mu}\big(\mathbf{1}_{A}\big)=\mu\otimes\mu\big(A\times A^{\complement}\big)\;.
\]
\end{enumerate}
\end{lem}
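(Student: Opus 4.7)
The proof will proceed by elementary manipulation of the defining integrals, exploiting two observations: the symmetry of $|\cdot|^p$ under swapping arguments, and the characterization of the adjoint via joint measures on $\mathsf{E}\times\mathsf{E}$.

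For part (a), the key observation is that the joint measure $\lambda_{T^*}(\dif x,\dif y):=\mu(\dif x)T^*(x,\dif y)$ is the pushforward of $\lambda_T(\dif x,\dif y):=\mu(\dif x)T(x,\dif y)$ under the swap $\iota\colon(x,y)\mapsto(y,x)$. Indeed, this is exactly the content of the defining relation $\langle f,Tg\rangle=\langle T^*f,g\rangle$ applied to simple functions. Since for $p\in\{1,2\}$ the integrand $(x,y)\mapsto|f(y)-f(x)|^p$ is invariant under $\iota$, a change of variables yields $\mathcal{E}_p(T,f)=\mathcal{E}_p(T^*,f)$. The identity $\mathcal{E}_p(S,f)=\mathcal{E}_p(T,f)$ follows by linearity of the defining integral in the kernel: $\mu\otimes S=\tfrac12(\mu\otimes T+\mu\otimes T^*)$, so $\mathcal{E}_p(S,f)=\tfrac12\bigl(\mathcal{E}_p(T,f)+\mathcal{E}_p(T^*,f)\bigr)=\mathcal{E}_p(T,f)$.

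For part (b), I would first observe that for $f=\mathbf{1}_A$, the quantity $|\mathbf{1}_A(y)-\mathbf{1}_A(x)|$ takes values in $\{0,1\}$, so $|f(y)-f(x)|^p=|f(y)-f(x)|$ for both $p\in\{1,2\}$, and moreover equals $\mathbf{1}_{A\times A^\complement}(x,y)+\mathbf{1}_{A^\complement\times A}(x,y)$. Substituting, one obtains
\[
\mathcal{E}_p(T,\mathbf{1}_A)=\tfrac12\bigl[\mu\otimes T(A\times A^\complement)+\mu\otimes T(A^\complement\times A)\bigr].
\]
The equality of the two terms in brackets follows from $\mu$-invariance of $T$: the left and right marginals of $\mu\otimes T$ both equal $\mu$, so writing $\mu(A^\complement)$ as a sum over the partition by $\{A,A^\complement\}$ of the other coordinate yields
\[
\mu\otimes T(A\times A^\complement)+\mu\otimes T(A^\complement\times A^\complement)=\mu(A^\complement)=\mu\otimes T(A^\complement\times A)+\mu\otimes T(A^\complement\times A^\complement),
\]
from which cancellation gives $\mu\otimes T(A\times A^\complement)=\mu\otimes T(A^\complement\times A)$. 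The variance identity is a direct computation: $\var_\mu(\mathbf{1}_A)=\mu(A)-\mu(A)^2=\mu(A)\mu(A^\complement)=\mu\otimes\mu(A\times A^\complement)$.

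There is no substantive obstacle here; the lemma is essentially a bookkeeping exercise, but it is worth recording cleanly because it underpins the probabilistic interpretation of Dirichlet forms on indicators that is used throughout Section~\ref{subsec:Cheeger-inequalities}. The only mild subtlety is ensuring that the symmetrization argument in part (a) genuinely relies on $p\in\{1,2\}$ only through the symmetry $|t|^p=|-t|^p$, which of course holds for every $p\geqslant1$; thus the same proof actually gives the statement for all $p\geqslant1$, as was already noted in Remark~\ref{rem:nonrev_WPI}.
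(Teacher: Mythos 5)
Your proposal is correct and follows essentially the same route as the paper: the swap-of-coordinates identity $\mu(\dif x)T(x,\dif y)=\mu(\dif y)T^{*}(y,\dif x)$ plus linearity in the kernel for part (a), and the indicator decomposition together with $\mu$-invariance (your partition-and-cancel step is just a repackaging of the paper's computation showing $\mu\otimes T(A\times A^{\complement})-\mu\otimes T(A^{\complement}\times A)=\mu(A)-\mu T(A)=0$) for part (b). The only cosmetic difference is the variance identity, which the paper obtains by specialising the previous display to $T(x,\cdot)=\mu(\cdot)$ while you compute $\mu(A)-\mu(A)^{2}$ directly; both are fine.
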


\begin{proof}
We have for $p\in\left\{ 1,2\right\} $
\begin{align*}
\mathcal{E}_{p}\left(T,f\right) & =\frac{1}{2}\cdot\int\mu\left(\mathrm{d}x\right)\cdot T\left(x,\mathrm{d}y\right)\cdot\left|f\left(y\right)-f\left(x\right)\right|^{p}\\
 & =\frac{1}{2}\cdot\int\mu\left(\mathrm{d}y\right)\cdot T^{*}\left(y,\mathrm{d}x\right)\cdot\left|f\left(y\right)-f\left(x\right)\right|^{p}\\
 & =\mathcal{E}_{p}\left(T^{*},f\right),
\end{align*}
and by linearity that $\mathcal{E}_{p}\left(T,f\right)=\mathcal{E}_{p}\left(w\cdot T+\left(1-w\right)\cdot T^{*},f\right)$
for any $w\in\left[0,1\right]$. It follows that $\mathcal{E}_{p}\left(T,f\right)=\mathcal{E}_{p}\left(S,f\right)$.

Now, for arbitrary $\mu$-invariant $T$, it holds that 
\begin{align*}
\left(\mu\otimes T\right)\left(A\times A^{\complement}\right) & -\left(\mu\otimes T\right)\left(A^{\complement}\times A\right)\\
 & =\int\mu\left(\mathrm{d}x\right)\cdot T\left(x,\mathrm{d}y\right)\cdot\left[\mathbf{1}_{A}\left(x\right)\cdot1_{A^{\complement}}\left(y\right)-\mathbf{1}_{A}\left(y\right)\cdot1_{A^{\complement}}\left(x\right)\right]\\
 & =\int\mu\left(\mathrm{d}x\right)\cdot T\left(x,\mathrm{d}y\right)\cdot\left[\mathbf{1}_{A}\left(x\right)-\mathbf{1}_{A}\left(y\right)\right]\\
 & =\mu\left(A\right)-\mu T\left(A\right)=\mu\left(A\right)-\mu\left(A\right)=0,
\end{align*}
i.e. that $\left(\mu\otimes T\right)\left(A\times A^{\complement}\right)=\left(\mu\otimes T\right)\left(A^{\complement}\times A\right)$,
and that
\begin{align*}
\mathcal{E}_{p}\left(T,\mathbf{1}_{A}\right) & =\frac{1}{2}\int\left|\mathbf{1}_{A}\left(x\right)-\mathbf{1}_{A}\left(y\right)\right|^{p}\cdot\mu\otimes T\left({\rm d}x,{\rm d}y\right)\\
 & =\frac{1}{2}\int\left[\mathbf{1}_{A}\left(x\right)\cdot\mathbf{1}_{A^{\complement}}\left(y\right)+\mathbf{1}_{A^{\complement}}\left(x\right)\cdot\mathbf{1}_{A}\left(y\right)\right]\cdot\mu\otimes T\left({\rm d}x,{\rm d}y\right)\\
 & =\frac{1}{2}\left\{ \left(\mu\otimes T\right)\left(A\times A^{\complement}\right)+\left(\mu\otimes T\right)\left(A^{\complement}\times A\right)\right\} .
\end{align*}
Combining these two identities yields the first two equalities. Finally,
the result on the variance follows by considering $T\left(x,A\right)=\mu\left(A\right)$
for $\left(x,A\right)\in\mathsf{E}\times\mathscr{E}$ and the classical
identity $\var_{\mu}\big(\mathbf{1}_{A}\big)=\frac{1}{2}\mathbb{E}_{\mu\otimes\mu}\left[\big(\mathbf{1}_{A}\left(X\right)-\mathbf{1}_{A}\left(Y\right)\big)^{2}\right]$.
\end{proof}

\section{Improved sharpness of $\mathrm{L}^{1}$\textit{\emph{--}}WPI result\label{sec:Compared-sharpness-of}}

In this section we establish that the results of Theorem~\ref{thm:positive-CP-implies-optim-WPI}
improves mixing times estimates compared to those obtained using earlier
results in \cite[Theorem 38]{andrieu2022poincare_tech}. First, since
$\mu\otimes P\left(A\times A^{\complement}\right)=\mu\otimes P\left(A^{\complement}\times A\right)$
for any $A\in\mathscr{E}$, which does not require reversibility,
we have for $v\in\left[0,\frac{1}{4}\right]$,

\begin{align*}
\kappa(v): & =\inf\left\{ \frac{\mu\otimes P\left(A\times A^{\complement}\right)}{\mu\left(A\right)\mu\left(A^{\complement}\right)}:v\leqslant\mu\left(A\right)\mu\left(A^{\complement}\right)\leqslant\frac{1}{4}\right\} \\
 & =\inf\left\{ \frac{\mu\otimes P\left(A\times A^{\complement}\right)}{\mu\left(A\right)\mu\left(A^{\complement}\right)}:\frac{1-\sqrt{1-4v}}{2}\leqslant\mu\left(A\right)\leqslant\frac{1}{2}\right\} \\
 & \geqslant\inf\left\{ \frac{\mu\otimes P\left(A\times A^{\complement}\right)}{\mu\left(A\right)}:\frac{1-\sqrt{1-4v}}{2}\leqslant\mu\left(A\right)\leqslant\frac{1}{2}\right\} \\
 & =\Phi_{P}^{\mathrm{W}}\left(\frac{1-\sqrt{1-4v}}{2}\right).
\end{align*}
It is shown in \cite[Theorem 38]{andrieu2022poincare_tech} that $\kappa\left(v\right)>0$
for $v\in\left(0,\frac{1}{4}\right)$ implies a $\left(\Psi,\alpha\right)$-WPI
with $\alpha\left(v\right)=2^{4}\cdot\kappa\left(2^{-4}\cdot v\right)^{-2}$.
Using (\ref{eq:lower-bound-Kstar}) in the proof of Theorem~\ref{thm:positive-CP-implies-optim-WPI},
we obtain 
\begin{align*}
K^{*}\left(v\right) & \geqslant\frac{1}{2}\cdot v\cdot\alpha\left(\frac{v}{2}\right)^{-1}\\
 & \geqslant\frac{1}{2}\cdot v\cdot\left(2^{4}\cdot\kappa\left(2^{-5}\cdot v\right)^{-2}\right)^{-1}\\
 & \geqslant2^{-5}\cdot v\cdot\Phi_{P}^{\mathrm{W}}\left(\frac{1-\sqrt{1-2^{-3}\cdot v}}{2}\right)^{2}\cdot
\end{align*}
From Theorem~\ref{thm:WPI_F_bd}, given $f\in\ELL_{0}\left(\mu\right)$,
for any $\varepsilon>0$ we have $\left\Vert P^{n}f\right\Vert _{2}^{2}/\left\Vert f\right\Vert _{\mathrm{osc}}^{2}\leqslant\varepsilon$
whenever $n\geqslant F\left(\varepsilon\right)$, and we therefore
want the lowest possible upper bound on $F\left(\varepsilon\right)$.
In Corollary~\ref{cor:mixing-with-weak-conductance} our new lower
bound on $K^{*}(v)$ leads to
\[
F\left(\varepsilon\right)\leqslant2^{2}\cdot\int_{\varepsilon}^{1/4}\frac{{\rm d}v}{v\cdot\Phi_{P}^{\mathrm{W}}\left(2^{-2}\cdot v\right)^{2}}.
\]
We show that this is tighter than what is obtained with the old lower
bound above. We have $\frac{1}{2}x\leqslant1-\sqrt{1-x}\leqslant x$
for $x\in\left[0,1\right]$, and using that $u\mapsto\Phi_{P}^{\mathrm{W}}\left(u\right)$
is non-decreasing it holds that

\[
\Phi_{P}^{\mathrm{W}}\left(2^{-4}\cdot v\right)\leqslant\Phi_{P}^{\mathrm{W}}\left(\frac{1-\sqrt{1-2^{-3}\cdot v}}{2}\right)\leqslant\Phi_{P}^{\mathrm{W}}\left(2^{-3}\cdot v\right),
\]
and therefore
\begin{align*}
F\left(\varepsilon\right) & \leqslant2^{2}\cdot\int_{\varepsilon}^{1/4}\frac{{\rm d}v}{v\cdot\Phi_{P}^{\mathrm{W}}\left(2^{-2}\cdot v\right)^{2}}\\
 & \leqslant2^{2}\cdot\int_{\varepsilon}^{1/4}\frac{{\rm d}v}{v\cdot\Phi_{P}^{\mathrm{W}}\left(2^{-3}\cdot v\right)^{2}}\\
 & \leqslant2^{2}\cdot\int_{\varepsilon}^{1/4}\frac{{\rm d}v}{v\cdot\Phi_{P}^{\mathrm{W}}\left(\frac{1-\sqrt{1-2^{-3}\cdot v}}{2}\right)^{2}}\\
 & =2^{-3}\cdot\int_{\varepsilon}^{1/4}\frac{\mathrm{d}v}{2^{-5}\cdot v\cdot\Phi_{P}^{\mathrm{W}}\left(\frac{1-\sqrt{1-2^{-3}\cdot v}}{2}\right)^{2}},
\end{align*}
i.e. our new result improves estimates of the mixing time by a factor
of at least $8$. If $\Phi$ is moreover convex and satisfies $\Phi\left(0\right)=0$
(both of which are often the case for chains with slower-than-exponential
convergence), we can use the estimate $\Phi_{P}^{\mathrm{W}}\left(2^{-3}\cdot v\right)\leqslant2^{-1}\cdot\Phi_{P}^{\mathrm{W}}\left(2^{-2}\cdot v\right)$
to obtain an additional improvement by a factor of $4$.

\section{Auxiliary results for examples}

\subsection{Examples \ref{exa:student-t-smoothness}, \ref{exa:quasi-exponential-smoothness}\label{sec:smoothness-estimates}}
\begin{lem}
\label{lem:radial-derivs}Let $U(x)=A\left(\left|x\right|\right)$
for some sufficiently smooth $A:\mathbb{R}\to\mathbb{R}$. Then it
holds that for $x\neq0$ 
\begin{align*}
\nabla U(x) & =A^{'}\left(\left|x\right|\right)\cdot\frac{x}{\left|x\right|}\\
\nabla^{2}U(x) & =A^{''}\left(\left|x\right|\right)\cdot\left(\frac{x}{\left|x\right|}\right)\cdot\left(\frac{x}{\left|x\right|}\right)^{\top}\\
 & \quad+\frac{A^{'}\left(\left|x\right|\right)}{\left|x\right|}\cdot\left\{ I_{d}-\frac{xx^{\top}}{\left|x\right|^{2}}\right\} .
\end{align*}
For $d\geqslant2$, it thus holds for $x\neq0$ that
\[
\left\Vert \nabla^{2}U\left(x\right)\right\Vert _{\mathrm{spec}}=\max\left\{ \left|A^{''}\left(\left|x\right|\right)\right|,\left|\frac{A^{'}\left(\left|x\right|\right)}{\left|x\right|}\right|\right\} .
\]
\end{lem}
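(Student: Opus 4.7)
The proof is essentially a direct computation using the chain and product rules, followed by a simple spectral decomposition. I would structure it as follows.

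First, for the gradient, I would apply the chain rule componentwise. Since $\partial_i |x| = x_i/|x|$ for $x\neq 0$, writing $u(x) := x/|x|$ for brevity, one gets $\nabla U(x) = A'(|x|)\cdot u(x)$ immediately. For the Hessian, I would differentiate $\partial_i U(x) = A'(|x|)\, x_i/|x|$ with respect to $x_j$, using the product rule and the identity $\partial_j(x_i/|x|) = \delta_{ij}/|x| - x_i x_j/|x|^3$. This yields
\[
\partial_j\partial_i U(x) = A''(|x|)\cdot \frac{x_i x_j}{|x|^2} + \frac{A'(|x|)}{|x|}\cdot\left(\delta_{ij} - \frac{x_i x_j}{|x|^2}\right),
\]
which is precisely the stated matrix identity after recognising $u(x)u(x)^\top = xx^\top/|x|^2$.

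For the spectral norm claim, the key observation is that the Hessian has been expressed as a linear combination of two complementary orthogonal projections: $P_\parallel := u(x)u(x)^\top$ is the rank-one projection onto $\mathrm{span}(x)$, and $P_\perp := I_d - u(x)u(x)^\top$ is the rank $d-1$ projection onto its orthogonal complement. Thus
\[
\nabla^2 U(x) = A''(|x|)\cdot P_\parallel + \frac{A'(|x|)}{|x|}\cdot P_\perp,
\]
and since $P_\parallel$ and $P_\perp$ commute, share a common eigenbasis, and satisfy $P_\parallel P_\perp = 0$, the eigenvalues of $\nabla^2 U(x)$ are $A''(|x|)$ with multiplicity one (eigenvector $u(x)$) and $A'(|x|)/|x|$ with multiplicity $d-1$ (eigenvectors spanning $u(x)^\perp$). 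When $d\geqslant 2$ both eigenvalues are realised, so the spectral norm equals $\max\{|A''(|x|)|, |A'(|x|)/|x|\}$.

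There isn't really a hard step here; the only thing worth being careful about is to invoke $d\geqslant 2$ precisely at the point where we assert that the $A'(|x|)/|x|$ eigenvalue is actually attained (for $d=1$ the orthogonal complement of $u(x)$ is trivial and the formula would reduce to $|A''(|x|)|$ alone). Everything else is a standard chain-rule and projection argument, so the proposal is essentially to present these computations in order and invoke the orthogonal decomposition to extract the spectral norm.
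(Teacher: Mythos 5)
Your proposal is correct and follows essentially the same route as the paper: direct chain-rule/product-rule computation for the gradient and Hessian, then exploiting the decomposition of $\mathbb{R}^{d}$ into $\mathrm{span}(x)$ and its orthogonal complement to read off the spectral norm. The paper phrases the last step as optimising the quadratic form $v^{\top}\nabla^{2}U(x)v$ over unit vectors split as $v=v_{\parallel}+v_{\perp}$, which is the same observation as your eigendecomposition into the two projections, including the role of $d\geqslant2$ in ensuring the eigenvalue $A'(|x|)/|x|$ is attained.
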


\begin{proof}
The first two results follow from direct calculation. For the estimate
on the spectral norm, let $v$ be a vector of norm $1$ and write
$v=v_{\perp}+v_{\parallel}$ with $v_{\parallel}=\langle v,x\rangle\cdot\frac{x}{\left|x\right|}$.
Compute directly that $v^{\top}\nabla^{2}U\left(x\right)v=\left|v_{\parallel}\right|^{2}\cdot A^{''}\left(\left|x\right|\right)+\left(1-|v_{\parallel}|^{2}\right)\cdot\frac{A^{'}\left(\left|x\right|\right)}{\left|x\right|}$;
optimising over $\left|v_{\parallel}\right|^{2}$ yields the claimed
result.
\end{proof}
\begin{lem}
Throughout, let $d\geqslant2$, $\tau>0$, $\alpha\in\left(0,1\right)$.
Then,
\begin{enumerate}
\item \label{enu:exampleU-1}For $U\left(x\right)=\frac{d+\tau}{2}\cdot\log\left(\tau+\left|x\right|^{2}\right)$,
it holds that 
\begin{align*}
\left\Vert \nabla^{2}U\left(x\right)\right\Vert _{\mathrm{spec}} & =\frac{\tau+d}{\tau+\left|x\right|^{2}}\\
\sup\left\{ \left\Vert \nabla^{2}U\left(x\right)\right\Vert _{\mathrm{spec}}:x\in\mathbb{R}^{d}\right\}  & =1+d\cdot\tau^{-1}.
\end{align*}
\item \label{enu:exampleU-2}For $U\left(x\right)=\left(\tau+\left|x\right|^{2}\right)^{\alpha/2}$,
it holds that
\begin{align*}
\left\Vert \nabla^{2}U\left(x\right)\right\Vert _{\mathrm{spec}} & =\alpha\cdot\left(\tau+\left|x\right|^{2}\right)^{\alpha/2-1}\\
\sup\left\{ \left\Vert \nabla^{2}U\left(x\right)\right\Vert _{\mathrm{spec}}:x\in\mathbb{R}^{d}\right\}  & =\alpha\cdot\tau^{-\left(1-\alpha/2\right)}.
\end{align*}
\end{enumerate}
\end{lem}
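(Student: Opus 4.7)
The proof will be essentially a bookkeeping exercise built directly on Lemma~\ref{lem:radial-derivs}: in both cases the potential is rotationally symmetric, i.e. of the form $U(x) = A(|x|)$ for a suitable $A\colon[0,\infty)\to\mathbb{R}$, so I would instantiate the lemma with
\[
A_1(r) := \tfrac{d+\tau}{2}\log(\tau + r^2),\qquad A_2(r) := (\tau + r^2)^{\alpha/2},
\]
respectively, and then reduce the computation of $\|\nabla^2 U(x)\|_{\mathrm{spec}}$ (for $x\neq 0$) to evaluating $\max\{|A''(r)|, |A'(r)/r|\}$ at $r = |x|$.

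For \ref{enu:exampleU-1}, a direct computation gives
\[
A_1'(r) = \frac{(d+\tau)\,r}{\tau + r^2},\quad \frac{A_1'(r)}{r} = \frac{d+\tau}{\tau + r^2},\quad A_1''(r) = \frac{(d+\tau)(\tau - r^2)}{(\tau + r^2)^2}.
\]
Since $|\tau - r^2|\leqslant \tau + r^2$ for all $r\geqslant 0$, one has $|A_1''(r)|\leqslant |A_1'(r)/r|$, so the maximum in Lemma~\ref{lem:radial-derivs} equals $(d+\tau)/(\tau + r^2)$, giving the stated expression for $\|\nabla^2 U(x)\|_{\mathrm{spec}}$. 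This is a decreasing function of $|x|$, so its supremum is attained at $x = 0$ and equals $(d+\tau)/\tau = 1 + d\tau^{-1}$.

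For \ref{enu:exampleU-2}, similar computation yields
\[
\frac{A_2'(r)}{r} = \alpha(\tau + r^2)^{\alpha/2 - 1},\quad A_2''(r) = \alpha(\tau + r^2)^{\alpha/2 - 2}\bigl[\tau + (\alpha - 1)r^2\bigr].
\]
Since $\alpha\in(0,1)$, the bracket satisfies $|\tau + (\alpha-1)r^2|\leqslant \tau + r^2$, so again $|A_2''(r)|\leqslant |A_2'(r)/r|$ and the maximum is $\alpha(\tau + r^2)^{\alpha/2 - 1}$. Because $\alpha/2 - 1 < 0$, this is again maximized at $r = 0$, giving the claimed supremum $\alpha\tau^{-(1 - \alpha/2)}$.

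The only mild subtlety, rather than a genuine obstacle, is that Lemma~\ref{lem:radial-derivs} is stated for $x\neq 0$; I would handle the origin by noting that both potentials are smooth on all of $\mathbb{R}^d$ (the maps $r\mapsto A_i(\sqrt{r})$ are analytic at $0$), so that the formula for $\|\nabla^2 U(x)\|_{\mathrm{spec}}$ extends continuously to $x=0$ — which is precisely where the suprema are achieved. Direct Taylor expansion gives $\nabla^2 U_1(0) = (1 + d\tau^{-1})\,I_d$ and $\nabla^2 U_2(0) = \alpha\tau^{-(1-\alpha/2)}\,I_d$, confirming consistency.
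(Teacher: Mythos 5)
Your proposal is correct and follows essentially the same route as the paper: instantiate Lemma~\ref{lem:radial-derivs} with the radial profiles $A_{1},A_{2}$, compute $A'$ and $A''$, observe that $|A''(r)|\leqslant|A'(r)/r|$ in both cases so the spectral norm equals $|A'(r)/r|$, and use monotonicity in $r$ to identify the supremum. Your explicit treatment of the point $x=0$ (where Lemma~\ref{lem:radial-derivs} does not apply) is a small additional care the paper leaves implicit, and it is handled correctly.
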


\begin{proof}
Explicit calculations. For \ref{enu:exampleU-1}, define $A\left(r\right)=\frac{d+\tau}{2}\cdot\log\left(\tau+r^{2}\right)$,
and compute that 
\begin{align*}
A^{'}\left(r\right) & =\left(d+\tau\right)\cdot\frac{r}{\tau+r^{2}}\\
A^{''}\left(r\right) & =\left(d+\tau\right)\cdot\frac{\tau-r^{2}}{\left(\tau+r^{2}\right)^{2}}.
\end{align*}
By Lemma~\ref{lem:radial-derivs}, it follows that if $\left|x\right|=r$,
then
\begin{align*}
\left\Vert \nabla^{2}U\left(x\right)\right\Vert _{\mathrm{spec}} & =\max\left\{ \left|A^{''}\left(r\right)\right|,\left|\frac{A^{'}\left(r\right)}{r}\right|\right\} \\
 & =\max\left\{ \left|\left(d+\tau\right)\cdot\frac{\tau-r^{2}}{\left(\tau+r^{2}\right)^{2}}\right|,\left|\left(d+\tau\right)\cdot\frac{1}{\tau+r^{2}}\right|\right\} \\
 & =\frac{\tau+d}{\left(\tau+r^{2}\right)^{2}}\cdot\max\left\{ \left|\tau-r^{2}\right|,\left|\tau+r^{2}\right|\right\} \\
 & =\frac{\tau+d}{\tau+r^{2}},
\end{align*}
from which the first result follows. Noting that this expression is
monotonically decreasing in $r$ yields the second result.

For \ref{enu:exampleU-2}, define similarly $A\left(r\right)=\left(\tau+r^{2}\right)^{\alpha/2}$,
and compute that
\begin{align*}
A^{'}\left(r\right) & =\alpha\cdot r\cdot\left(\tau+r^{2}\right)^{\alpha/2-1}\\
A^{''}\left(r\right) & =\alpha\cdot\left(\tau+r^{2}\right)^{\alpha/2-1}-\alpha\cdot\left(2-\alpha\right)\cdot r^{2}\cdot\left(\tau+r^{2}\right)^{\alpha/2-2}\\
 & =\alpha\cdot\left(\tau+r^{2}\right)^{\alpha/2-2}\cdot\left\{ \tau-\left(1-\alpha\right)\cdot r^{2}\right\} 
\end{align*}
whence if $\left|x\right|=r$, then
\begin{align*}
\left\Vert \nabla^{2}U\left(x\right)\right\Vert _{\mathrm{spec}}\\
=\max & \left\{ \left|A^{''}\left(r\right)\right|,\left|\frac{A^{'}\left(r\right)}{r}\right|\right\} \\
=\max & \left\{ \left|\alpha\cdot\left(\tau+r^{2}\right)^{\alpha/2-2}\cdot\left\{ \tau-\left(1-\alpha\right)\cdot r^{2}\right\} \right|,\left|\alpha\cdot\left(\tau+r^{2}\right)^{\alpha/2-1}\right|\right\} \\
=\alpha\cdot & \left(\tau+r^{2}\right)^{-\left(2-\alpha/2\right)}\cdot\max\left\{ \left|\left\{ \tau-\left(1-\alpha\right)\cdot r^{2}\right\} \right|,\left|\tau+r^{2}\right|\right\} \\
=\alpha\cdot & \left(\tau+r^{2}\right)^{-\left(1-\alpha/2\right)},
\end{align*}
yielding the first result; monotonicity again yields the second.
\end{proof}
\begin{lem}
Let $U:\mathbb{R}^{d}\to\mathbb{R}$ be a twice-differentiable mapping
such that $\sup\left\{ \left\Vert \nabla^{2}U\left(x\right)\right\Vert _{\mathrm{spec}}:x\in\mathbb{R}^{d}\right\} \leqslant L$
for some $L\in[0,\infty)$. Then $U$ is $L$-smooth.
\end{lem}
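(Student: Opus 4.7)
The plan is to establish the quadratic upper bound by a standard Taylor expansion of $U$ with integral remainder, and then control the remainder using the spectral norm bound on the Hessian. Since the hypothesis is purely pointwise on $\nabla^{2}U$, this is really just a direct calculation.

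More precisely, for fixed $x,h\in\mathbb{R}^{d}$, I would consider the scalar function $\varphi\colon[0,1]\to\mathbb{R}$ given by $\varphi(t):=U(x+t\cdot h)$. Twice differentiability of $U$ gives $\varphi^{'}(t)=\langle\nabla U(x+t\cdot h),h\rangle$ and $\varphi^{''}(t)=h^{\top}\nabla^{2}U(x+t\cdot h)\,h$. Applying Taylor's theorem with integral remainder to $\varphi$ between $0$ and $1$ yields
\[
U(x+h)-U(x)-\langle\nabla U(x),h\rangle=\varphi(1)-\varphi(0)-\varphi^{'}(0)=\int_{0}^{1}(1-t)\cdot h^{\top}\nabla^{2}U(x+t\cdot h)\,h\,\mathrm{d}t.
\]

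The next step is to bound the integrand by the spectral norm. By definition of $\|\cdot\|_{\mathrm{spec}}$, for every $y\in\mathbb{R}^{d}$,
\[
h^{\top}\nabla^{2}U(y)\,h\leqslant\left\Vert \nabla^{2}U(y)\right\Vert _{\mathrm{spec}}\cdot|h|^{2}\leqslant L\cdot|h|^{2},
\]
where the last inequality uses the standing hypothesis. Substituting this into the integral and using $\int_{0}^{1}(1-t)\,\mathrm{d}t=\tfrac{1}{2}$ gives
\[
U(x+h)-U(x)-\langle\nabla U(x),h\rangle\leqslant\frac{L}{2}\cdot|h|^{2},
\]
which is precisely the $L$-smoothness condition of Assumption~\ref{assu:target-distribution}.

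There is no real obstacle here: the only mildly delicate point is ensuring that the integral form of Taylor's theorem is applicable under mere twice differentiability (rather than $C^{2}$), but this is standard once one notes that $\varphi^{''}$ exists everywhere on $[0,1]$ and the fundamental theorem of calculus applies to $\varphi^{'}$. Alternatively, one can avoid this minor technicality by invoking the mean value form of the remainder: for some $\xi\in(0,1)$, $U(x+h)-U(x)-\langle\nabla U(x),h\rangle=\tfrac{1}{2}\cdot h^{\top}\nabla^{2}U(x+\xi\cdot h)\,h$, and then the same spectral norm bound finishes the argument.
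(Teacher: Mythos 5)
Your proof is correct and follows essentially the same route as the paper: a Taylor/fundamental-theorem expansion of $U$ along the segment from $x$ to $x+h$, with the quadratic remainder bounded by $\left\Vert \nabla^{2}U\right\Vert _{\mathrm{spec}}\cdot\left|h\right|^{2}\leqslant L\cdot\left|h\right|^{2}$; your single integral with weight $(1-t)$ is just the paper's double integral after swapping the order of integration. The mean-value-form remark is a fine way to dispose of the regularity point, but nothing more is needed.
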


\begin{proof}
Let $x,h\in\E$, and write
\begin{align*}
U\left(x+h\right)-U\left(x\right)-\left\langle \nabla U\left(x\right),h\right\rangle  & =\int_{0}^{1}\left\langle \nabla U\left(x+t\cdot h\right)-\nabla U\left(x\right),h\right\rangle \,\mathrm{d}t\\
 & =\int_{0}^{1}\int_{0}^{t}h^{\top}\nabla^{2}U\left(x+s\cdot h\right)h\,\mathrm{d}s\,\mathrm{d}t\\
 & \leqslant\int_{0}^{1}\int_{0}^{t}\left\Vert \nabla^{2}U\left(x+s\cdot h\right)\right\Vert _{\mathrm{spec}}\cdot\left|h\right|^{2}\,\mathrm{d}s\,\mathrm{d}t\\
 & \leqslant\int_{0}^{1}\int_{0}^{t}L\cdot\left|h\right|^{2}\,\mathrm{d}s\,\mathrm{d}t\\
 & =\frac{L}{2}\cdot\left|h\right|^{2}.
\end{align*}
\end{proof}

\subsection{Example \ref{exa:PM-Pareto} \label{app:exa-pm-pareto}}

A ${\rm Pareto}\left(\alpha,x_{m}\right)$ with $x_{m}=1-\alpha^{-1}$
and $\alpha>1$ for $Q_{*}$ leads to a cumulative distribution function
for $\tilde{\pi}_{*}$
\[
\tilde{\pi}_{*}\left(W\leqslant s\right)=\begin{cases}
0 & s\leqslant x_{m}\\
1-\left(x_{m}/s\right)^{\alpha-1} & s\geqslant x_{m},
\end{cases}
\]
because
\begin{align*}
\alpha x_{m}^{\alpha}\int_{x_{m}}^{s}x^{-(1+\alpha)}x\,{\rm d}x & =\frac{\alpha x_{m}^{\alpha}}{\alpha-1}\cdot\left[x_{m}^{-(\alpha-1)}-s^{-(\alpha-1)}\right]\\
 & =\frac{\alpha x_{m}}{\alpha-1}\cdot\left[1-\left(x_{m}/s\right)^{\alpha-1}\right]\\
 & =1-\left(x_{m}/s\right)^{\alpha-1},
\end{align*}
using basic properties of the Pareto distribution. The expression
for $\psi_{*}^{-1}$ therefore follows.

\bibliographystyle{plain}
\bibliography{bib-rupi}

\end{document}